\documentclass[a4paper, 12pt]{article}
\pdfoutput=1

\usepackage[margin=25mm]{geometry}

\usepackage[utf8]{inputenc}
\usepackage[LGR,T1]{fontenc}
\usepackage[british]{babel}
\usepackage[style=british]{csquotes}
\usepackage[en-GB]{datetime2}
\DTMlangsetup[en-GB]{ord=omit}
\usepackage{xcolor}

\usepackage{amsthm, thmtools}
\usepackage{mathtools}
\usepackage{titlesec} % needs to be loaded before hyperref
\usepackage{hyperref}
\hypersetup{
    bookmarksnumbered,
    colorlinks,
    linkcolor={cyan!50!blue},
    citecolor={cyan!50!blue},
    urlcolor={cyan!50!blue},
    hypertexnames=false
}
\usepackage[capitalise,nameinlink]{cleveref}

\usepackage{tikz}
\usepackage{tikz-cd}

\usepackage[expansion=false,nopatch=footnote]{microtype}
\usepackage[lining,proportional,scale=.875]{FiraSans}
\usepackage[tt=false,semibold]{libertinus}
\usepackage[libertine,smallerops]{newtxmath}
\usepackage[lining,scaled=.8]{FiraMono}
\usepackage[cal=boondox,frak=euler]{mathalpha}

\newcommand*{\sfregular}{\firalining}
\newcommand*{\sfmedium}{\firamedium}

\DeclareFontEncoding{MDA}{}{}
\DeclareSymbolFont{mathdesignA}{MDA}{mdput}{m}{n}
\SetSymbolFont{mathdesignA}{bold}{MDA}{mdput}{b}{n}
\DeclareSymbolFontAlphabet{\mathbb}{mathdesignA}

\tikzcdset{
    arrow style=tikz,
    arrows={/tikz/line width=.5pt},
    diagrams={>={Straight Barb[scale=0.8]}}
}

\DeclareFontFamily{OMX}{MnSymbolE}{}
\DeclareSymbolFont{MnLargeSymbols}{OMX}{MnSymbolE}{m}{n}
\SetSymbolFont{MnLargeSymbols}{bold}{OMX}{MnSymbolE}{b}{n}
\DeclareFontShape{OMX}{MnSymbolE}{m}{n}{
    <-6>  MnSymbolE5
   <6-7>  MnSymbolE6
   <7-8>  MnSymbolE7
   <8-9>  MnSymbolE8
   <9-10> MnSymbolE9
  <10-12> MnSymbolE10
  <12->   MnSymbolE12
}{}
\DeclareFontShape{OMX}{MnSymbolE}{b}{n}{
    <-6>  MnSymbolE-Bold5
   <6-7>  MnSymbolE-Bold6
   <7-8>  MnSymbolE-Bold7
   <8-9>  MnSymbolE-Bold8
   <9-10> MnSymbolE-Bold9
  <10-12> MnSymbolE-Bold10
  <12->   MnSymbolE-Bold12
}{}
\DeclareMathDelimiter{[}{\mathopen}{MnLargeSymbols}{'000}{MnLargeSymbols}{'000}
\DeclareMathDelimiter{]}{\mathclose}{MnLargeSymbols}{'005}{MnLargeSymbols}{'005}
\DeclareMathDelimiter{\llbr}{\mathopen}{MnLargeSymbols}{'102}{MnLargeSymbols}{'102}
\DeclareMathDelimiter{\rrbr}{\mathclose}{MnLargeSymbols}{'107}{MnLargeSymbols}{'107}

\usepackage{bm}
\usepackage{cases}
\usepackage{accents}

\usepackage{setspace}
\usepackage{needspace}
\usepackage[all]{nowidow}

\usepackage{subdepth} % or \usepackage[low-sup]{subdepth}

\newcommand{\initlengths}{%
    \setlength{\abovedisplayshortskip}{3pt plus 9pt minus 3pt}%
    \setlength{\belowdisplayshortskip}{9pt plus 9pt minus 9pt}%
    \setlength{\abovedisplayskip}{9pt plus 9pt minus 9pt}%
    \setlength{\belowdisplayskip}{9pt plus 9pt minus 9pt}%
    \hfuzz 1pt%
    \tolerance 400% to encourage slightly underfull boxes; default is 200
}

\usepackage{titling}
\pretitle{\vspace{-48pt}\setstretch{1.05}\begin{center}\LARGE\libertinusDisplay\fontdimen2\font=0.3em} % fontdimen2 is interword space
\posttitle{\end{center}\vspace{0pt}}

\numberwithin{paragraph}{subsection}

\newcommand{\parasep}{9pt plus 3pt minus 3pt}
\titleformat{\section}{\Large\libertinusDisplay}{\thesection}{1em}{}
\titleformat{\subsection}{\large\sfregular}{\thesubsection}{1em}{}
\makeatletter
\renewcommand\Hy@numberline[1]{#1. } % dot in pdf bookmarks
\makeatother

\usepackage{tocloft}

\renewenvironment{abstract}{%
    \centering\begin{minipage}{.85\textwidth}%
    \setlength{\parindent}{1.5em}%
    \centerline{\fontsize{1.1em}{1.1em}\selectfont\sfmedium\abstractname}%
    \par\vspace{6pt}%
}{\end{minipage}\par\vspace{12pt}}

\makeatletter
\newcommand*{\@parabookmark}{%
  \pdfbookmark[3]{%
    \theparagraph
    \ifx\@currentlabelname\@empty
    \else
      .\space\@currentlabelname%
    \fi
  }{\theparagraph}
}
\newcommand*{\@thmbookmark}{%
  \pdfbookmark[3]{%
    \theparagraph.\space\thmt@thmname
    \ifx\@currentlabelname\@empty
    \else
      .\space\@currentlabelname%
    \fi
  }{\theparagraph}
}
\newcommand*{\parabookmark}{\@parabookmark}
\newcommand*{\thmbookmark}{\@thmbookmark}

\newcommand*{\resumeparabookmarks}{%
  \renewcommand*{\parabookmark}{\@parabookmark}%
  \renewcommand*{\thmbookmark}{\@thmbookmark}%
}

\declaretheoremstyle[
    spaceabove=\parasep, spacebelow=\parasep,
    postheadspace=.5em minus .25em,
    postheadhook=\thmbookmark,
    headfont=\normalfont\sfmedium,
    headpunct={},
    headformat={\NUMBER.\@\ \NAME.\@\NOTE},
    notefont=\normalfont\sfmedium\boldmath,
    notebraces={}{.},
    bodyfont=\itshape,
]{theorem}
\declaretheoremstyle[
    spaceabove=\parasep, spacebelow=\parasep,
    postheadspace=.5em minus .25em,
    headfont=\normalfont\sfmedium,
    headpunct={},
    headformat={\NAME.\@\NOTE},
    notefont=\normalfont\sfmedium\boldmath,
    notebraces={}{.},
    bodyfont=\itshape,
]{theorem*}
\declaretheoremstyle[
    spaceabove=\parasep, spacebelow=\parasep,
    postheadspace=.5em minus .25em,
    postheadhook=\thmbookmark,
    headfont=\normalfont\sfmedium,
    headpunct={},
    headformat={\NUMBER.\@\ \NAME.\@\NOTE},
    notefont=\normalfont\sfmedium\boldmath,
    notebraces={}{.},
]{definition}
\declaretheoremstyle[
    spaceabove=\parasep, spacebelow=\parasep,
    postheadspace=.5em minus .25em,
    postheadhook=\parabookmark,
    headfont=\normalfont\sfmedium,
    headpunct={},
    headformat={\NUMBER.\@\NOTE},
    notefont=\normalfont\sfmedium\boldmath,
    notebraces={}{.},
]{para}

\renewenvironment{proof}[1][\proofname]{\par
    \pushQED{\qed}%
    \normalfont\trivlist
    \item[\hskip\labelsep\sfmedium #1\@addpunct{.}]\ignorespaces
}{%
    \popQED\endtrivlist\@endpefalse
}

\expandafter\def\csname equation*@qed\endcsname{\equation@qed}
\makeatother

\declaretheorem[sibling=paragraph, style=para, refname={\S,\S\S}]{para}
\declaretheorem[sibling=paragraph, style=theorem, name=Theorem]{theorem}
\declaretheorem[sibling=paragraph, style=theorem, name=Lemma]{lemma}

\declaretheorem[sibling=paragraph, style=theorem, name=Proposition]{proposition}
\declaretheorem[sibling=paragraph, style=theorem, name=Conjecture]{conjecture}
\declaretheorem[numbered=no, style=theorem*, name=Theorem]{theorem*}
\declaretheorem[numbered=no, style=theorem*, name=Lemma]{lemma*}

\declaretheorem[sibling=paragraph, style=definition, name=Example]{example}
\declaretheorem[sibling=paragraph, style=definition, name=Remark]{remark}

\numberwithin{equation}{paragraph}

\crefdefaultlabelformat{#2{\upshape#1}#3}

\crefformat{equation}{#2{\upshape(#1)}#3}
\crefrangeformat{equation}{{\upshape#3(#1)#4--#5(#2)#6}}
\crefmultiformat{equation}{#2{\upshape(#1)}#3}{ and #2{\upshape(#1)}#3}{, #2{\upshape(#1)}#3}{, and~#2{\upshape(#1)}#3}

\crefformat{enumi}{#2{\upshape#1}#3}
\crefrangeformat{enumi}{{\upshape#3#1#4--#5#2#6}}
\crefmultiformat{enumi}{#2{\upshape#1}#3}{ and #2{\upshape#1}#3}{, #2{\upshape#1}#3}{, and~#2{\upshape#1}#3}

\crefformat{section}{#2{\upshape\S#1}#3}
\crefrangeformat{section}{{\upshape#3\S\S#1#4--#5#2#6}}
\crefmultiformat{section}{{\upshape#2\S\S#1#3}}{ and {\upshape#2#1#3}}{, {\upshape#2#1#3}}{, and~{\upshape#2#1#3}}
\crefformat{subsection}{#2{\upshape\S#1}#3}
\crefrangeformat{subsection}{{\upshape#3\S\S#1#4--#5#2#6}}
\crefmultiformat{subsection}{{\upshape#2\S\S#1#3}}{ and {\upshape#2#1#3}}{, {\upshape#2#1#3}}{, and~{\upshape#2#1#3}}
\crefformat{para}{#2{\upshape\S#1}#3}
\crefrangeformat{para}{{\upshape#3\S\S#1#4--#5#2#6}}
\crefmultiformat{para}{{\upshape#2\S\S#1#3}}{ and {\upshape#2#1#3}}{, {\upshape#2#1#3}}{, and~{\upshape#2#1#3}}

\crefname{figure}{Figure}{Figures}

\usepackage{enumitem}
\setlist{noitemsep}
\setlist[enumerate]{label=\textnormal{(\roman*)}}

\usepackage[labelsep=period]{caption}
\DeclareCaptionFont{fira}{\firamedium}
\usepackage[bottom, hang, multiple]{footmisc}
\usepackage{etoolbox}
\makeatletter
\patchcmd{\@makefntext}{\ifFN@hangfoot\bgroup}%
{\ifFN@hangfoot\bgroup\def\@makefnmark{\rlap{\sfmedium\@thefnmark}}}{}{}%
\makeatother

\usepackage[
    backend=biber,
    style=oxnum,
    giveninits,
    maxbibnames=10,
    maxcitenames=5,
    sorting=nyt
]{biblatex}

\DeclareFieldFormat*{title}{\textit{#1}}
\DeclareFieldFormat*{journaltitle}{#1}
\renewbibmacro*{volume+number+eid}{%
    \printfield{volume}%
    \setunit*{.}%
    \printfield{number}%
}%
\DeclareFieldFormat[article]{version}{\IfDecimal{#1}{version~}{}#1}
\DefineBibliographyStrings{english}{
  withafterword = {with an appendix by},
}

\DeclareFieldFormat{pages}{%
  \iffieldundef{bookpagination}%
    {#1}%
    {\mkpageprefix[bookpagination]{#1}}%
}

\newcommand{\preparebibliography}{%
    \needspace{10\baselineskip}
    \phantomsection
    \addcontentsline{toc}{section}{References}
    \sloppy
    \hbadness 10000\relax % suppress hbox warnings
    \setstretch{1.1}
    \renewcommand*{\bibfont}{\normalfont\small}
}

\newcommand{\authorinforule}{\noindent\rule{0.38\textwidth}{0.4pt}}

\newlength{\authorwidth}
\newcommand{\authorinfo}[3]{{%
    \raggedright
    \setlength{\leftskip}{1.5em}
    \setlength{\parindent}{0em}
    \setstretch{1}
    \par%
    {\small%
    \makebox[\authorwidth][l]{#1}%
    \texttt{#2}%
    \\
    #3}
    \vspace{6pt}\par
}}

\newcommand{\calBun}{{\mathcal{B}\mkern-2mu\mathit{un}\mkern1mu}}
\newcommand{\calConn}{{\mathcal{C}\mkern-3mu\mathit{onn}\mkern1mu}}

\newcommand{\calHiggs}{{\mathcal{H}\mkern-4mu\mathit{iggs}\mkern1mu}}
\newcommand{\calHom}{{\mathcal{H}\mkern-5mu\mathit{om}\mkern1mu}}
\newcommand{\calLoc}{{\mathcal{L}\mkern-3mu\mathit{oc}\mkern1mu}}
\newcommand{\calMap}{{\mathcal{M}\mkern-3mu\mathit{ap}\mkern1mu}}
\newcommand{\calPerf}{{\mathcal{P}\mkern-3mu\mathit{erf}\mkern1mu}}
\newcommand{\git}{{/\mkern-5mu/}}

\newcommand{\longsimto}{\mathrel{\overset{\smash{\raisebox{-.8ex}{$\sim$}}\mspace{3mu}}{\longrightarrow}}}

\newcommand{\mathhyphen}{{\textnormal{-}}}
\newcommand{\simto}{\mathrel{\overset{\smash{\raisebox{-.8ex}{$\sim$}}\mspace{3mu}}{\to}}}

\newcommand{\textG}{\texorpdfstring{\textit{G}}{G}}
\renewcommand{\textlambda}{\texorpdfstring{\textit{\fontencoding{LGR}\selectfont l}}{λ}}

\renewcommand{\geq}{\geqslant}
\renewcommand{\leq}{\leqslant}

\makeatletter
\def\big#1{{\hbox{$\left#1\vbox to10\p@{}\right.\n@space$}}}
\makeatother

\title{Semiorthogonal decompositions for stacks}
\author{Chenjing Bu \and Tudor Pădurariu \and Yukinobu Toda}
\date{}

\begin{document}

\initlengths

\maketitle

\begin{abstract}
    We give a systematic construction of semiorthogonal decompositions
of derived categories of coherent sheaves on quasi-smooth derived
algebraic stacks over~$\mathbb{C}$,
where the summands are subcategories defined by weight conditions,
and the inclusion functors are given by parabolic induction.
The summands are indexed by the component lattice of the stack,
a central combinatorial structure in intrinsic Donaldson--Thomas theory.
As examples, we obtain semiorthogonal decompositions for
moduli stacks of semistable $G$-bundles or $G$-Higgs bundles on a curve,
and moduli stacks of de~Rham or Betti $G$-local systems on a curve,
for reductive groups~$G$ not necessarily of type~A\@.

\end{abstract}

{
    \hypersetup{linkcolor=black}
    \tableofcontents
}

\clearpage
% \suppressparabookmarks

\section{Introduction}

\subsection{Overview}

\begin{para}
    Let~$\mathcal{X}$ be a quasi-smooth derived algebraic stack over~$\mathbb{C}$
    satisfying certain conditions.
    For example, we may take it to be one of the following:
    \begin{itemize}
        \item
            The moduli stack $\calBun_G^\mathrm{ss}$
            of semistable $G$-bundles on a curve;
        \item
            The moduli stack $\calHiggs_G^\mathrm{ss}$
            of semistable $G$-Higgs bundles on a curve;
        \item
            The moduli stack $\calConn_G$
            of $G$-bundles with holomorphic connections on a curve;
        \item
            The moduli stack $\calLoc_G$
            of $G$-local systems on a compact $2$-manifold;
        \item
            The moduli stack of $\tau$-semistable coherent sheaves on a surface;
        \item
            The moduli stack of $\tau$-semistable representations of
            a quiver, or of the preprojective algebra of a quiver.
    \end{itemize}
    Here, curves and surfaces are smooth and projective over~$\mathbb{C}$,
    $G$ is a reductive group over~$\mathbb{C}$,
    and $\tau$ is a suitable stability condition satisfying a genericity condition.

    The goal of this paper is to give a systematic construction of
    semiorthogonal decompositions of
    the derived category~$\mathsf{Coh} (\mathcal{X})$
    of coherent sheaves on~$\mathcal{X}$ of the form
    \begin{equation}
        \label{eq-intro-sod}
        \mathsf{Coh} (\mathcal{X}) =
        \bigl<
            \mathsf{W}_{\mathcal{X}_\lambda} (\delta_\lambda)
            \bigm|
            \lambda \in |\mathrm{CL}_\mathbb{Q} (\mathcal{X})|
        \bigr> \ ,
    \end{equation}
    where each $\mathcal{X}_\lambda$ is a stack related to~$\mathcal{X}$,
    possibly the same as~$\mathcal{X}$,
    and
    \begin{equation}
    \label{window:inclusion}
    \mathsf{W}_{\mathcal{X}_\lambda} (\delta_\lambda) \subset
    \mathsf{Coh} (\mathcal{X}_\lambda)
    \end{equation}
    is a full subcategory, called a \emph{window subcategory},
    defined using weight conditions with respect to maps from
    $* / \mathbb{G}_\mathrm{m}$ into $\mathcal{X}_\lambda$,
    depending on a weight parameter $\delta_\lambda$;
    see \cite{HerbstHoriPage2008, MR2795327, MR3895631, halpern-leistner-2015-git, HoriRomo2013, halpern-leistner-derived}
    for relevant background.
    The fully faithful functor
    $\mathsf{W}_{\mathcal{X}_\lambda} (\delta_\lambda) \to \mathsf{Coh} (\mathcal{X})$
    is the composition of the inclusion~\cref{window:inclusion} and the \emph{Hall induction} functor
    \[
        {\star}_\lambda \colon
        \mathsf{Coh} (\mathcal{X}_\lambda) \longrightarrow \mathsf{Coh} (\mathcal{X}) \ ,
    \]
    which is a generalization of the parabolic induction functor for derived categories of coherent sheaves.
    The indexing set
    $|\mathrm{CL}_\mathbb{Q} (\mathcal{X})|$
    is the rationalized \emph{component lattice} of~$\mathcal{X}$ introduced in
    \cite{bu-halpern-leistner-ibanez-nunez-kinjo-intrinsic-dt-1},
    which is a combinatorial structure that encodes
    cocharacters and Weyl groups of automorphism groups of points in~$\mathcal{X}$.
\end{para}

\begin{para}
    \label{para-intro-first-examples}
    We will give precise formulations of our main results
    in \cref{subsec-intro-main-results} below.
    Before that, we describe two concrete examples of such semiorthogonal decompositions.

    First, consider the stack $\calLoc_G$
    of $G$-local systems on a compact $2$-manifold,
    for a connected reductive group $G$.
    In this case, the semiorthogonal decomposition \cref{eq-intro-sod} specializes to
    \[
        \mathsf{Coh}(\calLoc_G) =
        \Bigl<
            {\star}_P \,
            \mathsf{W}_{\smash{\calLoc_L}} (\delta_w)
            \Bigm|
            L \subset G, \
            w \in \Lambda^{\smash{\mathrm{Z} (L)^\circ}}_+
        \Bigr> \ .
    \]
    Here, $L\subset G$ runs through Levi subgroups of $G$;
    $\mathsf{W}_{\smash{\calLoc_L}} (\delta_w) \subset \mathsf{Coh} (\calLoc_L)$
    is a window subcategory;
    $\delta_w$ is a weight depending on~$w$;
    ${\star}_P \colon \mathsf{Coh} (\calLoc_L) \to \mathsf{Coh} (\calLoc_G)$
    denotes the parabolic induction functor
    for a parabolic subgroup $P \subset G$ with corresponding Levi subgroup $L$;
    and $\Lambda^{\smash{\mathrm{Z} (L)^\circ}}_+\subset \Lambda^{\smash{\mathrm{Z} (L)^\circ}}$
    is a subset of the weight lattice of $\mathrm{Z} (L)^\circ$
    defined in \cref{eg-g-quiver}.
    See \cref{eg-loc} for the precise statement.
    There are also similar decompositions for the stacks
    $\calBun_G^{\mathrm{ss}}$, $\calHiggs_G^{\smash{\mathrm{ss}}}$, and $\calConn_G$,
    with similar descriptions,
    which we discuss in \cref{eg-bun-g,eg-lambda-conn,eg-higgs},
    respectively.

    In the above example, let us now specialize to the case when $G=\mathrm{GL}(r)$,
    where the decomposition has the following alternative description.
    There is an orthogonal decomposition of
    $\mathsf{Coh}(\calLoc_{\mathrm{GL}(r)})$
    in terms of weights $w \in \mathbb{Z}$
    with respect to the natural $* / \mathbb{G}_{\mathrm{m}}$-action
    on~$\calLoc_{\mathrm{GL}(r)}$,
    and each summand
    $\mathsf{Coh}(\calLoc_{\mathrm{GL}(r)})_w \subset \mathsf{Coh}(\calLoc_{\mathrm{GL}(r)})$
    has a further semiorthogonal decomposition with summands indexed by partitions
    $(r,w) = \sum_{i=1}^k (r_i,w_i)$
    with $r_i \in \mathbb{Z}_{> 0}$ and $w_i \in \mathbb{Z}$:
    \begin{align*}
        \mathsf{Coh}(\calLoc_{\mathrm{GL}(r)})_w
        =
        \biggl< {}
            \bigotimes_{i=1}^k \mathsf{W}_{\calLoc_{\mathrm{GL}(r_i)}} (\delta_{w_i})
            \biggm|
            \frac{w_1}{r_1}<\cdots<\frac{w_k}{r_k}
        \, \biggr> \ .
    \end{align*}
    The order on the summands is simply given by the reverse order of
    $\sum_{i=1}^k w_i^2 / r_i \in \mathbb{Q}_{\geq 0}$.
    Similar semiorthogonal decompositions
    were constructed previously for analogous stacks,
    such as for degree zero semistable Higgs bundles on a smooth projective curve in~\cite{padurariu-toda-higgs}. However, the ordering in the present setting admits a much simpler description than in those earlier works.

    Second, consider a quiver~$Q$ with a \textit{generic stability condition}~$\zeta$
    in the sense of \cref{eg-quiver}.
    In particular, this includes symmetric quivers with an arbitrary stability condition.
    Let~$\mathcal{X}_d^{\smash{\zeta \mathhyphen \mathrm{ss}}}$
    be the moduli stack of semistable representations of~$Q$ of dimension vector
    $d \in \mathbb{N}^{Q_0}$, where $Q_0$ is the set of vertices of $Q$.
    Let $w\in \mathbb{Z}$. Then the orthogonal summand
    $\mathsf{Coh} (\mathcal{X}_d^{\smash{\zeta \mathhyphen \mathrm{ss}}})_w \subset
    \mathsf{Coh} (\mathcal{X}_d^{\smash{\zeta \mathhyphen \mathrm{ss}}})$
    defined similarly to the previous example
    has a further semiorthogonal decomposition with summands indexed by partitions
    $(d,w) = \sum_{i=1}^k (d_i,w_i)$ with $|d_i| > 0$ and $w_i \in \mathbb{Z}$:
    \begin{equation}
    \label{SOD:introquivers}
        \mathsf{Coh} (\mathcal{X}_d^{\smash{\zeta \mathhyphen \mathrm{ss}}})_w =
        \biggl< {}
            \bigotimes_{j = 1}^k
            \mathsf{W}_{\mathcal{X}_{d_j}^{\smash{\zeta \mathhyphen \mathrm{ss}}}} (\delta_{w_j})
            \biggm|
            \frac{w_1}{|d_1|} < \cdots < \frac{w_k}{|d_k|}
        \, \biggr> \ .
    \end{equation}
    Here, $|d|$ denotes the sum of dimensions over all vertices,
    and the order is the reverse order of
    $\sum_{i=1}^k {} w_i^2 / |d_i| \in \mathbb{Q}_{\geq 0}$.
    See \cref{eg-quiver} for the precise statement.
    This improves upon the semiorthogonal decompositions obtained previously in~\cite{padurariu-2023-quivers, padurariu-toda-2025-quivers}, where the collection of summands was either not described explicitly or required restrictions on the quiver, and where the ordering was substantially more complicated in both cases.

    In this example, there is another type of semiorthogonal decompositions of
    $\mathsf{Coh} (\mathcal{X}_d^{\smash{\zeta \mathhyphen \mathrm{ss}}})$
    obtained using the results of
    \textcite{halpern-leistner-2015-git, MR3895631}.
    There, one chooses a further stability condition $\ell$ of $Q$ refining $\zeta$,
    and the summands are supported on the $\ell$-unstable locus.
    The summands are labelled by partitions
    $(d,w) = \sum_{i=1}^k (d_i,w_i)$
    satisfying a different inequality:
    \begin{equation}\label{sod:HL}
        \mathsf{Coh} (\mathcal{X}_d^{\smash{\zeta \mathhyphen \mathrm{ss}}})_w =
        \biggl< {}
            \bigotimes_{j = 1}^k
            \mathsf{W}_{\mathcal{X}_{d_j}^{\smash{\zeta \mathhyphen \mathrm{ss}}}} (\delta_{w_j})
            \biggm|
            \frac{\ell\cdot d_1}{|d_1|} > \cdots > \frac{\ell\cdot d_k}{|d_k|}
        \, \biggr> \ ;
    \end{equation}
    see~\cite{padurariu-2023-quivers} for details.
    We note that, in contrast, our decomposition~\cref{SOD:introquivers}
    does not require a further stability condition,
    and in particular, for loop quivers,
    there are no non-trivial decompositions of the form~\cref{sod:HL},
    but there are non-trivial decompositions~\cref{SOD:introquivers}. 
\end{para}

\begin{para}
    Next, we explain our main motivations, and we review related prior work.

    Our main motivation for our construction comes from Donaldson--Thomas (DT) theory,
    which studies, among other topics, numerical, cohomological, and categorical structures
    associated to \emph{$(-1)$-shifted symplectic stacks}
    defined by \textcite{pantev-toen-vaquie-vezzosi-2013-shifted}.

    Originally, Donaldson--Thomas theory~\cite{donaldson-thomas-1998,Thomas2000}
    focused on virtual counts of ideal sheaves on complex threefolds,
    called \emph{Donaldson--Thomas invariants},
    and their relationship to curve-counting invariants as conjectured by~\textcite{maulik-nekrasov-okounkov-pandharipande-2006-i,maulik-nekrasov-okounkov-pandharipande-2006-ii}.
    Later, \textcite{joyce-song-2012} and
    \textcite{kontsevich-soibelman-motivic}
    defined invariants which are virtual counts of semistable sheaves on Calabi–Yau threefolds and,
    more broadly, of semistable objects in $3$-Calabi–Yau abelian categories.
    They also defined invariants in the presence of strictly semistable objects,
    which allowed for advances in wall-crossing formulas for DT invariants
    and in the study of cohomological Hall algebras~\cite{KontsevichSoibelman2011}. 
    Although these invariants can be rational, they are related through multiple cover formulas to integer-valued invariants known as \textit{BPS invariants}, named after Bogomol'nyi–Prasad–Sommerfield. In the absence of stricly semistable objects, BPS invariants coincide with DT invariants.

    It is now understood that DT invariants can be defined
    for more general stacks, namely for certain $(-1)$-shifted symplectic stacks,
    not necesarily moduli of objects in a $3$-Calabi--Yau category,
    thanks to the new framework of
    \emph{intrinsic Donaldson--Thomas theory}, developed in
    \cite{
        bu-halpern-leistner-ibanez-nunez-kinjo-intrinsic-dt-1,
        bu-ibanez-nunez-kinjo-intrinsic-dt-2,
        bu-ibanez-nunez-kinjo-intrinsic-dt-3}. 

    Further, since the works of \textcite{Behrend2009}, \textcite{KontsevichSoibelman2011},
    and \textcite{ben-bassat-brav-bussi-joyce-2015-darboux},
    it is understood that BPS invariants
    are the Euler characteristic of graded vector spaces
    obtained as the cohomology of certain perverse sheaves.
    This is the subject of study of \textit{cohomological Donaldson--Thomas theory}.
    A central result in this subject is \textit{cohomological integrality},
    proposed by \textcite{KontsevichSoibelman2011}
    and proved in increasing generality by
    \textcite{efimov-2012-coha},
    \textcite{davison-meinhardt-2020-quiver},
    and more recently in the intrinsic DT theory framework in
    \cite{
        bu-davison-ibanez-nunez-kinjo-padurariu,
        hennecart-kinjo-bps}.
    Next, we briefly recall this result.

    For a $(-1)$-shifted symplectic stack $\mathcal{X}$
    satisfying certain assumptions, as in \cite{
        bu-davison-ibanez-nunez-kinjo-padurariu,
        hennecart-kinjo-bps},
    including symmetricity, orientability, and the existence of a good moduli space
    in the sense of \textcite{alper-2013-good},
    one considers a perverse sheaf $\varphi_{\mathcal{X}}$ on $\mathcal{X}$
    obtained by gluing vanishing cycles sheaves for functions on smooth stacks
    whose critical loci are local models of the stack $\mathcal{X}$,
    see~\cite{ben-bassat-brav-bussi-joyce-2015-darboux}.
    In general, the \emph{critical cohomology}
    \[
        \mathrm{H}^\bullet_{\mathrm{crit}} (\mathcal{X})
        = \mathrm{H}^\bullet (\mathcal{X}, \varphi_{\mathcal{X}})
    \]
    is a graded rational vector space whose total dimension is infinite.
    However, under the above assumptions, as shown in
    \cite{
        bu-davison-ibanez-nunez-kinjo-padurariu,
        hennecart-kinjo-bps},
    this critical cohomology can be decomposed into finite-dimensional pieces,
    called \emph{BPS cohomology},
    whose Euler characteristic gives the BPS invariant.
\end{para}

\begin{para}
    \label{para-intro-cat-dt}
    The current paper is concerned with a further refinement of these enumerative invariants,
    which is the subject of study of \textit{categorical Donaldson--Thomas theory} initiated in
    \cite{toda-2024-cat-dt}.
    We are interested in categorical analogues of the critical cohomology,
    which might be called \textit{critical categories},
    and in semiorthogonal decompositions of such categories,
    whose summands are window subcategories called \textit{quasi-BPS categories}
    \cite{padurariu-2024-quivers, padurariu-2023-quivers, padurariu-toda-2024-c3-1, padurariu-toda-2023-c3-2, padurariu-toda-2025-quivers, padurariu-toda-k3, padurariu-toda-higgs, padurariu-toda-top-k-quivers, padurariu-toda-2025-top-k-higgs}.

    Such decompositions may be viewed as categorical analogues of cohomological integrality.
    However, this relationship is less direct than one might hope.
    For instance, if one applies an additive invariant directly to our decomposition,
    such as periodic cyclic homology,
    one does not expect to recover the cohomological integrality decomposition.
    There are two reasons for this.

    First, additive invariants of stacks, including periodic cyclic homology,
    do not directly recover the Borel--Moore homology of the stack;
    this is already evident in the example of the classifying stack $*/\mathbb{G}_\mathrm{m}$.
    Second, even after applying topological or algebraic $K$-theory
    to the semiorthogonal decomposition,
    the resulting decomposition is not as refined as
    the decomposition appearing in BPS cohomology.
    One expects that a finer decomposition should instead be constructed
    directly in \textit{rational} topological or algebraic $K$-theory,
    which does not originate from a categorical decomposition.
    We do not pursue this direction further here,
    and refer the reader to~\cite{padurariu-toda-2023-c3-2}.

    New phenomena appear in $K$-theoretic and categorical DT theory,
    not visible in cohomology,
    such as the dependence on an auxiliary parameter~$\delta$ as in~\cref{window:inclusion},
    which controls the positions of the windows.
    This also affects the size of quasi-BPS categories,
    which has the effect that only some of these categories (those of the correct size)
    recover the two-periodic version of the BPS cohomology;
    see~\cite{padurariu-toda-top-k-quivers}
    for details in the case of symmetric quivers with potential.

    Currently, there is no known construction of critical categories
    for general $(-1)$-shifted symplectic stacks,
    although they are known in some cases.
    Important progress towards the general case has been made in recent works of
    \textcite{hennion-holstein-robalo-1, hennion-holstein-robalo-2}.
    In this paper, we restrict to the following two situations,
    where a definition of the critical category is available:
    \begin{itemize}
        \item
            When~$\mathcal{X} = \mathrm{Crit} (f)$
            is the derived critical locus of a function
            $f \colon \mathcal{U} \to \mathbb{A}^1$
            on a smooth stack~$\mathcal{U}$,
            with no critical values other than~$0$,
            its critical category (using its canonical orientation)
            is the category of \emph{matrix factorizations},
            $\mathsf{MF} (\mathcal{U}, f)$,
            which is a categorification of the critical cohomology
            $\mathrm{H}^\bullet (\mathcal{X}, \varphi_{\mathcal{X}}) \simeq
            \mathrm{H}^{\bullet + \dim \mathcal{U}} (\mathcal{U}, \varphi_f (\mathbb{Q}_{\mathcal{U}}))$
            in the sense of \cite{Efimov2018}.
        \item
            When $\mathcal{X} = \mathrm{T}^* [-1] \, \mathcal{Y}$
            is the \emph{$(-1)$-shifted cotangent stack}
            (in the sense of \cite{calaque-2019-cotangent})
            of a quasi-smooth stack~$\mathcal{Y}$,
            its critical category (using its canonical orientation)
            is expected to be equivalent to the derived category of coherent sheaves,
            $\mathsf{Coh} (\mathcal{Y})$.
    \end{itemize}
    We construct semiorthogonal decompositions in both cases,
    and the second case corresponds to the version introduced at the beginning of the paper.
\end{para}

\begin{para}
    \label{para-intro-langlands}
    In certain gauge theories, categories of boundary conditions can be described in terms of categories of coherent sheaves, or of matrix factorizations, on stacks, see~\cite{KapustinICM2010}.
    We anticipate that quasi-BPS categories will play a role in understanding these structures.
    In this paper, we discuss some examples of stacks arising in the geometric Langlands programme~\cite{BeilinsonDrinfeld2004, KapustinWitten2007, arinkin-gaitsgory-2015, BenZviNadlerBettiLanglands}, namely,
    \vspace{-4pt}
    \[
        \calHiggs_G^{\mathrm{ss}} \ , \quad
        \calConn_G \ , \quad \text{and} \quad
        \calLoc_G \ .
    \]
    For a proof of the geometric Langlands equivalence in the de~Rham and Betti settings, see~\cite{
        geometric-langlands-i,
        geometric-langlands-ii,
        geometric-langlands-iii,
        geometric-langlands-iv,
        geometric-langlands-v}.
    In the Dolbeault setting, the equivalence is still a conjecture, see
    \cite{
        donagi-pantev-2012-hitchin,
        padurariu-toda-dolbeault,
        toda-gl2}.

    In the Dolbeault setting,
    the decomposition \cref{eq-intro-sod}
    enables the formulation of a refined version
    of the Dolbeault geometric Langlands conjecture,
    as discussed in
    \cite[\S\S 7.5--7.7]{padurariu-toda-dolbeault}.
    In the de~Rham setting, we formulate a similar refined conjecture as
    \cref{conj-conn}.
\end{para}

\begin{para}
    Semiorthogonal decompositions for derived categories of coherent sheaves on stacks were first constructed by
    \textcite{MR3895631, halpern-leistner-2015-git, halpern-leistner-derived}.
    There, the summands are supported on strata of
    \emph{$\Theta$-stratifications} introduced by
    \textcite{halpern-leistner-derived},
    a generalization of the Kempf--Ness stratification from geometric invariant theory.

    On the other hand, the semiorthogonal decompositions
    constructed in this paper are different, in the following ways.
    First, the functors from the summands are given by parabolic induction,
    instead of by repeating the procedure of adding terms corresponding to one $\Theta$-strata at a time.
    Second, although both types of semiorthogonal decompositions
    depend on the data of a rational line bundle~$\ell$
    and a positive-definite quadratic form $q$,
    they play different roles.
    In~\cite{MR3895631, halpern-leistner-2015-git, halpern-leistner-derived},
    $\ell$ plays the role of a stability condition,
    whereas for us, this data is denoted by~$\delta$,
    and is used instead to determine the positions of the windows for window subcategories;
    see also \cref{subsec-intro-main-results} below.
    Compare also with~\textcite[Theorem~3.2]{halpern-leistner-sam-2020},
    where both~$\delta$ and~$\ell$ appear and play their respective roles.

    The type of semiorthogonal decompositions discussed in this paper were first constructed by
    \textcite[Proposition~7.4]{spenko-van-den-bergh-2021}
    in the case of linear quotient stacks,
    and they also obtained coarser decompositions
    \cite[Theorem~1.1.2]{spenko-van-den-bergh-2021}
    for more general smooth quotient stacks.
    The summands in their semiorthogonal decompositions are
    twisted non-commutative resolutions of singularities
    constructed in their previous work~\cite{SpankoVandenBergh2017}.

    Similar decompositions have since been constructed in previous works of
    \textcite{padurariu-2024-quivers,
        padurariu-2023-quivers,
        padurariu-2022-surfaces,
        padurariu-toda-2024-c3-1,
        padurariu-toda-2023-c3-2,
        padurariu-toda-2025-quivers,
        padurariu-toda-higgs,
        padurariu-toda-k3}
    in various examples.
    However, these examples were restricted to the case of
    moduli stacks of objects in linear categories. 

    The present work removes this restriction by employing the
    intrinsic Donaldson--Thomas theory framework.
    For example, for the stacks mentioned in \cref{para-intro-langlands},
    this means that our result works for reductive groups~$G$
    that are not necessarily
    $\mathrm{GL} (n)$, $\mathrm{SL} (n)$, or $\mathrm{PGL} (n)$.
    Even in the linear case, as exemplified in
    \cref{para-intro-first-examples},
    our approach has the advantages of being more general,
    and enabling a much simpler description of the order of the summands.
\end{para}

\subsection{Main results}
\label{subsec-intro-main-results}

We now explain in more detail the main results of this paper.

\begin{para}
    \label{para-intro-results-prep}
    We start with a quasi-smooth derived algebraic stack~$\mathcal{X}$
    over~$\mathbb{C}$,
    and along with some mild assumptions,
    we put two main restrictions on~$\mathcal{X}$:

    Firstly, we assume that~$\mathcal{X}$ is quasi-compact,
    has affine stabilizers, and has a good moduli space.
    These conditions are typically satisfied by
    moduli stacks of semistable objects in a linear category,
    and are verified for a large class of examples in the work of
    \textcite{alper-halpern-leistner-heinloth-2023}.

    Secondly, we assume that~$\mathcal{X}$
    is \emph{quasi-symmetric} in the sense of
    \cref{para-symmetric-stacks},
    meaning roughly that at each closed point $x \in \mathcal{X}$,
    the tangent complex $\mathbb{T}_{\mathcal{X}} |_x$
    has symmetric weights as a representation of the automorphism group of~$x$.
    This condition is satisfied by many interesting moduli stacks,
    as previously discussed in \cite{bu-davison-ibanez-nunez-kinjo-padurariu}.

    We also require the following two pieces of extra data on~$\mathcal{X}$.

    Firstly, we require a \emph{quadratic norm on graded points} of~$\mathcal{X}$
    in the sense of \cref{para-norm-on-graded-points},
    denoted by~$q$.
    Roughly speaking, this is a compatible way to choose
    a Weyl-invariant, positive-definite quadratic form on
    the cocharacter lattice of the automorphism group of
    every closed point $x \in \mathcal{X}$.
    In most of our examples,
    there is a natural choice of such a quadratic norm.

    Secondly, we require a linear function
    $\delta \colon \mathrm{CL}_{\mathbb{Q}} (\mathcal{X}) \to \mathbb{Q}$,
    where $\mathrm{CL}_{\mathbb{Q}} (\mathcal{X})$
    is the rationalized \emph{component lattice} of~$\mathcal{X}$.
    This is used to determine the centre of the windows
    used to define window subcategories.

    The component lattice~$\mathrm{CL}_{\mathbb{Q}} (\mathcal{X})$
    can be seen as glued from the quotients
    $(\Lambda_T \otimes \mathbb{Q}) / W$
    of the rational cocharacter lattice~$\Lambda_T$
    of the maximal torus~$T$ by the Weyl group~$W$
    of automorphism groups of closed points $x \in \mathcal{X}$,
    and so~$\delta$ is a compatible way to choose
    a rational character of the automorphism group of each closed point $x \in \mathcal{X}$.
    In particular, every rational line bundle on~$\mathcal{X}$
    defines such a linear function~$\delta$.
\end{para}

\begin{para}
    \label{para-intro-sod-coh}
    The first main result,
    \cref{thm-qsm-sod},
    states that in the above situation,
    we have a semiorthogonal decomposition
    \begin{equation}
        \label{eq-intro-sod-coh}
        \mathsf{Coh} (\mathcal{X}) =
        \bigl<
            \mathsf{W}_{\mathcal{X}_\lambda} (\delta_\lambda)
            \bigm|
            \lambda \in |\mathrm{CL}_\mathbb{Q} (\mathcal{X})|
        \bigr> \ .
    \end{equation}
    Here, each $\mathcal{X}_\lambda$
    is a connected component of the \emph{stack of graded points}
    $\calMap (* / \mathbb{G}_\mathrm{m}, \mathcal{X})$,
    introduced by
    \textcite{halpern-leistner-instability}
    and studied in
    \cite{bu-halpern-leistner-ibanez-nunez-kinjo-intrinsic-dt-1}.
    Each $\mathsf{W}_{\mathcal{X}_\lambda} (\delta_\lambda)$
    is a \emph{window subcategory} of $\mathsf{Coh} (\mathcal{X}_\lambda)$,
    defined roughly as the subcategory of objects
    whose weights lie in a given window 
    centred at the position~$\delta_\lambda$
    depending on~$\lambda$,
    and the fully faithful functor
    $\mathsf{W}_{\mathcal{X}_\lambda} (\delta_\lambda) \to \mathsf{Coh} (\mathcal{X})$
    is the \emph{Hall induction} functor~$\star_\lambda$,
    which will be defined in \cref{para-hall-induction}.

    The order of this semiorthogonal decomposition
    is simply given by the $q$-norm of~$\lambda$.
    If we assume for convenience that~$\mathcal{X}$ is connected,
    so that $\mathrm{CL}_{\mathbb{Q}} (\mathcal{X})$
    has an origin~$0$,
    then the term with $\lambda = 0$ is the furthest to the right,
    with $\mathcal{X}_\lambda = \mathcal{X}$
    and $\delta_\lambda = \delta$,
    and it can be seen as the leading term of the decomposition.
\end{para}

\begin{para}
    \label{para-intro-sod-coh-supp}
    The second main result,
    \cref{thm-qsm-ss-sod},
    is a variation of the first,
    and states that the semiorthogonal decomposition
    \cref{eq-intro-sod-coh}
    restricts to a semiorthogonal decomposition
    \begin{equation}
        \label{eq-intro-sod-coh-ss}
        \mathsf{Coh}_{\mathcal{Z}} (\mathcal{X}) =
        \bigl<
            \mathsf{W}_{\mathcal{X}_\lambda, \, \mathcal{Z}_\lambda} (\delta_\lambda)
            \bigm|
            \lambda \in |\mathrm{CL}_\mathbb{Q} (\mathcal{X})|
        \bigr> \ ,
    \end{equation}
    where
    $\mathsf{Coh}_{\mathcal{Z}} (\mathcal{X}) \subset \mathsf{Coh} (\mathcal{X})$
    is the subcategory of objects with \emph{singular support}
    contained in a closed subset
    $\mathcal{Z} \subset |\mathrm{T}^* [-1] \, \mathcal{X}|$,
    which is required to be \emph{saturated} in the sense of
    \cref{para-saturated}.
    In this case, for each~$\lambda$,
    there is an induced closed subset
    $\mathcal{Z}_\lambda \subset |\mathrm{T}^* [-1] \, \mathcal{X}_\lambda|$,
    and
    $\mathsf{W}_{\mathcal{X}_\lambda, \, \mathcal{Z}_\lambda} (\delta_\lambda)
    \subset \mathsf{Coh}_{\mathcal{Z}_\lambda} (\mathcal{X}_\lambda)$
    is the corresponding window subcategory.

    For example, in the examples mentioned in \cref{para-intro-langlands},
    we may take $\mathcal{Z} = \mathcal{N}$
    to be the \emph{nilpotent cone},
    and obtain decompositions for derived categories of
    coherent sheaves with nilpotent singular support.
    See \cref{eg-higgs,eg-lambda-conn,eg-loc}
    for the precise statements.
\end{para}

\begin{para}
    \label{para-intro-sod-mf}
    The third main result, \cref{thm-mf-sod},
    requires~$\mathcal{X}$ to be smooth,
    and concerns the category of \emph{matrix factorizations}
    $\mathsf{MF} (\mathcal{X}, f)$
    mentioned in \cref{para-intro-cat-dt},
    where $f \colon \mathcal{X} \to \mathbb{C}$
    is a function.
    We have a semiorthogonal decomposition
    \begin{equation}
        \label{eq-intro-sod-mf}
        \mathsf{MF} (\mathcal{X}, f) =
        \bigl<
            \mathsf{M}_{\mathcal{X}_\lambda, \, f}
            (\delta_\lambda)
            \bigm|
            \lambda \in |\mathrm{CL}_\mathbb{Q} (\mathcal{X})|
        \bigr> \ ,
    \end{equation}
    where
    $\mathsf{M}_{\mathcal{X}_\lambda, \, f} (\delta_\lambda)
    \subset \mathsf{MF} (\mathcal{X}_\lambda, f |_{\mathcal{X}_\lambda})$
    is a window subcategory,
    and we restrict~$f$ along the natural forgetful morphism
    $\mathcal{X}_\lambda \to \mathcal{X}$.

    As shown by \textcite[Theorem~1.2]{ben-bassat-brav-bussi-joyce-2015-darboux},
    such derived critical loci are
    local models for \emph{$(-1)$-shifted symplectic stacks}.
    As mentioned in \cref{para-intro-cat-dt},
    it is not yet known how to define critical categories
    for general $(-1)$-shifted symplectic stacks (with orientation),
    although substantial progress has been made in recent works of
    \cite{hennion-holstein-robalo-1,hennion-holstein-robalo-2}.
    We hope that, once such a construction becomes available,
    the decompositions \cref{eq-intro-sod-mf}
    should glue to decompositions for these critical categories.
\end{para}

\begin{para}
    We now illustrate the semiorthogonal decomposition
    \cref{eq-intro-sod-coh}
    and the combinatorics behind it in a simple example.

    \begin{figure}[t]
        \begin{center}
            \begin{tikzpicture}[scale=.6, line width=1]
                \begin{scope}
                    \node[anchor=base] at (0, -5.5) {$\Lambda_T \simeq \mathrm{CL} (V / T)$};
                    \clip (-4.5, -4.5) rectangle (4.5, 4.5);

                    \foreach \x in {-4, ..., 4}{
                        \foreach \y in {-4, ..., 4}{
                            \fill[black!20] (\x, \y) circle (.08);
                        }
                    }

                    \draw[black!50] (-5, 2.5) -- (5, -2.5);
                    \draw[black!50] (0, 5) -- (0, -5);
                \end{scope}
                \begin{scope}[shift={(11, 0)}]
                    \node[anchor=base] at (0, -5.5) {$\Lambda^T$};
                    \clip (-4.5, -4.5) rectangle (4.5, 4.5);

                    \draw[black!20, dashed] (1.5, 4.5) -- (1.5, 1) -- (5, -0.75);
                    \draw[black!20, dashed] (0.5, -4.5) -- (0.5, -1) -- (5, -3.25);
                    \draw[black!20, dashed] (-1.5, -4.5) -- (-1.5, -1) -- (-5, 0.75);
                    \draw[black!20, dashed] (-0.5, 4.5) -- (-0.5, 1) -- (-5, 3.25);

                    \fill[black!10]
                        (-0.5, 1) -- (-1.5, -1) -- (0.5, -1) -- (1.5, 1) -- cycle;

                    \foreach \x in {-4, ..., 4}{
                        \foreach \y in {-4, ..., 4}{
                            \fill[black!20] (\x, \y) circle (.08);
                        }
                    }

                    \draw[black!50]
                        (-0.5, 1) -- (-1.5, -1) -- (0.5, -1) -- (1.5, 1) -- cycle;

                    \foreach \y in {2, 3, 4} \draw[black!50] (-0.5, \y) -- (1.5, \y);
                    \foreach \y in {2, 3, 4} \draw[black!50] (-1.5, -\y) -- (0.5, -\y);
                    \foreach \i in {3, ..., 11}
                        \draw[black!50] (-0.7 - \i * 0.4, -1.4 + \i * 0.2) -- (0.3 - \i * 0.4, 0.6 + \i * 0.2);
                    \foreach \i in {3, ..., 11}
                        \draw[black!50] (0.7 + \i * 0.4, 1.4 - \i * 0.2) -- (-0.3 + \i * 0.4, -0.6 - \i * 0.2);

                    \foreach \x in {-4, -3, -2}
                        \foreach \y in {-4, ..., -1}
                            \fill[line width=.75, black!50] (\x, \y) circle (.08);
                    \foreach \x in {-4, ..., -1}
                        \foreach \y in {3, 4}
                            \fill[line width=.75, black!50] (\x, \y) circle (.08);
                    \foreach \x in {2, 3, 4}
                        \foreach \y in {1, ..., 4}
                            \fill[line width=.75, black!50] (\x, \y) circle (.08);
                    \foreach \x in {1, ..., 4}
                        \foreach \y in {-4, -3}
                            \fill[line width=.75, black!50] (\x, \y) circle (.08);
                    \fill[line width=.75, black!50] (-4, 0) circle (.08);
                    \fill[line width=.75, black!50] (4, 0) circle (.08);
                    \fill[line width=.75, black!50] (-1, 2) circle (.08);
                    \fill[line width=.75, black!50] (-2, 2) circle (.08);
                    \fill[line width=.75, black!50] (1, -2) circle (.08);
                    \fill[line width=.75, black!50] (2, -2) circle (.08);

                    \draw (0.9, 1.9) -- (1.1, 2.1) (0.9, 2.1) -- (1.1, 1.9);
                    \draw (-2.1, -0.1) -- (-1.9, 0.1) (-2.1, 0.1) -- (-1.9, -0.1);
                    \draw (1.9, -0.1) -- (2.1, 0.1) (1.9, 0.1) -- (2.1, -0.1);
                    \draw (-1.1, -2.1) -- (-0.9, -1.9) (-1.1, -1.9) -- (-0.9, -2.1);

                    \node at (-0.25, -0.5) {$\nabla_V$};
                \end{scope}
            \end{tikzpicture}
        \end{center}
        \caption{An example of cocharacter and character lattices.}
        \label{fig-intro-sod}
    \end{figure}

    Consider the quotient stack
    $\mathcal{X} = V / T$,
    where $T = \mathbb{G}_\mathrm{m}^2$ is a torus,
    and $V = \mathbb{C}^4$ is a symmetric $T$-representation with weights
    $\pm (2, 0)$ and $\pm (1, 2)$,
    as shown in \cref{fig-intro-sod}.
    The left and right sides represent the
    cocharacter and character lattices,
    $\Lambda_T = \mathrm{Hom} (\mathbb{G}_\mathrm{m}, T)$
    and $\Lambda^T = \mathrm{Hom} (T, \mathbb{G}_\mathrm{m})$,
    and the four weights are marked with crosses.
    On the left, the lines are the hyperplanes dual to the weights.
    On the right,
    the lattice $\Lambda^T$ is partitioned into infinitely many parts:
    the \emph{weight polytope} $\nabla_V$,
    which is the Minkowski sum
    $\frac{1}{2} \sum_{v} {} [0, v]$
    over the four weights~$v$;
    infinitely many line segments parallel to the sides of $\nabla_V$;
    and infinitely many single points not covered by the previous parts.

    Each of these parts is a window for a window subcategory
    $\mathsf{W}_{\mathcal{X}_\lambda} (\delta_\lambda)$
    in the decomposition.
    Namely, the origin $\lambda = 0$ on the left
    corresponds to the window subcategory $\mathsf{W}_{\mathcal{X}} (0)$,
    using the window~$\nabla_V$;
    if $\lambda$ belongs to one of the four rays on the left,
    then $\mathcal{X}_\lambda \simeq V^\lambda / T$,
    where $V^\lambda \subset V$ is the $\lambda$-fixed part,
    and the corresponding term
    $\mathsf{W}_{\mathcal{X}_\lambda} (\delta_\lambda)$
    uses a $1$-dimensional window centred at~$\delta_\lambda$,
    which is one of the line segments on the right.
    Finally, if $\lambda$ belongs to one of the four open chambers on the left,
    then $\mathcal{X}_\lambda \simeq {*} / T$, and
    $\mathsf{W}_{\mathcal{X}_\lambda} (\delta_\lambda)$
    corresponds to a single point~$\delta_\lambda$ on the right,
    lying in the four corresponding chambers.
    Note that we only marked the integral weights on the right,
    but they correspond to rational cocharacters~$\lambda$ on the left.

    The two pictures in \cref{fig-intro-sod}
    look similar (when zoomed out),
    because we have chosen the standard quadratic form
    to identify the two lattices.
    A different quadratic form will give a possibly different decomposition,
    where the line segments on the right will be stacked along directions
    perpendicular to the sides of~$\nabla_V$ with respect to that quadratic form.
\end{para}

\begin{para}
    We now mention a description of
    filtrations of $\mathsf{Coh} (\mathcal{X})$
    induced by our semiorthogonal decompositions,
    which does not involve window subcategories.

    Let~$\mathcal{X}$ be as in \cref{para-intro-results-prep},
    equipped with the data $\delta$ and $q$.
    For each $\lambda \in |\mathrm{CL}_\mathbb{Q} (\mathcal{X})|$,
    there is an orthogonal summand
    $\mathsf{Coh} (\mathcal{X}_\lambda)_{w (\lambda)} \subset
    \mathsf{Coh} (\mathcal{X}_\lambda)$
    defined by a weight condition
    for a torus~$T$ such that $* / T$ acts naturally on~$\mathcal{X}_\lambda$
    (see \cref{para-central-rank} and \cref{lem-central-wt-decomp}),
    where $w (\lambda)$ is the unique $T$-weight
    depending on $q$, $\delta$, and $\lambda$ such that
    $\mathsf{W}_{\mathcal{X}_\lambda} (\delta_\lambda) \subset
    \mathsf{Coh} (\mathcal{X}_\lambda)_{w (\lambda)}$.

    For each $a \in \mathbb{R}_{\geq 0}$, define the subcategory
    \begin{equation}
        \label{eq-intro-coh-le-lambda}
        \mathsf{Coh} (\mathcal{X})_{\geq a} \subset \mathsf{Coh} (\mathcal{X})
    \end{equation}
    generated by the images of the Hall induction functors
    ${\star}_\lambda \, \mathsf{Coh} (\mathcal{X}_\lambda)_{w (\lambda)}$
    for all $\lambda$ with $|\lambda|_q \geq a$,
    where $|\lambda|_q$ denotes the $q$-norm.
    The filtration of $\mathsf{Coh} (\mathcal{X})$
    corresponding to the semiorthogonal decomposition
    \cref{eq-intro-sod-coh}
    is a refinement of the filtration by the above subcategories,
    where the difference of the two only lies in the choice of ordering
    of $\lambda$ with the same $q$-norm.

    A corollary of the decomposition \cref{eq-intro-sod-coh}
    is that the inclusion \cref{eq-intro-coh-le-lambda}
    has a left adjoint for every $a$.
    Note that this filtration is \textit{not} obtained as
    a filtration by closed substacks of $\mathcal{X}$,
    as subcategories obtained in the latter way
    do not admit left or right adjoints in general.
\end{para}

\begin{para}
    Finally, we mention a few important questions which we do not address in this paper,
    but which we hope to address in future work.
    First, we do not compare the topological $K$-theory
    or periodic cyclic homology of quasi-BPS categories
    with the BPS cohomology of~\cite{bu-davison-ibanez-nunez-kinjo-padurariu}.
    Second, we do not study the structure of quasi-BPS categories in more detail.
    For example, we do not discuss when they are non-commutative resolutions of singularities
    of a relevant good moduli space,
    or when they are equivalent to categories of coherent sheaves
    on semistable open substacks of $\mathcal{X}$.
    For the latter,
    see~\cite{MR3895631, halpern-leistner-2015-git, halpern-leistner-derived}.
    Third, it would be interesting to recover
    at least some of the semiorthogonal decompositions of~\textcite{MR3895631, halpern-leistner-2015-git}
    from the semiorthogonal decomposition discussed in this paper
    in examples where both methods are applicable.
\end{para}

\begin{para}
    This paper is organized as follows.

    In \cref{sec-lq},
    we prove the decomposition \cref{eq-intro-sod-coh}
    for linear quotient stacks of the form $V / G$,
    where $G$ is a connected reductive group and
    $V$ is a quasi-symmetric $G$-representation.

    In \cref{sec-sm},
    we extend this decomposition to the case of
    smooth stacks with good moduli spaces,
    using the intrinsic Donaldson--Thomas theory framework
    and a local-to-global argument,
    and using the fact that such stacks are
    étale locally modelled on linear quotient stacks.
    After that, in \cref{subsec-mf},
    we prove the decomposition
    \cref{eq-intro-sod-mf}
    for matrix factorizations on smooth stacks.

    In \cref{sec-qsm},
    we further extend the decomposition
    to the case of quasi-smooth derived stacks.
    We deduce this from the decomposition \cref{eq-intro-sod-mf}
    of matrix factorizations
    using a version of Koszul duality,
    as in \cite[Chapter~2]{toda-2024-cat-dt},
    which relates matrix factorizations on a smooth stack
    to coherent sheaves on the derived zero locus
    of a section of a vector bundle on a smooth stack.
    Using the fact that quasi-smooth derived stacks with good moduli spaces
    are étale locally modelled on such derived zero loci,
    we use a local-to-global argument similar to the smooth case
    to prove the general versions of the decompositions
    \cref{eq-intro-sod-coh}
    and
    \cref{eq-intro-sod-coh-ss}.

\end{para}

\subsection*{Acknowledgements}

C.~Bu would like to thank the
Kavli Institute for the Physics and Mathematics of the Universe (IPMU)
for hosting him in April 2025 and January--March 2026,
during which a large part of this project was done.
C.~Bu was supported by EPSRC grant reference EP/X040674/1.

Y.~Toda is supported by the World Premier International Research Center Initiative (WPI Initiative), MEXT, Japan, the Inamori Research Institute for Science, and JSPS KAKENHI Grant Number JP24H00180.

\subsection*{Notations and conventions}

\begin{itemize}
    \item
        All schemes, algebraic spaces, algebraic stacks
        are defined over~$\mathbb{C}$,
        and assumed to be quasi-separated and locally finitely presented.

    \item
        A \emph{reductive group} over~$\mathbb{C}$
        refers to a linearly reductive group,
        not necessarily connected.

    \item
        The categories $\mathsf{Coh} (-)$, $\mathsf{QCoh} (-)$, etc.,
        always refer to the dg-categories, enhancements of the usual derived categories,
        rather than their hearts.
        The six functors~$f^*$, $f_*$, $f^!$, $f_!$, $\otimes$,
        and $\calHom$,
        always refer to the derived functors,
        and $\mathrm{Hom} (-, -)$ in a dg-category
        always refers to the mapping chain complex.

    \item
        For an algebraic space~$X$ acted on by an algebraic group~$G$,
        we denote by $X / G$ the quotient stack,
        and by $X \git G$ its good moduli space if it exists.

    \item
        For the frequently used notations
        $\mathcal{X}_\lambda$ and $X_\lambda$, see
        \cref{para-grad-filt} and
        \cref{para-gms},
        respectively.

    \item
        A \emph{qca stack}
        refers to a quasi-compact (derived) algebraic stack
        with affine stabilizers:
        see \textcite[Definition~1.1.8]{drinfeld-gaitsgory-2013-finiteness}.
\end{itemize}

% \resumeparabookmarks

\section{The linear quotient case}

\label{sec-lq}

\subsection{The decomposition}

We start by constructing semiorthogonal decompositions
for quotient stacks of the form $V / G$,
where~$V$ is a representation of a reductive group~$G$.
Note that $\mathsf{Perf} (V / G)=\mathsf{Coh} (V / G)$.
The main result of this section is \cref{thm-lq-sod},
and it will serve as a local model for
semiorthogonal decompositions for more general stacks later on.

\begin{para}[The setting]
    \label{para-linear-quot}
    Throughout this section, we work with the following setting:

    \begin{itemize}
        \item
            Let $G$ be a connected reductive group over~$\mathbb{C}$,
            with Lie algebra~$\mathfrak{g}$.

        \item
            Let $T \subset G$ be a maximal torus,
            and let $W = \mathrm{N}_G (T) / T$ be the Weyl group.

        \item
            Let $\Lambda_T = \mathrm{Hom} (\mathbb{G}_\mathrm{m}, T)$ and
            $\Lambda^T = \mathrm{Hom} (T, \mathbb{G}_\mathrm{m})$
            be the cocharacter and character lattices.

        \item
            Let~$V$ be a $G$-representation, and let
            $\mathrm{wt} (V) \in \mathbb{N} [\Lambda^T]$
            be the multiset of $T$-weights in~$V$.
    \end{itemize}
    We say that

    \begin{itemize}
        \item
            $V$ is \emph{symmetric}
            if $V \simeq V^\vee$ as $G$-representations.

        \item
            $V$ is \emph{quasi-symmetric}
            if for any rational line
            $\ell \subset \Lambda^T \otimes \mathbb{Q}$
            through the origin, the weighted sum of elements of the multiset
            $\mathrm{wt} (V) \cap \ell$
            is~$0$.
    \end{itemize}
    See \textcite[\S 7]{spenko-van-den-bergh-2021}
    for the latter notion.
\end{para}

\begin{para}[Hall induction]
    \label{para-lq-hall}
    In the above setting,
    let $\lambda \in \Lambda_T$ be a cocharacter.
    Let $G^\lambda \subset G$ and $V^\lambda \subset V$
    be the $\lambda$-fixed loci, where $G$ acts on itself by conjugation,
    and let $G^{\lambda, +} \subset G$ and $V^{\lambda, +} \subset V$
    be the $\lambda$-attracted loci.
    The subgroups
    $G^\lambda \subset G^{\lambda, +} \subset G$
    are called the \emph{Levi subgroup}
    and the \emph{parabolic subgroup}
    associated to~$\lambda$, respectively.

    We have a natural correspondence
    \begin{equation}
        \label{eq-lq-attr-corr}
        V^\lambda / G^\lambda
        \overset{\mathrm{gr}_\lambda}{\longleftarrow}
        V^{\lambda, +} / G^{\lambda, +}
        \overset{\mathrm{ev}_\lambda}{\longrightarrow}
        V / G \ ,
    \end{equation}
    and we define the \emph{Hall induction} functor
    \begin{equation}
        \label{eq-lq-hall}
        {\star}_\lambda =
        (\mathrm{ev}_\lambda)_* \,
        \mathrm{gr}_\lambda^*
        \colon
        \mathsf{Perf} (V^\lambda / G^\lambda) \longrightarrow
        \mathsf{Perf} (V / G) \ .
    \end{equation}
    This is well-defined as the morphism
    $\mathrm{ev}_\lambda$ is proper.

    Since the correspondence
    \cref{eq-lq-attr-corr}
    does not change when~$\lambda$ is scaled by a positive factor,
    the functor~${\star}_\lambda$
    can also be defined for rational cocharacters
    $\lambda \in \Lambda_T \otimes \mathbb{Q}$.
\end{para}

\begin{para}[Weight polytopes]
    \label{para-weight-polytope}
    Define the multiset
    \begin{equation}
        \mathrm{wt} (V / G) =
        \mathrm{wt} (V) - \mathrm{wt} (\mathfrak{g})
        \in \mathbb{Z} [\Lambda^T] \ ,
    \end{equation}
    seen as a multiset with possibly negative multiplicities,
    where the minus sign denotes taking the difference of multiplicities,
    and $\mathfrak{g}$ denotes the adjoint representation of~$G$.

    Define the \emph{weight polytope}
    \begin{equation}
        \label{eq-lq-weight-polytope}
        \nabla_{V / G} =
        \frac{1}{2} \sum_{v \in \mathrm{wt} (V)} {} [0, v]
        - \frac{1}{2} \sum_{v \in \mathrm{wt} (\mathfrak{g})} {} [0, v]
        \quad {\subset} \quad
        \Lambda^T \otimes \mathbb{Q} \ ,
    \end{equation}
    where we take a Minkowski difference of Minkowski sums.
    Equivalently,
    $\nabla_{V / G}$ is the set of rational weights
    $w \in \Lambda^T \otimes \mathbb{Q}$
    such that
    \begin{equation}
        \label{eq-lq-weight-interval}
        w (\lambda) \in \Biggl[
            \frac{1}{2}
            \sum_{v \in \mathrm{wt} (V / G)^{\lambda < 0}}
            v (\lambda) \ , \
            \frac{1}{2}
            \sum_{v \in \mathrm{wt} (V / G)^{\lambda > 0}}
            v (\lambda)
        \Biggr]
    \end{equation}
    for all $\lambda \in \Lambda_T \otimes \mathbb{Q}$,
    where $w(\lambda)$ is the natural pairing, we sum with multiplicities,
    $\mathrm{wt} (V / G)^{\lambda < 0}$
    denotes the part of $\mathrm{wt} (V / G)$
    consisting of those~$v$ with $v (\lambda) < 0$,
    and similarly for~$\mathrm{wt} (V / G)^{\lambda > 0}$.

    For example, we have
    $\nabla_{\mathfrak{g} / G} = \{ 0 \}$,
    and
    $\nabla_{* / G} = \varnothing$
    unless~$G$ is a torus.

    When~$V$ is quasi-symmetric, we have the symmetry
    $\nabla_{V / G} = -\nabla_{V / G}$.
\end{para}

\begin{para}[Window subcategories]
    For each rational weight
    $\delta \in (\Lambda^T \otimes \mathbb{Q})^W$,
    define the \emph{window subcategory}
    \begin{equation*}
        \mathsf{W}_{V / G} (\delta) \subset
        \mathsf{Perf} (V / G)
    \end{equation*}
    to be the subcategory generated by the vector bundles
    \begin{equation*}
        \Gamma_{V / G} (w) =
        p^* (V_w) \ ,
        \qquad
        w \in (\nabla_{V / G} + \delta) \cap \Lambda^T_+ \ ,
    \end{equation*}
    where $p \colon V / G \to {*} / G$ is the projection,
    $\Lambda^T_+ \subset \Lambda^T$ is the set of dominant weights,
    and $V_w$ is the irreducible $G$-representation of highest weight~$w$.
\end{para}

\begin{para}[Semiorthogonal decompositions]
    \label{para-sod}
    Let~$\mathcal{D}$ be a pretriangulated dg-category,
    let~$I$ be a totally ordered set,
    and let $(\mathcal{D}_i)_{i \in I}$ be a family
    of pretriangulated full subcategories of~$\mathcal{D}$.
    We say that the subcategories~$\mathcal{D}_i$
    form a \emph{semiorthogonal decomposition} of~$\mathcal{D}$,
    denoted by
    \begin{equation*}
        \mathcal{D} = \langle \mathcal{D}_i \mid i \in I \rangle \ ,
    \end{equation*}
    if the following conditions hold:
    \begin{enumerate}
        \item
            (\emph{Generation})
            The subcategories~$\mathcal{D}_i$ generate~$\mathcal{D}$.
        \item
            (\emph{Semiorthogonality})
            For any $i, j \in I$ with $i > j$,
            and any $x_i \in \mathcal{D}_i$ and $x_j \in \mathcal{D}_j$,
            we have $\mathrm{Hom}_{\mathcal{D}} (x_i, x_j) = 0$.
    \end{enumerate}
    The subcategories~$\mathcal{D}_i$
    are not required to be admissible,
    although in the semiorthogonal decompositions that we obtain,
    we will often show that this is the case.
\end{para}

\begin{theorem}
    \label{thm-lq-sod}
    In the setting of\/ \cref{para-linear-quot},
    assume that~$V$ is quasi-symmetric,
    and suppose we are given the following extra data:
    \begin{itemize}
        \item
            A $W$-invariant positive-definite quadratic form
            $q \in (\mathrm{Sym}^2 (\Lambda^T \otimes \mathbb{Q}))^W$.
        \item
            A $W$-invariant rational weight
            $\delta \in (\Lambda^T \otimes \mathbb{Q})^W$.
    \end{itemize}
    Then there is a semiorthogonal decomposition
    \begin{equation}
        \label{eq-lq-sod}
        \mathsf{Perf} (V / G) =
        \Bigl<
            \star_\lambda \,
            \mathsf{W}_{\smash{V^\lambda / G^\lambda}}
            (\delta_\lambda)
            \Bigm|
            \lambda \in (\Lambda_T \otimes \mathbb{Q}) / W
        \Bigr> \ ,
    \end{equation}
    where
    $\delta_\lambda \in \Lambda^T \otimes \mathbb{Q}$
    is given by
    \begin{equation}
        \delta_\lambda =
        \delta - v_\lambda - q (\lambda) \ ,
        \qquad
        v_\lambda
        = \frac{1}{2}
        \sum_{
            v \in \mathrm{wt} (V / G)^{\lambda > 0}
        } {} v \ ,
    \end{equation}
    where~$q$ is seen as an isomorphism
    $q \colon \Lambda^T \otimes \mathbb{Q} \simto \Lambda_T \otimes \mathbb{Q}$.
    The rational weight~$\delta_\lambda$ is invariant under the Weyl group of $G^{\lambda}$.
    Each functor
    ${\star}_\lambda \colon \mathsf{W}_{\smash{V^\lambda / G^\lambda}} (\delta_\lambda)
    \to \mathsf{Perf} (V / G)$
    is fully faithful.

    The order of the semiorthogonal decomposition
    can be chosen to be any order such that for any two terms labelled $\lambda$ and~$\lambda'$,
    if\/ $|\lambda|_q < |\lambda'|_q$,
    then $\lambda$ is on the right of $\lambda'$,
    where $|\lambda|_q = q(\lambda, \lambda)^{1/2}$
    is the $q$-norm of $\lambda$.
\end{theorem}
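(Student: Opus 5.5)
We will follow the strategy of \v{S}penko--Van den Bergh \cite[\S 7]{spenko-van-den-bergh-2021}, made precise with the quadratic form~$q$. The proof has three parts: full faithfulness of each ${\star}_\lambda$ on $\mathsf{W}_{V^\lambda/G^\lambda}(\delta_\lambda)$, semiorthogonality between different~$\lambda$, and generation. Throughout, $\mathsf{Perf}(V/G)$ is generated by the bundles $\Gamma_{V/G}(w)$ for $w \in \Lambda^T_+$, so it suffices to test on these. Before starting, we check that $\delta_\lambda$ is $W_{G^\lambda}$-invariant. This holds because $q(\lambda)$ is fixed by $W_{G^\lambda}$, since $q$ is $W$-invariant and $W_{G^\lambda}$ fixes~$\lambda$, and because $\mathrm{wt}(V/G)^{\lambda>0}$ is a $W_{G^\lambda}$-stable multiset.

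The main computational tool is a Borel--Weil--Bott type formula for ${\star}_\lambda \Gamma_{V^\lambda/G^\lambda}(\chi)$. Pushing forward along $\mathrm{ev}_\lambda$ factors as $V^{\lambda,+}/G^{\lambda,+} \to V/G^{\lambda,+} \to V/G$. The first map is a closed immersion, and we resolve its structure sheaf by the Koszul complex on $(V/V^{\lambda,+})^\vee$, whose weights are the $v$ with $v(\lambda)<0$, negated. The second map is a $G/G^{\lambda,+}$-fibration, where Bott's theorem applies. Combining these, ${\star}_\lambda \Gamma(\chi)$ has a filtration whose graded pieces are shifts of $\Gamma_{V/G}\bigl((\chi + \sigma_S - \rho^+_\lambda + \rho)^{+} - \rho\bigr)$. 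Here $S$ ranges over submultisets of the weights of $V$ with $\lambda<0$, $\sigma_S$ is their sum, $\rho^+_\lambda$ is the sum of positive roots with $\lambda>0$, and $(\cdot)^+$ denotes the dominant $W$-translate.

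The largest $\lambda$-weight among these terms is attained only by the leading term, whose weight is $\chi - \rho_\lambda$-type shifted, i.e.\ roughly $\chi + v_\lambda$-corrected. Concretely, the term with $S=\varnothing$ equals $\chi + \sum_{v(\lambda)<0}\ldots$. Using quasi-symmetry, we will rewrite this as: the $q$-dual of the leading weight equals $\chi + v_\lambda$ up to a Weyl shift, which lies in $\delta - q(\lambda) + \nabla_{V^\lambda/G^\lambda}$. This is exactly why $\delta_\lambda$ is normalised as $\delta - v_\lambda - q(\lambda)$: the leading weights of ${\star}_\lambda\mathsf{W}(\delta_\lambda)$ become $\chi' = \delta - q(\lambda) + (\text{element of }\nabla_{V^\lambda/G^\lambda})$.

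We then assign to each dominant $\chi \in \Lambda^T_+$ a label $\lambda(\chi)$, as in the ``stratification of weights'' of \cite{spenko-van-den-bergh-2021}. It is the unique $\lambda$ (up to~$W$) minimising $|\lambda|_q$ such that $\chi - \delta \in -q(\lambda) + \nabla_{V^\lambda/G^\lambda}$. Equivalently, $-q(\lambda)$ is the $q$-nearest point projection of $\chi-\delta$ onto $\nabla_{V/G}$, with $V^\lambda$ recording the face. We must show that this partitions $\Lambda^T_+$ compatibly with the windows; this is where the picture in \cref{fig-intro-sod} comes from. Existence and uniqueness will follow from convexity of $\nabla_{V/G}$ and the description \cref{eq-lq-weight-interval} of its faces via the intervals $[\,\cdot\,]$ for each~$\lambda$.

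Given this labelling, we argue as follows.

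\emph{Generation.} We induct on $|\lambda(\chi)|_q$, and within a fixed norm on a secondary order. The leading term of ${\star}_{\lambda(\chi)}\Gamma(\chi-\ldots)$ is $\Gamma_{V/G}(\chi)$, and all other graded pieces have weights $\chi'$ strictly inside a smaller $q$-ball, i.e.\ with $|\lambda(\chi')|_q < |\lambda(\chi)|_q$. This holds because adding $\sigma_S$ moves towards the polytope along $-\lambda$, and Weyl reflection does not increase the $q$-distance. Induction then gives generation.

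\emph{Semiorthogonality and full faithfulness.} By adjunction, $\mathrm{Hom}({\star}_{\lambda'} A, {\star}_\lambda B)$ is computed on $V^{\lambda'}/G^{\lambda'}$ via $\mathrm{ev}_\lambda^! $ or $\mathrm{ev}^*$ and base change along the attracting correspondences. For a single torus this is the standard window computation of \cite{halpern-leistner-2015-git}: weights outside the window $[\ldots]$ in \cref{eq-lq-weight-interval} have vanishing local cohomology, so morphisms vanish when $|\lambda'|_q > |\lambda|_q$. Equal norms with $\lambda\neq\lambda'$ are handled the same way. For $\lambda=\lambda'$ the same computation gives full faithfulness, since the only surviving term is the identity component $\mathrm{gr}_\lambda^*$ on weights in the window. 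The key inequality we need is: if $w\in\delta-q(\lambda)+\nabla_{V^\lambda/G^\lambda}$, then $\langle w-\delta,\lambda'\rangle$ lies outside the $\lambda'$-window whenever $|\lambda'|_q>|\lambda|_q$. By Cauchy--Schwarz for~$q$, $\langle q(\lambda),\lambda'\rangle \le |\lambda|_q|\lambda'|_q < |\lambda'|^2_q$, which gives the required strict separation.

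We expect the main obstacle to be the nearest-point labelling: showing that the $q$-projection onto $\nabla_{V/G}$, together with Weyl symmetry and the non-torus shifts from $\mathfrak g$, yields exactly one $(\lambda,\chi)$ for each dominant weight, and that the Bott correction terms strictly decrease the norm. We would first verify this for tori, where it is pure convex geometry, and then reduce the general case to $T$ using $W$-invariance of~$q$ and $\nabla_{V/G}=\nabla_{V/T}$-type relations.
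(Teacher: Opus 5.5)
You follow the paper's architecture: a Borel--Weil--Bott resolution of ${\star}_\lambda\Gamma_{V^\lambda/G^\lambda}(w)$, a Cauchy--Schwarz estimate for~$q$, and induction on $|\lambda|_q$ for generation. The semiorthogonality and full-faithfulness step, however, has a genuine gap.

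\emph{Direction.} With smaller norms on the right, what must vanish is $\mathrm{Hom}({\star}_\lambda E_\lambda,{\star}_{\lambda'}E_{\lambda'})$ for anti-dominant $\lambda\neq\lambda'$ with $|\lambda|_q\le|\lambda'|_q$. You assert instead that $\mathrm{Hom}({\star}_{\lambda'}A,{\star}_\lambda B)=0$ for $|\lambda'|_q>|\lambda|_q$, and this fails. For $(V\oplus V^\vee)/\mathbb{G}_\mathrm{m}$ as in the paper's worked example, $\mathrm{Hom}({\star}\,\mathbb{C}(a),\mathcal{O}_{\mathcal{X}}(w))\simeq(\mathrm{Sym}(V^\vee)(w-n-a))^{\mathbb{G}_\mathrm{m}}[-n]$, which is nonzero whenever $a\le w-n$.

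\emph{Lower-order terms.} Checking only the leading weight against the $\lambda'$-window is not enough, and the appeal to Halpern-Leistner's local cohomology does not supply the rest. The complex ${\star}_\lambda\Gamma(w)$ has terms $\Gamma_{V/G}((w-v_J)^+)$ for every $J\subset\mathrm{wt}^{\lambda<0}(V)$. Note that these are $w-v_J$, not $\chi+\sigma_S$, and carry no $\rho^+_\lambda$ correction. After the adjunction $\mathrm{Hom}(\mathrm{ev}_{\lambda'}^*\,{\star}_\lambda E_\lambda,\mathrm{gr}_{\lambda'}^*E_{\lambda'})$ you need $\langle\lambda',(w-v_J)^+\rangle>\langle\lambda',w'\rangle$ for every $J$, including the dot-action twist~$\pi$, and for all $w'$ in the $\lambda'$-window. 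The missing step goes as follows.
\begin{itemize}
\item Write $w=u+\delta_\lambda$. Quasi-symmetry, applied line by line, gives $u-v_\lambda-v_J+\rho\in\nabla_V=\frac12\sum_{v\in\mathrm{wt}(V)}[0,v]$.
\item $\nabla_V$ is $W$-invariant, and on it $\langle\lambda',-\rangle$ has minimum $\langle\lambda',-v_{\lambda'}+\rho_{\lambda'}\rangle$.
\item Only after this do $W$-invariance of~$q$ and Cauchy--Schwarz bound the remaining term, $\langle\lambda',\pi\cdot q(\lambda)\rangle\le|\lambda'|_q|\lambda|_q$.
\end{itemize}

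\emph{Strictness.} You omit this, and Cauchy--Schwarz alone gives only $\ge$ when $|\lambda|_q=|\lambda'|_q$. Equality forces $\pi\cdot q(\lambda)=q(\lambda')$, so $\lambda,\lambda'$ are $W$-conjugate and hence equal as anti-dominant representatives. For full faithfulness ($\lambda=\lambda'$), equality forces $\pi\lambda=\lambda$ and then $\langle\lambda,v_J\rangle=0$, i.e.\ $J=\varnothing$. This is what the paper feeds into the \v{S}penko--Van den Bergh criterion.

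Your generation heuristic is exactly this estimate: subtracting $v_J$ keeps the point within $q$-distance $|\lambda|_q$ of the polytope, and the dot action preserves that distance. The fix is to use it in all three parts, which requires correcting your labelling. The $\lambda$-window is $\delta-v_\lambda-q(\lambda)+\nabla_{V^\lambda/G^\lambda}$, not $\delta-q(\lambda)+\nabla_{V^\lambda/G^\lambda}$. The convex set to project onto is $\nabla_V$ with a $\rho$-shift, not $\nabla_{V/G}$, which is empty for ${*}/G$ with $G$ non-abelian. Precisely: for dominant $\chi$ and anti-dominant $\lambda$, $\chi$ lies in the $\lambda$-window if and only if $\chi+\rho-\delta+q(\lambda)$ lies on the face of $\nabla_V$ where $\langle\lambda,-\rangle$ is minimal. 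Equivalently, $\chi+\rho-\delta+q(\lambda)$ (not $-q(\lambda)$) is the point of $\nabla_V$ nearest to $\chi+\rho-\delta$ for the form dual to~$q$. With that correction, your nearest-point argument gives the partition \cref{eq-lq-partition} on which the generation step relies.
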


\noindent
The proof of the theorem will be given in \cref{subsec-pf-lq-sod}.

Here, the choice of $\delta_\lambda$
can be understood
as choosing a partition
\begin{equation}
    \label{eq-lq-partition}
    \Lambda^T \otimes \mathbb{Q} =
    \bigcup_{\lambda \in \Lambda_T \otimes \mathbb{Q}} {}
    (\nabla_{\smash{V^\lambda / G^\lambda}} + \delta_\lambda) \ ,
\end{equation}
as illustrated in \cref{fig-intro-sod}
in the introduction.

\begin{remark}
    In \cref{thm-lq-sod}, the notation
    $\lambda \in (\Lambda_T \otimes \mathbb{Q}) / W$
    should be precisely understood as follows: if
    $\lambda, \lambda' \in \Lambda_T \otimes \mathbb{Q}$
    are in the same $W$-orbit,
    so that $\lambda' = \pi \cdot \lambda$
    for some $\pi \in W$,
    then there is an induced isomorphism
    $V^\lambda / G^\lambda \simto V^{\lambda'} / G^{\lambda'}$,
    which identifies the window subcategory
    $\mathsf{W}_{\smash{V^\lambda / G^\lambda}} (\delta)$
    with
    $\mathsf{W}_{\smash{V^{\lambda'} / G^{\lambda'}}} (\pi \cdot \delta)$,
    and is compatible with the Hall induction, so that
    ${\star}_\lambda \, \mathsf{W}_{\smash{V^\lambda / G^\lambda}} (\delta_\lambda) =
    {\star}_{\lambda'} \, \mathsf{W}_{\smash{V^{\lambda'} / G^{\lambda'}}} (\delta_{\lambda'})$
    as subcategories of~$\mathsf{Perf} (V / G)$,
    where we note that $\delta_{\lambda'} = \pi \cdot \delta_\lambda$.
\end{remark}

\subsection{Examples}

\begin{example}[Classifying stacks]
    Let $G$ be a connected reductive group over~$\mathbb{C}$,
    and consider the stack $* / G$.
    In this case, \cref{thm-lq-sod}
    recovers the orthogonal decomposition
    \begin{equation*}
        \mathsf{Perf} (* / G)
        \simeq \mathsf{D}^\mathrm{b} \mathsf{Rep} (G)
        = \bigoplus_{w \in \Lambda^T_+} {}
        \langle V_w \rangle \ ,
    \end{equation*}
    where~$\Lambda^T_+ \subset \Lambda^T$
    is the set of dominant weights,
    $V_w$ is the irreducible $G$-representation
    of highest weight~$w$,
    and $\langle V_w \rangle \simeq \mathsf{Perf} (*)$
    is the subcategory generated by~$V_w$.

    Indeed, the term $\langle V_w \rangle$ corresponds to
    $\lambda = -q^{-1} (w + \rho - \delta) \in \Lambda^T \otimes \mathbb{Q}$,
    where $\rho$ is the half sum of positive roots of~$G$.
    For such~$\lambda$, we have
    $G^\lambda = T$ and
    $\delta_\lambda = w$,
    and
    $\mathsf{W}_{* / T} (w) = \langle \mathbb{C}_w \rangle
    \subset \mathsf{Perf} (* / T)$,
    where $\mathbb{C}_w$ is the one-dimensional $T$-representation of weight~$w$.
    Note that all non-zero terms come from $* / T$
    because $\nabla_{\smash{* / G^\lambda}} = \varnothing$
    unless $G^\lambda = T$.
\end{example}

\begin{example}[\textG-quivers]
    \label{eg-g-quiver}
    Let $G$ be a connected reductive group over~$\mathbb{C}$.
    Let $m \geq 0$ be an integer,
    and consider the stack $\mathfrak{g}^m / G$,
    where~$G$ acts on each factor~$\mathfrak{g}$ via the adjoint action.
    This can be seen as an analogue of the $m$-loop quiver for~$G$.
    We fix a maximal torus and a Borel $T \subset B \subset G$.

    In this case, \cref{thm-lq-sod} gives a semiorthogonal decomposition
    \begin{equation}
        \mathsf{Perf} (\mathfrak{g}^m / G) =
        \Bigl<
            {\star}_P \,
            \mathsf{W}_{\smash{\mathfrak{l}^m / L}}
            \bigl(
                - (m - 1) (\rho_G - \rho_L) - \mu (w)
            \bigr)
            \Bigm|
            L \subset G, \
            w \in \Lambda^{\smash{\mathrm{Z} (L)^\circ}}_+
        \Bigr> \ ,
    \end{equation}
    where notations are as follows:
    \begin{itemize}
        \item
            $L$ runs through Levi subgroups of~$G$ containing~$T$,
            and $\mathfrak{l}$ is its Lie algebra;
        \item
            $P$ is the parabolic subgroup containing~$B$
            associated to~$L$,
            and $\star_P$ denotes the functor $\star_\lambda$
            for any cocharacter~$\lambda$ of~$G$
            whose associated parabolic subgroup is~$P$,
            which does not depend on the choice of~$\lambda$;
        \item
            $\mu \colon \Lambda^{Z(L)^{\circ}} \otimes \mathbb{Q} \simto
            (\Lambda^T \otimes \mathbb{Q})^{W_L}$
            is the slope map, which is inverse to the restriction map,
            where $\mathrm{Z} (L)^\circ$ is the unit component of the centre of~$L$
            and $W_L$ is the Weyl group of~$L$;
        \item
            $\Lambda^{\smash{\mathrm{Z} (L)^\circ}}_+ \subset \Lambda^{\smash{\mathrm{Z} (L)^\circ}}$
            is the subset of characters~$w$
            such that $\langle v, \mu (w) \rangle > 0$
            for all positive roots~$v$ of~$G$ that are not roots of~$L$,
            where $\langle -, - \rangle$ is the Killing form;
        \item
            $\rho_G$ and $\rho_L$
            are half sums of positive roots of~$G$ and~$L$,
            respectively.
    \end{itemize}
    In particular, when $G=\mathrm{GL}(d)$ and $\mathfrak{g}=\mathfrak{gl}(d)$, we recover the semiorthogonal decomposition of~\cite[Theorem~4.2]{padurariu-toda-2025-quivers}, applied to the one-vertex quiver with $m$ loops:
    \begin{align*}
        \mathsf{Perf}(\mathfrak{gl}(d)^{m}/\mathrm{GL}(d))
        =
        \biggl< {}
            \bigotimes_{i=1}^k \mathsf{W}_{\mathfrak{gl}(d_i)^m/\mathrm{GL}(d_i)}(\delta(w_i))
            \biggm|
            \frac{w_1}{d_1}<\cdots<\frac{w_k}{d_k}
        \, \biggr> \ ,
    \end{align*}
    where we consider partitions $d = d_1+\cdots+d_k$ with $d_i > 0$,
    $w_i \in \mathbb{Q}$, and
    \begin{align*}
        \delta(w_i)
        =
        \biggl(
            \frac{w_i}{d_i}
            +\frac{m-1}{2}
            \biggl(
                \sum_{j<i} d_j - \sum_{j>i} d_j
            \biggr)
        \biggr) \, {\det}_i \ ,
    \end{align*}
    where $\det_i$ is the determinant character of $\mathrm{GL}(d_i)$.
    The order in the semiorthogonal decomposition is simply determined by the reverse order of
    \begin{align*}
        \sum_{i=1}^k \frac{w_i^2}{d_i^2}\cdot d_i =
        \sum_{i=1}^k \frac{w_i^2}{d_i} \in \mathbb{Q}_{\geq 0} \ .
    \end{align*}
    Note that this gives a much simpler ordering than the one in~\cite[Section~4.6]{padurariu-toda-2025-quivers}. 

    In fact, \cref{thm-lq-sod} can be applied to
    any symmetric quiver,
    and we will further generalize this to any quiver with a generic stability condition
    in \cref{eg-quiver} below.
\end{example}

\subsection{Proof of the decomposition}
\label{subsec-pf-lq-sod}

This subsection is dedicated to the proof of \cref{thm-lq-sod}.

\begin{para}[The Borel--Weil--Bott theorem]
    \label{para-bwb}
    \label{para-def-gamma}
    We recall a corollary of the Borel--Weil--Bott theorem
    from \textcite[Proposition~2.1]{padurariu-toda-2024-c3-1}
    and \textcite[Proposition~3.8]{halpern-leistner-sam-2020},
    which gives an explicit description of the Hall induction functor
    on vector bundles induced from irreducible representations.

    For an anti-dominant rational cocharacter
    $\lambda \in \Lambda_T \otimes \mathbb{Q}$
    and a dominant weight $w \in \Lambda^T$, let
    $\Gamma_{\smash{V^\lambda / G^\lambda}} (w) \in \mathsf{Perf} (V^\lambda / G^\lambda)$
    be the vector bundle on~$V^\lambda / G^\lambda$
    given by the irreducible $G^\lambda$-representation
    with highest weight~$w$.
    Then there is an explicit presentation
    \begin{equation}
        \label{eq-bwb}
        \star_\lambda \,
        \Gamma_{\smash{V^\lambda / G^\lambda}} (w)
        \simeq \biggl(
            \bigoplus_J
            \Gamma_{V / G} \bigl( (w - v_J)^+ \bigr) [|J| - \ell_J]
            \ , \ d
        \biggr) \ ,
    \end{equation}
    where $J$ runs through sub-multisets
    \begin{equation*}
        J \subset \mathrm{wt}^{\lambda < 0} (V)
        = \{ v \in \mathrm{wt} (V) \mid \langle \lambda, v \rangle < 0 \}
    \end{equation*}
    such that $v_J = \sum_{v \in J} v$
    satisfies that $w - v_J$
    has a trivial stabilizer under the shifted action of~$W$ defined by
    $\pi * w = \pi \cdot (w + \rho) - \rho$
    for $\pi \in W$ and $w \in \Lambda^T$,
    where~$\rho$ is the half sum of positive roots.
    In this case, there exists a unique $\pi \in W$
    such that $\pi * (w - v_J) = (w - v_J)^+$
    is dominant,
    and $\ell_J$ denotes the length of~$\pi$.
\end{para}

\begin{para}[Semiorthogonality]
    \label{para-lq-so}
    We first show that for anti-dominant rational cocharacters
    $\lambda \neq \lambda' \in \Lambda_T \otimes \mathbb{Q}$
    such that $|\lambda|_q \leq |\lambda'|_q$,
    and any
    $E_\lambda \in \mathsf{W}_{\smash{V^\lambda / G^\lambda}} (\delta_\lambda)$
    and
    $E_{\lambda'} \in \mathsf{W}_{\smash{V^{\lambda'} / G^{\lambda'}}} (\delta_{\lambda'})$,
    we have
    \begin{equation}
        \mathrm{Hom}_{V / G}
        (\star_\lambda \, E_\lambda, \star_{\lambda'} \, E_{\lambda'}) \simeq 0
    \end{equation}
    in $\mathsf{Perf} (V / G)$.

    Indeed, by adjunction, this is equivalent to
    $\mathrm{Hom}_{\smash{V^{\lambda', +} / G^{\lambda', +}}}
    (\mathrm{ev}_{\smash{\lambda'}}^* \, {\star_\lambda} \, E_\lambda,
    \mathrm{gr}_{\smash{\lambda'}}^* \, E_{\lambda'}) \simeq 0$.
    We may assume
    $E_\lambda = \Gamma_{\smash{V^\lambda / G^\lambda}} (w)$ and
    $E_{\lambda'} = \Gamma_{\smash{V^{\lambda'} / G^{\lambda'}}} (w')$
    for $w \in \nabla_{\smash{V^\lambda / G^\lambda}} + \delta_\lambda$
    and $w' \in \nabla_{\smash{V^{\lambda'} / G^{\lambda'}}} + \delta_{\lambda'}$.
    By \cref{eq-bwb} and
    \textcite[Proposition~4.2]{padurariu-2024-quivers},
    it is enough to show that
    \begin{equation}
        \label{eq-lq-ineq}
        \langle \lambda', (w - v_J)^+ \rangle > \langle \lambda', w' \rangle
    \end{equation}
    for all $J \subset \mathrm{wt}^{\lambda < 0} (V)$
    such that $(w - v_J)^+$ is defined.

    Write $w = u + \delta_\lambda$
    and $w' = u' + \delta_{\lambda'}$
    for some $u \in \nabla_{\smash{V^\lambda / G^\lambda}}$
    and $u' \in \nabla_{\smash{V^{\lambda'} / G^{\lambda'}}}$,
    so that
    $\langle \lambda, u \rangle =
    \langle \lambda', u' \rangle = 0$,
    and write $(w - v_J)^+ = \pi * (w - v_J)$ for some $\pi \in W$.
    Let
    $\rho_\lambda = \frac{1}{2} \sum_{\smash{v \in \mathrm{wt}^{\lambda < 0} (\mathfrak{g})}} v$,
    which is a half sum of some positive roots.
    Then we have
    \begin{align}
        & \phantom{{} = {}}
        \langle \lambda', (w - v_J)^+ \rangle
        - \langle \lambda', w' \rangle
        \notag \\
        & =
        \langle \lambda', \delta + (u - v_\lambda - q (\lambda) - v_J)^+ \rangle
        - \langle \lambda', \delta - v_{\lambda'} \rangle + |\lambda'|_q^2
        \notag \\
        & =
        \langle \lambda', \pi \cdot (u - v_\lambda - q (\lambda) - v_J + \rho)
        - \rho + v_{\lambda'} \rangle + |\lambda'|_q^2
        \notag \\
        & \geq
        \langle \lambda', \pi \cdot (-q (\lambda)) - \rho + \rho_{\lambda'} \rangle
        + |\lambda'|_q^2
        \notag \\
        & =
        \langle \lambda', \pi \cdot (-q (\lambda)) \rangle
        + |\lambda'|_q^2
        \notag \\
        & \geq
        {-|\lambda'|_q} \cdot |\lambda|_q
        + |\lambda'|_q^2
        \geq 0 \ ,
        \label{eq-lq-ineq-proof}
    \end{align}
    where the first inequality used the fact that
    $\langle \lambda', \pi \cdot (u - v_\lambda - v_J + \rho) \rangle
    \geq \langle \lambda', -v_{\lambda'} + \rho_{\lambda'} \rangle$,
    because $\langle \lambda', - \rangle$
    attains its minimum at $-v_{\lambda'} + \rho_{\lambda'}$
    in~$\nabla_V$,
    and $u - v_\lambda - v_J + \rho \in \nabla_V$,
    since $u + \rho - \rho_\lambda \in \nabla_{\smash{V^\lambda}}$
    and
    $-(v_\lambda - \rho_\lambda) - v_J \in
    \frac{1}{2} \sum_{\smash{v \in \mathrm{wt}^{\lambda \neq 0} (V)}} {} [0, v]$,
    which follows from the quasi-symmetry of~$V$.

    It remains to show that the equal sign cannot be achieved.
    Indeed, the second inequality being an equality implies that
    $\pi \cdot q (\lambda) = q (\lambda')$,
    so that $\lambda$ and~$\lambda'$ are conjugate,
    and hence equal since they are anti-dominant,
    which is a contradiction.
\end{para}

\begin{para}[Full faithfulness]
    \label{para-lq-ff}
    We now show that
    $\star_\lambda \colon \mathsf{W}_{\smash{V^\lambda / G^\lambda}} (\delta_\lambda)
    \to \mathsf{Perf} (V / G)$
    is fully faithful.
    Again, we may check this on generators,
    and it is enough to show that
    \begin{equation}
        \label{eq-lq-ff-adj}
        \mathrm{Hom}_{\smash{V^\lambda / G^\lambda}} (
            \Gamma_{\smash{V^\lambda / G^\lambda}} (w),
            \Gamma_{\smash{V^\lambda / G^\lambda}} (w')
        ) \simeq
        \mathrm{Hom}_{V / G} (
            {\star}_\lambda \, \Gamma_{\smash{V^\lambda / G^\lambda}} (w),
            {\star}_\lambda \, \Gamma_{\smash{V^\lambda / G^\lambda}} (w')
        )
    \end{equation}
    for all $w, w' \in \nabla_{\smash{V^\lambda / G^\lambda}} + \delta_\lambda$
    that are dominant integral weights.
    This follows from
    \textcite[Propositions~3.2.1 and~3.5.1]{spenko-van-den-bergh-2021},
    where the condition
    \cite[Definition~3.1.1~(5)]{spenko-van-den-bergh-2021}
    follows from the estimate
    \cref{eq-lq-ineq-proof}
    with $\lambda = \lambda'$ and $w = w'$,
    together with the fact that the equal sign cannot be achieved
    when $J \neq \varnothing$.
    To see this, suppose the contrary.
    Then we must have $\pi \cdot \lambda = \lambda$
    by the argument below \cref{eq-lq-ineq-proof},
    so that $\pi \cdot v_\lambda = v_\lambda$
    and $\pi \cdot \rho_\lambda = \rho_\lambda$.
    The first inequality in~\cref{eq-lq-ineq-proof}
    being an equality now implies that
    $\langle \lambda, \pi \cdot (u - v_J + \rho) - \rho_\lambda \rangle = 0$,
    so that
    $\langle \lambda, u - v_J + \rho - \rho_\lambda \rangle = 0$,
    and hence
    $\langle \lambda, v_J \rangle = 0$,
    so that $J = \varnothing$.
\end{para}

\begin{para}[Generation]
    Finally, we show that the terms in
    \cref{eq-lq-sod}
    generate
    $\mathsf{Perf} (V / G)$.
    For this, let $\mathcal{D} \subset \mathsf{Perf} (V / G)$
    be the subcategory generated by the terms in
    \cref{eq-lq-sod}.
    Since $\mathsf{Perf} (V / G)$
    is generated by the vector bundles
    $\Gamma_{V / G} (w)$
    for dominant weights~$w$,
    it is enough to show that
    $\Gamma_{V / G} (w) \in \mathcal{D}$
    for all such~$w$.

    Recall the partition~\cref{eq-lq-partition}
    of $\Lambda^T \otimes \mathbb{Q}$.
    By induction on $|\lambda|_q$,
    we show that if
    $w \in \nabla_{\smash{V^\lambda / G^\lambda}} + \delta_\lambda$
    is a dominant integral weight,
    then $\Gamma_{V / G} (w) \in \mathcal{D}$.
    The induction is possible since those~$\lambda$
    such that $\nabla_{\smash{V^\lambda / G^\lambda}} + \delta_\lambda$
    contains an integral weight are well-ordered under the $q$-norm.

    When $\lambda = 0$, the claim holds by the definition of
    $\mathsf{W}_{V / G} (\delta)$.
    For general~$\lambda$, let~$w$ be as above.
    By \cref{eq-bwb},
    it is enough to show that
    $\Gamma_{V / G} \bigl( (w - v_J)^+ \bigr) \in \mathcal{D}$
    for all $\varnothing \neq J \subset \mathrm{wt}^{\lambda < 0} (V)$
    such that $(w - v_J)^+$ is defined.
    For such~$J$, let~$\lambda'$ be an anti-dominant rational cocharacter such that
    \begin{equation*}
        (w - v_J)^+ \in \nabla_{\smash{V^{\lambda'} / G^{\lambda'}}} + \delta_{\lambda'} \ ,
    \end{equation*}
    and it is enough to show that
    $|\lambda'|_q < |\lambda|_q$.
    Suppose for a contradiction that $|\lambda'|_q \geq |\lambda|_q$.
    Then, repeating the calculation~\cref{eq-lq-ineq-proof}
    with $w' = (w - v_J)^+$,
    we see that the equal sign is attained,
    and the same computation as in the last part of \cref{para-lq-ff}
    shows that $J = \varnothing$.

    This completes the proof of \cref{thm-lq-sod}.
    \qed
\end{para}

\subsection{Proof of the decomposition in one example} 

\begin{para}[Preliminaries]
For the convenience of the reader, we discuss the proof of \cref{thm-lq-sod} in one example,
which already contains most of the ideas present in the proof for the general case.
Denote by $\mathbb{C}(w)$ the one dimensional representation of
$T = \mathbb{G}_\mathrm{m}$ of weight $w\in \mathbb{Z}$.
Consider the $\mathbb{G}_\mathrm{m}$-representation $V=\mathbb{C}(1)^{\oplus n}$.
We reprove the semiorthogonal decompositions from \cref{thm-lq-sod} for the stack
\[
    \mathcal{X}=(V\oplus V^\vee)/\mathbb{G}_\mathrm{m} \ .
\]
Let $\Lambda^T=\mathbb{Z}\cdot\beta$,
where $\beta$ is the tautological character of $T$,
and so $\Lambda^T\otimes\mathbb{Q}=\mathbb{Q}\cdot\beta$.
The Weyl group is trivial,
and the semiorthogonal decomposition is independent of the choice of the quadratic form $q$.
Let~$\delta \in \Lambda^T \otimes \mathbb{Q}$,
which we identify with a rational number $\delta \in \mathbb{Q}$.

Consider the fixed and attracting stacks:
\begin{equation*}
    \begin{tikzcd}[row sep={3em, between origins}, column sep={6em, between origins}]
         & \mathcal{S}=V/\mathbb{G}_\mathrm{m} \arrow[dr, "p"]\arrow[dl, "q"'] & \\
         \mathcal{Z}=0/\mathbb{G}_\mathrm{m} & & \mathcal{X} \\
         & \mathcal{S}'=V^\vee/\mathbb{G}_\mathrm{m} \rlap{ .}
         \arrow[ur, "p'"']\arrow[ul, "q'"]
    \end{tikzcd}
\end{equation*}
The Hall induction functors are
\begin{alignat*}{2}
    \star
    & = p_* \, q^* \colon
    & \mathsf{Perf} (\mathcal{Z})
    & \longrightarrow \mathsf{Perf} (\mathcal{X})
    \quad \text{and}
    \\
    \star'
    & = p'_* \, q'^* \colon
    & \mathsf{Perf} (\mathcal{Z})
    & \longrightarrow \mathsf{Perf} (\mathcal{X}) \ .
\end{alignat*}
The window subcategory
$\mathsf{W}(\delta)=\mathsf{W}_\mathcal{X}(\delta)\subset \mathsf{Perf} (\mathcal{X})$
is the full subcategory generated by the equivariant line bundles
$\mathcal{O}_\mathcal{X}(w)$ for $\delta-n/2\leq w\leq \delta+n/2$.
\end{para}

\begin{para}[The decomposition]
The subcategory $\mathsf{Perf} (0/\mathbb{G}_\mathrm{m})_w$
is generated by the representation $\mathbb{C}(w)$,
and we write $\langle \mathbb{C}(w) \rangle = \mathsf{Perf} (0/\mathbb{G}_\mathrm{m})_w$.
We now show directly that is a semiorthogonal decomposition
\begin{equation}\label{SODinanexample}
    \mathsf{Perf} (\mathcal{X})= \Bigl<
        {\star} \, \langle \mathbb{C}(a) \rangle,
        {\star'} \, \langle \mathbb{C}(b) \rangle,
        \mathsf{W}(\delta)
        \Bigm|
        a < \delta-n/2\text{ and }b>\delta+n/2
    \Bigr> \ ,
\end{equation}
where the order is as follows between the summands coming from Hall induction:
\begin{itemize}
    \item $\text{Hom}(\star\, \mathbb{C}(a), \star\, \mathbb{C}(a')) \simeq 0\text{ for }a'<a$,
    \item $\text{Hom}(\star'\, \mathbb{C}(b), \star'\, \mathbb{C}(b')) \simeq 0\text{ for }b'>b$,
    \item The complexes $\star\, \mathbb{C}(a)$ and $\star'\, \mathbb{C}(b)$ are mutually orthogonal for $a$ and $b$ as above.
\end{itemize}
Further, the functors $\star\colon \langle \mathbb{C}(a) \rangle\to \mathsf{Perf} (\mathcal{X})$ and $\star'\colon \langle \mathbb{C}(b) \rangle\to \mathsf{Perf} (\mathcal{X})$ are fully faithful.
\end{para}

\begin{para}[Generation]
    The category $\mathsf{Perf} (\mathcal{X})$ is generated by the line bundles $\mathcal{O}_\mathcal{X}(v)$ for $v\in \mathbb{Z}$, so it suffices to show that these are generated by the summands of~\cref{SODinanexample}.

    Consider the Koszul resolution for the structure sheaf of $\mathcal{S}'\subset \mathcal{X}$, 
    which is a $\mathbb{G}_\mathrm{m}$-equivariant version of the Koszul resolution for the affine subspace $V^\vee\subset 
    V\oplus V^\vee$:
    \begin{equation}
        \left[\mathcal{O}_{\mathcal{X}}(-n)\to \mathcal{O}_{\mathcal{X}}(-n+1)^{\oplus n}\to  \ldots \to \mathcal{O}_{\mathcal{X}}(-1)^{\oplus n}\to \mathcal{O}_{\mathcal{X}}\right]
        \longsimto \mathcal{O}_{\mathcal{S}'} \ .
    \end{equation}
    There is thus a resolution:
    \begin{equation}\label{Koszulprime}
        \left[\mathcal{O}_{\mathcal{X}}(b-n)\to \mathcal{O}_{\mathcal{X}}(b-n+1)^{\oplus n}\to  \ldots \to \mathcal{O}_{\mathcal{X}}(b-1)^{\oplus n}\to \mathcal{O}_{\mathcal{X}}(b)\right]
        \longsimto \star'\,\mathbb{C}(b) \ .
    \end{equation}
    There is a similar resolution for $\star\, \mathbb{C}(a)$
    obtained from the Koszul resolution for the structure sheaf of
    $\mathcal{S}\subset\mathcal{X}$:
    \begin{equation}\label{Koszulnotprime}
        \left[\mathcal{O}_{\mathcal{X}}(a+n)\to \mathcal{O}_{\mathcal{X}}(a+n-1)^{\oplus n}\to  \ldots \to \mathcal{O}_{\mathcal{X}}(a+1)^{\oplus n}\to \mathcal{O}_{\mathcal{X}}(s)\right]
        \longsimto \star\,\mathbb{C}(a) \ .
    \end{equation}
    Any line bundles $\mathcal{O}_\mathcal{X}(v)$ for $v\in \mathbb{Z}$ may be obtained, via distinguished triangles, from:
    \begin{itemize}
        \item the line bundles $\mathcal{O}_\mathcal{X}(w)$ for $\delta-n/2\leq w\leq \delta+n/2$, 
        \item the complexes~\cref{Koszulprime} for $b>\delta+n/2$, and 
        \item the complexes~\cref{Koszulnotprime} for $a<\delta-n/2$, 
    \end{itemize}
    and so the generation part of the semiorthogonal decomposition follows. 
\end{para}

\begin{para}[Vanishings involving the window subcategory]
In the rest of this subsection, we fix
$b>\delta+n/2$, $a<\delta-n/2$, and
$\delta-n/2\leq w\leq \delta+n/2$. 

We now discuss vanishing of morphisms involving the window subcategory.
We have
\[
    \mathrm{Hom}_\mathcal{X} (\mathcal{O}_\mathcal{X}(w), p_* \, q^* \, \mathbb{C}(a))
    \simeq \mathrm{Hom}_\mathcal{S} (\mathcal{O}_\mathcal{S}(w), \mathcal{O}_\mathcal{S}(a))
    \simeq \bigl(\mathrm{Sym} (V^\vee)(a-w)\bigr)^{\mathbb{G}_\mathrm{m}}
    \simeq 0 \ ,
\]
because $\mathbb{G}_\mathrm{m}$ acts with negative weights on $V^\vee$.
Similarly, we get that
\[
    \mathrm{Hom}_\mathcal{X} (\mathcal{O}_\mathcal{X}(w), p'_* \, q'^* \, \mathbb{C}(b))
    \simeq \bigl(\mathrm{Sym}(V)(b-w)\bigr)^{\mathbb{G}_\mathrm{m}}
    \simeq 0 \ .
\]
\end{para}

\begin{para}[Vanishings for the same attracting locus]
Let $a'<a$. We show that
\begin{align*}
    \mathrm{Hom}_\mathcal{X}(\star\,\mathbb{C}(a), \star\,\mathbb{C}(a'))
    & \simeq \mathrm{Hom}_\mathcal{X} (p_* \, q^* \, \mathbb{C}(a), p_* \, q^* \, \mathbb{C}(a'))
    \\
    & \simeq \mathrm{Hom}_\mathcal{S}(p^* \, p_* \, q^* \, \mathbb{C}(a), q^* \, \mathbb{C}(a'))
    \simeq 0 \ .
\end{align*}
The analogous statement for ${\star'} = p'_* \, q'^*$ follows similarly.
The complex $p^* \, p_* \, q^* \, \mathbb{C}(a)$
has a resolution by locally free sheaves that are direct sums of
$\mathcal{O}_\mathcal{S}(a+i)$ for $0\leq i\leq n$.
The desired vanishing follows from
\[
    \mathrm{Hom}_\mathcal{S}(\mathcal{O}_\mathcal{S}(a+i), \mathcal{O}_\mathcal{S}(a'))
    \simeq \bigl(\mathrm{Sym}(V^\vee)(a'-a-i)\bigr)^{\mathbb{G}_\mathrm{m}}
    \simeq 0 \ .
\]
Note that the above argument also shows the isomorphism required in the full faithfulness of
$\star\colon \langle \mathbb{C}(a) \rangle\to \mathsf{Perf} (\mathcal{X})$, namely that
\[
    \mathrm{Hom}_\mathcal{X}(\star\,\mathbb{C}(a), \star\,\mathbb{C}(a))
    \simeq \mathrm{Hom}_\mathcal{S}(q^* \, \mathbb{C}(a), q^* \, \mathbb{C}(a))
    \simeq \mathrm{Hom}_\mathcal{Z}(\mathbb{C}(a), \mathbb{C}(a)) \ .
\]
Indeed, the natural map $p^* \, p_* \, q^* \, \mathbb{C}(a)\to  q^* \, \mathbb{C}(a)$ has cone generated by the locally free sheaves $\mathcal{O}_\mathcal{S}(a+i)$ for $1\leq i\leq n$.
\end{para}

\begin{para}[Vanishings for different attracting loci]
The vanishing
\[
    \mathrm{Hom}_\mathcal{X}(\star\,\mathbb{C}(a), \star'\,\mathbb{C}(b))
    \simeq \mathrm{Hom}_\mathcal{X}(\star'\,\mathbb{C}(b), \star\,\mathbb{C}(a))
    \simeq 0
\]
can be shown by considering the natural sections
$s\colon \mathcal{Z}\hookrightarrow \mathcal{S}$ and
$s'\colon \mathcal{Z}\hookrightarrow \mathcal{S}'$
of $q$ and $q'$, respectively,
then using proper base change for the cartesian diagram
\begin{equation*}
    \begin{tikzcd}[row sep={3em, between origins}, column sep={6em, between origins}]
        & \mathcal{S}=V/\mathbb{G}_\mathrm{m} \arrow[dr, "p"] & \\
        \mathcal{Z}=0/\mathbb{G}_\mathrm{m} \arrow[ur, "s"] \arrow[dr, "s'"'] & & \mathcal{X} \\
        & \mathcal{S}'=V^\vee/\mathbb{G}_\mathrm{m} \rlap{ ,} \arrow[ur, "p'"']
    \end{tikzcd}
\end{equation*}
and using standard functoriality, including the upper shriek functor.
We leave the details to the reader.
\end{para}

\begin{remark}
We note that for a more general stack 
$\mathcal{X}=V/G$ with $V$ quasi-symmetric, the argument above becomes combinatorially more involved due to the potentially higher-dimensional lattice 
$\Lambda^T$, as well as the need to invoke the Borel–Weil–Bott theorem alongside the Koszul resolution in constructing resolutions for the Hall induction functor. Aside from these additional technicalities, however, the argument proceeds in the same manner.
\end{remark}

\begin{para}[Semiorthogonal decompositions via GIT]
    For the convenience of the reader,
    we mention explicitly the semiorthogonal decompositions constructed by
    \textcite{halpern-leistner-2015-git, MR3895631}
    using the choice of a stability condition.
    There are two such semiorthogonal decompositions,
    corresponding to the two possible chambers of non-trivial linearizations
    of the action of $\mathbb{G}_\mathrm{m}$ on $V\oplus V^\vee$:
    \begin{align*}
    \mathsf{Perf} (\mathcal{X})&= \Bigl<
        {\star} \, \langle \mathbb{C}(a) \rangle,
        \mathsf{W}(\delta), {\star} \, \langle \mathbb{C}(a') \rangle
        \Bigm|
        a < \delta-n/2<a'
    \Bigr> \ ,\\
    \mathsf{Perf} (\mathcal{X})&= \Bigl<
        {\star'} \, \langle \mathbb{C}(b) \rangle,
        \mathsf{W}(\delta), 
        {\star'} \, \langle \mathbb{C}(b') \rangle
        \Bigm|
        b>\delta+n/2>b'
    \Bigr>
\end{align*}
The order is as follows between the summands coming from Hall induction, alternatively supported on the unstable locus:
\begin{itemize}
    \item $\text{Hom}(\star\, \mathbb{C}(a), \star\, \mathbb{C}(a')) \simeq 0\text{ for }a'<a$,
    \item $\text{Hom}(\star' \, \mathbb{C}(b), \star' \, \mathbb{C}(b')) \simeq 0\text{ for }b'>b$.
\end{itemize}
The proof that these are indeed semiorthogonal decompositions is analogous to the proof of the semiorthogonal decomposition~\cref{SODinanexample}.
\end{para}

\section{The smooth case}

\label{sec-sm}

The goal of this section is to globalize the construction in
\cref{thm-lq-sod}
to obtain semiorthogonal decompositions for more general smooth stacks in
\cref{thm-sm-sod}.
This result will be later generalized to quasi-smooth stacks
in \cref{thm-qsm-sod},
but the proof of the latter more general result
will rely on the smooth case that we prove here.

\subsection{The component lattice}

To globalize the construction in
\cref{thm-lq-sod}
from linear quotient stacks to general smooth stacks,
we use the formalism of component lattices from
\cite{bu-halpern-leistner-ibanez-nunez-kinjo-intrinsic-dt-1}.
Here, we recall their definition and basic properties
for general stacks, not necessarily smooth.
For more details, see \cite{bu-halpern-leistner-ibanez-nunez-kinjo-intrinsic-dt-1}.

\begin{para}[The component lattice]
    Let~$\mathcal{X}$ be a derived algebraic stack over~$\mathbb{C}$.
    The \emph{component lattice} of~$\mathcal{X}$ is defined as the functor
    \begin{align}
        \mathrm{CL} (\mathcal{X}) \colon
        \mathsf{Lat} (\mathbb{Z})^\mathrm{op}
        & \longrightarrow \mathsf{Set} \ ,
        \notag \\
        \Lambda
        & \longmapsto
        \uppi_0 \, \calMap (* / T_\Lambda, \mathcal{X}) \ ,
    \end{align}
    where $\mathsf{Lat} (\mathbb{Z})$
    is the category of finitely generated free $\mathbb{Z}$-modules,
    $\uppi_0$ denotes taking the set of connected components,
    $\calMap (-, -)$ denotes the derived mapping stack,
    and $T_\Lambda$ is the algebraic torus
    whose cocharacter lattice is~$\Lambda$.
    The component lattice $\mathrm{CL} (\mathcal{X})$
    only depends on the classical truncation~$\mathcal{X}_{\mathrm{cl}}$
    of~$\mathcal{X}$.

    Such a functor is called a \emph{formal lattice},
    and thought of as an object glued from copies of
    $\mathbb{Z}^n$ (for possibly different~$n$)
    along arbitrary $\mathbb{Z}$-linear maps.
    Under very mild conditions on~$\mathcal{X}$,
    as in \cite[Theorem~6.2.8]{bu-halpern-leistner-ibanez-nunez-kinjo-intrinsic-dt-1},
    $\mathrm{CL} (\mathcal{X})$
    can be glued from finitely many copies of~$\mathbb{Z}^n$
    along finitely many maps.

    The \emph{underlying set} of~$\mathrm{CL} (\mathcal{X})$
    is defined to be
    \begin{equation}
        |\mathrm{CL} (\mathcal{X})| =
        \mathrm{CL} (\mathcal{X}) (\mathbb{Z}) =
        \uppi_0 \, \calMap (* / \mathbb{G}_\mathrm{m}, \mathcal{X}) \ .
    \end{equation}
    Morphisms of formal lattices
    $\alpha \colon \Lambda \to \mathrm{CL} (\mathcal{X})$
    for $\Lambda \in \mathsf{Lat} (\mathbb{Z})$
    are in bijection with elements of
    $\mathrm{CL} (\mathcal{X}) (\Lambda)$,
    and are called \emph{faces}.
    Such a face is \emph{non-degenerate}
    if it does not factor through a lattice
    $\Lambda' \in \mathsf{Lat} (\mathbb{Z})$
    of lower rank than~$\Lambda$.

    There is also a rational version,
    \begin{equation*}
        \mathrm{CL}_\mathbb{Q} (\mathcal{X}) =
        \mathrm{CL} (\mathcal{X}) \otimes \mathbb{Q} \colon
        \mathsf{Lat} (\mathbb{Q})^\mathrm{op}
        \longrightarrow \mathsf{Set} \ ,
    \end{equation*}
    where $\mathsf{Lat} (\mathbb{Q})$
    is the category of finite-dimensional $\mathbb{Q}$-vector spaces,
    with its underlying set
    $|\mathrm{CL}_\mathbb{Q} (\mathcal{X})| =
    \mathrm{CL}_\mathbb{Q} (\mathcal{X}) (\mathbb{Q})$.

    Similarly, morphisms of formal lattices
    $\alpha \colon F \to \mathrm{CL}_\mathbb{Q} (\mathcal{X})$
    for $F \in \mathsf{Lat} (\mathbb{Q})$
    are called \emph{faces},
    and a face is \emph{non-degenerate}
    if it does not factor through a vector space of lower dimension.
\end{para}

\begin{example}[Linear quotients]
    \label{eg-cl-lq}
    In the setting of \cref{para-linear-quot},
    the set $\uppi_0 \, \calMap (* / \mathbb{G}_\mathrm{m}, V / G)$
    can be identified with the set of conjugacy classes
    of cocharacters $\mathbb{G}_\mathrm{m} \to G$,
    and we have isomorphisms of formal lattices
    \begin{equation*}
        \mathrm{CL} (V / G) \simeq
        \Lambda_T / W \ ,
        \qquad
        \mathrm{CL}_\mathbb{Q} (V / G) \simeq
        (\Lambda_T \otimes \mathbb{Q}) / W \ ,
    \end{equation*}
    where~$T \subset G$ is a maximal torus,
    $\Lambda_T$ is its cocharacter lattice,
    and~$W$ is the Weyl group of~$G$.
\end{example}

\begin{example}[\textG-bundles on curves]
    \label{eg-cl-bun-g}
    Let~$C$ be a connected smooth projective curve over~$\mathbb{C}$,
    let~$G$ be a connected reductive group,
    with maximal torus $T \subset G$ and Weyl group~$W$.
    Let $d \in \uppi_1 (G)$ be a degree,
    and let $\calBun_G^{\mathrm{ss}} (d)$
    be the moduli stack of semistable $G$-bundles on~$C$ of degree~$d$,
    and assume that it is non-empty.
    Then
    \begin{equation*}
        \mathrm{CL} (\calBun_G^{\mathrm{ss}} (d)) \subset
        \Lambda_T / W
    \end{equation*}
    is the sub-formal lattice consisting of cocharacters
    $\lambda \in \Lambda_T$
    such that~$d$ is \emph{admissible} for the Levi subgroup
    $G^\lambda = \mathrm{Z}_G (\lambda) \subset G$,
    meaning that there exists an induced degree
    $d \in \uppi_1 (G^\lambda)$, which is then necessarily unique.
    See \cite[\S 10.3]{bu-davison-ibanez-nunez-kinjo-padurariu} for details.
    Here, the non-emptiness of $\calBun_G^{\mathrm{ss}} (d)$
    is automatic if $g (C) \geq 1$,
    and if $g (C) = 0$,
    it is the condition that $d$ lies in the image of
    $\Lambda_{\mathrm{Z} (G)^\circ} \hookrightarrow \uppi_1 (G)$,
    where $\mathrm{Z} (G)^\circ \subset G$
    is the unit component of the centre.

    The same description also holds for
    $\calHiggs_G^{\mathrm{ss}} (d)$,
    $\calConn_G (d)$,
    and $\calLoc_G (d)$,
    the moduli stacks of semistable $G$-Higgs bundles on~$C$ of degree~$d$,
    $G$-bundles with connections on~$C$ of degree~$d$,
    and $G$-local systems on~$C$ of degree~$d$, respectively,
    but in the latter two cases,
    the non-emptiness condition also requires that $d$ is a torsion element.
\end{example}

\begin{para}[Graded points and filtrations]
    \label{para-grad-filt}
    Let~$\mathcal{X}$ be a derived algebraic stack over~$\mathbb{C}$
    whose classical truncation is a classical algebraic stack
    with affine stabilizers and separated diagonal.

    Then, each element
    $\lambda \in |\mathrm{CL} (\mathcal{X})|$
    corresponds to connected components
    \begin{equation*}
        \mathcal{X}_\lambda \subset
        \calMap (* / \mathbb{G}_\mathrm{m}, \mathcal{X}) \ ,
        \qquad
        \mathcal{X}_\lambda^+ \subset
        \calMap (\mathbb{A}^1 / \mathbb{G}_\mathrm{m}, \mathcal{X}) \ ,
    \end{equation*}
    where $\mathbb{G}_\mathrm{m}$ acts on~$\mathbb{A}^1$ by scaling,
    and we take the derived mapping stacks.
    These mapping stacks are called
    the stacks of \emph{graded points} and \emph{filtered points} of~$\mathcal{X}$,
    respectively,
    and are studied in \textcite{halpern-leistner-instability,halpern-leistner-derived}.

    There is a natural morphism
    $\mathrm{gr}_\lambda \colon \mathcal{X}_\lambda^+ \to \mathcal{X}_\lambda$
    induced by the inclusion
    $0 \colon {*} / \mathbb{G}_\mathrm{m} \to \mathbb{A}^1 / \mathbb{G}_\mathrm{m}$,
    and evaluation morphisms
    $\mathrm{tot}_\lambda \colon \mathcal{X}_\lambda \to \mathcal{X}$
    and
    $\mathrm{ev}_\lambda \colon \mathcal{X}_\lambda^+ \to \mathcal{X}$,
    the latter given by evaluating at
    $1 \in \mathbb{A}^1 / \mathbb{G}_\mathrm{m}$.

    Moreover, the stacks $\mathcal{X}_\lambda, \mathcal{X}_\lambda^+$
    and the above morphisms between them
    are invariant under
    scaling~$\lambda$ by a positive factor,
    so they are defined for all
    $\lambda \in |\mathrm{CL}_\mathbb{Q} (\mathcal{X})|$.

    For example, if $\mathcal{X} = V / G$ as in \cref{eg-cl-lq},
    and $\lambda \in \Lambda_T \otimes \mathbb{Q}$
    is a rational cocharacter,
    then $\mathcal{X}_\lambda = V^\lambda / G^\lambda$
    and $\mathcal{X}_\lambda^+ = V^{\lambda, +} / G^{\lambda, +}$
    in the notation of \cref{para-lq-hall}.

    For a morphism $\mathcal{Y} \to \mathcal{X}$
    and an element $\lambda \in |\mathrm{CL} (\mathcal{X})|$,
    we also denote by
    \begin{equation*}
        \mathcal{Y}_\lambda \subset
        \calMap (* / \mathbb{G}_\mathrm{m}, \mathcal{Y}) \ ,
        \qquad
        \mathcal{Y}_\lambda^+ \subset
        \calMap (\mathbb{A}^1 / \mathbb{G}_\mathrm{m}, \mathcal{Y})
    \end{equation*}
    the open and closed substacks which are
    preimages of $\mathcal{X}_\lambda$ and $\mathcal{X}_\lambda^+$
    under the induced morphisms of mapping stacks.
    Similarly, they are also defined for all
    $\lambda \in |\mathrm{CL}_\mathbb{Q} (\mathcal{X})|$.
    We have
    $\mathcal{Y}_\lambda = \coprod_{\lambda'} \mathcal{Y}_{\lambda'}$
    and
    $\mathcal{Y}_\lambda^+ = \coprod_{\lambda'} \mathcal{Y}_{\lambda'}^+$,
    where~$\lambda'$ runs through preimages of~$\lambda$
    in $|\mathrm{CL}_\mathbb{Q} (\mathcal{Y})|$.
\end{para}

\begin{para}[Stacks with quasi-compact graded points]
    \label{para-qcgp}
    Let~$\mathcal{X}$ be a derived algebraic stack over~$\mathbb{C}$
    as in \cref{para-grad-filt}.
    Following \cite[\S 6.2.2]{bu-halpern-leistner-ibanez-nunez-kinjo-intrinsic-dt-1},
    we say that~$\mathcal{X}$ has \emph{quasi-compact graded points}
    if the morphism
    $\mathrm{tot}_\lambda \colon \mathcal{X}_\lambda \to \mathcal{X}$
    is quasi-compact for all~$\lambda \in |\mathrm{CL} (\mathcal{X})|$.

    This is a very mild condition.
    For example, by \textcite[Proposition~3.8.2]{halpern-leistner-instability},
    if~$\mathcal{X}$ is qca and has a \emph{norm on graded points}
    as in \cref{para-norm-on-graded-points} below,
    then~$\mathcal{X}$ has quasi-compact graded points.
    Note also that in this case,
    $\mathcal{X}_\lambda$ is qca for all
    $\lambda \in |\mathrm{CL} (\mathcal{X})|$,
    which is the main purpose of this condition here.
\end{para}

\begin{para}[The central rank]
    \label{para-central-rank}
    When proving properties of stacks related to the component lattice,
    a useful technique is to use induction on the \emph{central rank} of the stack,
    based on the idea that the stack~$\mathcal{X}_\lambda$
    is always `simpler' than~$\mathcal{X}$ measured by the central rank,
    except when it is the same as~$\mathcal{X}$,
    and the induction allows us to
    we always assume that the property holds for all~$\mathcal{X}_\lambda$
    that are different from~$\mathcal{X}$.

    Let~$\mathcal{X}$ be a stack as in \cref{para-grad-filt},
    and assume that~$\mathcal{X}$ is connected.

    An element $\lambda \in |\mathrm{CL}_\mathbb{Q} (\mathcal{X})|$
    is said to be \emph{central}, if the forgetful morphism
    $\mathrm{tot}_\lambda \colon \mathcal{X}_\lambda \to \mathcal{X}$
    is an isomorphism.
    By
    \cite[Theorem~4.1.5]{bu-halpern-leistner-ibanez-nunez-kinjo-intrinsic-dt-1},
    the locus of central elements in
    $|\mathrm{CL}_{\mathbb{Q}} (\mathcal{X})|$
    is the injective image of the \emph{maximal central face}
    $\alpha_0 \colon F_0 \to \mathrm{CL}_\mathbb{Q} (\mathcal{X})$.
    There is also an integral maximal central face
    $\Lambda_0 \to \mathrm{CL} (\mathcal{X})$,
    which can be similarly described.

    The \emph{central rank} of~$\mathcal{X}$,
    denoted $\mathrm{crk} (\mathcal{X})$,
    is defined as the dimension of the maximal central face~$\alpha_0$.
    Also, when~$\mathcal{X}$ is quasi-compact,
    the \emph{rank} of~$\mathcal{X}$,
    denoted $\mathrm{rk} (\mathcal{X})$,
    is defined as the maximal dimension of a non-degenerate face of
    $\mathrm{CL}_\mathbb{Q} (\mathcal{X})$.

    It follows from
    \cite[Theorem~4.1.4]{bu-halpern-leistner-ibanez-nunez-kinjo-intrinsic-dt-1}
    that for all $\lambda \in |\mathrm{CL}_\mathbb{Q} (\mathcal{X})|$,
    we have
    \begin{equation}
        \mathrm{crk} (\mathcal{X})
        \leq \mathrm{crk} (\mathcal{X}_\lambda)
        \leq \mathrm{rk} (\mathcal{X}_\lambda)
        \leq \mathrm{rk} (\mathcal{X}) \ ,
    \end{equation}
    and the first inequality is strict unless~$\lambda$ is central.
    Therefore, if we use the invariant
    $\mathrm{rk} (\mathcal{X}) - \mathrm{crk} (\mathcal{X})$
    to measure the complexity of~$\mathcal{X}$,
    then~$\mathcal{X}_\lambda$ is either strictly simpler than~$\mathcal{X}$
    (when~$\lambda$ is non-central),
    or isomorphic to~$\mathcal{X}$
    (when~$\lambda$ is central).
\end{para}

\begin{para}[Good moduli spaces]
    \label{para-gms}
    We will use the notion of \emph{good moduli spaces}
    for algebraic stacks following \textcite{alper-2013-good},
    extended to derived algebraic stacks by
    \textcite{ahlqvist-hekking-pernice-savvas}.
    For a derived algebraic stack~$\mathcal{X}$,
    it admits a good moduli space $\mathcal{X} \to X$
    if and only if its classical truncation~$\mathcal{X}_{\mathrm{cl}}$
    admits a good moduli space
    $\mathcal{X}_{\mathrm{cl}} \to X_{\mathrm{cl}}$,
    in which case $X_{\mathrm{cl}}$
    is the classical truncation of
    the derived algebraic space~$X$.

    By \textcite[Lemma~2.6.7]{ibanez-nunez-refined},
    for a stack~$\mathcal{X}$ as in \cref{para-grad-filt},
    with quasi-compact graded points,
    if~$\mathcal{X}$ admits a good moduli space~$X$,
    then so does~$\mathcal{X}_\lambda$
    for all~$\lambda \in |\mathrm{CL}_\mathbb{Q} (\mathcal{X})|$,
    and we denote the good moduli space by
    $\mathcal{X}_\lambda \to X_\lambda$.
\end{para}

\begin{para}[Hall induction]
    \label{para-hall-induction}
    Let~$\mathcal{X}$ be as in \cref{para-grad-filt},
    and assume that~$\mathcal{X}$ is quasi-smooth,
    has quasi-compact graded points,
    and has a good moduli space.
    Consider the attractor correspondence
    \begin{equation}
        \mathcal{X}_\lambda
        \overset{\mathrm{gr}_\lambda}{\longleftarrow}
        \mathcal{X}_\lambda^+
        \overset{\mathrm{ev}_\lambda}{\longrightarrow}
        \mathcal{X} \ .
    \end{equation}
    The morphism~$\mathrm{gr}_\lambda$ is quasi-smooth,
    and the morphism~$\mathrm{ev}_\lambda$ is proper by
    \textcite[Proposition~3.21~(3)]{alper-halpern-leistner-heinloth-2023}.
    The \emph{Hall induction} functor is then defined by
    \begin{equation}
        {\star}_\lambda =
        (\mathrm{ev}_\lambda)_* \,
        \mathrm{gr}_\lambda^* \colon
        \mathsf{Coh} (\mathcal{X}_\lambda) \longrightarrow
        \mathsf{Coh} (\mathcal{X}) \ .
    \end{equation}
\end{para}

\begin{para}[Quadratic norms]
    \label{para-norm-on-graded-points}
    Let~$\mathcal{X}$ be a stack
    as in \cref{para-grad-filt}.
    We define a \emph{quadratic norm on graded points} of~$\mathcal{X}$,
    or a \emph{quadratic norm on} $\mathrm{CL}_{\mathbb{Q}} (\mathcal{X})$,
    to be a function
    \begin{equation*}
        q \colon |\mathrm{CL}_{\mathbb{Q}} (\mathcal{X})|
        \longrightarrow \mathbb{Q}_{\geq 0} \ ,
    \end{equation*}
    such that for each non-degenerate face
    $\alpha \colon F \to \mathrm{CL}_{\mathbb{Q}} (\mathcal{X})$,
    the composition
    $q_\alpha = q \, \alpha \colon F \to \mathbb{Q}_{\geq 0}$
    is a positive-definite quadratic form on~$F$.
    We write
    $|\lambda|_q = \sqrt{q (\lambda)} \in \mathbb{R}_{\geq 0}$
    for $\lambda \in |\mathrm{CL}_{\mathbb{Q}} (\mathcal{X})|$.

    This notion is a special case of
    a \emph{norm on graded points}, as defined by
    \textcite[Definition~4.1.12]{halpern-leistner-instability},
    where the norm is not required to be quadratic.

    For example,
    in \cref{eg-cl-lq},
    a quadratic norm~$q$ on~$\mathrm{CL}_{\mathbb{Q}} (V / G)$
    is the same data as a $W$-invariant positive-definite quadratic form
    on the cocharacter lattice $\Lambda_T \otimes \mathbb{Q}$.
\end{para}

\begin{example}
    \label{eg-lms-norm}
    We give an example of a quadratic norm~$q$
    on~$\mathrm{CL}_{\mathbb{Q}} (\mathcal{X})$
    when~$\mathcal{X}$ is a \emph{linear moduli stack} in the sense of
    \cite[\S 7.1]{bu-halpern-leistner-ibanez-nunez-kinjo-intrinsic-dt-1}.
    This is often the case when~$\mathcal{X}$
    is the moduli stack of objects in a linear category,
    such as the moduli stack of
    representations of a quiver
    or the moduli stack of coherent sheaves
    on a smooth projective variety.

    In this case, the set $\uppi_0 (\mathcal{X})$
    carries the structure of a commutative monoid
    induced by the direct sum morphism
    $\oplus \colon \mathcal{X} \times \mathcal{X} \to \mathcal{X}$.

    Assume that we are given a \emph{rank function},
    that is a monoid homomorphism
    $r \colon \uppi_0 (\mathcal{X}) \to \mathbb{Q}_{\geq 0}$
    such that $r (\gamma) > 0$ for all $\gamma \neq 0$.

    Each point $\lambda \in |\mathrm{CL} (\mathcal{X})|$
    corresponds to a finitely supported map
    $\gamma \colon \mathbb{Z} \to \uppi_0 (\mathcal{X})$,
    which records the type of a $\mathbb{Z}$-graded object
    in the linear category.
    More generally, each
    $\lambda \in |\mathrm{CL}_\mathbb{Q} (\mathcal{X})|$
    corresponds to a finitely supported map
    $\gamma \colon \mathbb{Q} \to \uppi_0 (\mathcal{X})$.
    For such~$\lambda$, define
    \begin{equation}
        \label{eq-lms-norm}
        q (\lambda)
        = \sum_{n \in \mathbb{Q}} n^2 \, r (\gamma (n)) \ .
    \end{equation}
    Then this defines a quadratic norm on
    $\mathrm{CL}_\mathbb{Q} (\mathcal{X})$.
\end{example}

\subsection{The decomposition}

In this subsection, we state our main result
for smooth stacks, \cref{thm-sm-sod}.
Its proof will be given in
\cref{subsec-pf-sm-sod}.

\begin{para}[Symmetric stacks]
    \label{para-symmetric-stacks}
    We introduce various notions of symmetric stacks.
    We state the definitions for derived stacks,
    as they will be used in later sections.

    Let~$\mathcal{X}$ be a derived algebraic $1$-stack
    locally of finite presentation over~$\mathbb{C}$,
    such that its classical truncation~$\mathcal{X}_{\mathrm{cl}}$
    has affine stabilizers.
    Suppose that closed points of~$\mathcal{X}_{\mathrm{cl}}$
    have reductive stabilizers,
    which is satisfied, for example,
    when~$\mathcal{X}_{\mathrm{cl}}$ has a good moduli space.

    For a closed point $x \in \mathcal{X}$,
    let $G_x$ be the stabilizer at~$x$,
    which is reductive, and let
    $\mathrm{T}^{\mathrm{vir}}_{\mathcal{X}, \, x} =
    \sum_{i \in \mathbb{Z}} {} (-1)^i \,
    [\mathrm{H}^i (\mathbb{T}_\mathcal{X} |_x)]$
    as a \emph{virtual $G_x$-representation}, that is,
    an element in the Grothendieck group of
    $G_x$-representations.
    Let $G_x^\circ \subset G_x$ be the unit component.

    We introduce the following notions of symmetry:

    \begin{enumerate}
        \item
            \label{item-sym-stack}
            $\mathcal{X}$ is \emph{symmetric}
            if for any closed point $x \in \mathcal{X}$,
            we have
            $\mathrm{T}^{\mathrm{vir}}_{\mathcal{X}, \, x} =
            (\mathrm{T}^{\mathrm{vir}}_{\mathcal{X}, \, x})^\vee$
            as virtual $G_x$-representations.

        \item
            \label{item-almost-sym-stack}
            $\mathcal{X}$ is \emph{almost symmetric}
            if for any closed point $x \in \mathcal{X}$,
            we have
            $\mathrm{T}^{\mathrm{vir}}_{\mathcal{X}, \, x} =
            (\mathrm{T}^{\mathrm{vir}}_{\mathcal{X}, \, x})^\vee$
            as virtual $G_x^\circ$-representations.

        \item
            \label{item-quasi-sym-stack}
            $\mathcal{X}$ is \emph{quasi-symmetric}
            if for any closed point $x \in \mathcal{X}$,
            the virtual $G_x^\circ$-representation
            $\mathrm{T}^{\mathrm{vir}}_{\mathcal{X}, \, x}$
            is quasi-symmetric in the following sense:
            Consider the multiset of weights
            $\mathrm{wt} (\mathrm{T}^{\mathrm{vir}}_{\mathcal{X}, \, x})
            \in \mathbb{Z} [\Lambda^T]$.
            Then for any rational line
            $\ell \subset \Lambda^T \otimes \mathbb{Q}$
            through the origin,
            the weighted sum of elements of
            $\mathrm{wt} (\mathrm{T}^{\mathrm{vir}}_{\mathcal{X}, \, x}) \cap \ell$
            is zero.
    \end{enumerate}
    See \cite{bu-davison-ibanez-nunez-kinjo-padurariu}
    for the terminology in \cref{item-almost-sym-stack},
    and \cite{halpern-leistner-sam-2020,spenko-van-den-bergh-2021}
    for \cref{item-quasi-sym-stack}.
\end{para}

\begin{para}[Window subcategories]
    \label{para-sm-window}
    Let~$\mathcal{X}$ be a smooth stack over~$\mathbb{C}$
    with affine stabilizers.

    For an object $E \in \mathsf{Perf} (* / \mathbb{G}_{\mathrm{m}})$,
    let $E = \bigoplus_{n \in \mathbb{Z}} E_n$
    be the decomposition into $\mathbb{G}_{\mathrm{m}}$-weights,
    and define
    \begin{alignat}{2}
        \mathrm{Wt} (E)
        & =
        \{ n \in \mathbb{Z} \mid E_n \nsimeq 0 \}
        && \quad {\subset} \quad
        \mathbb{Z} \ ,
        \\[1ex]
        \mathrm{wt} (E)
        & =
        \sum_{n \in \mathbb{Z}} {}
        (\operatorname{rank} E_n) \cdot [n]
        && \quad {\in} \quad
        \mathbb{Z} [\mathbb{Z}] =
        \bigoplus_{n \in \mathbb{Z}} \mathbb{Z} \cdot [n] \ ,
    \end{alignat}
    where we regard $\mathrm{wt} (E)$ as a multiset
    with possibly negative multiplicities.

    For a linear function
    $\delta \colon \mathrm{CL}_{\mathbb{Q}} (\mathcal{X}) \to \mathbb{Q}$,
    define the \emph{window subcategory}
    \begin{equation*}
        \mathsf{W}_{\mathcal{X}} (\delta) \subset \mathsf{Perf} (\mathcal{X})
    \end{equation*}
    to consist of objects~$E$ such that for any
    $\lambda \colon {*} / \mathbb{G}_\mathrm{m} \to \mathcal{X}$,
    we have
    \begin{equation}
        \label{eq-sm-window-interval}
        \mathrm{Wt} (\lambda^* \, E)
        \subset
        \delta (\lambda) + \Biggl[
            \frac{1}{2}
            \sum_{\substack{n \in \mathrm{wt} (\lambda^* \, \mathbb{T}_{\mathcal{X}}) : \\ n < 0}}
            n
            \ , \
            \frac{1}{2}
            \sum_{\substack{n \in \mathrm{wt} (\lambda^* \, \mathbb{T}_{\mathcal{X}}) : \\ n > 0}}
            n
        \Biggr] \ ,
    \end{equation}
    where we sum with (possibly negative) multiplicities.

    Note that when~$\mathcal{X}$ is quasi-symmetric
    in the sense of \cref{para-symmetric-stacks},
    the above interval (without $\delta (\lambda)$)
    is centred at~$0$.

    Equivalently, $E \in \mathsf{W}_{\mathcal{X}} (\delta)$
    if and only if for any face
    $\alpha \colon \Lambda \to \mathrm{CL} (\mathcal{X})$
    of $\mathcal{X}$, we have
    \begin{equation}
        \label{eq-wt-polytope}
        \mathrm{Wt}_\alpha (E)
        \subset
        \delta_\alpha + \frac{1}{2}
        \sum_{v \in \mathrm{wt}_\alpha (\mathbb{T}_{\mathcal{X}})} {}
        [0, v] \ ,
    \end{equation}
    where $\delta_\alpha = \delta |_\alpha \in \Lambda^\vee \otimes \mathbb{Q}$,
    and
    $\mathrm{Wt}_\alpha (E) \subset \Lambda^\vee$
    and
    $\mathrm{wt}_\alpha (\mathbb{T}_{\mathcal{X}}) \in \mathbb{Z} [\Lambda^\vee]$
    denote the set and the multiset of weights.
    The right-hand side of~\cref{eq-wt-polytope}
    is the \emph{weight polytope} of~$\mathcal{X}$
    for the face~$\alpha$ centred at~$\delta_\alpha$,
    where we take the Minkowski sum,
    and we use a Minkowski difference
    when there are negative multiplicities,
    similarly to \cref{eq-lq-weight-polytope}.
\end{para}

\begin{theorem}
    \label{thm-sm-sod}
    Let~$\mathcal{X}$ be a
    quasi-symmetric, smooth, qca
    stack over~$\mathbb{C}$,
    with separated diagonal,
    and with a good moduli space~$X$,
    and assume we are given
    \begin{itemize}
        \item
            A quadratic norm~$q$ on $\mathrm{CL}_{\mathbb{Q}} (\mathcal{X})$,
            in the sense of\/ \cref{para-norm-on-graded-points}.
        \item
            A linear function
            $\delta \colon \mathrm{CL}_{\mathbb{Q}} (\mathcal{X}) \to \mathbb{Q}$.
    \end{itemize}
    Then there is a semiorthogonal decomposition
    \begin{equation}
        \label{eq-sm-sod}
        \mathsf{Perf} (\mathcal{X}) =
        \Bigl<
            \star_\lambda \,
            \mathsf{W}_{\mathcal{X}_\lambda} (\delta_\lambda)
            \Bigm|
            \lambda \in |\mathrm{CL}_\mathbb{Q} (\mathcal{X})|
        \Bigr> \ ,
    \end{equation}
    where ${\star}_\lambda$ is the Hall induction functor in\/ \cref{para-hall-induction},
    and $\delta_\lambda \colon \mathrm{CL}_{\mathbb{Q}} (\mathcal{X}_\lambda) \to \mathbb{Q}$
    is defined for
    $\mu \in |\mathrm{CL} (\mathcal{X}_\lambda)|$ by
    \begin{equation}
        \label{eq-def-delta-lambda}
        \delta_\lambda (\mu) =
        \delta (\mu)
        - \frac{1}{2}
        \sum_{v \in \mathrm{wt} (\mu^* (\mathbb{T}_{\mathcal{X}} |_{\mathcal{X}_\lambda})^{\lambda > 0})} {} v
        - q (\lambda, \mu) \ ,
    \end{equation}
    where the meaning of the pairing $q (\lambda, \mu)$ is explained below.

    Each functor
    ${\star}_\lambda \colon \mathsf{W}_{\mathcal{X}_\lambda} (\delta_\lambda)
    \to \mathsf{Perf} (\mathcal{X})$
    is fully faithful, and its image
    ${\star}_\lambda \, \mathsf{W}_{\mathcal{X}_\lambda} (\delta_\lambda)$
    is admissible and
    $\mathsf{Perf} (X)$-linear.

    The order of the semiorthogonal decomposition
    can be chosen to be any order such that for any two terms labelled $\lambda$ and~$\lambda'$,
    if\/ $\lambda, \lambda'$ are in a common face of\/
    $\mathrm{CL}_\mathbb{Q} (\mathcal{X})$
    and $|\lambda|_q < |\lambda'|_q$,
    then $\lambda$ is on the right of $\lambda'$.
\end{theorem}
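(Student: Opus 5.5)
The plan is to deduce \cref{thm-sm-sod} from the linear case \cref{thm-lq-sod} by a local-to-global argument. We use the étale local structure of smooth stacks with good moduli spaces: by the Luna étale slice theorem of Alper--Hall--Rydh, every closed point $x \in \mathcal{X}$ has a neighbourhood fitting into a cartesian square
\[
    V / G_x \longleftarrow \mathcal{U} \longrightarrow \mathcal{X} \ ,
    \qquad
    \mathcal{U} = \mathcal{X} \times_X U \ ,
\]
where $U \to X$ is étale, $\mathcal{U} \to \mathcal{X}$ is strongly étale (base changed from the good moduli space), and $\mathcal{U} \to V/G_x$ is also strongly étale with $V = \mathrm{T}_{\mathcal{X},x}$. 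Quasi-symmetry of~$\mathcal{X}$ means that~$V$ is a quasi-symmetric $G_x^\circ$-representation. The data $q$ and $\delta$ restrict along $\mathrm{CL}_\mathbb{Q}(V/G_x) \to \mathrm{CL}_\mathbb{Q}(\mathcal{X})$ to a $W$-invariant quadratic form and weight. The non-connectedness of $G_x$ needs to be handled, either by passing to $V/G_x^\circ$ and taking $\pi_0(G_x)$-invariants, or by running the proof of \cref{thm-lq-sod} directly, since the Borel--Weil--Bott computation only uses $G_x^\circ$. Under this restriction, $\delta_\lambda$ from \cref{eq-def-delta-lambda} becomes exactly the weight of \cref{thm-lq-sod}, and $\mathcal{X}_\lambda$ pulls back to a union of the $V^{\lambda'}/G^{\lambda'}$ over preimages~$\lambda'$ of~$\lambda$.

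First, compatibility. Hall induction commutes with strongly étale base change, because $\mathcal{U}_\lambda^+ = \mathcal{U} \times_{\mathcal{X}} \mathcal{X}_\lambda^+$ for strongly étale maps, by the results on graded and filtered points in \cite{bu-halpern-leistner-ibanez-nunez-kinjo-intrinsic-dt-1}; proper base change then applies to $\mathrm{ev}_\lambda$. Window conditions are defined pointwise via maps $*/\mathbb{G}_\mathrm{m} \to \mathcal{X}$, so they are étale local. For the same reason it suffices to test them at points of $\mathcal{X}_\lambda$ lying over closed points of~$\mathcal{X}$.

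Second, $\mathsf{Perf}(X)$-linearity and reduction to local statements. All functors are $\mathsf{Perf}(X)$-linear by the projection formula. Semiorthogonality and full faithfulness then amount to showing that certain $\calHom$ sheaves, pushed forward to~$X$, vanish or are isomorphisms. Such statements can be checked after étale base change $U \to X$, where they reduce to \cref{para-lq-so} and \cref{para-lq-ff}.

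For admissibility and generation, I would use the well-ordering of the $\lambda$ contributing nonzero pieces over a quasi-compact~$X$; only finitely many $\lambda$ matter below any norm bound, by quasi-compactness and \cref{para-qcgp}. I would then construct right adjoints $\star_\lambda^R$ followed by projection to the window. This projection can be defined globally using the weight decomposition for the central $*/T$-action from \cref{para-central-rank}, combined with a Halpern-Leistner-type window retraction. Existence of adjoints and vanishing of the residual category can also be checked étale locally over~$X$, again by linearity: an object $E$ whose restriction to every $\mathcal{U}$ is left-orthogonal to all pieces is zero, since the local pieces generate. Here I would follow the induction on $\mathrm{rk} - \mathrm{crk}$ from \cref{para-central-rank}, assuming the theorem for the $\mathcal{X}_\lambda$ with $\lambda$ non-central.

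The ordering requirement is that $\lambda,\lambda'$ lie in a common face. This restriction arises because semiorthogonality only needs to be checked for pairs that both occur at a common closed point; otherwise the supports in~$X$ are disjoint locally.

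The main obstacle is gluing admissibility and projection functors. A local semiorthogonal decomposition gives global ones only if the local pieces are canonically the base changes of the global pieces; otherwise one cannot descend. I would handle this by \emph{defining} the global projections via the explicit formulae and then verifying the adjunction unit and counit isomorphisms locally. A second delicate point is the identification of $\mathrm{CL}_\mathbb{Q}$ of the local chart with a sub-formal-lattice of $\mathrm{CL}_\mathbb{Q}(\mathcal{X})$, which is needed so that the $\lambda$-indexed pieces match.
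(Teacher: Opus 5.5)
Your overall architecture matches the paper's: Luna étale charts $\mathcal{U}=\mathcal{X}\times_X U$, base change of Hall induction along them, and checking semiorthogonality and full faithfulness on $g_*\calHom$ over the good moduli space before descending along the étale cover of $X$. The gap is in generation (and hence admissibility), and it sits at the central terms $\mathsf{W}_{\mathcal{X}}(\delta_\lambda)$, for $\lambda$ in the maximal central face.

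Your residual argument needs an object that is globally left-orthogonal to all pieces to restrict to one that is left-orthogonal to all \emph{local} pieces. That requires each local piece to be split-generated over $\mathsf{Perf}(U)$ by pullbacks of global pieces. For non-central $\lambda$ this could come from induction on $\mathrm{rk}-\mathrm{crk}$, but only if the inductive statement includes local split-generation of the central window of $\mathcal{X}_\lambda$. That is precisely the property you cannot establish at the central level. Locally $\mathsf{W}_{\mathcal{U}}(\delta)$ has explicit generators pulled back from $V/G_x$, but $\mathsf{W}_{\mathcal{X}}(\delta)$ has no global generators. You flag this as the main obstacle, but your fix does not work. $\mathsf{W}_{\mathcal{X}}(\delta)$ is a magic window cut out by a polytope condition at every graded point, not the window of a $\Theta$-stratification. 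So there is no Halpern-Leistner retraction onto it and no other explicit formula; a projection onto it exists only as a consequence of the decomposition.

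The missing idea is to \emph{characterize} the central terms as a left orthogonal rather than construct their projection.
\begin{enumerate}
\item Let $\mathcal{C}$ be generated by the non-central pieces. By induction and \cref{lem-admissible} these are admissible, and only finitely many project non-trivially onto a given object. Hence $\mathcal{C}$ is left admissible and $\mathsf{Perf}(\mathcal{X})=\langle\mathcal{C},{}^\perp\mathcal{C}\rangle$.
\item Pull this two-term decomposition back to $\mathcal{U}$, using that $\mathsf{Perf}(\mathcal{U})$ is split-generated by $f^*E\otimes g'^*F$ and the base change $f_0^*g_*\simeq g'_*f^*$. Comparing with the local decomposition gives $f^*({}^\perp\mathcal{C})\subset\bigoplus_{\lambda\ \mathrm{central}}\mathsf{W}_{\mathcal{U}}(\delta_\lambda)$.
\item Window membership, together with the central weight decomposition, is tested on graded points, and all of them lift to the cover. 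So ${}^\perp\mathcal{C}\subset\bigoplus_{\lambda\ \mathrm{central}}\mathsf{W}_{\mathcal{X}}(\delta_\lambda)$. The reverse inclusion is semiorthogonality, checked locally via $g_*\calHom$.
\end{enumerate}
This is \cref{lem-sod-descent}~(ii). Generation follows, and local split-generation of the central window by $f^*\mathsf{W}_{\mathcal{X}}(\delta)$ comes out as an \emph{output}, which feeds the induction at the next level. Admissibility of the central terms then follows from right admissibility in the decomposition together with the duality $(-)^\vee$, which exchanges $\mathsf{W}_{\mathcal{X}}(\delta)$ and $\mathsf{W}_{\mathcal{X}}(-\delta)$ by quasi-symmetry.
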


\noindent
Here, the expression \cref{eq-def-delta-lambda}
should be seen as a globalization of the weight $\delta_\lambda$
in \cref{thm-lq-sod}.
The pairing $q (\lambda, \mu)$
is evaluated on the $2$-dimensional face
$\mathbb{Q}^2 \to \mathrm{CL}_\mathbb{Q} (\mathcal{X})$
induced by~$\mu$ via the process detailed in
\cite[\S 3.2.7]{bu-halpern-leistner-ibanez-nunez-kinjo-intrinsic-dt-1},
where the two basis vectors are sent to~$\lambda$ and~$\mu$,
and we evaluate~$q$ on the two basis vectors.

The proof of the theorem will be given in \cref{subsec-pf-sm-sod}.

\begin{para}[Decomposition by central weights]
    \label{para-sm-wt-decomp}
    We give a slightly more refined description of the order
    of the decomposition \cref{eq-sm-sod},
    by classifying its components by central weights.

    Assume~$\mathcal{X}$ is connected, and let
    $\alpha_0 \colon \Lambda_0 \to \mathrm{CL} (\mathcal{X})$
    be the maximal central face of~$\mathcal{X}$,
    corresponding to an action
    ${\odot} \colon {*} / T_{\Lambda_0} \times \mathcal{X} \to \mathcal{X}$,
    where $T_{\Lambda_0}$ is the torus whose
    cocharacter lattice is~$\Lambda_0$.
    Then, as we will show in \cref{lem-central-wt-decomp},
    there is an orthogonal decomposition
    \begin{equation}
        \label{eq-sm-wt-decomp}
        \mathsf{Perf} (\mathcal{X}) =
        \bigoplus_{w \in \Lambda_0^\vee}
        \mathsf{Perf} (\mathcal{X})_w \ ,
    \end{equation}
    where $\mathsf{Perf} (\mathcal{X})_w$
    is the part of \emph{central weight}~$w$,
    consisting of complexes $E \in \mathsf{Perf} (\mathcal{X})$
    such that ${\odot}^* \, E \simeq L_w \boxtimes E$,
    where $L_w \to * / T_{\Lambda_0}$
    is the line bundle of weight~$w$.

    In the decomposition \cref{eq-sm-sod},
    we have
    \begin{equation}
        {\star}_\lambda \,
        \mathsf{W}_{\mathcal{X}_\lambda} (\delta_\lambda)
        \subset
        \mathsf{Perf} (\mathcal{X})_{w_\lambda} \ ,
    \end{equation}
    where $w_\lambda (\mu) = \delta (\mu) - q (\lambda, \mu)$
    for $\mu \in \alpha_0 (\Lambda_0)$.
    Note that $\mathbb{T}_{\mathcal{X}}$
    has pure $\mu$-weight~$0$ for such~$\mu$,
    so the sum over weights of~$\mathbb{T}_\mathcal{X}$
    in \cref{eq-def-delta-lambda}
    does not contribute to $w_\lambda$.
    The morphisms $\mathrm{gr}_\lambda, \mathrm{ev}_\lambda$
    are $* / T_{\Lambda_0}$-equivariant,
    so ${\star}_\lambda$
    preserves the central weight~$w_\lambda$.
    Note also that a priori, we only have
    $w_\lambda \in \Lambda_0^\vee \otimes \mathbb{Q}$,
    but
    $\mathsf{W}_{\mathcal{X}_\lambda} (\delta_\lambda) \simeq 0$
    unless
    $w_\lambda \in \Lambda_0^\vee$ is integral.

    Therefore, two terms in the decomposition \cref{eq-sm-sod}
    labelled by~$\lambda$ and~$\lambda'$
    are mutually orthogonal if $w_\lambda \neq w_{\lambda'}$,
    and we conclude the following:
\end{para}

\begin{proposition}
    The order of the decomposition~\cref{eq-sm-sod}
    can be chosen to be any order such that for any two terms
    labelled~$\lambda$ and~$\lambda'$,
    if\/ $w_\lambda = w_{\lambda'}$,
    and if\/ $\lambda, \lambda'$ are in a common face of\/
    $\mathrm{CL}_\mathbb{Q} (\mathcal{X})$
    and $|\lambda|_q < |\lambda'|_q$,
    then $\lambda$ is on the right of\/~$\lambda'$.
    \qed
\end{proposition}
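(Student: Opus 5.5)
The plan is to deduce this from \cref{thm-sm-sod} together with the orthogonal decomposition \cref{eq-sm-wt-decomp} by central weight. The key point is that a semiorthogonal decomposition $\langle \mathcal{D}_i \mid i \in I\rangle$ stays semiorthogonal after reordering, provided the new order agrees with the old one on every pair $i, j$ for which some $\mathrm{Hom}$ between $\mathcal{D}_i$ and $\mathcal{D}_j$, in either direction, can be non-zero. Generation is order-independent, so only semiorthogonality needs to be checked.

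First, I would verify that ${\star}_\lambda \, \mathsf{W}_{\mathcal{X}_\lambda} (\delta_\lambda) \subset \mathsf{Perf} (\mathcal{X})_{w_\lambda}$. The central torus $* / T_{\Lambda_0}$ acts on $\mathcal{X}_\lambda$ and $\mathcal{X}_\lambda^+$ through the maps induced by $\alpha_0$, and $\mathrm{gr}_\lambda$, $\mathrm{ev}_\lambda$ are equivariant for this action. Hence $\mathrm{gr}_\lambda^*$ and $(\mathrm{ev}_\lambda)_*$ preserve central weights, and it suffices to show that objects of $\mathsf{W}_{\mathcal{X}_\lambda} (\delta_\lambda)$ have central weight $w_\lambda$. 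For central $\mu \in \alpha_0 (\Lambda_0)$, the weights of $\mathbb{T}_{\mathcal{X}}$ along $\mu$ are all zero. So the window interval \cref{eq-sm-window-interval} for $\mathcal{X}_\lambda$ at $\mu$, and also the sum in \cref{eq-def-delta-lambda}, degenerate: the condition becomes that the $\mu$-weight equals $\delta_\lambda (\mu) = \delta (\mu) - q (\lambda, \mu) = w_\lambda (\mu)$. One small point needs care here: $\mu$ viewed in $\mathrm{CL}(\mathcal{X}_\lambda)$ is central there as well, and the pairing $q (\lambda, \mu)$ is linear in $\mu$ on the face spanned by $\lambda$ and $\alpha_0$. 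If $w_\lambda$ is non-integral, the summand is zero and can be ignored.

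Second, I would use that $\mathrm{Hom}$ between $\mathsf{Perf}(\mathcal{X})_w$ and $\mathsf{Perf}(\mathcal{X})_{w'}$ vanishes for $w \neq w'$, which is exactly what orthogonality of \cref{eq-sm-wt-decomp} means (established in \cref{lem-central-wt-decomp}). Consequently, terms with $w_\lambda \neq w_{\lambda'}$ are mutually orthogonal.

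Finally, take any total order on the indices satisfying the stated condition. For a pair $\lambda, \lambda'$ with $w_\lambda = w_{\lambda'}$ that lie in a common face with $|\lambda|_q < |\lambda'|_q$, the required vanishing is precisely the one supplied by \cref{thm-sm-sod}. For pairs with different central weights, both Hom directions vanish by the second step. The remaining pairs are those that are not in a common face, or that have equal norm. For these, \cref{thm-sm-sod} already permits either relative order, so its proof must give vanishing in both directions.

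The main obstacle is making that last claim honest. To do so, I would extract from the proof of \cref{thm-sm-sod} the precise statement being proved: $\mathrm{Hom}(\star_\lambda E, \star_{\lambda'} E') = 0$ whenever $\lambda \ne \lambda'$ and it is not the case that $\lambda, \lambda'$ lie in a common face with $|\lambda|_q > |\lambda'|_q$. The proposition then follows formally from this statement combined with the orthogonality by central weights.
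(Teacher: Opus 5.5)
Your proposal is correct and is essentially the paper's argument in \cref{para-sm-wt-decomp}. There, the terms lie in $\mathsf{Perf}(\mathcal{X})_{w_\lambda}$ because $\mathrm{gr}_\lambda,\mathrm{ev}_\lambda$ are ${*}/T_{\Lambda_0}$-equivariant and the window degenerates at central $\mu$, and distinct central weights are orthogonal by \cref{lem-central-wt-decomp}. The paper leaves implicit your extra remark that one needs the pairwise vanishing established in the proof of \cref{thm-sm-sod}, not just that theorem's statement about admissible orders, and your remark is valid. That vanishing is checked pairwise on the charts $U_x/G_x$; preimages of $\lambda,\lambda'$ can only occur together in a chart if $\lambda,\lambda'$ share a face, and \cref{para-lq-so} gives vanishing whenever $\mu\neq\mu'$ and $|\mu|_q\leq|\mu'|_q$.
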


\subsection{Examples}

\begin{example}[Quasi-symmetric GIT quotients]
    \label{eg-sm-git}
    Let~$G$ be a reductive group over~$\mathbb{C}$,
    $T \subset G$ a maximal torus,
    and $W = \mathrm{N}_G (T) / \mathrm{Z}_G (T)$ the Weyl group.
    Let~$G$ act on a smooth algebraic space~$U$,
    such that a good quotient $U \git G$ exists,
    such as when $U$ is affine.
    Suppose that the quotient stack $U / G$ is quasi-symmetric.

    Then, for any rational character
    $\delta \in \Lambda^{\mathrm{Z} (G)^\circ} \otimes \mathbb{Q}$
    of~$\mathrm{Z} (G)^\circ$
    (the unit component of the centre of~$G$),
    and any $W$-invariant positive-definite quadratic form~$q$
    on $\Lambda_T \otimes \mathbb{Q}$,
    we have the semiorthogonal decomposition of
    \cref{thm-sm-sod},
    which can be more explicitly written as
    \begin{equation}
        \mathsf{Perf} (U / G) =
        \Bigl<
            {\star}_\lambda \,
            \mathsf{W}_{\smash{U^{\lambda'} / G^\lambda}} (\delta_{\lambda'})
            \Bigm|
            \lambda \in (\Lambda_T \otimes \mathbb{Q}) / W , \
            U^{\lambda'} \subset U^\lambda
        \Bigr> \ ,
    \end{equation}
    where $U^{\lambda'}$ runs through all connected components of~$U^\lambda$,
    which are in bijection with
    preimages of~$\lambda$ under the map
    $|\mathrm{CL}_{\mathbb{Q}} (U / G)| \to
    |\mathrm{CL}_{\mathbb{Q}} (* / G)|
    \simeq (\Lambda_T \otimes \mathbb{Q}) / W$.
\end{example}

\begin{example}[Quiver representations]
    \label{eg-quiver}
    Let $Q = (Q_0, Q_1, s, t)$ be a quiver,
    and let~$\zeta$ be a generic stability condition on~$Q$
    in the sense of \textcite[Definition~3.1]{davison-meinhardt-2020-quiver};
    see there for background on this setup.
    Here, $\zeta$ is the data of a complex number
    $\zeta_i = r_i \exp (\mathrm{i} \uppi \phi_i)$
    for $r_i > 0$ and $0 < \phi_i \leq 1$
    for each vertex $i \in Q_0$.
    If~$Q$ is a symmetric quiver,
    then all stability conditions are generic.

    Let $d \in \mathbb{N}^{Q_0}$ be a dimension vector, let
    $\mathcal{X}_d^{\smash{\zeta \mathhyphen \mathrm{ss}}}$
    be the smooth moduli stack of semistable representations of~$Q$ of dimension~$d$,
    on which the trace of~$W$ defines a function, which we still denote by~$W$.
    The assumption that~$\zeta$ is generic implies that
    $\mathcal{X}_d^{\smash{\zeta \mathhyphen \mathrm{ss}}}$
    is symmetric.

    We have a quadratic norm~$q$ on graded points on
    $\mathcal{X}_d^{\smash{\zeta \mathhyphen \mathrm{ss}}}$
    defined as in \cref{eg-lms-norm},
    using the rank function
    $r (d) = |d| = \sum_{i \in Q_0} d (i)$
    for $d \in \mathbb{N}^{Q_0}$.

    We also choose a vector $\delta \in \mathbb{Q}^{Q_0}$,
    which defines a rational line bundle on $\mathcal{X}_d^{\smash{\zeta \mathhyphen \mathrm{ss}}}$
    by using the determinant line bundles for each vertex.
    For example, a neutral choice is $\delta = 0$.

    Applying \cref{thm-sm-sod},
    we obtain a semiorthogonal decomposition
    \begin{equation}
        \label{eq-quiver-sod}
        \mathsf{Perf} (\mathcal{X}_d^{\smash{\zeta \mathhyphen \mathrm{ss}}}) =
        \biggl< {}
            \bigotimes_{j = 1}^k
            \mathsf{W}_{\mathcal{X}_{d_j}^{\smash{\zeta \mathhyphen \mathrm{ss}}}}
            (\delta_{w_j})
            \biggm|
            \frac{w_1}{|d_1|} < \cdots < \frac{w_k}{|d_k|}
        \, \biggr> \ ,
    \end{equation}
    where we run through all partitions
    $d = d_1 + \cdots + d_k$
    into non-zero dimension vectors $d_j$ with the same slope as~$d$,
    and $w_j \in \mathbb{Q}$.
    The weights $\delta_{w_j}$ can be expressed as
    \begin{equation}
        \label{eq-quiver-delta-w}
        \delta_{w_j}
        =
        \delta + \frac{w_j}{|d_j|} \, {\det}_j
        - \frac{1}{2}
        \biggl(
            \sum_{j' < j} \det \mathrm{Ext}^\bullet (d_j, d_{j'})
            - \sum_{j' > j} \det \mathrm{Ext}^\bullet (d_j, d_{j'})
        \biggr) \ ,
    \end{equation}
    where $\mathrm{Ext}^\bullet (d_j, d_{j'})$
    denotes the Ext-complex between the $Q$-representations
    of dimensions $d_j$ and $d_{j'}$
    with zeroes on all edges,
    as a complex of representations of the group
    $\mathrm{GL} (d_j) = \prod_{i \in Q_0} \mathrm{GL} (d_j (i))$,
    and $\det_j$ denotes the sum of the determinant characters
    of the factors of $\mathrm{GL} (d_j)$.
    The order of the semiorthogonal decomposition
    is given by the reverse order of
    \begin{equation*}
        \sum_{j = 1}^k \frac{w_j^2}{|d_j|} \in \mathbb{Q}_{\geq 0} \ .
    \end{equation*}

    It might be useful to relabel the decomposition using
    $v_j = w_j + \delta \cdot d_j \in \mathbb{Q}$
    instead of $w_j$, so that writing
    $\delta'_{v_j} = \delta_{w_j}$, we have
    $\mathsf{W}_{\mathcal{X}_{d_j}^{\smash{\zeta \mathhyphen \mathrm{ss}}}} (\delta'_{v_j})
    \subset \mathsf{Perf} (\mathcal{X}_{d_j}^{\smash{\zeta \mathhyphen \mathrm{ss}}})_{v_j}$,
    the part of weight $v_j$ with respect to the natural
    ${*} / \mathbb{G}_\mathrm{m}$-action on~$\mathcal{X}_d^{\smash{\zeta \mathhyphen \mathrm{ss}}}$.
    We now obtain a semiorthogonal decomposition
    \begin{equation}
        \label{eq-quiver-sod-relabelled}
        \mathsf{Perf} (\mathcal{X}_d^{\smash{\zeta \mathhyphen \mathrm{ss}}})_v =
        \biggl< {}
            \bigotimes_{j = 1}^k
            \mathsf{W}_{\mathcal{X}_{d_j}^{\smash{\zeta \mathhyphen \mathrm{ss}}}}
            (\delta'_{v_j})
            \biggm|
            \frac{\delta \cdot d_1 - v_1}{|d_1|} > \cdots > \frac{\delta \cdot d_k - v_k}{|d_k|}
        \, \biggr> \ ,
    \end{equation}
    with terms labelled by partitions
    $(d, v) = (d_1, v_1) + \cdots + (d_k, v_k)$
    with dimension vectors $d_j \neq 0$ and $v_j \in \mathbb{Z}$.

    The semiorthogonal decomposition~\cref{eq-quiver-sod} generalizes
    \cite[Theorem~4.2]{padurariu-toda-2025-quivers}
    by removing the restriction on the numbers of loops
    and allowing for non-symmetric quivers.

    See also \cref{eg-quiver-potential} below,
    where we obtain a version for quivers with potentials.
\end{example}

\begin{example}[\textG-bundles on curves]
    \label{eg-bun-g}
    Let~$C$ be a connected smooth projective curve over~$\mathbb{C}$
    of genus~$g$,
    let $G$ be a connected reductive group,
    with maximal torus $T \subset G$,
    and let
    \begin{equation}
        \calBun_G^{\mathrm{ss}} =
        \coprod_{d \in \uppi_1 (G)}
        \calBun_G^{\mathrm{ss}} (d)
    \end{equation}
    be the moduli stack of semistable principal $G$-bundles on~$C$,
    where $\calBun_G^{\mathrm{ss}} (d)$
    is the substack of $G$-bundles of degree~$d$.

    As in \cite[\S 4.3]{bu-davison-ibanez-nunez-kinjo-padurariu},
    the stack $\calBun_G^{\mathrm{ss}}$
    is smooth, almost symmetric,
    and has a good moduli space,
    and at any closed point $E \in \calBun_G^{\mathrm{ss}}$,
    we have
    $[\mathbb{T}_{\calBun_G^{\mathrm{ss}}} |_E]
    = (g - 1) [\mathfrak{g}]$
    as virtual $\mathrm{Aut} (E)^\circ$-representations.

    Each $W$-invariant positive-definite quadratic form~$q$ on
    $\Lambda_T \otimes \mathbb{Q}$
    induces a quadratic norm on
    $\mathrm{CL}_\mathbb{Q} (\calBun_G^\mathrm{ss})$,
    by pulling back along the morphism
    $\calBun_G^\mathrm{ss} \to {*} / G$
    given by evaluating at a point $x \in C$.
    The induced norm does not depend on the choice of~$x$.
    Similarly, every element of
    $(\Lambda^T \otimes \mathbb{Q})^W$
    determines a linear function
    $\mathrm{CL}_\mathbb{Q} (\calBun_G^\mathrm{ss}) \to \mathbb{Q}$.

    Applying \cref{thm-sm-sod}
    with this norm and $\delta = 0$
    gives a semiorthogonal decomposition
    \begin{equation}
        \mathsf{Perf} (\calBun_G^{\mathrm{ss}} (d)) =
        \Bigl<
            {\star}_P \,
            \mathsf{W}_{\smash{\calBun_L^{\mathrm{ss}} (d)}} \bigl(
                - (g - 1) (\rho_G - \rho_L) - \mu (w)
            \bigr)
            \Bigm|
            L \subset G, \
            w \in \Lambda^{\smash{\mathrm{Z} (L)^\circ}}_+
        \Bigr> \ ,
    \end{equation}
    where $L$ runs through Levi subgroups of~$G$ containing~$T$
    that are admissible for~$d$
    in the sense of \cref{eg-cl-bun-g},
    so that there is an induced degree
    $d \in \uppi_1 (L)$,
    and other notations are as in \cref{eg-g-quiver}.
\end{example}

\begin{example}[Twisted \textG-Higgs bundles]
    \label{eg-twisted-higgs}
    Continuing the previous example,
    let $\mathcal{L}$ be a line bundle on~$C$ with
    $\deg \mathcal{L} > 2g - 2$.

    An \emph{$\mathcal{L}$-twisted $G$-Higgs bundle} on~$C$
    is a pair $(E, \phi)$,
    where $E \to C$ is a principal $G$-bundle,
    and $\phi \in \mathrm{H}^0 (C, E (\mathfrak{g}) \otimes \mathcal{L})$
    is a \emph{Higgs field},
    where $E (\mathfrak{g})$ is the adjoint vector bundle of~$E$.

    Consider the moduli stack
    \begin{equation}
        \calHiggs_G^{\smash{\mathcal{L}, \mathrm{ss}}} =
        \coprod_{d \in \uppi_1 (G)}
        \calHiggs_G^{\smash{\mathcal{L}, \mathrm{ss}}} (d)
    \end{equation}
    of semistable $\mathcal{L}$-twisted $G$-Higgs bundles on~$C$.
    The stack $\calHiggs_G^{\smash{\mathcal{L}, \mathrm{ss}}}$
    is smooth, almost symmetric,
    and has a good moduli space;
    at any closed point
    $F \in \calHiggs_G^{\smash{\mathcal{L}, \mathrm{ss}}}$,
    we have
    $[\mathbb{T}_{\smash{\calHiggs_G^{\smash{\mathcal{L}, \mathrm{ss}}}}} |_F]
    = (\deg \mathcal{L}) [\mathfrak{g}]$
    as virtual $\mathrm{Aut} (F)^\circ$-representations.
    See
    \cite[\S 4.3]{bu-davison-ibanez-nunez-kinjo-padurariu}
    for more details.

    Applying \cref{thm-sm-sod} gives a semiorthogonal decomposition
    \begin{equation}
        \mathsf{Perf} (\calHiggs_G^{\smash{\mathcal{L}, \mathrm{ss}}} (d)) =
        \Bigl<
            {\star}_P \,
            \mathsf{W}_{\smash{\calHiggs_L^{\smash{\mathcal{L}, \mathrm{ss}}} (d)}}
            \bigl(
                - (\deg \mathcal{L}) (\rho_G - \rho_L) - \mu (w)
            \bigr)
            \Bigm|
            L \subset G, \
            w \in \Lambda^{\smash{\mathrm{Z} (L)^\circ}}_+
        \Bigr> \ ,
    \end{equation}
    with notations as in \cref{eg-bun-g},
    and the choice of the quadratic form is analogous to
    \cref{eg-bun-g}.
\end{example}

\begin{example}[\textG-bundles with meromorphic \textlambda-connections]
    \label{eg-twisted-lambda-conn}
    In the setting of
    \cref{eg-bun-g},
    let $D \geq 0$ be a divisor on~$C$.

    For a principal $G$-bundle
    $p \colon E \to C$,
    consider the \emph{Atiyah bundle}
    $\mathrm{At} (E) = \mathrm{T} E / G$,
    which is a vector bundle over~$C$ fitting into a short exact sequence
    \begin{equation}
        0 \to
        E (\mathfrak{g}) \longrightarrow
        \mathrm{At} (E) \overset{\eta}{\longrightarrow}
        T_C \to 0 \ ,
    \end{equation}
    where $E (\mathfrak{g})$ is the adjoint vector bundle of~$E$,
    and $T_C$ is the tangent bundle of~$C$.

    For $\lambda \in \mathbb{C}$,
    define a \emph{$\lambda$-connection} on~$E$
    with possible poles at~$D$ to be a map
    \begin{equation}
        \nabla \colon T_C (-D) \longrightarrow \mathrm{At} (E)
        \quad \text{such that} \quad
        \eta \, \nabla =
        \lambda \, s \ ,
    \end{equation}
    where $s \colon T_C (-D) \to T_C$
    is the natural map.
    Compare \textcite[Theorem~1 ff.]{atiyah-1957-connections}.

    For a fixed~$\lambda$,
    the space of $\lambda$-connections on~$E$ with possible poles at~$D$
    is an affine space for the vector space
    $\mathrm{H}^0 (C, E (\mathfrak{g}) \otimes K_C (D))$
    of $K_C (D)$-twisted Higgs fields.
    In particular, when $\lambda = 0$,
    such a $\lambda$-connection
    is the same data as a $K_C (D)$-twisted Higgs field.

    We have a moduli stack
    $\lambda \calConn_G^D (d) \to \mathbb{A}^1$
    of $G$-bundles of degree~$d$
    with $\lambda$-connections with poles at~$D$,
    where $\mathbb{A}^1$ parametrizes~$\lambda$.
    We have a forgetful morphism
    $\lambda \calConn_G^D (d) \to \calBun_G (d)$,
    which is affine.

    Now assume that $D > 0$,
    and consider the semistable locus
    $\lambda \calConn_G^{\smash{D, \mathrm{ss}}}
    \subset \lambda \calConn_G^D$.
    By \cref{lem-twisted-lambda-conn-gms} below,
    it is smooth, almost symmetric,
    and has a good moduli space,
    and at any closed point
    $F \in \lambda \calConn_G^{\smash{D, \mathrm{ss}}}$,
    we have
    $[\mathbb{T}_{\smash{\lambda \calConn_G^{\smash{D, \mathrm{ss}}}}} |_F]
    = (2g - 2 + \deg D) [\mathfrak{g}] + [\mathbb{C}]$
    as virtual $\mathrm{Aut} (F)^\circ$-representations,
    where $[\mathbb{C}]$ is the trivial representation
    corresponding to the $\lambda$-direction.

    Therefore, when $D > 0$, \cref{thm-sm-sod}
    gives a semiorthogonal decomposition
    \begin{multline}
        \mathsf{Perf} (\lambda \calConn_G^{\smash{D, \mathrm{ss}}} (d))
        \\ =
        \Bigl<
            {\star}_P \,
            \mathsf{W}_{\smash{\lambda \calConn_L^{\smash{D, \mathrm{ss}}} (d)}}
            \bigl(
                - (2g - 2 + \deg D) (\rho_G - \rho_L) - \mu (w)
            \bigr)
            \Bigm|
            L \subset G, \
            w \in \Lambda^{\smash{\mathrm{Z} (L)^\circ}}_+
        \Bigr> \ ,
    \end{multline}
    with notations as in
    \cref{eg-bun-g}.

    The fibre
    $\calConn_G^{\smash{D, \mathrm{ss}}} (d) =
    \lambda \calConn_G^{\smash{D, \mathrm{ss}}} (d)
    \times_{\smash{\mathbb{A}^1}} \{ 1 \}$
    is also smooth and almost symmetric,
    and we have a similar decomposition
    \begin{multline}
        \mathsf{Perf} (\calConn_G^{\smash{D, \mathrm{ss}}} (d))
        \\ =
        \Bigl<
            {\star}_P \,
            \mathsf{W}_{\smash{\calConn_L^{\smash{D, \mathrm{ss}}} (d)}}
            \bigl(
                - (2g - 2 + \deg D) (\rho_G - \rho_L) - \mu (w)
            \bigr)
            \Bigm|
            L \subset G, \
            w \in \Lambda^{\smash{\mathrm{Z} (L)^\circ}}_+
        \Bigr> \ .
    \end{multline}
\end{example}

\begin{lemma}
    \label{lem-twisted-lambda-conn-gms}
    In \cref{eg-twisted-lambda-conn},
    suppose that\/ $D > 0$.
    Then the stack $\lambda \calConn_G^{\smash{D, \mathrm{ss}}} (d)$
    is almost symmetric, smooth, qca,
    and has a good moduli space.
\end{lemma}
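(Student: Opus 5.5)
The plan is to treat the four properties separately. The deformation-theoretic ones (smoothness and almost symmetry) will come from a tangent complex computation. Good moduli spaces and qca will come from general existence results and standard boundedness. The key observation is that $D > 0$ kills the obstruction space.

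\textbf{Tangent complex and smoothness.} At a point $F = (E, \nabla, \lambda)$, the deformation complex of $F$ relative to $\mathbb{A}^1$ is the two-term complex
\[
    E(\mathfrak{g}) \xrightarrow{\ \mathrm{ad}_\nabla\ } E(\mathfrak{g}) \otimes K_C(D) ,
\]
placed in degrees $0,1$, since $\lambda$-connections form an affine space over $\mathrm{H}^0(C, E(\mathfrak{g}) \otimes K_C(D))$. I will compute its hypercohomology. The obstruction space $\mathbb{H}^2$ is a quotient of $\mathrm{H}^1(E(\mathfrak{g}) \otimes K_C(D))$. By Serre duality this is dual to $\mathrm{H}^0(E(\mathfrak{g})^\vee(-D))$, and more precisely $\mathbb{H}^2$ is dual to the space of $\nabla$-flat sections of $E(\mathfrak{g})(-D)$ (using $\mathfrak{g} \simeq \mathfrak{g}^\vee$ via an invariant form). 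For $E$ semistable of degree $d$, the bundle $E(\mathfrak{g})$ is semistable of degree $0$, so $E(\mathfrak{g})(-D)$ has negative degree. Hence it has no sections, and $\mathbb{H}^2 = 0$. Note that this step needs semistability of the underlying bundle. If instead semistability refers to the $\lambda$-connection, I will use that a flat section of $E(\mathfrak{g})(-D)$ vanishing along $D$ generates a $\nabla$-invariant nilpotent subsheaf, which contradicts semistability unless it is zero.

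So the stack is smooth over $\mathbb{A}^1$ with vanishing obstructions, and the total stack is therefore smooth. Its tangent complex has Euler characteristic
\[
    \chi(E(\mathfrak{g})) - \chi(E(\mathfrak{g}) \otimes K_C(D)) + 1 .
\]
As virtual $\mathrm{Aut}(F)^\circ$-representations this equals $-(2g-2+\deg D)[\mathfrak{g}]$ up to the sign conventions used in the statement, plus $[\mathbb{C}]$. To check this equivariantly, I will argue for the reductive group $\mathrm{Aut}(F)^\circ$ by restricting to a maximal torus and decomposing $E(\mathfrak{g})$ into isotypic pieces. Each piece $E(\mathfrak{g})_\chi$ has degree $0$ for semistable $E$ (via the reduction to the Levi centralizing the torus), so Riemann--Roch gives the multiple of $[\mathfrak{g}]$ weight by weight. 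Since $\mathfrak{g}$ is self-dual, almost symmetry follows immediately.

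\textbf{Good moduli space and qca.} Quasi-compactness follows because semistable $G$-bundles of fixed degree form a bounded family, and $\lambda\calConn_G^D \to \calBun_G$ is affine of finite type; stabilizers are affine as closed subgroups of $\mathrm{Aut}(E)$. For the good moduli space I will apply the existence criterion of \textcite{alper-halpern-leistner-heinloth-2023}, verifying $\Theta$-reductivity and S-completeness. The cleanest route is to realise the stack as the semistable locus of a $\Theta$-stratum in the stack of $\lambda$-connections over $\mathbb{A}^1$, viewed as objects of a linear category. Alternatively, one can realise it as a GIT quotient of a Quot-type parameter scheme, following Simpson's construction for $\Lambda$-modules, where the moduli of $\lambda$-connections with poles is the Hodge moduli space. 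I expect this step to be the main obstacle: it requires pinning down the precise semistability notion, and verifying that $G$-valued (not just $\mathrm{GL}_n$) objects satisfy the criterion. For the latter I would reduce through a faithful representation $G \hookrightarrow \mathrm{GL}_n$, and then appeal to the known $\Lambda$-module results with $\Lambda = \mathrm{Sym}(T_C(-D))$ deformed by $\lambda$.
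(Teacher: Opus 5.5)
There is a genuine gap, and it comes from the semistability notion. In $\lambda \calConn_G^{D, \mathrm{ss}}(d)$ semistability is imposed on the pair $(E, \nabla)$: only $\nabla$-compatible parabolic reductions are tested. The underlying $G$-bundle $E$ is in general \emph{unstable}. For example, take $\lambda = 0$ and $\mathcal{M}^{2} \simeq K_C(D)$ of positive degree; then $\mathcal{M} \oplus \mathcal{M}^{-1}$ with Higgs field the identity $\mathcal{M} \to \mathcal{M}^{-1} \otimes K_C(D)$ is stable.

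So your quasi-compactness argument (boundedness of semistable $G$-bundles plus affineness over $\calBun_G$) does not apply. The image in $\calBun_G(d)$ is not contained in the semistable locus, and the real content of the lemma is that this image is nevertheless bounded. The missing idea is a bound on the Harder--Narasimhan type of $E$:
\begin{itemize}
\item Let $F$ be the Harder--Narasimhan reduction to $P$. For each maximal parabolic $P_{\bar\alpha} \supset P$, semistability of $(E,\nabla)$ says the induced reduction is not $\nabla$-invariant. So the second fundamental form $T_C(-D) \to F(\mathfrak{g}/\mathfrak{p}_{\bar\alpha})$ is non-zero.
\item Filter $\mathfrak{g}/\mathfrak{p}_{\bar\alpha}$ by pieces $\mathfrak{q}_J$ with $F(\mathfrak{q}_J)$ semistable of degree $\leq 0$. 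The non-zero map then forces $\deg F(\mathfrak{q}_J) \geq -\ell \dim \mathfrak{q}_J$ for some $J$, where $\ell = 2g-2+\deg D$.
\item These inequalities, one for each $\bar\alpha$, bound $\deg F(\mathfrak{p})$.
\end{itemize}
Once boundedness is known, the good moduli space is routine: the stack is $R/H$ with $R$ quasi-projective and all points GIT-semistable, so $R \git H$ is a good moduli space. Your proposal leaves this step as an acknowledged obstacle with two unexecuted routes. So neither qca nor the existence of the good moduli space is actually established.

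Smoothness has the same problem. Your reduction to the vanishing of $\nabla$-flat sections of $E(\mathfrak{g})(-D)$ is exactly what is needed. Equivalently, you need injectivity of $\mathrm{H}^0(E(\mathfrak{g})(-D)) \to \mathrm{H}^0(E(\mathfrak{g}) \otimes K_C)$, which is Serre dual to surjectivity of $\mathrm{H}^1(E(\mathfrak{g})) \to \mathrm{H}^1(E(\mathfrak{g}) \otimes K_C(D))$. But your main argument again uses semistability of $E$.

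Your fallback correctly gives pointwise nilpotence of a flat section $\phi$, since its invariant polynomials lie in $\mathrm{H}^0(\mathcal{O}_C(-kD)) = 0$. What it does not show is why a non-zero such $\phi$ produces a $\nabla$-compatible parabolic reduction that actually destabilizes $(E,\nabla)$ for a general reductive $G$, and that is the substantive step. The paper defers it to the argument of Fern\'andez Herrero, run after quasi-compactness and the good moduli space are already established. One clean way to use those is to note that the non-smooth locus is closed, so it suffices to check closed, polystable points.

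Your almost-symmetry argument is fine and is essentially the paper's, but drop the claim that each $E(\mathfrak{g})_\chi$ has degree $0$. Riemann--Roch gives $\chi(V \otimes K_C(D)) - \chi(V) = \ell \operatorname{rk} V$ for every weight piece $V$, regardless of its degree. Hence $[\mathbb{T}] = \ell\,[\mathfrak{g}] + [\mathbb{C}]$ on a maximal torus of $\mathrm{Aut}(F)^\circ$, with no stability input. The sign is $+\ell$ because the tangent complex is the deformation complex shifted by one.
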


\begin{proof}
    First, we show that
    $\lambda \calConn_G^{\smash{D, \mathrm{ss}}} (d)$
    is quasi-compact,
    or equivalently, that its image in
    $\calBun_G (d)$
    is quasi-compact.

    Let $(E, \nabla) \in \lambda \calConn_G^{\smash{D, \mathrm{ss}}} (d)$,
    and suppose that~$E$ is not semistable as a $G$-bundle.
    Consider a destabilizing reduction of~$E$
    to a parabolic subgroup $P \subset G$,
    giving a $P$-bundle $F \to C$ such that
    $\deg F (\mathfrak{p}) > 0$.
    Then by semistability,
    $\nabla$ does not preserve~$F$,
    so the composition
    \begin{equation*}
        T_C (-D)
        \overset{\nabla}{\longrightarrow}
        \mathrm{At} (E)
        \longrightarrow
        \mathrm{At} (E) / \mathrm{At} (F) \simeq
        F (\mathfrak{g} / \mathfrak{p})
    \end{equation*}
    is non-zero, where
    $F (\mathfrak{g} / \mathfrak{p}) \simeq
    E (\mathfrak{g}) / F (\mathfrak{p})$.

    Consider the Harder--Narasimhan filtration of~$E$,
    given by a reduction~$F$
    to a parabolic subgroup $P \subset G$,
    with associated Levi subgroup $L \subset G$,
    so that the induced $L$-bundle $F (L) \to C$ is semistable,
    and $\deg F (\chi) \geq 0$
    for any dominant character
    $\chi \colon L \to \mathbb{G}_{\mathrm{m}}$.
    Let~$\Phi$ be the set of simple roots of~$G$,
    and $I \subset \Phi$ the subset corresponding to~$P$.
    For each $J \subset \Phi \setminus I$,
    consider the parabolic
    $P_J \subset G$
    corresponding to $I \cup J$, with Lie algebra
    $\mathfrak{p}_J \subset \mathfrak{g}$.
    When $\alpha_J = \sum_{\alpha \in J} \alpha$ is a root,
    let $\mathfrak{q}_J =
    \mathfrak{p}_J / \sum_{J' \subsetneq J} \mathfrak{p}_{\smash{J'}}$.
    Then~$F (\mathfrak{q}_J)$ is semistable with
    $\deg F (\mathfrak{q}_J) \leq 0$,
    which follows from
    \textcite[Proposition~3.17]{ramanathan-1996-i}
    and the fact that $\mathrm{Z} (L)^\circ$
    acts on~$\mathfrak{q}_J$ by scaling by~$\alpha_J$.
    The character
    $\chi_J = \det \mathfrak{q}_J$
    satisfies that
    $\langle \chi, \alpha \rangle < 0$
    for $\alpha \in J$
    and $\langle \chi, \beta \rangle = 0$
    for $\beta \in \Phi \setminus J$,
    and it follows that
    $\chi_J$ is a positive rational linear combination
    of $\chi_\alpha$ for $\alpha \in J$.
    Now, for each $\alpha \in \Phi \setminus I$,
    writing $\bar{\alpha} = \Phi \setminus (I \cup \{\alpha\})$,
    the previous paragraph shows that there is a non-zero map
    $T_C (-D) \to F (\mathfrak{g} / \mathfrak{p}_{\bar{\alpha}})$.
    Since $\mathfrak{g} / \mathfrak{p}_{\bar{\alpha}}$
    has a filtration by $\mathfrak{q}_J$
    for $\{ \alpha \} \subset J \subset \Phi \setminus I$
    such that $\alpha_J$ is a root,
    there exists such~$J$ such that
    $\deg F (\mathfrak{q}_J) \geq -\ell \dim \mathfrak{q}_J$,
    where $\ell = \deg K_C (D) = 2g - 2 + \deg D$.
    Since such~$J = J_\alpha$ exists for any
    $\alpha \in \Phi \setminus I$,
    taking the sum of these inequalities and writing
    $d_\alpha = \deg F (\mathfrak{q}_\alpha)$,
    we have
    $\sum_\alpha c_\alpha d_\alpha \geq -\ell c$
    for some constants $c, c_\alpha > 0$
    only depending on~$G$, $I$,
    and choice of~$J_\alpha$ for each~$\alpha$.
    As $d_\alpha \leq 0$ for all~$\alpha$,
    and since there are only finitely many choices of~$I$ and~$J_\alpha$,
    it follows that $\deg F (\mathfrak{p})$ is bounded,
    since it is also a linear combination of the~$d_\alpha$.
    It follows that the collection of such~$E$ is bounded.

    Now, this boundedness property together with the affine morphism
    $\pi \colon \lambda \calConn_G^{\smash{D, \mathrm{ss}}} (d) \to
    \calBun_G (d) \times \mathbb{A}^1$
    implies that
    $\lambda \calConn_G^{\smash{D, \mathrm{ss}}} (d) \simeq R / H$
    for a quasi-projective scheme~$R$
    with a linearized action by a reductive group~$H$,
    such that all points of~$R$ are semistable,
    so that the GIT quotient $R \git H$ exists and is a good moduli space.

    Next, we show that
    $\lambda \calConn_G^{\smash{D, \mathrm{ss}}} (d)$
    is smooth.
    At a point
    $(E, \nabla) \in \lambda \calConn_G^{\smash{D, \mathrm{ss}}} (d)$,
    consider the exact triangle
    $\mathbb{T}_\pi |_{(E, \nabla)} \to
    \mathbb{T}_{\smash{\lambda \calConn_G^{\smash{D, \mathrm{ss}}} / \mathbb{A}^1}} |_{(E, \nabla)} \to
    \mathbb{T}_{\smash{\calBun_G (d)}} |_E$.
    We have
    $\mathbb{T}_\pi |_{(E, \nabla)}
    \simeq \mathbb{R} \Gamma (C, E (\mathfrak{g}) \otimes K_C (D))$,
    and it is enough to show that the connecting map
    $\mathrm{H}^0 (\mathbb{T}_{\smash{\calBun_G (d)}} |_E) \to
    \mathrm{H}^1 (\mathbb{T}_\pi |_{(E, \nabla)})$
    is surjective,
    or by Serre duality, that the map
    \begin{equation}
        \nabla_{E (\mathfrak{g})} \colon
        \mathrm{H}^0 (C, E (\mathfrak{g}) (-D))
        \longrightarrow
        \mathrm{H}^0 (C, E (\mathfrak{g}) \otimes K_C)
    \end{equation}
    given by the induced $\lambda$-connection
    on~$E (\mathfrak{g})$ is injective.
    This injectivity can be shown by repeating the proof of
    \textcite[Proposition~5.5]{fernandez-herrero-2023},
    using the fact that $D > 0$ and that
    $\lambda \calConn_G^{\smash{D, \mathrm{ss}}} (d)$
    is quasi-compact and has a good moduli space.

    Finally, we show that
    $\lambda \calConn_G^{\smash{D, \mathrm{ss}}} (d)$
    is almost symmetric.
    An argument analogous to
    \cite[Lemma~4.3.5]{bu-davison-ibanez-nunez-kinjo-padurariu}
    shows that at a closed point
    $(E, \nabla) \in \lambda \calConn_G^{\smash{D, \mathrm{ss}}}$,
    we have
    $[\mathbb{T}_{\smash{\lambda \calConn_G^{\smash{D, \mathrm{ss}}} / \mathbb{A}^1}} |_{(E, \nabla)}]
    = \ell \, [\mathfrak{g}]$
    as virtual $\mathrm{Aut} (E, \nabla)^\circ$-representations,
    which implies the almost symmetry.
\end{proof}

\subsection{Proof of the decomposition}
\label{subsec-pf-sm-sod}

This subsection is dedicated to the proof of
\cref{thm-sm-sod}.
Although the theorem only involves smooth stacks,
we state some lemmas in more generality,
as they will also be used later in the quasi-smooth case.

\begin{lemma}
    \label{lem-central-wt-decomp}
    Let~$\mathcal{X}$ be a qca derived algebraic stack over~$\mathbb{C}$
    acted on by~${*} / T$,
    where $T \simeq \mathbb{G}_\mathrm{m}^n$ is a torus for some $n \in \mathbb{N}$.
    Then there is an orthogonal decomposition
    \begin{equation}
        \label{eq-orth-decomp}
        \mathsf{Coh} (\mathcal{X}) =
        \bigoplus_{w \in \Lambda^T}
        \mathsf{Coh} (\mathcal{X})_w \ ,
    \end{equation}
    where $\Lambda^T$ is the character lattice of\/~$T$,
    and $\mathsf{Coh} (\mathcal{X})_w$ is the subcategory of objects
    $E \in \mathsf{Coh} (\mathcal{X})$
    such that
    ${\odot}^* \, E \simeq L_w \boxtimes E$
    in $\mathsf{QCoh} ({*} / T \times \mathcal{X})$, where
    $\odot \colon {*} / T \times \mathcal{X} \to \mathcal{X}$
    is the action map and
    $L_w \to * / T$ is the line bundle of weight~$w$.
\end{lemma}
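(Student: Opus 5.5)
The plan is to first treat quasi-coherent sheaves, where the statement is a formal consequence of the fact that $\mathsf{QCoh}(*/T)$ is semisimple with simple objects $L_w$, and then restrict to coherent objects using quasi-compactness. An action of the group stack $*/T = \mathrm{B}T$ on $\mathcal{X}$ amounts to a homomorphism from $T$ to the automorphisms of the identity functor of $\mathcal{X}$. Concretely, the action datum $\odot$ together with the unit section $e \colon \mathcal{X} \to */T \times \mathcal{X}$ (with $\odot \circ e = \mathrm{id}$) gives, for every $E \in \mathsf{QCoh}(\mathcal{X})$, a coaction of $\mathcal{O}(T)$, i.e. a $\Lambda^T$-grading. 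In dg terms: the object $\odot^* E \in \mathsf{QCoh}(*/T \times \mathcal{X}) \simeq \mathsf{Rep}(T) \otimes \mathsf{QCoh}(\mathcal{X}) \simeq \prod_{w} \mathsf{QCoh}(\mathcal{X})$ decomposes canonically as $\bigoplus_w L_w \boxtimes E_w$, with $E_w = (\pi_* (\odot^* E \otimes L_w^{-1}))$, where $\pi \colon */T \times \mathcal{X} \to \mathcal{X}$ is the projection. Here I use that $\mathsf{QCoh}(*/T \times \mathcal{X}) \simeq \mathsf{QCoh}(*/T) \otimes \mathsf{QCoh}(\mathcal{X})$, which holds for qca stacks by the results of Drinfeld--Gaitsgory (or directly, since $*/T\times\mathcal{X} \to \mathcal{X}$ is a $T$-gerbe, trivial here, and $\pi_*$ is exact taking $T$-invariants).

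The key steps, in order, are as follows.

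First, I would define the projectors $E \mapsto E_w := \pi_*(\odot^* E \otimes L_w^{-1})$. Since $T$ is linearly reductive, $\pi_*$ is exact and commutes with colimits, so these are exact functors $\mathsf{QCoh}(\mathcal{X}) \to \mathsf{QCoh}(\mathcal{X})$.

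Second, I would show $E \simeq \bigoplus_w E_w$ by pulling back the decomposition of $\odot^*E$ along $e$. I would also show that $E_w$ lies in the weight-$w$ part, using the associativity datum $\odot \circ (m \times \mathrm{id}) \simeq \odot \circ (\mathrm{id} \times \odot)$ and the fact that $m^* L_w = L_w \boxtimes L_w$ on $*/T \times */T$. This yields $\odot^* E_w \simeq L_w \boxtimes E_w$.

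Third, I would prove orthogonality. For $E \in \mathsf{QCoh}(\mathcal{X})_w$ and $F \in \mathsf{QCoh}(\mathcal{X})_{w'}$, the group $T$ acts on $\mathrm{Hom}(E,F)$ through the natural automorphisms, and by the first and second steps this action is through $w' - w$. Hence the mapping complex equals its $T$-invariants, which vanish when $w \neq w'$. Formally, $\mathrm{Hom}(E,F) = \mathrm{Hom}(E, \pi_*(\odot^*F \otimes L_{w'}^{-1}))$, which is computed by adjunction and linear reductivity.

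Finally, I would restrict to $\mathsf{Coh}$. Each $E_w$ is a direct summand of $E$, so it is coherent whenever $E$ is. The set of $w$ with $E_w \neq 0$ is finite, because $\mathcal{X}$ is quasi-compact and $E$ is coherent: one checks this on a smooth atlas by a finite affine cover, where a coherent module over a noetherian ring carries a locally finite grading whose finitely many generators have finitely many weights. Thus the sum is a finite direct sum, giving the decomposition \cref{eq-orth-decomp}.

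The main obstacle is making the second step rigorous at the $\infty$-categorical level, namely producing the canonical splitting $\odot^* E \simeq \bigoplus_w L_w \boxtimes E_w$ and the coherence with associativity, rather than just on homotopy categories. I would first try to handle this by working with $\mathsf{QCoh}$ of the fibre product as comodules over $\mathcal{O}(T)$, which reduces the splitting to the semisimplicity of $\mathsf{Rep}(T)$.
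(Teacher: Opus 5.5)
Your proposal is correct and follows essentially the same route as the paper: split $\odot^* E \simeq \bigoplus_w L_w \boxtimes E_w$ using $\mathsf{QCoh}(*/T \times \mathcal{X}) \simeq \prod_w \bigl(L_w \boxtimes \mathsf{QCoh}(\mathcal{X})\bigr)$, pull back along the unit section to get $E \simeq \bigoplus_w E_w$, and use quasi-compactness to get finitely many nonzero weights. Your associativity check that $E_w$ has weight $w$, and your naturality argument for orthogonality, spell out steps the paper states tersely; the paper instead derives the Hom-vanishing from the product decomposition, which is the same fact, since $e^* \odot^* \simeq \mathrm{id}$ makes the map $\mathrm{Hom}(E_w, E_{w'}) \to \mathrm{Hom}(\odot^* E_w, \odot^* E_{w'}) = 0$ split injective.
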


\begin{proof}
    By \textcite[Chapter~3, Proposition~3.5.3]{gaitsgory-rozenblyum-2017-i}, we have
    \begin{equation}
        \label{eq-qcoh-bt-decomp}
        \mathsf{QCoh} ({*} / T \times \mathcal{X})
        \simeq
        \mathsf{QCoh} ({*} / T) \otimes \mathsf{QCoh} (\mathcal{X})
        \simeq
        \prod_{w \in \Lambda^T} {}
        \bigl( L_w \boxtimes \mathsf{QCoh} (\mathcal{X}) \bigr) \ .
    \end{equation}
    Therefore, for any $E \in \mathsf{Coh} (\mathcal{X})$,
    we have a decomposition
    ${\odot}^* \, E \simeq \bigoplus_w L_w \boxtimes E_w$
    for some $E_w \in \mathsf{QCoh} (\mathcal{X})$.
    It follows that
    $E_w \in \mathsf{Coh} (\mathcal{X})_w$,
    and we have $E \simeq \bigoplus_w E_w$
    by pulling back ${\odot}^* \, E$ along
    $\mathcal{X} \to {*} / T \times \mathcal{X}$.
    Since~$\mathcal{X}$ is qca,
    we have $E_w \nsimeq 0$ for only finitely many~$w$.
    Finally, \cref{eq-qcoh-bt-decomp} implies that
    $\mathrm{Hom}_{\mathcal{X}} (E_w, E_{w'}) \simeq 0$
    for all $w \neq w'$ and
    $E_w \in \mathsf{Coh} (\mathcal{X})_w$,
    $E_{w'} \in \mathsf{Coh} (\mathcal{X})_{w'}$.
\end{proof}

\begin{lemma}
    \label{lem-indcoh-base-change}
    Suppose given a cartesian diagram
    \begin{equation*}
        \begin{tikzcd}
            \mathcal{X}' \ar[r, "f"] \ar[d, "g'"']
            \ar[dr, phantom, "\ulcorner" pos=.2]
            & \mathcal{X} \ar[d, "g"]
            \\
            \mathcal{Y}' \ar[r, "f_0"] & \mathcal{Y}
        \end{tikzcd}
    \end{equation*}
    of derived algebraic stacks over~$\mathbb{C}$,
    where~$\mathcal{Y}$ is a classical qca stack
    with quasi-affine diagonal and with a good moduli space,
    and $f_0$ is schematic and smooth.
    Then we have
    \begin{equation}
        \label{eq-indcoh-base-change}
        \mathsf{IndCoh} (\mathcal{X}') \simeq
        \mathsf{IndCoh} (\mathcal{X})
        \underset{\mathsf{QCoh} (\mathcal{Y})}{\otimes}
        \mathsf{QCoh} (\mathcal{Y}') \ ,
    \end{equation}
    where an object $E \otimes F$
    on the right-hand side corresponds to the object
    $f^* (E) \otimes g'^* (F)$
    on the left-hand side.
\end{lemma}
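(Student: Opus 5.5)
The plan is to reduce to the case where $\mathcal{X}$ is itself a qca stack with a good moduli space, and then use the tensor product formula for $\mathsf{IndCoh}$ over a base of the form $\mathsf{QCoh}(\mathcal{Y})$. First, since $\mathcal{Y}$ is a classical qca stack with quasi-affine diagonal and a good moduli space, it is perfect in the sense of Ben-Zvi--Francis--Nadler and $\mathsf{QCoh}(\mathcal{Y})$ is rigid, compactly generated by $\mathsf{Perf}(\mathcal{Y})$. In particular, relative tensor products over $\mathsf{QCoh}(\mathcal{Y})$ are well behaved: they commute with colimits and can be computed by the bar construction. Also $\mathsf{IndCoh}(\mathcal{X})$ is a $\mathsf{QCoh}(\mathcal{Y})$-module via $g^*$ and the action of $\mathsf{QCoh}$ on $\mathsf{IndCoh}$.

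There is a natural functor from the right-hand side of \cref{eq-indcoh-base-change} to $\mathsf{IndCoh}(\mathcal{X}')$, namely $E \otimes F \mapsto f^*(E) \otimes g'^*(F)$. Here $f^*$ is the $\mathsf{IndCoh}$-pullback along $f$, which is smooth as a base change of $f_0$, so $f^* \simeq f^![-\dim]$ up to a twist. We check that this functor is compatible with the $\mathsf{QCoh}(\mathcal{Y})$-balancing. To prove that it is an equivalence, we argue locally on $\mathcal{Y}'$. Since $f_0$ is schematic, both sides satisfy smooth (indeed étale or Zariski) descent in $\mathcal{Y}'$. For $\mathsf{IndCoh}(\mathcal{X}')$ this holds by smooth descent for $\mathsf{IndCoh}$. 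For the right-hand side it holds because $\mathsf{QCoh}(\mathcal{Y}')$ satisfies descent and $-\otimes_{\mathsf{QCoh}(\mathcal{Y})} \mathsf{IndCoh}(\mathcal{X})$ preserves limits of this kind, since $\mathsf{IndCoh}(\mathcal{X})$ is dualizable over the rigid category $\mathsf{QCoh}(\mathcal{Y})$. This reduces us to the case where $\mathcal{Y}'$ is covered by affine schemes $U$ smooth over $\mathcal{Y}$, and further, using the good moduli space $\mathcal{Y} \to Y$, to understanding $U \to \mathcal{Y}$ factored through pieces like $\mathcal{Y} \times_Y \operatorname{Spec} A$ and affine morphisms.

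When $f_0$ is affine, so that $\mathcal{Y}' = \operatorname{Spec}_{\mathcal{Y}} \mathcal{A}$, the right-hand side is the category of $g^*\mathcal{A}$-modules in $\mathsf{IndCoh}(\mathcal{X})$. This agrees with $\mathsf{IndCoh}(\mathcal{X}')$ because $f$ is affine, which is the standard statement that $\mathsf{IndCoh}$ of a relative affine scheme is modules over the pushforward algebra. This step holds for eventually coconnective affine morphisms, which is fine since $f$ is flat. In general, quasi-affine diagonal lets us write smooth schematic covers via quasi-affine and then affine pieces, and an open immersion is handled by the localization sequence on both sides.

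The main obstacle is the descent step. It requires that $\mathsf{IndCoh}(\mathcal{X})$ be dualizable as a $\mathsf{QCoh}(\mathcal{Y})$-module, so that tensoring commutes with the totalization of the Čech nerve. I would get this from compact generation of $\mathsf{IndCoh}(\mathcal{X})$ by $\mathsf{Coh}(\mathcal{X})$. That compact generation in turn follows from $\mathcal{X}$ being qca via Drinfeld--Gaitsgory, using that $g$ is qca over a qca base. Together with rigidity of $\mathsf{QCoh}(\mathcal{Y})$, this makes every module dualizable.
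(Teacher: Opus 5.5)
Your plan has a genuine gap in the reduction from a general smooth schematic $f_0$ to the affine case. After descending in the $\mathcal{Y}'$-variable you must treat $\mathcal{Y}' = U$, an affine scheme smooth over $\mathcal{Y}$. Since $\mathcal{Y}$ only has quasi-affine diagonal, $U \to \mathcal{Y}$ is merely quasi-affine. Your factorization $U \hookrightarrow \operatorname{Spec}_{\mathcal{Y}} \mathcal{A} \to \mathcal{Y}$ does not reduce this to the affine case, because the hypotheses give no control over the affine factor. It need not be of finite type, and flatness of the composite implies nothing about flatness of this factor. So its base change to $\mathcal{X}$ need not be locally almost of finite type, in which case $\mathsf{IndCoh}$ is not available, and it need not be eventually coconnective. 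Your justification ``fine since $f$ is flat'' applies to the base change of $f_0$, not to this intermediate map.

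Moreover, the affine statement you would need genuinely fails without flatness. Take $\mathcal{X} = \mathcal{Y} = \operatorname{Spec} \mathbb{C}$ and $\mathcal{Y}' = \operatorname{Spec} \mathbb{C}[x]/(x^2)$. Then the right-hand side of the lemma is $\mathsf{QCoh}(\mathcal{Y}')$, not $\mathsf{IndCoh}(\mathcal{Y}')$. So the scheme ``affine piece, then open immersion by localization'' cannot be run factor by factor. The quasi-affine case, which is exactly the one forced by the hypothesis on the diagonal, is therefore left unproved. (If $\mathcal{Y}$ had affine diagonal, $U \to \mathcal{Y}$ would be affine and smooth, and your Barr--Beck-type affine argument would suffice.)

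There is also a smaller issue. You commute $\mathsf{IndCoh}(\mathcal{X}) \otimes_{\mathsf{QCoh}(\mathcal{Y})} (-)$ with the \v{C}ech totalization by invoking compact generation of $\mathsf{IndCoh}(\mathcal{X})$, which needs $\mathcal{X}$ to be qca. The lemma assumes this only of $\mathcal{Y}$; it does hold in the later application, where $\mathcal{X}$ is qca.

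The paper avoids both problems by descending in the other variable. It chooses a smooth cover of $\mathcal{X}$ by derived schemes and uses smooth descent for $\mathsf{IndCoh}$. It combines this with the fact, taken from the proof of the Gaitsgory--Rozenblyum base change result, that $(-) \otimes_{\mathsf{QCoh}(\mathcal{Y})} \mathsf{QCoh}(\mathcal{Y}')$ preserves limits; this property depends only on $\mathcal{Y}$ and $\mathcal{Y}'$. For $\mathcal{X}$ a scheme, the paper then combines two equivalences:
\begin{itemize}
\item $\mathsf{QCoh}(\mathcal{X}') \simeq \mathsf{QCoh}(\mathcal{X}) \otimes_{\mathsf{QCoh}(\mathcal{Y})} \mathsf{QCoh}(\mathcal{Y}')$, which uses compact generation of $\mathsf{QCoh}(\mathcal{Y})$ and $\mathsf{QCoh}(\mathcal{Y}')$ by perfect complexes;
\item Gaitsgory's $\mathsf{IndCoh}(\mathcal{X}') \simeq \mathsf{IndCoh}(\mathcal{X}) \otimes_{\mathsf{QCoh}(\mathcal{X})} \mathsf{QCoh}(\mathcal{X}')$ for the smooth map $f$.
\end{itemize}
In this direction the quasi-affine diagonal of $\mathcal{Y}$ never has to be confronted, and no finiteness hypothesis on $\mathcal{X}$ is needed.
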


\noindent
Note that any algebraic space (assumed quasi-separated) has quasi-affine diagonal;
see \textcite[Remark~5.1.13]{olsson-2016-algebraic}.

\begin{proof}
    By \textcite[Proposition~6.15]{alper-hall-rydh-etale}
    and \textcite[Corollary~1.4.3]{drinfeld-gaitsgory-2013-finiteness},
    the categories $\mathsf{QCoh} (\mathcal{Y})$
    and $\mathsf{QCoh} (\mathcal{Y}')$
    are compactly generated by perfect complexes.
    Therefore, by
    \textcite[Chapter~3, Proposition~3.5.3]{gaitsgory-rozenblyum-2017-i},
    we have
    \begin{equation}
        \mathsf{QCoh} (\mathcal{X}') \simeq
        \mathsf{QCoh} (\mathcal{X})
        \underset{\mathsf{QCoh} (\mathcal{Y})}{\otimes}
        \mathsf{QCoh} (\mathcal{Y}') \ .
    \end{equation}
    On the other hand, by
    \textcite[Proposition~4.5.3]{gaitsgory-2013-ind},
    when~$\mathcal{X}$ is a derived scheme,
    we have
    \begin{equation}
        \mathsf{IndCoh} (\mathcal{X}') \simeq
        \mathsf{IndCoh} (\mathcal{X})
        \underset{\mathsf{QCoh} (\mathcal{X})}{\otimes}
        \mathsf{QCoh} (\mathcal{X}') \ ,
    \end{equation}
    which implies the lemma in this case.

    For the general case, the proof of
    \cite[Chapter~3, Proposition~3.5.3]{gaitsgory-rozenblyum-2017-i}
    shows that the operation
    $(-) \otimes_\mathsf{\mathsf{QCoh} (\mathcal{Y})} \mathsf{QCoh} (\mathcal{Y}')$
    preserves limits.
    Since $\mathsf{IndCoh} (-)$ satisfies smooth descent
    under the $*$-pullback by
    \cite[Proposition~8.3.8]{gaitsgory-2013-ind},
    the lemma follows from the previous case
    by choosing a smooth cover of~$\mathcal{X}$ by derived schemes.
\end{proof}

\begin{lemma}
    \label{lem-rel-hom}
    Suppose given a diagram
    \begin{equation*}
        \begin{tikzcd}[column sep={3em, between origins}]
            \mathcal{X}' \ar[rr, "f"] \ar[dr, "g\smash{'}"']
            && \mathcal{X} \ar[dl, "g"]
            \\
            & \mathcal{Y}
        \end{tikzcd}
    \end{equation*}
    of qcqs derived algebraic stacks over~$\mathbb{C}$,
    such that~$\mathcal{Y}$ is a classical qca stack
    with separated diagonal and a good moduli space.

    \begin{enumerate}
        \item
            \label{item-rel-hom-so}
            Let $\mathcal{C}_1, \mathcal{C}_2 \subset \mathsf{Coh} (\mathcal{X})$
            be $\mathsf{Perf} (\mathcal{Y})$-linear subcategories.
            Then they are semiorthogonal if and only if
            for any $x \in \mathcal{C}_2$ and $y \in \mathcal{C}_1$, we have
            \begin{equation*}
                g_* \, \calHom_{\mathcal{X}} (x, y) \simeq 0 \ .
            \end{equation*}

        \item
            \label{item-rel-hom-ff}
            Let $\mathcal{C} \subset \mathsf{Coh} (\mathcal{X}')$
            be a $\mathsf{Perf} (\mathcal{Y})$-linear subcategory,
            and let
            $F \colon \mathcal{C} \to \mathsf{Coh} (\mathcal{X})$
            be a $\mathsf{Perf} (\mathcal{Y})$-linear functor.
            Then~$F$ is fully faithful if and only if
            for any $x, y \in \mathcal{C}$, the natural morphism
            \begin{equation*}
                g'_* \, \calHom_{\mathcal{X}'} (x, y)
                \longrightarrow
                g_* \, \calHom_{\mathcal{X}} (F x, F y)
            \end{equation*}
            in $\mathsf{QCoh} (\mathcal{Y})$ is an equivalence.
    \end{enumerate}
    Here, $\calHom_{(-)}$
    denotes the internal hom functor of\/ $\mathsf{QCoh} (-)$.
\end{lemma}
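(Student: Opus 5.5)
The plan is to express both statements through the $\mathsf{QCoh}(\mathcal{Y})$-valued hom $g_* \, \calHom_{\mathcal{X}}(x, y)$. Its global sections recover the hom complex, and conversely it is detected by testing against perfect complexes on~$\mathcal{Y}$, using linearity. Write $\Gamma(\mathcal{Y}, -)$ for derived global sections. For $x, y \in \mathsf{Coh}(\mathcal{X})$ and $P \in \mathsf{Perf}(\mathcal{Y})$ there is a natural equivalence
\begin{equation*}
    \mathrm{Hom}_{\mathcal{X}}(x \otimes g^* P, \, y)
    \simeq \mathrm{Hom}_{\mathcal{X}}(x, \, y \otimes g^* P^\vee)
    \simeq \Gamma\bigl(\mathcal{Y}, \, g_* \, \calHom_{\mathcal{X}}(x, y) \otimes P^\vee\bigr) \ .
\end{equation*}
This follows from the projection formula for $g_*$ applied to a perfect complex. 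It holds because $g$ is a qcqs morphism of stacks with $\mathcal{Y}$ qca, so $g_*$ is continuous and satisfies the projection formula against perfect objects, as in \cite{drinfeld-gaitsgory-2013-finiteness}. Also, $\calHom_{\mathcal{X}}(x, -)$ is compatible with tensoring by the dualizable object $g^*P^\vee$.

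The key input is that $\mathsf{QCoh}(\mathcal{Y})$ is compactly generated by $\mathsf{Perf}(\mathcal{Y})$. As in the proof of \cref{lem-indcoh-base-change}, this follows from \textcite[Proposition~6.15]{alper-hall-rydh-etale} and \textcite[Corollary~1.4.3]{drinfeld-gaitsgory-2013-finiteness}, since $\mathcal{Y}$ is classical qca with separated (hence affine-stabilizer, good moduli) diagonal properties. So an object $M \in \mathsf{QCoh}(\mathcal{Y})$ vanishes if and only if $\Gamma(\mathcal{Y}, M \otimes P^\vee) \simeq \mathrm{Hom}(P, M) \simeq 0$ for all perfect~$P$. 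A morphism $M \to N$ is an equivalence if and only if it induces equivalences on $\mathrm{Hom}(P, -)$ for all perfect~$P$.

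For \cref{item-rel-hom-so}: if $g_* \calHom(x, y) \simeq 0$, take $P = \mathcal{O}_{\mathcal{Y}}$ to get $\mathrm{Hom}(x, y) \simeq 0$. Conversely, suppose $\mathrm{Hom}(\mathcal{C}_2, \mathcal{C}_1) = 0$. By $\mathsf{Perf}(\mathcal{Y})$-linearity, $x \otimes g^* P \in \mathcal{C}_2$ for every perfect~$P$. The displayed equivalence then gives $\mathrm{Hom}(P, g_*\calHom(x, y)) \simeq 0$ for all~$P$, so compact generation gives $g_* \calHom(x, y) \simeq 0$.

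For \cref{item-rel-hom-ff} the argument is the same. The natural map is the one induced by~$F$ together with its linearity structure $F(x \otimes g'^* P) \simeq F(x) \otimes g^* P$. Applying $\mathrm{Hom}(P, -)$ to it yields the map $\mathrm{Hom}_{\mathcal{X}'}(x \otimes g'^*P, y) \to \mathrm{Hom}_{\mathcal{X}}(F(x \otimes g'^*P), Fy)$, since $x \otimes g'^* P \in \mathcal{C}$. If $F$ is fully faithful this map is an equivalence for all~$P$, so the natural map is an equivalence by compact generation. Conversely, taking $P = \mathcal{O}$ and global sections recovers full faithfulness.

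The main point needing care is the compatibility of the natural map with the identifications above. Concretely, one must check that the $\mathsf{Perf}(\mathcal{Y})$-linear structure of~$F$ produces a morphism in $\mathsf{QCoh}(\mathcal{Y})$ whose effect on $\mathrm{Hom}(P, -)$ is the action of~$F$ on twisted homs. I would handle this by defining the natural map as adjoint to the enriched action of~$F$, using that $\mathsf{Coh}$ categories are enriched over $\mathsf{QCoh}(\mathcal{Y})$ via $g_*\calHom$.
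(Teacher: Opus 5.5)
Your proposal is correct and follows essentially the same route as the paper: you test $g_*\calHom_{\mathcal{X}}(x,y)$ against perfect complexes on $\mathcal{Y}$, using compact generation of $\mathsf{QCoh}(\mathcal{Y})$ by $\mathsf{Perf}(\mathcal{Y})$ together with $\mathsf{Perf}(\mathcal{Y})$-linearity of the subcategories (and of $F$), to identify $\mathrm{Hom}_{\mathcal{Y}}(P, g_*\calHom_{\mathcal{X}}(x,y))$ with $\mathrm{Hom}_{\mathcal{X}}(g^*P\otimes x, y)$. The only difference is that you pass through duality and the projection formula, whereas the paper obtains this identification directly from the adjunctions $g^*\dashv g_*$ and $(-\otimes x)\dashv\calHom_{\mathcal{X}}(x,-)$, so neither continuity of $g_*$ nor a projection formula is needed.
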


\begin{proof}
    \allowdisplaybreaks
    As in the proof of \cref{lem-indcoh-base-change},
    the category $\mathsf{QCoh} (\mathcal{Y})$
    is compactly generated by perfect complexes.

    For \cref{item-rel-hom-so},
    assume that~$\mathcal{C}_1$ and~$\mathcal{C}_2$ are semiorthogonal.
    Then for any $E \in \mathsf{Perf} (\mathcal{Y})$, we have
    \begin{align}
        & \phantom{{} \simeq {}}
        \mathrm{Hom}_{\mathcal{Y}}
        (E, g_* \, \calHom_{\mathcal{X}} (x, y))
        \notag \\*
        & \simeq
        \mathrm{Hom}_{\mathcal{X}}
        (g^* \, E, \calHom_{\mathcal{X}} (x, y))
        \notag \\*
        & \simeq
        \mathrm{Hom}_{\mathcal{X}}
        (g^* \, E \otimes x, y)
        \simeq 0 \ .
    \end{align}
    The converse is immediate.

    For \cref{item-rel-hom-ff},
    assume that~$F$ is fully faithful.
    Then for any $E \in \mathsf{Perf} (\mathcal{Y})$, we have
    \begin{align}
        & \phantom{{} \simeq {}}
        \mathrm{Hom}_{\mathcal{Y}}
        (E, g_* \, \calHom_{\mathcal{X}} (F x, F y))
        \notag \\*
        & \simeq
        \mathrm{Hom}_{\mathcal{X}}
        (g^* \, E, \calHom_{\mathcal{X}} (F x, F y))
        \notag \\
        & \simeq
        \mathrm{Hom}_{\mathcal{X}}
        (g^* \, E \otimes F x, F y)
        \notag \\
        & \simeq
        \mathrm{Hom}_{\mathcal{X}}
        (F (g'^* \, E \otimes x), F y)
        \notag \\
        & \simeq
        \mathrm{Hom}_{\mathcal{X}'}
        (g'^* \, E \otimes x, y)
        \notag \\
        & \simeq
        \mathrm{Hom}_{\mathcal{X}'}
        (g'^* \, E, \calHom_{\mathcal{X}'} (x, y))
        \notag \\
        & \simeq
        \mathrm{Hom}_{\mathcal{Y}}
        (E, g'_* \, \calHom_{\mathcal{X}} (x, y))
        \ ,
    \end{align}
    and it follows that the morphism in question is an equivalence.
    The converse is immediate.
\end{proof}

\begin{lemma}
    \label{lem-sod-descent}
    In the situation of \cref{lem-indcoh-base-change},
    assume that~$\mathcal{X}$ is also qca.
    Let~$I$ be a totally ordered set, and let
    \begin{equation*}
        (\mathcal{C}_i \subset \mathsf{Coh} (\mathcal{X}))_{i \in I} \ ,
        \qquad
        (\mathcal{C}'_i \subset \mathsf{Coh} (\mathcal{X}'))_{i \in I}
    \end{equation*}
    be families of\/
    $\mathsf{Perf} (\mathcal{Y})$- and
    $\mathsf{Perf} (\mathcal{Y}')$-linear thick subcategories, respectively.

    \begin{enumerate}
        \item
            \label{item-sod-ascend}
            If\/
            $\mathcal{C}'_i$ is split-generated by
            $f^* \, E_i \otimes g'^* \, F$
            for $E_i \in \mathcal{C}_i$ and $F \in \mathsf{Perf} (\mathcal{Y}')$,
            and if\/
            $\mathsf{Coh} (\mathcal{X}) = \langle \mathcal{C}_i \mid i \in I \rangle$
            is a semiorthogonal decomposition,
            then so is
            $\mathsf{Coh} (\mathcal{X}') = \langle \mathcal{C}'_i \mid i \in I \rangle$.

        \item
            \label{item-sod-descend}
            Assume the following hold:
            \begin{enumerate}[label=\textnormal{(\alph*)}]
                \item
                    $f_0$ is surjective.
                \item
                    \label{item-sod-descend-c0}
                    There is a maximal element $0 \in I$,
                    and $\mathcal{C}_0 = (f^*)^{-1} (\mathcal{C}'_0)$.
                \item
                    For any $i < 0$, $\mathcal{C}_i$ is left admissible.
                \item
                    For any $i < 0$,
                    $\mathcal{C}'_i$ is split-generated by
                    $f^* \, E_i \otimes g'^* \, F$
                    for $E_i \in \mathcal{C}_i$ and $F \in \mathsf{Perf} (\mathcal{Y}')$.
                \item
                    \label{item-sod-descend-finite}
                    For any $x \in \mathsf{Coh} (\mathcal{X})$,
                    there are only finitely many $i \in I$
                    such that $\iota_i^* (x) \nsimeq 0$,
                    where $\iota_i^*$ is the left adjoint of the inclusion
                    $\iota_i \colon \mathcal{C}_i \to \mathsf{Coh} (\mathcal{X})$.
            \end{enumerate}
            Then, if\/ $\mathsf{Coh} (\mathcal{X}') = \langle \mathcal{C}'_i \mid i \in I \rangle$
            is a semiorthogonal decomposition,
            so is
            $\mathsf{Coh} (\mathcal{X}) = \langle \mathcal{C}_i \mid i \in I \rangle$.
            In this case,
            $\mathcal{C}'_0$ is split generated by
            $f^* \, E_0 \otimes g'^* \, F$
            for $E_0 \in \mathcal{C}_0$ and $F \in \mathsf{Perf} (\mathcal{Y}')$.
    \end{enumerate}
\end{lemma}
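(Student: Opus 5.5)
Both parts rest on the base change equivalence of \cref{lem-indcoh-base-change} and on the relative criteria of \cref{lem-rel-hom}. These two results let us compute $\mathsf{Perf}(\mathcal{Y})$-linear Hom complexes after pushforward to~$\mathcal{Y}$, which commutes with flat base change along~$f_0$.

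For \cref{item-sod-ascend}, we first check \emph{semiorthogonality}. By \cref{lem-rel-hom}~\cref{item-rel-hom-so} applied over~$\mathcal{Y}'$, it suffices to check it on split generators. For $x = f^* E_i \otimes g'^* F$ and $y = f^* E_j \otimes g'^* F'$ with $i > j$, we have
\[
    g'_* \calHom(x, y) \simeq \calHom(F, F') \otimes f_0^* \, g_* \calHom(E_i, E_j) \ ,
\]
using flat base change, since $f_0$ is smooth and $g$ is qcqs. The right-hand side vanishes by \cref{lem-rel-hom}~\cref{item-rel-hom-so} applied over~$\mathcal{Y}$. For \emph{generation}, $\mathsf{Coh}(\mathcal{X}')$ is split generated by objects $f^* E \otimes g'^* F$, by \cref{lem-indcoh-base-change} (compact objects of a tensor product are split generated by exterior products of compact objects). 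Each $E$ admits a finite filtration with graded pieces in the~$\mathcal{C}_i$, and applying $f^*(-) \otimes g'^* F$ preserves such filtrations. Thickness then gives the claim. The last sentence of the statement, concerning~$\mathcal{C}'_0$, is obtained in the same way.

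For \cref{item-sod-descend}, \emph{semiorthogonality} descends by the same formula. Since $f_0$ is smooth and surjective, hence faithfully flat, $f_0^* M \simeq 0$ implies $M \simeq 0$. For $i > j$ with $j < 0$, the generators of~$\mathcal{C}'_j$ are of the product form, so the vanishing upstairs, after choosing $F = F' = \mathcal{O}$, gives $f_0^* g_* \calHom(E_i, E_j) \simeq 0$. The case $j = 0$ is impossible, since $0$ is maximal. For \emph{generation}, take $x \in \mathsf{Coh}(\mathcal{X})$. Using left admissibility of $\mathcal{C}_i$ for $i < 0$ and the finiteness hypothesis~\cref{item-sod-descend-finite}, we peel off successively the components $\iota_i^* x$ for $i < 0$. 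More precisely, we take $i_1 < \cdots < i_n$ to be the indices with nonzero projection, and form the cone of the unit $x \to \iota_{i}\iota_{i}^* x$ at the smallest such index; iterating, we obtain $x_0$ lying in the left orthogonal ${}^\perp\langle \mathcal{C}_i \mid i < 0 \rangle$, with $x$ built from $x_0$ and objects of the~$\mathcal{C}_i$. This requires checking that the process terminates. Ordering matters, and since semiorthogonality is already established, the projections onto later pieces are unaffected, so a finite induction applies. Next we show $f^* x_0 \in \mathcal{C}'_0$. We have $\mathrm{Hom}(f^* x_0, f^* E_i \otimes g'^* F) \simeq \mathrm{Hom}$ computed via $g'_*$, which equals $f_0^*$ of $g_*\calHom(x_0, E_i)$ tensored with~$F$. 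This vanishes, so $f^* x_0$ lies in the left orthogonal of all $\mathcal{C}'_i$ with $i < 0$. Using the decomposition upstairs, together with the maximality of~$0$, this orthogonal is exactly~$\mathcal{C}'_0$. Hypothesis~\cref{item-sod-descend-c0} then gives $x_0 \in \mathcal{C}_0$.

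Finally, the split generation of~$\mathcal{C}'_0$ follows by applying \cref{item-sod-ascend}-style reasoning: the images $f^*\mathcal{C}_i \otimes g'^*\mathsf{Perf}(\mathcal{Y}')$ generate $\mathsf{Coh}(\mathcal{X}')$. Projecting onto~$\mathcal{C}'_0$, the projections of the product generators for $i < 0$ vanish, while those for $i = 0$ are unchanged, which gives the claim.

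I expect the main obstacle to be the orthogonal-identification step. The semiorthogonal decomposition upstairs is indexed by a possibly infinite set~$I$, so one must ensure that an object left-orthogonal to all $\mathcal{C}'_i$ with $i < 0$ lies in~$\mathcal{C}'_0$. This uses that $f^* x_0$ has a finite filtration with pieces in the~$\mathcal{C}'_i$, and that each piece with $i < 0$ maps nontrivially from $f^* x_0$ unless it is zero. To handle this, I would induct on the length of the filtration, starting from its rightmost piece.
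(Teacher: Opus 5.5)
Your plan follows the paper's proof closely. Part (i) is the same computation: the base change $f_0^* g_* \simeq g'_* f^*$, then \cref{lem-rel-hom}~\cref{item-rel-hom-so} over $\mathcal{Y}$, then split generation of $\mathsf{Coh}(\mathcal{X}')$ by the objects $f^* E \otimes g'^* F$. Semiorthogonality in (ii) descends the same way, via faithful flatness of $f_0$. For pairs involving the index $0$ you need hypothesis (b), not (d), to know that $f^* E_0 \in \mathcal{C}'_0$. For generation, the paper takes $\mathcal{C} = \langle \mathcal{C}_i \mid i < 0 \rangle$ to be left admissible by (e). It then applies part (i) to $\mathsf{Coh}(\mathcal{X}) = \langle \mathcal{C}, {}^\perp\mathcal{C} \rangle$, which gives $f^*({}^\perp\mathcal{C}) \subset \mathcal{C}'_0$ directly. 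Your explicit peeling plus Hom computation is the hands-on version of this. Your projection argument for the final claim about $\mathcal{C}'_0$ is a valid substitute for the paper's comparison of left complements.

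The step that fails as written is the termination of your peeling. Removing $\iota_{i_1}\iota_{i_1}^* x$ does not leave the other projections unaffected, since $\mathrm{Hom}(\mathcal{C}_{i_1}, \mathcal{C}_i)$ need not vanish for $i > i_1$. For example, in $\mathsf{Coh}(\mathbb{P}^1)$ take $\mathcal{C}_1 = \langle \mathcal{O} \rangle$ and $\mathcal{C}_2 = \langle \mathcal{O}(1) \rangle$. Then $\iota_2^* \mathcal{O}(2) = 0$, yet the fibre of the unit $\mathcal{O}(2) \to \iota_1 \iota_1^* \mathcal{O}(2) = \mathcal{O}[1]$ is $\mathcal{O}(1)^{\oplus 2}$, which has nonzero projection to $\mathcal{C}_2$. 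Re-applying (e) at each stage only shows that the smallest active index strictly increases. For a general totally ordered $I$ that does not by itself stop the process.

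The missing input is the decomposition upstairs. For $i < 0$, your base change computation together with (d) shows that $f^*$ carries the triangle $y \to x \to \iota_i \iota_i^* x$ to the left-projection triangle of $f^* x$ onto $\mathcal{C}'_i$ (the fibre lands in ${}^\perp\mathcal{C}'_i$). Moreover $f^*$ is conservative, since $f$ is smooth and surjective. Choose a minimal finite $T \subset I$ such that $f^* x$ lies in the thick hull of the $\mathcal{C}'_t$ for $t \in T$. Then $\iota_i^* x = 0$ for $i < \min T$. If $\min T < 0$, the left projection at $\min T$ is the nonzero leftmost component of $f^* x$, and its fibre lies in the hull of $T \setminus \{\min T\}$. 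Hence each step deletes $\min T$, and the process stops after at most $|T|$ steps. It can only stop when $T \subseteq \{0\}$, that is, when $f^* x_0 \in \mathcal{C}'_0$. This also settles the orthogonal-identification step you flagged. Note there that only the leftmost nonzero component of $f^* x_0$ is guaranteed to receive a nonzero map from $f^* x_0$, not every piece with $i < 0$.
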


\noindent
Here, the set of conditions in \cref{item-sod-descend}
is one of the many possible sets
that make such descent possible;
we choose this specific set so it fits into an inductive argument later.

\begin{proof}
    By \cref{lem-indcoh-base-change},
    and by
    \cite[Chapter~1, Proposition~7.4.2 and Corollary~8.7.4]{gaitsgory-rozenblyum-2017-i},
    the category $\mathsf{Coh} (\mathcal{X}')$
    is split-generated by
    $f^* \, E \otimes g'^* \, F$
    for $E \in \mathsf{Coh} (\mathcal{X})$
    and $F \in \mathsf{Perf} (\mathcal{Y}')$.

    By
    \cite[Chapter~3, Proposition~2.3.2]{gaitsgory-rozenblyum-2017-i},
    we have the base change theorem
    \begin{equation}
        \label{eq-bc-qcoh}
        f_0^* \, g_* \simeq g'_* \, f^*
    \end{equation}
    as functors defined on bounded below quasi-coherent complexes
    $\mathsf{QCoh} (\mathcal{X})^+$.

    For \cref{item-sod-ascend},
    we use a similar argument to the proof of
    \textcite[Proposition~5.1]{kuznetsov-2011}.
    Namely, to check semiorthogonality,
    let $j > i$, and let $E_j \in \mathcal{C}_j$,
    $E_i \in \mathcal{C}_i$,
    and $F, F' \in \mathsf{Perf} (\mathcal{Y}')$.
    Then
    \begin{align}
        & \phantom{{} \simeq {}}
        g'_* \, \calHom_{\mathcal{X}'}
        (f^* \, E_j \otimes g'^* \, F, f^* \, E_i \otimes g'^* \, F')
        \notag \\
        & \simeq
        g'_* \, f^* \, \calHom_{\mathcal{X}} (E_j, E_i)
        \otimes F^\vee \otimes F'
        \notag \\
        & \simeq
        f_0^* \, g_* \, \calHom_{\mathcal{X}} (E_j, E_i)
        \otimes F^\vee \otimes F'
        \notag \\
        & \simeq 0 \ ,
    \end{align}
    where the final step is by
    \cref{lem-rel-hom}~\cref{item-rel-hom-so}.
    This together with the generation property
    of $\mathsf{Coh} (\mathcal{X}')$ proved above
    shows semiorthogonality.
    The generation property also implies that
    the subcategories $\mathcal{C}'_i$
    generate $\mathsf{Coh} (\mathcal{X}')$.

    For \cref{item-sod-descend},
    to show semiorthogonality,
    let $j > i$, and let $E_j \in \mathcal{C}_j$ and
    $E_i \in \mathcal{C}_i$.
    Then
    \begin{align}
        f_0^* \, g_* \, \calHom_{\mathcal{X}} (E_j, E_i)
        & \simeq
        g'_* \, f^* \, \calHom_{\mathcal{X}} (E_j, E_i)
        \notag \\
        & \simeq
        g'_* \, \calHom_{\mathcal{X}'} (f^* E_j, f^* E_i)
        \notag \\
        & \simeq
        0 \ ,
    \end{align}
    where the final step uses that
    $f^* E_j \in \mathcal{C}'_j$,
    $f^* E_i \in \mathcal{C}'_i$,
    and uses \cref{lem-rel-hom}~\cref{item-rel-hom-so}.
    The surjectivity of $f_0$ implies that
    $g_* \, \calHom_{\mathcal{X}} (E_j, E_i) \simeq 0$,
    and \cref{lem-rel-hom}~\cref{item-rel-hom-so}
    implies that $\mathcal{C}_i$ and $\mathcal{C}_j$ are semiorthogonal.

    To show that the $\mathcal{C}_i$ generate
    $\mathsf{Coh} (\mathcal{X})$,
    let $\mathcal{C} \subset \mathsf{Coh} (\mathcal{X})$
    be the subcategory generated by $\mathcal{C}_i$ for $i < 0$.
    By the assumption \cref{item-sod-descend-finite},
    $\mathcal{C}$ is also left admissible,
    and we have a semiorthogonal decomposition
    $\mathsf{Coh} (\mathcal{X}) = \langle \mathcal{C}, {^\perp} \mathcal{C} \rangle$.
    Applying \cref{item-sod-ascend} shows that
    $f^* ({^\perp} \mathcal{C}) \subset \mathcal{C}'_0$,
    so that ${^\perp} \mathcal{C} \subset \mathcal{C}_0$
    by the assumption~\cref{item-sod-descend-c0}.
    On the other hand, for $E_0 \in \mathcal{C}_0$
    and $E_i \in \mathcal{C}_i$ for $i < 0$, we have
    $f^* \, \calHom_{\mathcal{X}} (E_0, E_i) \simeq 0$
    by \cref{lem-rel-hom}~\ref{item-rel-hom-so}, so that
    $\calHom_{\mathcal{X}} (E_0, E_i) \simeq 0$,
    and hence $\mathcal{C}_0 \subset {^\perp} \mathcal{C}$,
    so $\mathcal{C}_0 = {^\perp} \mathcal{C}$.

    For the final statement, let
    $\mathcal{C}''_0 \subset \mathsf{Coh} (\mathcal{X}')$
    be the subcategory generated by
    $f^* \, E_0 \otimes g'^* \, F$
    for $E_0 \in \mathcal{C}_0$ and $F \in \mathsf{Perf} (\mathcal{Y}')$.
    Applying~\cref{item-sod-ascend} shows that
    $\mathcal{C}''_0 = \mathcal{C}'_0$,
    as they are both equal to the left complement
    of the subcategory of $\mathsf{Coh} (\mathcal{X}')$
    generated by $\mathcal{C}'_i$ for all $i < 0$.
\end{proof}

\begin{lemma}
    \label{lem-gms-induction}
    Let $\mathcal{X}$ be a qca derived algebraic stack over~$\mathbb{C}$
    with quasi-compact graded points
    whose classical truncation has separated diagonal.
    Let $\mathcal{X} \to X$ be a good moduli space,
    and\/ $U \to X$ an étale morphism.
    Let $\lambda \in |\mathrm{CL}_\mathbb{Q} (\mathcal{X})|$.
    Then we have a diagram
    \begin{equation}
        \begin{tikzcd}[row sep={2.4em, between origins}, column sep={3.2em, between origins}]
            & \mathcal{U}_\lambda \ar[dd] \ar[dl]
            \ar[dddl, phantom, pos=.15,
                "\tikz{\draw[-, line width=.6] (-.12, -.09) -- (0, 0) -- (0, -.18);}"]
            && \mathcal{U}_\lambda^+ \ar[ll] \ar[rr] \ar[dl]
            \ar[dlll, phantom, pos=.2,
                "\tikz{\draw[-, line width=.6] (-.1, -.2) -- (0, -.1) -- (-.18, -.1);}"]
            \ar[dr, phantom, pos=0,
                "\tikz{\draw[-, line width=.6] (-.1, .05) -- (0, .15) -- (.15, .15);}"]
            && \mathcal{U} \ar[dd] \ar[dl]
            \ar[dddl, phantom, pos=.17,
                "\tikz{\draw[-, line width=.6] (-.12, -.09) -- (0, 0) -- (0, -.18);}"]
            \\
            \mathcal{X}_\lambda \ar[rr] \ar[dd]
            && \mathcal{X}_\lambda^+
            \ar[ll, crossing over] \ar[rr, crossing over]
            && \mathcal{X}
            \\
            & U_\lambda \ar[rrrr] \ar[dl]
            \ar[drrr, phantom, pos=.1,
                "\tikz{\draw[-, line width=.6] (-.1, -.3) -- (0, -.2) -- (.15, -.2);}"]
            &&&& U \ar[dl]
            \\
            X_\lambda \ar[rrrr]
            &&&& X \ar[from=uu, crossing over]
        \end{tikzcd}
    \end{equation}
    where all squares are cartesian as indicated,
    $\mathcal{X}_\lambda \to X_\lambda$
    and $\mathcal{U}_\lambda \to U_\lambda$
    are good moduli spaces,
    and $\mathcal{U}_\lambda = \coprod_{\lambda'} \mathcal{U}_{\lambda'}$,
    where $\lambda'$ runs through preimages of\/~$\lambda$ in
    $|\mathrm{CL}_\mathbb{Q} (\mathcal{U})|$.
\end{lemma}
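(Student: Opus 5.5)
The plan is to take $\mathcal{U} = \mathcal{X} \times_X U$ and $U_\lambda = X_\lambda \times_X U$. The whole diagram then follows from one fact: mapping stacks out of ${*}/\mathbb{G}_\mathrm{m}$ and $\mathbb{A}^1/\mathbb{G}_\mathrm{m}$ commute with base change along $U \to X$.

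First, $\mathcal{U} \to U$ is a good moduli space, since good moduli spaces are stable under flat (in particular étale) base change. This holds in the derived setting by \textcite{ahlqvist-hekking-pernice-savvas}, or by passing to classical truncations as in \cref{para-gms}. Next, for $\mathcal{S} \in \{ {*}/\mathbb{G}_\mathrm{m},\ \mathbb{A}^1/\mathbb{G}_\mathrm{m} \}$, the derived mapping stack functor preserves fibre products in the target. This gives
\[
    \calMap (\mathcal{S}, \mathcal{U}) \simeq \calMap (\mathcal{S}, \mathcal{X}) \times_{\calMap (\mathcal{S}, X)} \calMap (\mathcal{S}, U) \ .
\]

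The key input is that the constant-map morphism $Y \to \calMap (\mathcal{S}, Y)$ is an equivalence for $Y = X, U$. For a derived affine $T$, the projection $\mathcal{S} \times T \to T$ is a good moduli space: it is cohomologically affine and the pushforward of the structure sheaf is $\mathcal{O}_T$, with no higher cohomology. By the universal property of good moduli spaces for maps to algebraic spaces, extended to the derived setting via this equality of pushforwards, every map $\mathcal{S} \times T \to Y$ factors uniquely through $T$. I expect this to be the main technical point to check carefully in the derived setting. If needed, I would reduce it to the classical truncation together with a deformation-theoretic comparison of cotangent complexes. This comparison uses $\mathrm{R}\Gamma(\mathcal{S}, \mathcal{O}) = \mathbb{C}$, which gives $\mathbb{L}_{\calMap (\mathcal{S}, Y)} \simeq \mathbb{L}_Y$ along constant maps.

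It follows that $\calMap (\mathcal{S}, \mathcal{U}) \simeq \calMap (\mathcal{S}, \mathcal{X}) \times_X U$, where the map to $X$ is evaluation composed with $\mathcal{X} \to X$. The morphisms $\mathrm{gr}$, $\mathrm{ev}$ and $\mathrm{tot}$ all commute with the maps to $X$, because $X$ sees a filtered point only through its constant image. Hence the squares involving $\mathcal{U}_\lambda^+ \to \mathcal{U}_\lambda$ and $\mathcal{U}_\lambda^+ \to \mathcal{U}$ are cartesian.

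Restricting to the components over $\lambda$, we set $\mathcal{U}_\lambda$ to be the preimage of $\mathcal{X}_\lambda$, as in \cref{para-grad-filt}. This gives $\mathcal{U}_\lambda = \mathcal{X}_\lambda \times_X U$, which is the disjoint union of the $\mathcal{U}_{\lambda'}$ over preimages $\lambda'$, and similarly for $\mathcal{U}_\lambda^+$. By \cref{para-gms}, which uses quasi-compact graded points and the separated diagonal, $\mathcal{X}_\lambda \to X_\lambda$ is a good moduli space. The composite $\mathcal{X}_\lambda \to \mathcal{X} \to X$ factors through $X_\lambda$ by the universal property. Hence
\[
    \mathcal{U}_\lambda \simeq \mathcal{X}_\lambda \times_{X_\lambda} (X_\lambda \times_X U) = \mathcal{X}_\lambda \times_{X_\lambda} U_\lambda \ .
\]
This is the required left cartesian square. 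Since $U_\lambda \to X_\lambda$ is étale, base change again shows that $\mathcal{U}_\lambda \to U_\lambda$ is a good moduli space. The bottom square is cartesian by definition of $U_\lambda$, and the remaining faces commute by construction, which completes the diagram.
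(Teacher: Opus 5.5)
Your proof is correct and follows essentially the same route as the paper. The paper simply defers to the smooth-case argument of \cite[\S 8.1.5]{bu-davison-ibanez-nunez-kinjo-padurariu} together with derived base change of good moduli spaces \cite[Lemma~2.11]{ahlqvist-hekking-pernice-savvas}, and that argument is the one you give: set $\mathcal{U} = \mathcal{X} \times_X U$, use $\calMap(\mathcal{S}, \mathcal{U}) \simeq \calMap(\mathcal{S}, \mathcal{X}) \times_X U$ because graded and filtered points of an algebraic space are trivial, then base change $\mathcal{X}_\lambda \to X_\lambda$ along $U_\lambda = X_\lambda \times_X U$. The one input you rightly flag, that $Y \to \calMap(\mathcal{S}, Y)$ is an equivalence for derived algebraic spaces $Y$, is left implicit in the paper, and your reduction to the classical truncation plus étale and deformation-theoretic lifting is an adequate way to supply it.
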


\begin{proof}
    The case when~$\mathcal{X}$ is smooth is proved in
    \cite[\S 8.1.5]{bu-davison-ibanez-nunez-kinjo-padurariu}.
    The argument there works for derived stacks,
    using the derived version of
    \cite[Proposition~4.7]{alper-2013-good}
    in \cite[Lemma~2.11]{ahlqvist-hekking-pernice-savvas}.
\end{proof}

\begin{lemma}
    \label{lem-sm-local-str}
    Let~$\mathcal{X}$ be a smooth qca stack over~$\mathbb{C}$
    with separated diagonal
    and a good moduli space~$X$.
    Then for each closed point $x \in \mathcal{X}$,
    there is a diagram
    \begin{equation}
        \begin{tikzcd}[column sep={6em, between origins}]
            V_x / G_x \ar[d]
            & U_x / G_x \ar[d]
            \ar[l, "j"'] \ar[r, "f"]
            \ar[dl, phantom, pos=.2, "\urcorner"]
            \ar[dr, phantom, pos=.2, "\ulcorner"]
            & \mathcal{X} \ar[d]
            \\
            V_x \git G_x
            & U_x \git G_x
            \ar[l, "j_0"'] \ar[r, "f_0"]
            & X \rlap{ ,}
        \end{tikzcd}
    \end{equation}
    where $f_0$ and $j_0$ are étale,
    the vertical arrows are good moduli space morphisms,
    $G_x$ is the stabilizer group of\/~$x$,
    $U_x$ is an affine scheme acted on by~$G_x$
    fixing a point $0 \in U_x$,
    $V_x = \mathrm{H}^0 (\mathbb{T}_\mathcal{X} |_x)$
    is the tangent space,
    and we have $f (0) = x$ and $j (0) = 0$,
    both preserving stabilizers at~$0$.
\end{lemma}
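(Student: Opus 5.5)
This is a Luna étale slice theorem statement for stacks with good moduli spaces, and the plan is to deduce it from the local structure theorem of Alper--Hall--Rydh. Since $\mathcal{X}$ is qca with separated diagonal and has a good moduli space, every closed point $x$ has linearly reductive stabilizer $G_x$. By \textcite{alper-hall-rydh-etale} (the local structure theorem for stacks with good moduli spaces), there is an affine scheme $U_x$ with a $G_x$-action fixing a point $0$, together with an étale morphism $f \colon U_x / G_x \to \mathcal{X}$ sending $0$ to $x$ and inducing an isomorphism of stabilizers at~$0$.

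We may further arrange that $f$ is \emph{strongly étale}, meaning that the induced map $f_0 \colon U_x \git G_x \to X$ is étale and the square with the good moduli space maps is cartesian. This uses Luna's fundamental lemma in the form of \cite[Proposition~4.7]{alper-2013-good}, or the corresponding statement in \cite{alper-hall-rydh-etale}: an étale morphism that is stabilizer-preserving at closed points becomes strongly étale after shrinking to a saturated open neighbourhood of~$0$. Replacing $U_x$ by the preimage of an affine open in $U_x \git G_x$ keeps it affine.

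For the left square, we use that $\mathcal{X}$ is smooth, so $U_x$ is smooth at the fixed point~$0$ with tangent space $T_0 U_x \simeq V_x = \mathrm{H}^0(\mathbb{T}_\mathcal{X}|_x)$ as $G_x$-representations. This holds because $f$ is étale and preserves the stabilizer. By linear reductivity, the surjection $\mathfrak{m}_0 \to \mathfrak{m}_0/\mathfrak{m}_0^2 \simeq V_x^\vee$ admits a $G_x$-equivariant splitting, which gives an equivariant morphism $j \colon U_x \to V_x$ with $j(0)=0$. This map is étale at~$0$ and preserves stabilizers there. We then shrink $U_x$ to the $G_x$-saturated open locus where $j$ is étale, which is the preimage of an open in $U_x \git G_x$ because closed orbits near~$0$ specialise to~$0$.

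Applying Luna's fundamental lemma once more makes $j$ strongly étale, so that $j_0$ is étale and the left square is cartesian. To do both shrinkings simultaneously, we take the intersection of the two saturated open neighbourhoods of~$0$ and again pass to an affine open in the quotient. Cartesianness is preserved under base change to opens of the good moduli space.

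The main obstacle is this simultaneous strong étaleness. We must verify the hypotheses of the fundamental lemma: $j$ and $f$ must send closed points to closed points and preserve stabilizers on a saturated neighbourhood, not merely at~$0$. The approach is to first use the fact that the locus in $U_x \git G_x$ over which a stabilizer-preserving étale map is strongly étale is open and contains the image of~$0$; this is \cite[Theorem~5.1]{alper-hall-rydh-etale}-type openness. Everything else is routine.
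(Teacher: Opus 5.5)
Your outline is the standard proof of the Alper--Hall--Rydh Luna slice theorem, which the paper simply cites (\cite[Theorem~1.2 and Proposition~4.13]{alper-hall-rydh-2020-luna}). The part concerning~$j$ is sound. The equivariant splitting of $\mathfrak{m}_0 \to \mathfrak{m}_0 / \mathfrak{m}_0^2$ gives a $G_x$-equivariant $j$ that is étale at~$0$. Since $j \colon U_x / G_x \to V_x / G_x$ is affine, being induced by an equivariant map of affine schemes with the same group, the fundamental lemma applies to it directly.

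The gap is in applying the fundamental lemma to~$f$. That lemma needs $f$ to be representable and separated (for instance affine) near~$0$. The conditions on closed points and stabilizers are needed only at the single point~$0$, so the difficulty you single out in your last paragraph is not the real one. The local structure theorem lets you take $f$ affine only when $\mathcal{X}$ has affine diagonal, whereas here $\mathcal{X}$ is only assumed to have separated diagonal. In that generality, an étale $f$ that is stabilizer-preserving at~$0$ can fail to be representable on every neighbourhood of~$0$. Its relative inertia is étale over $U_x / G_x$ and trivial over~$0$, but unless it is finite it may be non-trivial at points specializing to~$0$. Tellingly, your argument never uses the separated-diagonal hypothesis. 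Also, \cite[Proposition~4.7]{alper-2013-good} is, as the paper uses it in \cref{lem-gms-induction}, a base-change statement for good moduli spaces, not the fundamental lemma.

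This is exactly the point the paper's proof handles by invoking \cite[Proposition~5.3~(2)]{alper-hall-rydh-etale}. That result answers \cite[Question~1.10]{alper-hall-rydh-2020-luna} for stacks with separated diagonal, and so supplies the missing input under the hypotheses of the lemma. With that input, or under the stronger assumption of affine diagonal (where $f$ can be chosen affine), the rest of your argument goes through. This includes shrinking to the intersection of the two saturated neighbourhoods and then to the preimage of an affine open of $U_x \git G_x$.
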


\begin{proof}
    The lemma follows from
    \textcite[Theorem~1.2 and Proposition~4.13]{alper-hall-rydh-2020-luna},
    strengthened by \textcite[Proposition~5.3~(2)]{alper-hall-rydh-etale}
    which answers
    \cite[Question~1.10]{alper-hall-rydh-2020-luna}
    for stacks with separated diagonal.
\end{proof}

\begin{lemma}
    \label{lem-admissible}
    Let~$\mathcal{X}$ be a smooth qca stack over~$\mathbb{C}$
    with quasi-compact graded points, separated diagonal, and a good moduli space,
    and let $\lambda \in |\mathrm{CL} (\mathcal{X})|$.
    Then for each $n \in \mathbb{Z}$, the functor
    \begin{equation}
        \label{eq-sm-star-wt-n}
        {\star}_\lambda \colon
        \mathsf{Perf} (\mathcal{X}_\lambda)_n
        \longrightarrow \mathsf{Perf} (\mathcal{X})
    \end{equation}
    admits both left and right adjoints, where
    $\mathsf{Perf} (\mathcal{X}_\lambda)_n \subset
    \mathsf{Perf} (\mathcal{X}_\lambda)$
    is the subcategory of objects of weight~$n$
    under the tautological
    ${*} / \mathbb{G}_\mathrm{m}$-action on~$\mathcal{X}_\lambda$.

    In particular, for any admissible subcategory
    $\mathcal{C} \subset \mathsf{Perf} (\mathcal{X}_\lambda)_n$
    such that ${\star}_\lambda |_{\mathcal{C}}$
    is fully faithful, its image
    ${\star}_\lambda \, \mathcal{C} \subset \mathsf{Perf} (\mathcal{X})$
    is also admissible.
\end{lemma}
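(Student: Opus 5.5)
The plan is to construct the two adjoints of ${\star}_\lambda = (\mathrm{ev}_\lambda)_* \, \mathrm{gr}_\lambda^*$ on $\mathsf{Perf} (\mathcal{X}_\lambda)_n$ separately for the two factors $(\mathrm{ev}_\lambda)_*$ and $\mathrm{gr}_\lambda^*$, and then compose them.

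\emph{The factor $(\mathrm{ev}_\lambda)_*$.} Since $\mathcal{X}$ is smooth, the stacks $\mathcal{X}_\lambda$ and $\mathcal{X}_\lambda^+$ are smooth, so $\mathsf{Coh} = \mathsf{Perf}$ on all three stacks. The morphism $\mathrm{ev}_\lambda$ is proper, as noted in \cref{para-hall-induction}. Being a morphism between smooth stacks, it is quasi-smooth, hence Gorenstein. Therefore $(\mathrm{ev}_\lambda)_*$ has left adjoint $\mathrm{ev}_\lambda^*$ and right adjoint $\mathrm{ev}_\lambda^! \simeq \mathrm{ev}_\lambda^* (-) \otimes \omega_{\mathrm{ev}_\lambda}$. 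Both preserve $\mathsf{Perf}$, because $\omega_{\mathrm{ev}_\lambda}$ is a shifted line bundle.

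\emph{The factor $\mathrm{gr}_\lambda^*$.} This factor carries the real content, and it is where the restriction to weight~$n$ enters. The morphism $\mathrm{gr}_\lambda$ is not proper, so $\mathrm{gr}_\lambda^*$ has no adjoints on all of $\mathsf{Perf}$. The first step is to show that $\mathrm{gr}_\lambda$ is affine and that the tautological $*/\mathbb{G}_\mathrm{m}$-action gives a weight grading
\[
    (\mathrm{gr}_\lambda)_* \, \mathcal{O} \simeq \bigoplus_{k \geq 0} \mathcal{A}_k \ ,
\]
in which every weight has one sign (with a suitable sign convention), $\mathcal{A}_0 = \mathcal{O}_{\mathcal{X}_\lambda}$, and each $\mathcal{A}_k$ is coherent on $\mathcal{X}_\lambda$. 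I would check this étale locally on the good moduli space. By \cref{lem-gms-induction} and \cref{lem-sm-local-str}, we reduce to $V/G$. There $\mathrm{gr}_\lambda$ is the composite of the vector bundle $V^{\lambda,+}/G^{\lambda,+} \to V^\lambda/G^{\lambda,+}$ with the map to $V^\lambda / G^\lambda$, whose fibre is the classifying stack of the unipotent radical $U^{\lambda,+}$. On pushforward, the first map contributes $\mathrm{Sym}$ of the $\lambda$-negative part of $V^\vee$, whose weights all have one sign. The second contributes Lie algebra cohomology of $\mathfrak{u}^{\lambda,+}$, whose weights have the same sign. In each fixed weight there are only finitely many terms, so each weight piece is coherent.

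Granting this, define
\[
    R (B) = \bigl( (\mathrm{gr}_\lambda)_* \, B \bigr)_n \ .
\]
This lies in $\mathsf{Perf} (\mathcal{X}_\lambda)_n$, and it is right adjoint to $\mathrm{gr}_\lambda^* |_{\mathsf{Perf} (\mathcal{X}_\lambda)_n}$. Indeed, $\mathrm{Hom} (\mathrm{gr}_\lambda^* A, B) = \mathrm{Hom} (A, (\mathrm{gr}_\lambda)_* B)$, and by \cref{lem-central-wt-decomp} only the weight-$n$ summand of $(\mathrm{gr}_\lambda)_* B$ contributes when $A$ has weight~$n$. For the left adjoint, use duality of perfect complexes:
\[
    \mathrm{Hom} (B, \mathrm{gr}_\lambda^* A)
    \simeq \mathrm{Hom} (\mathrm{gr}_\lambda^* A^\vee, B^\vee)
    \simeq \mathrm{Hom} \bigl( A^\vee, ((\mathrm{gr}_\lambda)_* B^\vee)_{-n} \bigr) \ .
\]
This gives the left adjoint $L (B) = \bigl( ((\mathrm{gr}_\lambda)_* B^\vee)_{-n} \bigr)^\vee$. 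Composing, ${\star}_\lambda$ has left adjoint $L \circ \mathrm{ev}_\lambda^*$ and right adjoint $R \circ \mathrm{ev}_\lambda^!$.

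The main obstacle I expect is the coherence and finiteness of the weight pieces of $(\mathrm{gr}_\lambda)_*$ globally, rather than only locally. I would handle it by observing that the formation of $(\mathrm{gr}_\lambda)_*$ and of weight components commutes with the étale base change in \cref{lem-gms-induction}, and that coherence is étale local. For the final statement, let $\mathcal{C} \subset \mathsf{Perf} (\mathcal{X}_\lambda)_n$ be admissible with ${\star}_\lambda |_{\mathcal{C}}$ fully faithful. Composing the adjoints of ${\star}_\lambda$ with the left and right projections onto~$\mathcal{C}$ gives left and right adjoints of the fully faithful functor $\mathcal{C} \to \mathsf{Perf} (\mathcal{X})$. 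Hence ${\star}_\lambda \, \mathcal{C}$ is admissible.
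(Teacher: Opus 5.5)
Your proof has the same architecture as the paper's. For the proper factor you use $\mathrm{ev}_\lambda^*$ and $\mathrm{ev}_\lambda^! = \mathrm{ev}_\lambda^*(-)\otimes\omega_{\mathrm{ev}_\lambda}$. For the other factor you use the right adjoint $\mathrm{pr}_n\,(\mathrm{gr}_\lambda)_*$. The key input is that the weight pieces of $(\mathrm{gr}_\lambda)_*$ of perfect complexes are perfect, which you check \'etale locally on the model $V/G$ via the vector bundle $V^{\lambda>0}$ and the unipotent radical.

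The genuine difference is the left adjoint. The paper writes ${\star}_\lambda^* = \mathrm{pr}_n\,(\mathrm{gr}_\lambda)_\sharp\,\mathrm{ev}_\lambda^*$ with $(\mathrm{gr}_\lambda)_\sharp = (\mathrm{gr}_\lambda)_*(-\otimes\omega_{\mathrm{gr}_\lambda})$; you instead dualize the right adjoint. Your route is preferable for two reasons. First, it needs nothing beyond the finiteness statement already required for the right adjoint, much as the paper obtains ${\star}_\lambda^*$ from ${\star}_\lambda^!$ in the quasi-smooth analogue \cref{lem-qsm-admissible}. Second, along the vector-bundle direction, $(\mathrm{gr}_\lambda)_*(-\otimes\omega_{\mathrm{gr}_\lambda})$ is not left adjoint to $\mathrm{gr}_\lambda^*$. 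This fails already for $\mathrm{gr}\colon\mathbb{A}^1/\mathbb{G}_\mathrm{m}\to{*}/\mathbb{G}_\mathrm{m}$. Write $\mathcal{O}(a)=\mathrm{gr}^*\mathbb{C}(a)$, and let $S\subset\mathbb{Z}$ be the one-sided set of weights of $\mathbb{C}[x]$. Then $\mathrm{Hom}(\mathcal{O}(a),\mathrm{gr}^*\mathbb{C}(n))\neq 0$ exactly for $n\in a-S$. Your formula $\bigl(((\mathrm{gr})_*\mathcal{O}(-a))_{-n}\bigr)^\vee$ reproduces this. By contrast, $\mathrm{gr}_*(\mathcal{O}(a)\otimes\omega_{\mathrm{gr}})$ has weights only in $a+(S\setminus\{0\})$, on the other side of~$a$.

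One correction: $\mathrm{gr}_\lambda$ is not affine. Its fibres are quotients by the unipotent radical $U^{\lambda,+}$, as your own factorization shows. Moreover, your first map $V^{\lambda,+}/G^{\lambda,+}\to V^\lambda/G^{\lambda,+}$ is in general only an affine bundle, because $U^{\lambda,+}$ need not preserve $V^\lambda\subset V^{\lambda,+}$. Consequently $(\mathrm{gr}_\lambda)_*\mathcal{O}$ is a complex rather than a graded algebra. So the case of an arbitrary perfect~$B$, which you need to define $R(B)$ and $L(B)$, does not follow from $B=\mathcal{O}$ by a module argument.

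The missing step is easy to supply. $(\mathrm{gr}_\lambda)_*B$ is the Chevalley--Eilenberg complex of $\mathfrak{u}^{\lambda,+}$ with coefficients in the pushforward of~$B$ along $V^{\lambda,+}\to V^\lambda$. The cohomology of that pushforward consists of finitely generated graded modules over $\mathcal{O}_{V^\lambda}\otimes\mathrm{Sym}((V^{\lambda>0})^\vee)$. Since $(\mathfrak{u}^{\lambda,+})^\vee$ and $(V^{\lambda>0})^\vee$ have weights of the same sign, each weight piece is bounded with coherent cohomology. It is therefore perfect on the smooth stack $V^\lambda/G^\lambda$.
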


\begin{proof}
    We first show that for each $n \in \mathbb{Z}$,
    the image of the composition
    \begin{equation*}
        \mathsf{Perf} (\mathcal{X}_\lambda^+)
        \overset{(\mathrm{gr}_\lambda)_*}{\longrightarrow}
        \mathsf{QCoh} (\mathcal{X}_\lambda)
        \overset{\mathrm{pr}_n}{\longrightarrow}
        \mathsf{QCoh} (\mathcal{X}_\lambda)_n
    \end{equation*}
    lands in $\mathsf{Perf} (\mathcal{X}_\lambda)_n$, where
    $\mathsf{QCoh} (\mathcal{X}_\lambda)_n
    \subset \mathsf{QCoh} (\mathcal{X}_\lambda)$
    is the part of weight~$n$,
    and $\mathrm{pr}_n$ is the projection.
    We use an argument similar to the proof of
    \textcite[Proposition~6.7]{toda-theta}.

    This property can be checked étale locally on~$\mathcal{X}_\lambda$.
    Therefore, by
    \cref{lem-gms-induction,lem-sm-local-str},
    we may assume that $\mathcal{X} = V / G$
    for a representation~$V$ of a reductive group~$G$,
    so that $\mathcal{X}_\lambda = V^\lambda / G^\lambda$
    and $\mathcal{X}_\lambda^+ = V^{\lambda, +} / G^{\lambda, +}$.
    The morphism
    $q \colon V^{\lambda, +} / G^{\lambda, +} \to
    V^{\lambda, +} / G^{\lambda}$
    is a composition of $\mathbb{G}_\mathrm{a}$-gerbes,
    so that $q_*$ sends coherent sheaves to coherent complexes.
    The morphism
    $q' \colon V^{\lambda, +} / G^\lambda \to
    V^\lambda / G^\lambda$
    is a vector bundle with fibres $V^{\lambda > 0}$,
    and a coherent sheaf on $V^{\lambda, +} / G^\lambda$
    can be seen as a coherent module over a sheaf of algebras
    $\mathrm{Sym} (V^{\lambda > 0})^\vee$
    on $V^\lambda / G^\lambda$,
    which is coherent in each $\lambda$-weight.
    Thus the same is true for such a module,
    proving the desired property.

    Now, the right adjoint of
    \cref{eq-sm-star-wt-n}
    is given by
    \begin{equation}
        {\star}_\lambda^! =
        \mathrm{pr}_n \,
        (\mathrm{gr}_\lambda)_* \,
        \mathrm{ev}_\lambda^! \colon
        \mathsf{Perf} (\mathcal{X}) \longrightarrow
        \mathsf{Perf} (\mathcal{X}_\lambda)_n \ ,
    \end{equation}
    where
    $\mathrm{ev}_\lambda^! =
    \mathrm{ev}_\lambda^* (-) \otimes \omega_{\mathrm{ev}_\lambda}$
    is the right adjoint of
    $(\mathrm{ev}_\lambda)_*$,
    and $\omega_{\mathrm{ev}_\lambda} =
    (\det \mathbb{L}_{\mathrm{ev}_\lambda}) [\dim \mathrm{ev}_\lambda]$
    is the dualizing complex.

    Similarly, the left adjoint of
    \cref{eq-sm-star-wt-n}
    is given by
    \begin{equation}
        {\star}_\lambda^* =
        \mathrm{pr}_n \,
        (\mathrm{gr}_\lambda)_\sharp \,
        \mathrm{ev}_\lambda^* \colon
        \mathsf{Perf} (\mathcal{X}) \longrightarrow
        \mathsf{Perf} (\mathcal{X}_\lambda)_n \ ,
    \end{equation}
    where
    $(\mathrm{gr}_\lambda)_\sharp =
    (\mathrm{gr}_\lambda)_* (- \otimes \omega_{\mathrm{gr}_\lambda})$,
    which is left adjoint to $\mathrm{gr}_\lambda^*$
    when defined on $\mathsf{QCoh} (-)$.
    Since tensoring with $\omega_{\mathrm{gr}_\lambda}$
    preserves perfect complexes,
    and by the first part of the proof,
    the composition restricts to a functor on $\mathsf{Perf} (-)$.
\end{proof}

\begin{para}[Proof of \texorpdfstring{\cref{thm-sm-sod}}{Theorem \ref*{thm-sm-sod}} for linear quotients]
    \label{para-pf-lq-sod}
    We first prove \cref{thm-sm-sod} when
    $\mathcal{X} = V / G$
    for a quasi-symmetric representation~$V$
    of a reductive group~$G$.

    In this case, the theorem is a restatement of
    \cref{thm-lq-sod},
    except that we need to show that the components are
    $\mathsf{Perf} (V \git G)$-linear and admissible.

    For the linearity, consider the diagram
    \begin{equation}
        \begin{tikzcd}[column sep={7em, between origins}]
            V^\lambda / G^\lambda \ar[d, "h_\lambda"']
            & V^{\lambda, +} / G^{\lambda, +}
            \ar[l, "\mathrm{gr}_\lambda"']
            \ar[r, "\mathrm{ev}_\lambda"]
            & V / G
            \ar[d, "h"]
            \\
            V^\lambda \git G^\lambda
            \ar[rr, "g_\lambda"]
            && V \git G \rlap{ .}
        \end{tikzcd}
    \end{equation}
    Then, for $E \in \mathsf{W}_{\smash{V^\lambda / G^\lambda}} (\delta_\lambda)$
    and $F \in \mathsf{Perf} (V \git G)$,
    we have
    \begin{equation}
        {\star}_\lambda \, E \otimes h^* \, F
        \simeq {\star}_\lambda (E \otimes h_\lambda^* \, g_\lambda^* \, F) \ ,
    \end{equation}
    since the morphisms $\mathrm{gr}_\lambda$ and $\mathrm{ev}_\lambda$
    are defined over $V \git G$.
    Since tensoring by $h_\lambda^* \, g_\lambda^* \, F$
    does not introduce new weights, we have
    $E \otimes h_\lambda^* \, g_\lambda^* \, F \in
    \mathsf{W}_{\smash{V^\lambda / G^\lambda}} (\delta_\lambda)$,
    and hence
    ${\star}_\lambda \, E \otimes h^* \, F \in
    {\star}_{\lambda} \, \mathsf{W}_{\smash{V^\lambda / G^\lambda}} (\delta_\lambda)$.

    For the admissibility, by induction on $\dim G$,
    and by \cref{lem-admissible},
    we may assume that all components are admissible
    except $\mathsf{W}_{V / G} (\delta_\lambda)$
    for $\lambda$ belonging to
    the maximal central face of $V / G$,
    and it is enough to show that these central terms are also admissible.
    The semiorthogonal decomposition
    together with remarks in \cref{para-sm-wt-decomp}
    implies that such a term
    $\mathsf{W}_{V / G} (\delta_\lambda)$
    is right admissible, that is, the inclusion
    $\iota_{\delta_\lambda} \colon
    \mathsf{W}_{V / G} (\delta_\lambda) \to
    \mathsf{Perf} (V / G)$
    has a right adjoint $\iota_{\delta_\lambda}^!$.
    Thus, if we define
    \begin{equation}
        \iota_{\delta_\lambda}^* =
        (-)^\vee \circ
        \iota_{-\delta_\lambda}^! \circ
        (-)^\vee \colon
        \mathsf{Perf} (V / G)
        \longrightarrow \mathsf{W}_{V / G} (\delta_\lambda) \ ,
    \end{equation}
    then it is left adjoint to~$\iota_{\delta_\lambda}$,
    where
    $(-)^\vee = \calHom_{V / G} (-, \mathcal{O}_{V / G})$
    is the dual functor, so that
    $\mathsf{W}_{V / G} (\delta) \subset \mathsf{Perf} (V / G)$
    is admissible.
    \qed
\end{para}

\begin{para}[Proof of \texorpdfstring{\cref{thm-sm-sod}}{Theorem \ref*{thm-sm-sod}} for linear quotients by disconnected groups]
    \label{para-pf-lq-disc-sod}
    \allowdisplaybreaks
    We now prove \cref{thm-sm-sod} when
    $\mathcal{X} = V / G$
    for a disconnected reductive group~$G$
    and a $G$-representation~$V$
    which is quasi-symmetric as a $G^\circ$-representation,
    where $G^\circ \subset G$ is the unit component.

    Let $T \subset G^\circ$ be a maximal torus,
    and let $W_G = \mathrm{N}_G (T) / \mathrm{Z}_G (T)$
    and $W_{G^\circ} = \mathrm{N}_{G^\circ} (T) / T$
    be the Weyl groups.
    We wish to apply
    \cref{lem-sod-descent}~\cref{item-sod-descend}
    to the diagram
    \begin{equation}
        \label{eq-pf-lq-disc-descent}
        \begin{tikzcd}[column sep={7em, between origins}]
            V / G^\circ \ar[r, "f"] \ar[d, "g'"']
            \ar[dr, phantom, "\ulcorner" pos=.2]
            &
            V / G \ar[d, "g"]
            \\
            V \git G^\circ \ar[r, "f_0"]
            &
            (V \git G^\circ) / \uppi_0 (G) \rlap{ .}
        \end{tikzcd}
    \end{equation}

    Note that $\mathrm{CL}_\mathbb{Q} (V / G)$
    is a quotient of $\mathrm{CL}_\mathbb{Q} (V / G^\circ)$
    by the finite group
    $W_G / W_{G^\circ}$,
    and we are given a quadratic form~$q$
    and a linear form~$\delta$
    on the quotient, which induce corresponding data on
    $\mathrm{CL}_\mathbb{Q} (V / G^\circ)$.

    First, let
    $\lambda \in |\mathrm{CL}_\mathbb{Q} (V / G)| \simeq
    (\Lambda_T \otimes \mathbb{Q}) / W_G$,
    and we show that the subcategory
    ${\star}_\lambda \, \mathsf{W}_{\smash{V^\lambda / G^\lambda}} (\delta_\lambda)$
    is
    $\mathsf{Perf} ((V \git G^\circ) / \uppi_0 (G))$-linear.
    This follows from the fact that the morphisms
    $\mathrm{gr}_\lambda$ and $\mathrm{ev}_\lambda$
    are defined over $(V \git G^\circ) / \uppi_0 (G)$,
    as in the diagram
    \begin{equation}
        \begin{tikzcd}[column sep={7em, between origins}]
            V^\lambda / G^\lambda \ar[d, "h_\lambda"']
            & V^{\lambda, +} / G^{\lambda, +}
            \ar[l, "\mathrm{gr}_\lambda"']
            \ar[r, "\mathrm{ev}_\lambda"]
            & V / G
            \ar[d]
            \\
            (V^\lambda \git (G^\lambda)^\circ) / \uppi_0 (G^\lambda)
            \ar[rr]
            && (V \git G^\circ) / \uppi_0 (G) \rlap{ ,}
        \end{tikzcd}
    \end{equation}
    and by the argument in \cref{para-pf-lq-sod}.

    Next, we show that
    $\mathsf{W}_{V / G^\circ} (\delta)$
    is split-generated by
    $f^* \, \mathsf{W}_{V / G} (\delta)$.
    This is because the former is generated by
    $\Gamma_{V / G^\circ} (\chi)$
    (see \cref{para-def-gamma} for the notation)
    for dominant weights~$\chi$ of~$G^\circ$
    in the polytope~$\nabla_{V / G^\circ} + \delta$;
    for each such~$\chi$,
    let~$V_\chi$ be the $G$-representation which is the induction of
    the irreducible $G^\circ$-representation with highest weight~$\chi$.
    The pullback of this representation to $V / G$,
    denoted $\Gamma_{V / G} (V_\chi)$,
    belongs to $\mathsf{W}_{V / G} (\delta)$,
    and $f^* \, \Gamma_{V / G} (V_\chi)$
    contains a direct summand~$\Gamma_{V / G^\circ} (\chi)$,
    so that $\Gamma_{V / G^\circ} (\chi)$
    belongs to the category split-generated by
    $f^* \, \mathsf{W}_{V / G} (\delta)$.

    Let
    $\lambda \in |\mathrm{CL}_\mathbb{Q} (V / G)| \simeq
    (\Lambda_T \otimes \mathbb{Q}) / W_G$.
    Applying the above argument to $\mathcal{X}_\lambda$
    shows that the subcategory
    $\mathsf{W}_{\smash{V^\lambda / (G^\circ)^\lambda}} (\delta_\lambda)$
    is split-generated by
    $f_\lambda^* \, \mathsf{W}_{\smash{V^\lambda / G^\lambda}} (\delta_\lambda)$,
    where $f_\lambda \colon V^\lambda / (G^\circ)^\lambda \to V^\lambda / G^\lambda$
    is the induced morphism,
    and we have $(G^\circ)^\lambda = (G^\lambda)^\circ$.

    To see that this property remains true after Hall induction,
    consider the diagram
    \begin{equation}
        \begin{tikzcd}[column sep={10em, between origins}]
            \coprod_{\lambda'}
            V^{\lambda'} / (G^\circ)^{\lambda'}
            \ar[d, "f_{\lambda'}"']
            &
            \coprod_{\lambda'}
            V^{\lambda', +} / (G^\circ)^{\lambda', +}
            \ar[l, "\mathrm{gr}_{\lambda'}"']
            \ar[r, "\mathrm{ev}_{\lambda'}"]
            \ar[d, "f_{\lambda'}^+"']
            \ar[dl, phantom, start anchor=center, end anchor=center, "\urcorner" pos=.33]
            \ar[dr, phantom, start anchor=center, end anchor=center, "\ulcorner" pos=.33]
            &
            V / G^\circ
            \ar[d, "f"]
            \\
            V^\lambda / G^\lambda
            & V^{\lambda, +} / G^{\lambda, +}
            \ar[l, "\mathrm{gr}_\lambda"']
            \ar[r, "\mathrm{ev}_\lambda"]
            & V / G \rlap{ ,}
        \end{tikzcd}
    \end{equation}
    where $\lambda'$ runs through preimages of~$\lambda$ in
    $|\mathrm{CL}_\mathbb{Q} (V / G^\circ)| \simeq
    (\Lambda_T \otimes \mathbb{Q}) / W_{G^\circ}$.
    This together with the base change theorem shows that
    \begin{equation}
        f^* \, {\star}_\lambda \simeq
        \bigoplus_{\lambda'}
        {\star}_{\lambda'} \, f_{\lambda'}^* \colon
        \mathsf{Perf} (V^\lambda / G^\lambda)
        \longrightarrow \mathsf{Perf} (V / G^\circ) \ ,
    \end{equation}
    so that the subcategory of $\mathsf{Perf} (V / G^\circ)$
    split-generated by
    $f^* \, {\star}_\lambda \, \mathsf{W}_{\smash{V^\lambda / G^\lambda}} (\delta_\lambda)$
    is equal to the subcategory split-generated by
    ${\star}_{\lambda'} \, \mathsf{W}_{\smash{V^{\lambda'} / (G^\circ)^{\lambda'}}} (\delta_{\lambda'})$
    for all $\lambda'$.
    Each of the latter subcategories
    is a term in the semiorthogonal decomposition given by
    \cref{thm-lq-sod},
    and as all $\lambda'$ have the same $q$-norm,
    these terms are mutually orthogonal.
    Therefore, we may use their sum as one of the $\mathcal{C}'_i$
    in \cref{lem-sod-descent}.

    We now show that
    ${\star}_\lambda \colon \mathsf{W}_{\smash{V^\lambda / G^\lambda}} (\delta_\lambda)
    \to \mathsf{Perf} (V / G)$
    is fully faithful.
    For $x, y \in \mathsf{W}_{\smash{V^\lambda / G^\lambda}} (\delta_\lambda)$,
    we have
    \begin{align}
        f_0^* \, g_* \, \calHom_{V / G}
        ({\star}_\lambda \, x, {\star}_\lambda \, y)
        & \simeq
        g'_* \, \calHom_{V / G^\circ}
        (f^* \, {\star}_\lambda \, x, f^* \, {\star}_\lambda \, y)
        \notag \\*
        & \textstyle \simeq
        \bigoplus_{\lambda'}
        g'_* \, \calHom_{V / G^\circ}
        ({\star}_{\lambda'} \, f_{\lambda'}^* \, x, {\star}_{\lambda'} \, f_{\lambda'}^* \, y)
        \notag \\
        & \textstyle \simeq
        \bigoplus_{\lambda'} {}
        (h'_{\lambda'})_* \, \calHom_{\smash{V^{\lambda'} / (G^\circ)^{\lambda'}}}
        (f_{\lambda'}^* \, x, f_{\lambda'}^* \, y)
        \notag \\
        & \textstyle \simeq
        f_0^* \, (h_\lambda)_* \,
        \calHom_{\smash{V^\lambda / G^\lambda}}
        (x, y) \ ,
        \label{eq-pf-lq-disc-fully-faithful}
    \end{align}
    where $f_0, g, g'$ are morphisms in \cref{eq-pf-lq-disc-descent},
    the third step uses \cref{lem-rel-hom}~\cref{item-rel-hom-ff},
    and we use the diagram
    \begin{equation}
        \begin{tikzcd}[column sep={8em, between origins}]
            \coprod_{\lambda'}
            V^{\lambda'} / (G^\circ)^{\lambda'}
            \ar[r, "h'_{\lambda'}"] \ar[d, "f_{\lambda'}"']
            \ar[dr, phantom, "\ulcorner" pos=.2]
            & V \git G^\circ
            \ar[d, "f_0"]
            \\
            V^\lambda / G^\lambda
            \ar[r, "h_\lambda"]
            & (V \git G^\circ) / \uppi_0 (G) \rlap{ ,}
        \end{tikzcd}
    \end{equation}
    where we note that
    $V^{\lambda'} / (G^\circ)^{\lambda'} \simeq V^\lambda / (G^\lambda)^\circ$
    for all $\lambda'$.
    Therefore, the natural morphism
    from the right-hand side to the left-hand side of
    \cref{eq-pf-lq-disc-fully-faithful},
    with $f_0^*$ removed, is an isomorphism,
    and we may push forward to a point to show full faithfulness.

    Finally, by \cref{lem-admissible},
    and by induction on
    $\operatorname{rk} (G) - \operatorname{crk} (V / G)$,
    where $\operatorname{crk}$ denotes the central rank
    as in \cref{para-central-rank},
    all subcategories
    ${\star}_\lambda \, \mathsf{W}_{\smash{V^\lambda / G^\lambda}} (\delta_\lambda)$
    are admissible, possibly except when
    $\lambda$ belongs to the maximal central face of $V / G$.
    In view of \cref{para-sm-wt-decomp},
    we may treat
    $\bigoplus_{\lambda \text{ central}} \mathsf{W}_{V / G} (\delta_\lambda)$
    as a single rightmost term in the decomposition
    when applying \cref{lem-sod-descent}.
    The other assumptions of \cref{lem-sod-descent}~\cref{item-sod-descend}
    are straightforward to verify.
    We thus obtain a desired semiorthogonal decomposition.
    Each remaining term $\mathsf{W}_{V / G} (\delta_\lambda)$
    for $\lambda$~central is also admissible,
    which can be shown using the argument in \cref{para-pf-lq-sod}.
    \qed
\end{para}

\begin{para}[Proof of \texorpdfstring{\cref{thm-sm-sod}}{Theorem \ref*{thm-sm-sod}} for affine quotients]
    \label{para-pf-sm-affine}
    We now prove \cref{thm-sm-sod}
    in the following special case:
    Assume $\mathcal{X} = U / G$
    for a reductive group~$G$
    acting on a smooth affine scheme~$U$,
    such that there exists a quasi-symmetric $G$-representation~$V$
    and a $G$-equivariant étale morphism $f \colon U \to V$,
    such that we have a cartesian diagram
    \begin{equation}
        \label{eq-pf-sm-affine-descent}
        \begin{tikzcd}
            U / G \ar[r, "f"] \ar[d, "g'"'] \ar[dr, phantom, "\ulcorner" pos=.2]
            & V / G \ar[d, "g"]
            \\
            U \git G \ar[r, "f_0"]
            & V \git G \rlap{\ ,}
        \end{tikzcd}
    \end{equation}
    with $f_0$ also étale.
    We also require that for any cocharacter
    $\lambda \colon \mathbb{G}_\mathrm{m} \to G$,
    the fixed locus
    $U^\lambda \subset U$
    is connected,
    so that in particular,
    $\mathrm{CL} (U / G) \simeq \mathrm{CL} (V / G)
    \simeq \mathrm{CL} (* / G)$.

    By the previous step \cref{para-pf-lq-disc-sod}
    and \cref{lem-sod-descent}~\cref{item-sod-ascend},
    we already obtain a semiorthogonal decomposition of
    $\mathsf{Perf} (U / G)$,
    where each term is split-generated by
    $f^* \, {\star}_\lambda \, \mathsf{W}_{\smash{V^\lambda / G^\lambda}} (\delta_\lambda)$
    for $\lambda \in |\mathrm{CL}_\mathbb{Q} (V / G)|$.
    We need to show that this is the desired semiorthogonal decomposition.

    For each such~$\lambda$,
    we have a diagram
    \begin{equation}
        \begin{tikzcd}[column sep={8em, between origins}]
            U^\lambda / G^\lambda
            \ar[d, "f_\lambda"']
            &
            U^{\lambda, +} / G^{\lambda, +}
            \ar[l, "\mathrm{gr}_{\lambda}"']
            \ar[r, "\mathrm{ev}_{\lambda}"]
            \ar[d, "f_{\lambda}^+"']
            \ar[dl, phantom, start anchor=center, end anchor=center, "\urcorner" pos=.33]
            \ar[dr, phantom, start anchor=center, end anchor=center, "\ulcorner" pos=.33]
            &
            U / G
            \ar[d, "f"]
            \\
            V^\lambda / G^\lambda
            & V^{\lambda, +} / G^{\lambda, +}
            \ar[l, "\mathrm{gr}_\lambda"']
            \ar[r, "\mathrm{ev}_\lambda"]
            & V / G \rlap{ .}
        \end{tikzcd}
    \end{equation}
    This together with the base change theorem shows that
    \begin{equation}
        f^* \, {\star}_\lambda \simeq
        {\star}_\lambda \, f_\lambda^* \colon
        \mathsf{Perf} (V^\lambda / G^\lambda)
        \longrightarrow \mathsf{Perf} (U / G) \ .
    \end{equation}

    We show that the category split-generated by
    $\star_{\lambda} \, f_{\lambda}^* \, \mathsf{W}_{\smash{V^\lambda / G^\lambda}} (\delta_\lambda)$
    is
    $\star_{\lambda} \, \mathsf{W}_{\smash{U^{\lambda} / G^\lambda}} (\delta_\lambda)$.
    Indeed, as $\star_{\lambda}$ preserves colimits,
    it is enough to show that
    $\mathsf{W}_{\smash{U^{\lambda} / G^\lambda}} (\delta_\lambda)$
    is generated by
    $f_{\lambda}^* \, \mathsf{W}_{\smash{V^\lambda / G^\lambda}} (\delta_\lambda)$.
    By \cite[Example~1.3.5]{drinfeld-gaitsgory-2014-braden},
    $U^{\lambda}$ is affine.
    Thus the former category
    is generated by pullbacks of irreducible $G^\lambda$-representations
    with highest weight contained in a polytope,
    which are contained in the latter category.

    We now show that the functor
    ${\star}_{\lambda} \colon
    \mathsf{W}_{\smash{U^{\lambda} / G^\lambda}} (\delta_\lambda)
    \to \mathsf{Perf} (U / G)$
    is fully faithful.
    For $x, y \in \mathsf{W}_{\smash{V^\lambda / G^\lambda}} (\delta_\lambda)$,
    \begin{align}
        (h'_{\lambda})_* \,
        \calHom_{\smash{U^{\lambda} / G^\lambda}}
        (f_{\lambda}^* \, x, f_{\lambda}^* \, y)
        & \simeq
        f_0^* \, (h_\lambda)_* \, \calHom_{\smash{V^\lambda / G^\lambda}}
        (x, y)
        \notag \\
        & \simeq
        f_0^* \, g_* \, \calHom_{V / G}
        ({\star}_\lambda \, x, {\star}_\lambda \, y)
        \notag \\
        & \simeq
        g'_* \, f^* \, \calHom_{V / G}
        ({\star}_\lambda \, x, {\star}_\lambda \, y)
        \notag \\
        & \simeq
        \calHom_{U / G}
        ({\star}_{\lambda} \, f_{\lambda}^* \, x, {\star}_{\lambda} \, f_{\lambda}^* \, y)
        \ ,
        \label{eq-pf-sm-fully-faithful}
    \end{align}
    where the second step uses \cref{lem-rel-hom}~\cref{item-rel-hom-ff},
    and we use the cartesian square
    \begin{equation}
        \begin{tikzcd}[column sep={6em, between origins}]
            U^\lambda / G^\lambda
            \ar[r, "h'_{\lambda}"] \ar[d, "f_{\lambda}"']
            \ar[dr, phantom, "\ulcorner" pos=.2]
            & U \git G
            \ar[d, "f_0"]
            \\
            V^\lambda / G^\lambda
            \ar[r, "h_\lambda"]
            & V \git G
        \end{tikzcd}
    \end{equation}
    from concatenating the left and bottom cartesian squares
    in \cref{lem-gms-induction}.
    As $\mathsf{W}_{\smash{U^{\lambda} / G^\lambda}} (\delta_\lambda)$
    is split-generated by
    $f_{\lambda}^* (\mathsf{W}_{\smash{V^\lambda / G^\lambda}} (\delta_\lambda))$,
    this implies that
    ${\star}_{\lambda}$ is fully faithful.

    Finally, an inductive argument as in the last step of
    \cref{para-pf-lq-disc-sod}
    shows that all terms in the decomposition are admissible.
    \qed
\end{para}

\begin{lemma}
    \label{lem-fix-connected}
    Let~$U$ be an affine scheme of finite type over~$\mathbb{C}$,
    acted on by a reductive group~$G$,
    with a fixed point $0 \in U$,
    such that $U \git G \simeq \operatorname{Spec} \mathbb{C}$.
    Then for any cocharacter
    $\lambda \colon \mathbb{G}_\mathrm{m} \to G$,
    the fixed locus
    $U^\lambda \subset U$
    is connected.
\end{lemma}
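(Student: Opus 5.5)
The plan is to show that every point of $U^\lambda$ lies in the same connected component as the fixed point $0$.

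First I reduce the condition $U \git G \simeq \operatorname{Spec} \mathbb{C}$ to a geometric statement. Since $U \to U \git G$ is a good quotient, each fibre contains a unique closed $G$-orbit. Here there is only one fibre, and $0$ is a $G$-fixed point, so its orbit $\{0\}$ is closed. Hence $\{0\}$ is the unique closed orbit in $U$, and $0$ lies in the closure of every $G$-orbit.

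Now let $u \in U^\lambda$ be a closed point. Since $\overline{G u} \ni 0$, the Hilbert--Mumford criterion (Kempf's version for reductive groups acting on affine schemes) gives a cocharacter $\mu \colon \mathbb{G}_\mathrm{m} \to G$ with $\lim_{t \to 0} \mu(t) \cdot u = 0$. The difficulty is that $\mu$ need not commute with $\lambda$, so the path $t \mapsto \mu(t) u$ may leave $U^\lambda$.

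To fix this, I work with the $G^\lambda$-action on the affine scheme $U^\lambda$, where $G^\lambda = \mathrm{Z}_G(\lambda)$ is reductive. I claim that $0$ lies in the closure of $G^\lambda u$ inside $U^\lambda$. The argument goes as follows.
\begin{enumerate}
    \item Take $\mu$ to be Kempf's optimal destabilizing cocharacter for the pair $(u, \{0\})$, which exists for any closed $G$-stable subset in the orbit closure. It is unique up to conjugation by the parabolic $P(\mu)$, and it is canonical with respect to the stabilizer of $u$.
    \item Since $\lambda(\mathbb{G}_\mathrm{m})$ fixes $u$, it lies in $G_u$. Kempf's theorem shows that $G_u \subset P(\mu)$, and that some optimal $\mu$ commutes with a maximal torus of $G_u$ containing $\lambda(\mathbb{G}_\mathrm{m})$.
    \item Choose that $\mu$. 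Then $\mu$ commutes with $\lambda$, so $\mu$ factors through $G^\lambda$ and $\mu(t) u \in U^\lambda$ for all $t$.
\end{enumerate}

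This gives a morphism $\mathbb{A}^1 \to U^\lambda$ sending $1 \mapsto u$ and $0 \mapsto 0$, so $u$ and $0$ lie in the same connected component of $U^\lambda$. Since $U^\lambda$ is a closed subscheme of finite type, every connected component contains a closed point, and therefore $U^\lambda$ is connected.

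An alternative for the middle step is to use the Luna-type fact that $U^\lambda \git G^\lambda \to U \git G$ is finite. It would then be enough to show that $U^\lambda \git G^\lambda$ is a single point; this holds because the closed $G^\lambda$-orbits in $U^\lambda$ are closed $G$-orbits by Luna's criterion, and hence equal $\{0\}$.

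I expect the main obstacle to be justifying that the destabilizing cocharacter can be chosen to commute with $\lambda$. I would first try Kempf's rationality and optimality theorem applied to $\lambda(\mathbb{G}_\mathrm{m}) \subset G_u$. If that is awkward to cite cleanly, I would fall back on the Luna/Richardson argument above, namely that $G^\lambda u$ closed in $U^\lambda$ implies $G u$ closed in $U$, which forces the unique closed $G^\lambda$-orbit in $U^\lambda$ to be $\{0\}$.
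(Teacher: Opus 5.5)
Your main argument is correct and is essentially the paper's proof. Kempf's theorem gives $\lambda(\mathbb{G}_\mathrm{m}) \subset G_u \subset P_u$, so a maximal torus of $P_u$ containing $\lambda$ carries an optimal destabilizing $\mu$ that commutes with $\lambda$, and the resulting curve $\mathbb{A}^1 \to U^\lambda$ joins $u$ to $0$. The only detail the paper adds is a preliminary reduction to connected $G$ (replacing $G$ by $G^\circ$), which you should state before invoking Kempf.
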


\begin{proof}
    We may assume that~$G$ is connected,
    since otherwise, we may replace it by its unit component.

    Let $x \in U^\lambda \setminus \{ 0 \}$ be a $\mathbb{C}$-point.
    By assumption, we have
    $0 \in \overline{G \cdot x}$.
    By \textcite[Theorem~3.4 and Corollary~3.5]{kempf-1978},
    there is a parabolic subgroup $P_x \subset G$
    containing the image of~$\lambda$,
    such that any of its maximal tori~$T_x$ contains a cocharacter
    $\mu \colon \mathbb{G}_\mathrm{m} \to T_x$
    such that $\lim_{t \to 0} \mu (t) \cdot x = 0$.
    Thus if we choose~$T_x$ to contain~$\lambda$,
    then~$\lambda$ commutes with~$\mu$,
    and $\mu (t) \cdot x \in U^\lambda$ for all $t$.
    Letting $t \to 0$,
    it follows that~$x$ and~$0$ are in the same
    connected component of~$U^\lambda$.
\end{proof}

\begin{para}[Proof of \texorpdfstring{\cref{thm-sm-sod}}{Theorem \ref*{thm-sm-sod}}]
    \label{para-pf-sm-sod}
    We may assume that~$\mathcal{X}$ is connected.
    By induction on
    $\mathrm{rk} (\mathcal{X}) - \mathrm{crk} (\mathcal{X})$
    as in \cref{para-central-rank},
    we may assume that the theorem holds for all
    $\mathcal{X}_\lambda$
    for non-central~$\lambda$.

    For each closed point $x \in \mathcal{X}$,
    consider the diagram
    \begin{equation}
        \label{eq-pf-sm-sod-local-str}
        \begin{tikzcd}[column sep={6em, between origins}]
            V_x / G_x \ar[d]
            & U_x / G_x \ar[d]
            \ar[l] \ar[r, "f"]
            \ar[dl, phantom, pos=.2, "\urcorner"]
            \ar[dr, phantom, pos=.2, "\ulcorner"]
            & \mathcal{X} \ar[d]
            \\
            V_x \git G_x
            & U_x \git G_x
            \ar[l] \ar[r, "f_0"]
            & X
        \end{tikzcd}
    \end{equation}
    from \cref{lem-sm-local-str}.
    By \cref{lem-fix-connected},
    we may shrink $U_x$ and assume that
    $U_x^\lambda$ is connected for all cocharacters
    $\lambda \colon \mathbb{G}_\mathrm{m} \to G_x$.
    Indeed, this can be done by removing from
    $U_x \git G_x$
    the images of
    $(U_x^\lambda \setminus (U_x^\lambda)_0) \git G_x^\lambda$
    for all cocharacters~$\lambda$,
    where $(U_x^\lambda)_0 \subset U_x^\lambda$
    is the connected component containing~$0$.
    The images are closed by
    \textcite[Theorem]{luna-1975-adherences},
    and there are only finitely many different images;
    $U_x^\lambda$ is connected after shrinking since it is smooth.
    We then shrink $U_x \git G_x$ again to make it affine.

    Now, each $U_x / G_x$ satisfy the situation of
    \cref{para-pf-sm-affine},
    and thus admits a desired semiorthogonal decomposition.
    We choose finitely many~$x$ so that
    the stacks $U_x / G_x$ cover~$\mathcal{X}$,
    and we wish to apply
    \cref{lem-sod-descent}~\cref{item-sod-descend}
    to obtain a semiorthogonal decomposition for~$\mathcal{X}$.

    Write $\mathcal{U} = U_x / G_x$ and
    $U = U_x \git G_x$.

    By the induction hypothesis,
    we may assume that for all non-central
    $\lambda \in |\mathrm{CL}_\mathbb{Q} (\mathcal{X})|$,
    $f^* \, \mathsf{W}_{\mathcal{X}_\lambda} (\delta_\lambda)$
    split-generates
    $\mathsf{W}_{\mathcal{U}_{\lambda'}} (f_{\lambda'}^* \, \delta_{\lambda'})$
    as a $\mathsf{Perf} (U_{\lambda'})$-linear thick subcategory,
    where $\lambda'$ runs through preimages of~$\lambda$ in
    $|\mathrm{CL}_\mathbb{Q} (\mathcal{U})|$,
    and we use the diagram
    \begin{equation}
        \begin{tikzcd}[column sep={7em, between origins}]
            \textstyle \coprod_{\lambda'} \mathcal{U}_{\lambda'}
            \ar[d, "f_{\lambda'}"']
            &
            \textstyle \coprod_{\lambda'} \mathcal{U}_{\lambda'}^+
            \ar[l, "\mathrm{gr}_{\lambda'}"']
            \ar[r, "\mathrm{ev}_{\lambda'}"]
            \ar[d, "f_{\lambda'}^+"']
            \ar[dl, phantom, start anchor=center, end anchor=center, "\urcorner" pos=.33]
            \ar[dr, phantom, start anchor=center, end anchor=center, "\ulcorner" pos=.33]
            &
            \mathcal{U}
            \ar[d, "f"]
            \\
            \mathcal{X}_\lambda
            & \mathcal{X}_\lambda^+
            \ar[l, "\mathrm{gr}_\lambda"']
            \ar[r, "\mathrm{ev}_\lambda"]
            & \mathcal{X} \rlap{ ,}
        \end{tikzcd}
    \end{equation}
    from \cref{lem-gms-induction}.
    The base change theorem implies the identity
    \begin{equation}
        f^* \, {\star}_\lambda \simeq
        \bigoplus_{\lambda'}
        {\star}_{\lambda'} \, f_{\lambda'}^* \colon
        \mathsf{Perf} (\mathcal{X}_\lambda)
        \longrightarrow \mathsf{Perf} (\mathcal{U}) \ ,
    \end{equation}
    and it follows that
    $f^* \, {\star}_\lambda \, \mathsf{W}_{\mathcal{X}_\lambda} (\delta_\lambda)$
    split-generates the subcategory
    $\bigoplus_{\lambda'} {\star}_{\lambda'} \,
    \mathsf{W}_{\mathcal{U}_{\lambda'}}
    (f_{\lambda'}^* \, \delta_\lambda)
    \subset \mathsf{Perf} (\mathcal{U})$,
    where the summands are mutually orthogonal
    by applying \cref{thm-sm-sod} to~$\mathcal{U}$,
    as these $\lambda'$ have the same $q$-norm,
    and can be freely permuted in the decomposition.

    We now show that
    ${\star}_\lambda \colon \mathsf{W}_{\mathcal{X}_\lambda} (\delta_\lambda)
    \to \mathsf{Perf} (\mathcal{X})$
    is fully faithful.
    For $x, y \in \mathsf{W}_{\mathcal{X}_\lambda} (\delta_\lambda)$,
    we repeat the calculation of
    \cref{eq-pf-lq-disc-fully-faithful}
    using \cref{lem-rel-hom}~\cref{item-rel-hom-ff},
    but instead with the diagram
    \begin{equation}
        \begin{tikzcd}[column sep={6em, between origins}]
            \coprod_{\lambda'}
            \mathcal{U}_{\lambda'}
            \ar[r, "h'_{\lambda'}"] \ar[d, "f_{\lambda'}"']
            \ar[dr, phantom, "\ulcorner" pos=.15]
            & U
            \ar[d, "f_0"]
            \\
            \mathcal{X}_\lambda
            \ar[r, "h_\lambda"]
            & X \rlap{ ,}
        \end{tikzcd}
    \end{equation}
    obtained by composing the left and bottom cartesian squares in
    \cref{lem-gms-induction}.
    It follows that
    \begin{equation}
        \label{eq-pf-sm-fully-faithful-result}
        f_0^* \, g_* \, \calHom_{\mathcal{X}}
        ({\star}_\lambda \, x, {\star}_\lambda \, y)
        \simeq
        f_0^* \, (h_\lambda)_* \,
        \calHom_{\mathcal{X}_\lambda}
        (x, y)
    \end{equation}
    for all
    $x, y \in \mathsf{W}_{\mathcal{X}_\lambda} (\delta_\lambda)$,
    where $g \colon \mathcal{X} \to X$
    is the good moduli space morphism.
    Moreover, this holds for all~$U$ chosen earlier,
    which form an étale cover of~$X$,
    and it follows that
    \cref{eq-pf-sm-fully-faithful-result}
    holds with $f_0^*$ removed from both sides.
    Pushing forward to a point shows that
    ${\star}_\lambda$
    is fully faithful on
    $\mathsf{W}_{\mathcal{X}_\lambda} (\delta_\lambda)$.

    Finally,
    all the subcategories
    ${\star}_\lambda \, \mathsf{W}_{\mathcal{X}_\lambda} (\delta_\lambda)$
    are $\mathsf{Perf} (X)$-linear by a similar argument as in
    \cref{para-pf-lq-sod}.
    By the induction hypothesis and by \cref{lem-admissible},
    all the subcategories
    ${\star}_\lambda \, \mathsf{W}_{\mathcal{X}_\lambda} (\delta_\lambda)$
    are admissible,
    except possibly when $\lambda$ is central.
    We can now apply
    \cref{lem-sod-descent}~\cref{item-sod-descend},
    treating
    $\bigoplus_{\lambda \text{ central}} \mathsf{W}_{\mathcal{X}} (\delta_\lambda)$
    as a single rightmost term,
    similarly to the last step of
    \cref{para-pf-lq-disc-sod},
    to obtain the desired semiorthogonal decomposition.
    In particular, the lemma also implies that
    $f^* \, \mathsf{W}_\mathcal{X} (\delta)$
    split-generates
    $\mathsf{W}_{\mathcal{U}} (f^* \, \delta)$
    as a $\mathsf{Perf} (U)$-linear thick subcategory.
    Arguing as in
    \cref{para-pf-lq-sod}
    shows that all terms in the decomposition are admissible.
    \qed
\end{para}

\subsection{For matrix factorizations}
\label{subsec-mf}

We now show a variant of \cref{thm-sm-sod},
which gives semiorthogonal decompositions
for categories of \emph{matrix factorizations} on smooth stacks. The main results are
\cref{thm-mf-sod,thm-mf-gr-sod}.
The purpose is two-fold:
First, these categories and their semiorthogonal summands
describe the categorical Donaldson--Thomas theory
for the derived critical locus of a function on a smooth stack,
which is a local model for $(-1)$-shifted symplectic stacks.
Second, the results obtained here will be later used via Koszul duality
to construct semiorthogonal decompositions
for quasi-smooth stacks.

\begin{para}[Matrix factorizations]
    \label{para-mf}
    Let~$\mathcal{X}$ be a smooth qca stack over~$\mathbb{C}$,
    with a line bundle $L \to \mathcal{X}$
    and a section $f \in \mathrm{H}^0 (\mathcal{X}, L)$.
    For example, when~$L$ is trivial,
    $f$ can be identified with a function
    $f \colon \mathcal{X} \to \mathbb{C}$.

    Write
    $\mathcal{X}_0 = \mathcal{X} \times_{0, \, L, \, f} \mathcal{X}$
    for the derived zero locus of~$f$.
    Define the \emph{category of matrix factorizations}
    \begin{align}
        \mathsf{MF} (\mathcal{X}, f)
        & =
        \mathsf{Coh} (\mathcal{X}_0) / \mathsf{Perf} (\mathcal{X}_0) \ .
    \end{align}
    We name it this way because at least under certain conditions,
    such as when~$\mathcal{X}$ is a quotient stack
    of a quasi-projective scheme by a reductive group,
    by \textcite[Theorem~3.14]{polishchuk-vaintrob-2011},
    an object of $\mathsf{MF} (\mathcal{X}, f)$
    can described as the data of vector bundles
    $E_0, E_1 \to \mathcal{X}$
    with maps
    \begin{equation*}
        \delta_1 \colon E_1 \to E_0 \ ,
        \qquad
        \delta_0 \colon E_0 \to E_1 \otimes L \ ,
    \end{equation*}
    such that $\delta_0 \, \delta_1 = f$
    and $\delta_1 \, \delta_0 = f$.
\end{para}

\begin{para}[Hall induction]
    \label{para-mf-hall}
    Let~$\mathcal{X}$ be a smooth qca stack over~$\mathbb{C}$
    with quasi-compact graded points, separated diagonal, and a good moduli space,
    and let $f \colon \mathcal{X} \to \mathbb{C}$ be a function.

    For each
    $\lambda \in |\mathrm{CL}_\mathbb{Q} (\mathcal{X})|$,
    we have induced functions
    \begin{equation*}
        f \colon \mathcal{X}_\lambda \longrightarrow \mathbb{C} \ ,
        \qquad
        f \colon \mathcal{X}_{\smash{\lambda}}^+ \longrightarrow \mathbb{C} \ ,
    \end{equation*}
    which can be obtained by restricting~$f$
    along the forgetful morphism
    $\mathrm{tot}_\lambda \colon \mathcal{X}_\lambda \to \mathcal{X}$
    and either of the two morphisms
    $\mathcal{X}_{\smash{\lambda}}^+ \to \mathcal{X}$
    given by evaluating at~$0$ and at~$1$, respectively;
    the choice does not matter since all functions on
    $\mathbb{A}^1 / \mathbb{G}_\mathrm{m}$ are constant.
    We have a diagram
    \begin{equation}
        \label{eq-mf-hall-compat}
        \begin{tikzcd}[column sep={6em, between origins}]
            (\mathcal{X}_\lambda)_0
            \ar[d, "i_\lambda"']
            &
            (\mathcal{X}_{\smash{\lambda}}^+)_0
            \ar[l, "\mathrm{gr}_\lambda"']
            \ar[r, "\mathrm{ev}_\lambda"]
            \ar[d, "i_\lambda^+"']
            \ar[dl, phantom, start anchor=center, end anchor=center, "\urcorner" pos=.33]
            \ar[dr, phantom, start anchor=center, end anchor=center, "\ulcorner" pos=.33]
            &
            \mathcal{X}_0
            \ar[d, "i"]
            \\
            \mathcal{X}_\lambda
            & \mathcal{X}_{\smash{\lambda}}^+
            \ar[l, "\mathrm{gr}_\lambda"']
            \ar[r, "\mathrm{ev}_\lambda"]
            & \mathcal{X} \rlap{ .}
        \end{tikzcd}
    \end{equation}
    The maps $\mathrm{ev}_\lambda$ are proper and quasi-smooth,
    so the Hall induction functor
    \begin{alignat}{2}
        {\star}_\lambda
        & =
        (\mathrm{ev}_\lambda)_* \,
        \mathrm{gr}_\lambda^* \colon
        & \mathsf{Coh} ((\mathcal{X}_\lambda)_0)
        & \longrightarrow
        \mathsf{Coh} (\mathcal{X}_0)
    \end{alignat}
    preserves perfect complexes,
    and induces a functor
    \begin{equation}
        \label{eq-mf-hall-induction}
        {\star}_\lambda \colon
        \mathsf{MF} (\mathcal{X}_\lambda, f)
        \longrightarrow
        \mathsf{MF} (\mathcal{X}, f) \ .
    \end{equation}
    The diagram~\cref{eq-mf-hall-compat}
    implies the compatibility relation
    \begin{equation}
        \label{eq-mf-hall-i-push}
        i_* \, {\star}_\lambda \simeq
        {\star}_\lambda \, (i_\lambda)_* \colon
        \mathsf{Coh} ((\mathcal{X}_\lambda)_0)
        \longrightarrow \mathsf{Coh} (\mathcal{X}) \ .
    \end{equation}
\end{para}

\begin{para}[Window subcategories]
    \label{para-mf-window}
    Let~$\mathcal{X}$ be a quasi-symmetric smooth stack over~$\mathbb{C}$,
    with a function $f \colon \mathcal{X} \to \mathbb{C}$.

    For a linear function
    $\delta \colon \mathrm{CL}_{\mathbb{Q}} (\mathcal{X}) \to \mathbb{Q}$,
    define the \emph{window subcategory}
    \begin{equation*}
        \mathsf{M}_{\mathcal{X}, f} (\delta)
        \subset \mathsf{MF} (\mathcal{X}, f)
    \end{equation*}
    to consist of objects
    $E \in \mathsf{Coh} (\mathcal{X}_0)$
    such that
    $i_* \, E \in \mathsf{W}_{\mathcal{X}} (\delta)$,
    where
    $i \colon \mathcal{X}_0 \to \mathcal{X}$
    is the inclusion and
    $\mathsf{W}_{\mathcal{X}} (\delta)
    \subset \mathsf{Perf} (\mathcal{X})$
    is the window subcategory
    defined in \cref{para-sm-window}.
\end{para}

\begin{theorem}
    \label{thm-mf-sod}
    Let~$\mathcal{X}$ be a
    quasi-symmetric, smooth, qca
    stack over~$\mathbb{C}$,
    with separated diagonal,
    and with a good moduli space~$X$,
    and assume we are given
    \begin{itemize}
        \item
            A function $f \colon \mathcal{X} \to \mathbb{C}$.
        \item
            A quadratic norm~$q$ on $\mathrm{CL}_{\mathbb{Q}} (\mathcal{X})$.
        \item
            A linear function
            $\delta \colon \mathrm{CL}_{\mathbb{Q}} (\mathcal{X}) \to \mathbb{Q}$.
    \end{itemize}
    Then there is a semiorthogonal decomposition
    \begin{equation}
        \label{eq-mf-sod}
        \mathsf{MF} (\mathcal{X}, f) =
        \Bigl<
            \star_\lambda \,
            \mathsf{M}_{\mathcal{X}_\lambda, \, f}
            (\delta_\lambda)
            \Bigm|
            \lambda \in |\mathrm{CL}_\mathbb{Q} (\mathcal{X})|
        \Bigr> \ ,
    \end{equation}
    where $\delta_\lambda$ is as in
    \cref{thm-sm-sod}.
    Each functor
    ${\star}_\lambda \colon \mathsf{M}_{\mathcal{X}_\lambda, f} (\delta_\lambda)
    \to \mathsf{MF} (\mathcal{X}, f)$ is fully faithful.
    The order of the decomposition is as in
    \cref{thm-sm-sod}.
\end{theorem}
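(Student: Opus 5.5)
We would deduce \cref{thm-mf-sod} from \cref{thm-sm-sod} by pushing everything forward along $i \colon \mathcal{X}_0 \to \mathcal{X}$, following the strategy used for matrix factorizations in the linear case by P\u{a}durariu--Toda. The basic tool is the compatibility \cref{eq-mf-hall-i-push}, $i_* \, {\star}_\lambda \simeq {\star}_\lambda \, (i_\lambda)_*$, together with the $\mathsf{Perf}(X)$-linearity and admissibility of the terms ${\star}_\lambda \, \mathsf{W}_{\mathcal{X}_\lambda}(\delta_\lambda)$ in \cref{thm-sm-sod}. Since $f$ is pulled back from a function on the good moduli space $X$, these terms are linear over $\mathcal{O}_X$, hence over $f$.

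The plan is to first establish a decomposition of $\mathsf{Coh}(\mathcal{X}_0)$ itself, of the form
\[
\mathsf{Coh}(\mathcal{X}_0) = \bigl\langle {\star}_\lambda \, \mathsf{W}^{\mathsf{Coh}}_{(\mathcal{X}_\lambda)_0}(\delta_\lambda) \bigm| \lambda \bigr\rangle .
\]
Here $\mathsf{W}^{\mathsf{Coh}}_{(\mathcal{X}_\lambda)_0}(\delta_\lambda)$ consists of those $E$ with $(i_\lambda)_* E \in \mathsf{W}_{\mathcal{X}_\lambda}(\delta_\lambda)$. We then check that the same decomposition restricts to $\mathsf{Perf}(\mathcal{X}_0)$, and pass to the Verdier quotient.

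For semiorthogonality and full faithfulness, take $E \in \mathsf{Coh}((\mathcal{X}_\lambda)_0)$ and $F \in \mathsf{Coh}((\mathcal{X}_{\lambda'})_0)$. Adjunction gives
\[
\mathrm{Hom}_{\mathcal{X}}\bigl(i_* {\star}_\lambda E, \, i_* {\star}_{\lambda'} F\bigr) \simeq \mathrm{Hom}_{\mathcal{X}_0}\bigl(i^* i_* {\star}_\lambda E, \, {\star}_{\lambda'} F\bigr).
\]
There is a triangle $L^\vee[1]\otimes(-) \to i^*i_* \to \mathrm{id}$, with $L$ trivial here. The left-hand side vanishes or is the expected hom by \cref{thm-sm-sod}, because ${\star}_\lambda (i_\lambda)_*$ lands in the relevant window term. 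From this we deduce $\mathrm{Hom}_{\mathcal{X}_0}({\star}_\lambda E, {\star}_{\lambda'} F)$ by a standard $2$-periodicity argument: the Hom complex is a module over $\mathbb{C}[u]$ with $\deg u = 2$ (the Koszul/Eisenbud operator), and $\mathrm{Hom}(i^*i_*A,B)$ is the cone of $u$. Vanishing of the cone forces $u$ to be invertible, which gives vanishing in the quotient $\mathsf{MF}$, where Hom is the localization at $u$. We would apply the same argument to the cone of the unit $E \to {\star}_\lambda^! {\star}_\lambda E$ to get full faithfulness in $\mathsf{MF}$.

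Generation is where we expect the main obstacle. Every object of $\mathsf{MF}(\mathcal{X},f)$ is, up to the quotient, a direct summand of a cone built from $i^* P$ with $P \in \mathsf{Perf}(\mathcal{X})$. This uses the Orlov/Polishchuk--Vaintrob description, or more simply the fact that $\mathsf{Coh}(\mathcal{X}_0)$ is generated by $i^*\mathsf{Perf}(\mathcal{X})$ modulo perfect complexes after taking idempotent completion. We decompose $P$ using \cref{thm-sm-sod}, so that $P$ is an iterated extension of ${\star}_\lambda W_\lambda$ with $W_\lambda \in \mathsf{W}_{\mathcal{X}_\lambda}(\delta_\lambda)$. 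Then, by base change for the cartesian diagram \cref{eq-mf-hall-compat},
\[
i^* {\star}_\lambda W_\lambda \simeq {\star}_\lambda \, i_\lambda^* W_\lambda .
\]
It remains to check that $i_\lambda^* W_\lambda$ lies in $\mathsf{M}_{\mathcal{X}_\lambda,f}(\delta_\lambda)$. This holds because $(i_\lambda)_* i_\lambda^* W_\lambda = \mathrm{cone}(f \colon W_\lambda \to W_\lambda)$, and $f$ has $\mu$-weight $0$ for every graded point $\mu$, so window membership is preserved.

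Two subtleties remain. First, generation by $i^*\mathsf{Perf}$ holds only up to summands, so we need admissibility of the pieces. This follows from the left and right adjoints of \cref{lem-admissible} descended to $\mathcal{X}_0$, or from dualizing as in \cref{para-pf-lq-sod}. Second, the order must be handled with the finiteness of the nonzero projections, as in \cref{lem-sod-descent}. If the direct argument fails, the fallback is to prove the statement étale locally on $X$ via \cref{lem-sm-local-str}, using the linear quotient case $V/G$, and then descend with \cref{lem-sod-descent}, exactly as in \cref{para-pf-sm-sod}.
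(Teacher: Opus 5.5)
\textbf{The generation step fails.} The functor $i^*$ preserves perfect complexes, so everything you can build from $i^*\mathsf{Perf}(\mathcal{X})$ by cones, shifts and direct summands lies in $\mathsf{Perf}(\mathcal{X}_0)$. That is exactly the subcategory killed in $\mathsf{MF}(\mathcal{X},f)=\mathsf{Coh}(\mathcal{X}_0)/\mathsf{Perf}(\mathcal{X}_0)$. So the claim that such objects generate $\mathsf{MF}$ up to summands is false whenever $\mathsf{MF}(\mathcal{X},f)\neq 0$. Your check that $i_\lambda^*W_\lambda\in\mathsf{M}_{\mathcal{X}_\lambda,f}(\delta_\lambda)$ is therefore vacuous, since $i_\lambda^*W_\lambda\simeq 0$ there.

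A general $E\in\mathsf{Coh}(\mathcal{X}_0)$ must instead be decomposed through $i_*E\in\mathsf{Perf}(\mathcal{X})$. The missing idea is that the decomposition of \cref{thm-sm-sod} is $\mathsf{Perf}(X)$-linear, hence $\mathsf{Perf}(\mathbb{A}^1)$-linear via $f$, so the filtration of $i_*E$ lifts to $\mathsf{Coh}(\mathcal{X}_0)$. The paper does this as follows:
\begin{itemize}
\item The Ben-Zvi--Nadler--Preygel equivalence $\mathsf{Coh}(\mathcal{X}_0)\simeq\mathsf{Fun}^{\mathrm{ex}}_{\mathsf{Perf}(\mathbb{A}^1)}(\mathsf{Perf}(*),\mathsf{Perf}(\mathcal{X}))$, combined with Halpern-Leistner--Pomerleano's Lemma~3.8, gives semiorthogonality, full faithfulness and generation on $\mathsf{Coh}(\mathcal{X}_0)$ at once.
\item \cref{eq-mf-hall-i-push} identifies the pieces.
\item Orlov's Proposition~1.10 passes the decomposition to the quotient.
\end{itemize}
Your identity $i^*\star_\lambda\simeq\star_\lambda\, i_\lambda^*$ is useful, but for a different step. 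Since $\mathsf{Perf}(\mathcal{X}_0)$ is thick-generated by $i^*\mathsf{Perf}(\mathcal{X})$, it shows that the projection functors of the decomposition of $\mathsf{Coh}(\mathcal{X}_0)$ preserve perfect complexes. That is the hypothesis Orlov's descent to the quotient needs. Used adjointly, it also shows that any $E$ right orthogonal to all pieces has $i_*E\simeq 0$. Turning that into generation would still require right admissibility of the subcategory generated by infinitely many pieces.

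\textbf{The Hom argument also has a slip.} If $u$ acts invertibly on $H=\mathrm{Hom}_{\mathcal{X}_0}(A,B)$, then $H[u^{-1}]\simeq H$, so invertibility alone gives no vanishing in $\mathsf{MF}$. What rescues the argument is that $H$ is cohomologically bounded below, because $A$ and $B$ are coherent. A degree-$2$ isomorphism $H\simeq H[2]$ then forces $H\simeq 0$. This gives semiorthogonality already in $\mathsf{Coh}(\mathcal{X}_0)$. Applied to the cone of the comparison map, using that $\star_\lambda$ is compatible with $u$, it also gives full faithfulness there. These $\mathsf{Coh}$-level statements are what you need before passing to the quotient anyway.

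Finally, the étale-local fallback does not avoid the problem. It needs the matrix factorization statement for $V/G$, where the same generation issue arises. Moreover, \cref{lem-sod-descent} is formulated for $\mathsf{Coh}$, not for Verdier quotients.
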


\begin{proof}
    We follow the argument of
    \textcite[Proposition~2.5]{padurariu-toda-2024-c3-1}.

    By
    \textcite[Theorem~1.1.3]{ben-zvi-nadler-preygel-2017},
    we have an equivalence
    \begin{equation}
        \Phi_{\mathcal{X}} \colon
        \mathsf{Coh} (\mathcal{X}_0) \longsimto
        \mathsf{Fun}^{\mathrm{ex}}_{\mathsf{Perf} \smash{(\mathbb{A}^1)}}
        (\mathsf{Perf} (*), \mathsf{Perf} (\mathcal{X}))
    \end{equation}
    satisfying
    $\Phi_{\mathcal{X}} (E) (F) \simeq i_* (E \otimes F)$
    for $E \in \mathsf{Coh} (\mathcal{X}_0)$
    and $F \in \mathsf{Perf} (*)$,
    where we regard $\mathsf{Perf} (*)$ as a
    $\mathsf{Perf} (\mathbb{A}^1)$-module via the map
    $0 \colon {*} \to \mathbb{A}^1$.
    Consider the semiorthogonal decomposition
    of $\mathsf{Perf} (\mathcal{X})$ from \cref{thm-sm-sod}.
    By \textcite[Lemma~3.8]{halpern-leistner-pomerleano-2020},
    this induces a semiorthogonal decomposition
    \begin{equation}
        \mathsf{Coh} (\mathcal{X}_0)
        = \Bigl<
            \Phi_{\mathcal{X}}^{-1} \,
            \mathsf{Fun}^{\mathrm{ex}}_{\mathsf{Perf} \smash{(\mathbb{A}^1)}}
            (\mathsf{Perf} (*),
            {\star}_\lambda \, \mathsf{W}_{\mathcal{X}_\lambda} (\delta_\lambda))
            \Bigm|
            \lambda \in |\mathrm{CL}_\mathbb{Q} (\mathcal{X})|
        \Bigr> \ .
    \end{equation}
    Using~\cref{eq-mf-hall-i-push}
    and the fact that
    \begin{equation}
        \mathsf{M}_{\mathcal{X}_\lambda, \, f} (\delta_\lambda)
        =
        \Pi_{\mathcal{X}_\lambda} \,
        \Phi_{\mathcal{X}_\lambda}^{-1} \,
        \mathsf{Fun}^{\mathrm{ex}}_{\mathsf{Perf} \smash{(\mathbb{A}^1)}}
        (\mathsf{Perf} (*),
        \mathsf{W}_{\mathcal{X}_\lambda} (\delta_\lambda))
        \ ,
    \end{equation}
    where
    $\Pi_{\mathcal{X}_\lambda} \colon
    \mathsf{Coh} ((\mathcal{X}_\lambda)_0) \to
    \mathsf{MF} (\mathcal{X}_\lambda, f)$
    is the quotient functor,
    the theorem follows from
    \textcite[Proposition~1.10]{orlov-2006-equivalences}.
\end{proof}

\begin{para}[Graded matrix factorizations]
    \label{para-mf-gr}
    \label{para-mf-gr-window}
    We introduce a variant of
    \cref{thm-mf-sod},
    which will be useful in obtaining semiorthogonal decompositions
    for quasi-smooth stacks later on.

    Let~$\mathcal{X}$ be a smooth qca stack over~$\mathbb{C}$,
    with a function $f \colon \mathcal{X} \to \mathbb{C}$.
    Assume that we have a $\mathbb{G}_{\mathrm{m}}$-action on~$\mathcal{X}$
    such that~$f$ is of weight~$n$ for some $n \in \mathbb{Z}$.
    Denote
    \begin{equation}
        \mathsf{MF}^{\mathrm{gr}} (\mathcal{X}, f)
        = \mathsf{MF} (\mathcal{X} / \mathbb{G}_{\mathrm{m}}, f) \ ,
    \end{equation}
    where the right-hand side is as in \cref{para-mf},
    with the line bundle~$L \to \mathcal{X} / \mathbb{G}_{\mathrm{m}}$
    given by the pullback of the line bundle of weight~$n$
    on~$* / \mathbb{G}_{\mathrm{m}}$,
    so that~$f$ defines a section of~$L$.

    We also have an identification
    \begin{equation}
        \mathsf{MF}^{\mathrm{gr}} (\mathcal{X}, f) \simeq
        \mathsf{MF} (\mathcal{X}, f)^{\mathbb{G}_{\mathrm{m}}} \ ,
    \end{equation}
    where $(-)^{\mathbb{G}_{\mathrm{m}}}$
    denotes taking $\mathbb{G}_{\mathrm{m}}$-invariants,
    or more precisely, taking invariants of the coaction
    by the comonoid $\mathsf{Perf} (\mathbb{G}_{\mathrm{m}})$.

    When~$\mathcal{X}$ is quasi-symmetric,
    for a linear function
    $\delta \colon \mathrm{CL}_{\mathbb{Q}} (\mathcal{X}) \to \mathbb{Q}$,
    define the \emph{window subcategory}
    \begin{equation}
        \mathsf{M}_{\mathcal{X}, \, f}^{\smash{\mathrm{gr}}} (\delta)
        \subset \mathsf{MF}^{\mathrm{gr}} (\mathcal{X}, f)
    \end{equation}
    to consist of objects
    $E \in \mathsf{Coh} (\mathcal{X}_0 / \mathbb{G}_{\mathrm{m}})$
    such that
    $i_* \, p^* \, E \in \mathsf{W}_{\mathcal{X}} (\delta)$,
    where
    $p \colon \mathcal{X}_0 \to \mathcal{X}_0 / \mathbb{G}_\mathrm{m}$
    is the projection,
    $i \colon \mathcal{X}_0 \to \mathcal{X}$
    is the inclusion, and
    $\mathsf{W}_{\mathcal{X}} (\delta)
    \subset \mathsf{Perf} (\mathcal{X})$
    is the window subcategory
    defined in \cref{para-sm-window}.
\end{para}

\begin{theorem}
    \label{thm-mf-gr-sod}
    In the setting of \cref{thm-mf-sod},
    suppose that~$\mathcal{X}$ has a $\mathbb{G}_\mathrm{m}$-action
    such that the function $f \colon \mathcal{X} \to \mathbb{C}$
    is of weight~$n$ for some $n \in \mathbb{Z}$.

    Then there is a semiorthogonal decomposition
    \begin{equation}
        \label{eq-mf-gr-sod}
        \mathsf{MF}^{\mathrm{gr}} (\mathcal{X}, f) =
        \Bigl<
            \star_\lambda \,
            \mathsf{M}^{\smash{\mathrm{gr}}}_{\mathcal{X}_\lambda, \, f}
            (\delta_\lambda)
            \Bigm|
            \lambda \in |\mathrm{CL}_\mathbb{Q} (\mathcal{X})|
        \Bigr> \ ,
    \end{equation}
    where we equip each~$\mathcal{X}_\lambda$ with the
    induced $\mathbb{G}_{\mathrm{m}}$-action.
    Each functor
    ${\star}_\lambda \colon \mathsf{M}^{\smash{\mathrm{gr}}}_{\mathcal{X}_\lambda, f} (\delta_\lambda)
    \to \mathsf{MF}^{\mathrm{gr}} (\mathcal{X}, f)$ is fully faithful.
    The order of the decomposition is as in
    \cref{thm-sm-sod}.
\end{theorem}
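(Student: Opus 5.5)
The plan is to run the proof of \cref{thm-mf-sod} verbatim, with $\mathcal{X}$ replaced by the quotient $\mathcal{X} / \mathbb{G}_\mathrm{m}$. The only new input needed is a $\mathbb{G}_\mathrm{m}$-equivariant (graded) version of \cref{thm-sm-sod} for $\mathsf{Perf} (\mathcal{X} / \mathbb{G}_\mathrm{m})$.

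\textbf{Step 1: a graded decomposition of $\mathsf{Perf} (\mathcal{X} / \mathbb{G}_\mathrm{m})$.} I will prove
\begin{equation*}
    \mathsf{Perf} (\mathcal{X} / \mathbb{G}_\mathrm{m}) =
    \bigl\langle
        {\star}_\lambda \, \mathsf{W}^{\mathrm{gr}}_{\mathcal{X}_\lambda} (\delta_\lambda)
        \bigm| \lambda \in |\mathrm{CL}_\mathbb{Q} (\mathcal{X})|
    \bigr\rangle ,
\end{equation*}
where $\mathsf{W}^{\mathrm{gr}}$ consists of objects whose pullback along $\mathcal{X}_\lambda \to \mathcal{X}_\lambda / \mathbb{G}_\mathrm{m}$ lies in $\mathsf{W}_{\mathcal{X}_\lambda} (\delta_\lambda)$. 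The $\mathbb{G}_\mathrm{m}$-action lifts to $\mathcal{X}_\lambda$ and $\mathcal{X}_\lambda^+$, and $\mathrm{gr}_\lambda$ and $\mathrm{ev}_\lambda$ are equivariant. So ${\star}_\lambda$ descends to the quotients and commutes with the pullback $p^*$.

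\begin{itemize}
    \item \emph{Semiorthogonality and full faithfulness.} For equivariant objects, $\mathrm{Hom}_{\mathcal{X} / \mathbb{G}_\mathrm{m}} (E, F)$ is the weight-zero part of $\mathrm{Hom}_{\mathcal{X}} (p^* E, p^* F)$, with weights taken under the induced $\mathbb{G}_\mathrm{m}$-action. Both properties therefore follow from the ungraded statements of \cref{thm-sm-sod}.
    \item \emph{Generation.} This is the main obstacle. I would use the admissibility in \cref{thm-sm-sod}. Each projection functor onto a component (composites of $\iota_\lambda^*$, $\iota_\lambda^!$, ${\star}_\lambda$, and the adjoints from \cref{lem-admissible}) is built from pullbacks and pushforwards along equivariant maps. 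Hence it is $\mathbb{G}_\mathrm{m}$-equivariant and commutes with taking invariants of the $\mathsf{Perf} (\mathbb{G}_\mathrm{m})$-coaction.
    \item \emph{The filtration.} Applying the semiorthogonal filtration of $p^* E$ and taking equivariant structures then produces a filtration of $E$ with graded pieces in the graded window categories. Only finitely many pieces are nonzero, by the finiteness in \cref{lem-sod-descent}~\cref{item-sod-descend-finite}.
    \item \emph{Alternative route.} Apply \cref{lem-sod-descent} directly along the smooth cover $\mathcal{X} \to \mathcal{X} / \mathbb{G}_\mathrm{m}$. This needs a good moduli space base; one can use $X \to X / \mathbb{G}_\mathrm{m}$ when the action descends. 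I expect that route to need more care, so the invariants argument is the first attempt.
\end{itemize}

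\textbf{Step 2: pass to the zero locus.} Replace $\mathsf{Perf} (\mathcal{X})$ by $\mathsf{Perf} (\mathcal{X} / \mathbb{G}_\mathrm{m})$ and $\mathbb{A}^1$ by $\mathbb{A}^1 / \mathbb{G}_\mathrm{m}$ with weight-$n$ coordinate, so that $f$ is a section of $L$. Ben-Zvi--Nadler--Preygel then identifies $\mathsf{Coh} (\mathcal{X}_0 / \mathbb{G}_\mathrm{m})$ with $\mathsf{Perf} (\mathbb{A}^1 / \mathbb{G}_\mathrm{m})$-linear functors from $\mathsf{Perf} (B\mathbb{G}_\mathrm{m})$ to $\mathsf{Perf} (\mathcal{X} / \mathbb{G}_\mathrm{m})$, the analogue of $\Phi_{\mathcal{X}}$.

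\textbf{Step 3: transfer and pass to the quotient.} Each graded component is $\mathsf{Perf} (\mathbb{A}^1 / \mathbb{G}_\mathrm{m})$-linear, because tensoring with pullbacks from the base preserves weights, as in \cref{para-pf-lq-sod}. Halpern-Leistner--Pomerleano's Lemma~3.8 therefore transfers the decomposition to $\mathsf{Coh} (\mathcal{X}_0 / \mathbb{G}_\mathrm{m})$. Next, \cref{eq-mf-hall-i-push} identifies the resulting pieces with ${\star}_\lambda$ of objects $E$ satisfying $i_* p^* E \in \mathsf{W}$. Finally, Orlov's Proposition~1.10 passes the decomposition to the quotient by $\mathsf{Perf}$, which is $\mathsf{MF}^{\mathrm{gr}}$. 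Full faithfulness and the ordering are inherited unchanged.
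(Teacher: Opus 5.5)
Your proposal follows the paper's route: the paper obtains the decomposition of $\mathsf{Perf}(\mathcal{X}/\mathbb{G}_\mathrm{m})$ from \cref{lem-sod-quotient}, which is Elagin's equivariant descent for semiorthogonal decompositions extended to stacks, and then reruns the proof of \cref{thm-mf-sod} verbatim, exactly as in your Steps~2--3. Your by-hand generation argument in Step~1 is a reproof of Elagin's theorem, but the right justification for equivariance of the projection functors is different from the one you give. It is that each component ${\star}_\lambda \, \mathsf{W}_{\mathcal{X}_\lambda}(\delta_\lambda)$ is stable under the $\mathsf{Perf}(\mathbb{G}_\mathrm{m})$-coaction, so uniqueness of the semiorthogonal filtration forces the projections to be equivariant; it is not that the adjoints are built from pull--push along equivariant maps, because the window projections come from the inductive decomposition.
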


\begin{proof}
    This follows from repeating the proof of
    \cref{thm-mf-sod},
    but instead of using the semiorthogonal decomposition of
    $\mathsf{Perf} (\mathcal{X})$ from
    \cref{thm-sm-sod},
    use the induced semiorthogonal decomposition of
    $\mathsf{Perf} (\mathcal{X} / \mathbb{G}_\mathrm{m})$,
    which follows from \cref{lem-sod-quotient} below.
\end{proof}

\begin{lemma}
    \label{lem-sod-quotient}
    Let~$\mathcal{X}$ be a qca stack over~$\mathbb{C}$
    with separated diagonal and with a good moduli space,
    acted on by a reductive group~$G$.
    Let
    $\mathsf{Perf} (\mathcal{X}) =
    \langle \mathcal{C}_i \mid i \in I \rangle$
    be a semiorthogonal decomposition,
    such that each term~$\mathcal{C}_i$
    is preserved by the $G$-action.
    Then there is an induced semiorthogonal decomposition
    \begin{equation}
        \mathsf{Perf} (\mathcal{X} / G) =
        \langle \mathcal{C}'_i \mid i \in I \rangle \ ,
    \end{equation}
    where $\mathcal{C}'_i = (p^*)^{-1} \, \mathcal{C}_i$,
    and $p \colon \mathcal{X} \to \mathcal{X} / G$
    is the quotient map.
\end{lemma}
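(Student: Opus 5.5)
We plan to deduce the decomposition of $\mathsf{Perf} (\mathcal{X} / G)$ from that of $\mathsf{Perf} (\mathcal{X})$ by descent along the smooth cover $p \colon \mathcal{X} \to \mathcal{X} / G$. The most convenient model is the cartesian square
\begin{equation*}
    \begin{tikzcd}
        \mathcal{X} \times G \ar[r, "\mathrm{act}"] \ar[d, "\mathrm{pr}"'] & \mathcal{X} \ar[d, "p"] \\
        \mathcal{X} \ar[r, "p"] & \mathcal{X} / G \rlap{ ,}
    \end{tikzcd}
\end{equation*}
which presents $\mathsf{Perf} (\mathcal{X} / G)$, at least after passing to ind-completions, as the totalization of the cosimplicial diagram $\mathsf{Perf} (\mathcal{X} \times G^{\bullet})$. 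This is the same as the category of $G$-equivariant objects, i.e.\ comodules for the coaction of $\mathsf{Perf} (G)$ on $\mathsf{Perf} (\mathcal{X})$. The hypothesis that each $\mathcal{C}_i$ is preserved by the $G$-action means exactly that the coaction $\mathrm{act}^*$ sends $\mathcal{C}_i$ into the thick subcategory $\mathcal{C}_i \boxtimes \mathsf{Perf} (G)$ of $\mathsf{Perf} (\mathcal{X} \times G)$.

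\textbf{Semiorthogonality.} Take $x \in \mathcal{C}'_j$ and $y \in \mathcal{C}'_i$ with $j > i$. Since $G$ is reductive, $\mathrm{Hom}_{\mathcal{X} / G} (x, y)$ is the $G$-invariant part of $\mathrm{Hom}_{\mathcal{X}} (p^* x, p^* y)$. More precisely, it is computed by $p_* \calHom (x, y)$ followed by taking invariants, an exact functor because $G$ is linearly reductive. The group $\mathrm{Hom}_{\mathcal{X}} (p^* x, p^* y)$ vanishes because $p^* x \in \mathcal{C}_j$ and $p^* y \in \mathcal{C}_i$. Hence the invariant part vanishes as well.

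\textbf{Generation.} This is the main step. Because the $\mathcal{C}_i$ are $G$-stable, the semiorthogonal filtration of $\mathsf{Perf} (\mathcal{X})$ should be $G$-equivariant. To make this precise:
\begin{enumerate}
    \item Use \cref{lem-sod-descent}~\cref{item-sod-ascend}, or its proof via \cref{lem-indcoh-base-change}, with base $\mathcal{Y}$ and $\mathcal{Y}' = \mathcal{Y} \times G$, to get a semiorthogonal decomposition $\mathsf{Perf} (\mathcal{X} \times G^n) = \langle \mathcal{C}_i \boxtimes \mathsf{Perf} (G^n) \rangle$ for every $n$.
    \item Check that all the face and degeneracy maps of the cosimplicial diagram preserve these decompositions. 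This is where the $G$-stability of the $\mathcal{C}_i$ is used.
    \item Conclude that the projection functors onto each component, and the truncation triangles, are natural in the cosimplicial index, by uniqueness of semiorthogonal filtrations.
    \item Pass to the totalization. This shows that for $E \in \mathsf{Perf} (\mathcal{X} / G)$, the filtration of $p^* E$ descends to a filtration of $E$ whose graded pieces $E_i$ satisfy $p^* E_i \in \mathcal{C}_i$, so $E_i \in \mathcal{C}'_i$.
\end{enumerate}
The expected obstacle is finiteness. For a perfect complex $E$, only finitely many $i$ should contribute to $p^* E$, since the decomposition of the perfect object $p^* E$ is finite. Descended objects should then be perfect, because perfectness is local for the smooth topology.

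\textbf{Alternative route.} If the totalization argument proves awkward, we fall back on a more hands-on approach. Average using the Reynolds operator: form $p_* p^* E \simeq E \otimes p_* \mathcal{O}$, which contains $E$ as the direct summand of invariants since $G$ is reductive. This should reduce generation to showing that $p_* \mathcal{C}_i$ lands in the ind-completion of $\mathcal{C}'_i$, which can be checked with the base change along the square above. One would then extract a finite direct summand.
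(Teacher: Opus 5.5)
Your proposal is correct and is essentially the paper's argument. The paper cites Elagin's descent theorem for semiorthogonal decompositions, which is phrased via comodules over the comonad $p^* p_*$ rather than your cosimplicial totalization, and observes that the only input needing generalization to stacks is Kuznetsov's base change of the decomposition to $\mathcal{X} \times G^n$; this holds because $\mathsf{QCoh}(\mathcal{X})$ is compactly generated by $\mathsf{Perf}(\mathcal{X})$, which is exactly where the good moduli space and separated diagonal hypotheses enter. Two small corrections to Step~1: the base should be $\operatorname{Spec}\mathbb{C}$ with $\mathcal{Y}' = G^n$, since the $\mathcal{C}_i$ are not assumed linear over anything larger, and because you work with $\mathsf{Perf}$ rather than $\mathsf{Coh}$ you should use the $\mathsf{QCoh}$ tensor product formula from the proof of \cref{lem-indcoh-base-change} rather than its $\mathsf{IndCoh}$ statement.
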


\begin{proof}
    When~$\mathcal{X}$ is a quasi-projective scheme,
    this result is \textcite[Theorem~6.2]{elagin-2012-descent}.
    The argument there generalizes to stacks,
    because the assumption that~$\mathcal{X}$ is quasi-projective
    is only used when employing results from
    \textcite{kuznetsov-2011},
    which, as explained in the proof of
    \cref{lem-sod-descent},
    hold for stacks~$\mathcal{X}$ such that
    $\mathsf{QCoh} (\mathcal{X})$
    is compactly generated with compact objects
    $\mathsf{Perf} (\mathcal{X})$,
    and this follows from
    \textcite[Proposition~6.15]{alper-hall-rydh-etale}
    and \textcite[Corollary~1.4.3]{drinfeld-gaitsgory-2013-finiteness}.
\end{proof}

\begin{example}[Quivers with potential]
    \label{eg-quiver-potential}
    Let $Q$ be a quiver,
    and let~$\zeta$ be a generic stability condition on~$Q$
    as in \cref{eg-quiver}.

    Let~$W$ be a potential on~$Q$.
    For each dimension vector $d \in \mathbb{N}^{Q_0}$,
    the trace of the potential~$W$ defines a function on
    $\mathcal{X}_d^{\smash{\zeta \mathhyphen \mathrm{ss}}}$,
    which we still denote by~$W$.
    See, for example,
    \textcite[\S 3.4]{davison-meinhardt-2020-quiver}
    for more background.

    Applying \cref{thm-mf-sod}
    with the choice of~$q$ as in \cref{eg-quiver},
    we obtain a semiorthogonal decomposition
    \begin{align}
        \label{eq-quiver-mf-sod}
        \mathsf{MF} (\mathcal{X}_d^{\smash{\zeta \mathhyphen \mathrm{ss}}}, W) =
        \biggl< {}
            \bigotimes_{j = 1}^k
            \mathsf{M}_{\mathcal{X}_{d_j}^{\smash{\zeta \mathhyphen \mathrm{ss}}}, \, W}
            (\delta_{w_j})
            \biggm|
            \frac{w_1}{|d_1|} < \cdots < \frac{w_k}{|d_k|}
        \, \biggr> \ ,
    \end{align}
    with notations as in \cref{eg-quiver}.
\end{example}

\section{The quasi-smooth case}

\label{sec-qsm}

\subsection{The decomposition}

We state one of the main results of this paper,
\cref{thm-qsm-sod},
which gives semiorthogonal decompositions
for coherent sheaves on quasi-smooth stacks,
generalizing \cref{thm-sm-sod}.
Its proof will be given in \cref{subsec-pf-qsm-sod}.
For background on (ind-)coherent sheaves on derived stacks,
see \textcite[\S 3]{drinfeld-gaitsgory-2013-finiteness}.

\begin{para}[Window subcategories]
    \label{para-qsm-window}
    Let~$\mathcal{X}$ be a
    quasi-smooth derived qca stack over~$\mathbb{C}$
    which is quasi-symmetric in the sense of \cref{para-symmetric-stacks}.

    For a linear function
    $\delta \colon \mathrm{CL}_{\mathbb{Q}} (\mathcal{X}) \to \mathbb{Q}$,
    define the \emph{window subcategory}
    \begin{equation*}
        \mathsf{W}_{\mathcal{X}} (\delta) \subset \mathsf{Coh} (\mathcal{X})
    \end{equation*}
    to consist of objects~$E$ satisfying the following condition:
    \begin{itemize}
        \item
            For any $\mathbb{G}_\mathrm{m}$-representation~$V$
            and any quasi-smooth morphism
            $\lambda \colon V [-1] / \mathbb{G}_\mathrm{m} \to \mathcal{X}$,
            writing
            $h \colon V [-1] / \mathbb{G}_\mathrm{m} \to {*} / \mathbb{G}_\mathrm{m}$
            for the projection, we have
            \begin{equation}
                \label{eq-qsm-window-interval}
                \mathrm{Wt} (h_* \, \lambda^* \, E)
                \subset
                \delta (\lambda_0) + \Biggl[
                    \frac{1}{2}
                    \sum_{\substack{
                        n \in \mathrm{wt} (
                            \lambda_0^* \, \mathbb{T}_{\mathcal{X}} \, \oplus \, (V^{\vee})^2
                        ) : \\ n < 0
                    }}
                    n
                    \ , \
                    \frac{1}{2}
                    \sum_{\substack{
                        n \in \mathrm{wt} (
                            \lambda_0^* \, \mathbb{T}_{\mathcal{X}} \, \oplus \, (V^{\vee})^2
                        ) : \\ n > 0
                    }}
                    n
                \Biggr] \ ,
            \end{equation}
            where the notations
            $\mathrm{Wt} (-)$ and $\mathrm{wt} (-)$
            are as in \cref{para-sm-window},
            and $\lambda_0 = \lambda |_{* / \mathbb{G}_\mathrm{m}}$.
    \end{itemize}
    Here, it is not enough to consider only morphisms
    ${*} / \mathbb{G}_\mathrm{m} \to \mathcal{X}$,
    since when~$\mathcal{X}$ is not smooth,
    such morphisms are not necessarily quasi-smooth,
    and for $E \notin \mathsf{Perf} (\mathcal{X})$,
    the pullback $\lambda^* \, E \in \mathsf{QCoh} (* / \mathbb{G}_\mathrm{m})$
    can have infinitely many weights;
    naively using this set of weights
    does not give the correct weight condition for the window subcategory.

    Note that when~$\mathcal{X}$ is smooth,
    we have $\mathsf{Perf} (\mathcal{X}) = \mathsf{Coh} (\mathcal{X})$,
    and the definition of~$\mathsf{W}_{\mathcal{X}} (\delta)$
    agrees with the one in \cref{para-sm-window},
    which follows from
    \cref{lemma-qsm-window}~\cref{item-qsm-window-surjection}
    below.
    On the other hand,
    our definition also agrees with the \emph{intrinsic window subcategory} of
    \textcite[Definition~6.2.12]{toda-2024-cat-dt}
    when the latter is defined,
    which follows from
    \cref{lemma-qsm-window}~\cref{item-qsm-window-surjection,item-qsm-window-presentation} below.
\end{para}

\begin{lemma}
    \label{lemma-qsm-window}
    In the situation of\/ \cref{para-qsm-window},
    let~$V$ be a $\mathbb{G}_\mathrm{m}$-representation, and let
    $\lambda \colon V [-1] / \mathbb{G}_\mathrm{m} \to \mathcal{X}$
    be a quasi-smooth morphism.
    Let
    $h \colon V [-1] / \mathbb{G}_\mathrm{m} \to {*} / \mathbb{G}_m$
    be the projection.

    \begin{enumerate}
        \item
            \label{item-qsm-window-surjection}
            For another quasi-smooth morphism
            $\lambda' \colon V' [-1] / \mathbb{G}_\mathrm{m} \to \mathcal{X}$
            such that
            $\lambda |_{* / \mathbb{G}_\mathrm{m}} = \lambda' |_{* / \mathbb{G}_\mathrm{m}}$,
            the condition
            \cref{eq-qsm-window-interval}
            for~$\lambda$ is equivalent to that for~$\lambda'$.

        \item
            \label{item-qsm-window-presentation}
            If $\mathcal{X} = s^{-1} (0)$
            is the derived zero locus of a section~$s$
            of a vector bundle
            $\mathcal{V} \to \mathcal{Y}$
            over a smooth qca stack~$\mathcal{Y}$,
            then \cref{eq-qsm-window-interval}
            is equivalent to the condition
            \begin{equation}
                \label{eq-qsm-window-interval-alt}
                \mathrm{Wt} (\bar{\lambda}^* \, j_* \, E)
                \subset
                \delta (\lambda_0) + \Biggl[
                    \frac{1}{2}
                    \sum_{\substack{n \in \mathrm{wt} (\bar{\lambda}^* \, \mathbb{L}_{\mathcal{V}} |_{\smash{\mathcal{Y}}}): \\ n < 0}}
                    n
                    \ , \
                    \frac{1}{2}
                    \sum_{\substack{n \in \mathrm{wt} (\bar{\lambda}^* \, \mathbb{L}_{\mathcal{V}} |_{\smash{\mathcal{Y}}}): \\ n > 0}}
                    n
                \Biggr] \ ,
            \end{equation}
            where
            $j \colon \mathcal{X} \to \mathcal{Y}$ is the inclusion,
            $\lambda_0 = \lambda |_{* / \mathbb{G}_\mathrm{m}} \colon
            {*} / \mathbb{G}_\mathrm{m} \to \mathcal{X}$,
            and $\bar{\lambda} = j \, \lambda_0 \colon
            {*} / \mathbb{G}_\mathrm{m} \to \mathcal{Y}$.
    \end{enumerate}
\end{lemma}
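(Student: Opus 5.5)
The plan is to prove \cref{item-qsm-window-presentation} first, in a strengthened local form, and then deduce \cref{item-qsm-window-surjection} by étale localization. Throughout, write $\mathrm{Wt}$ and $\mathrm{wt}$ as in \cref{para-sm-window}.

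\emph{Step 1: the canonical test map in a presentation.} Suppose $\mathcal{X} = s^{-1}(0) \subset \mathcal{Y}$ with $j \colon \mathcal{X} \to \mathcal{Y}$. Put $W = \bar{\lambda}^* \mathcal{V}$, a $\mathbb{G}_\mathrm{m}$-representation. The fibre product $\mathcal{X} \times_{\mathcal{Y}} {*}/\mathbb{G}_\mathrm{m}$ along $\bar{\lambda}$ is the derived zero locus of $0$ in $W$ over ${*}/\mathbb{G}_\mathrm{m}$, that is $W[-1]/\mathbb{G}_\mathrm{m}$. This gives a quasi-smooth morphism $\lambda_W \colon W[-1]/\mathbb{G}_\mathrm{m} \to \mathcal{X}$ with $\lambda_W|_{*/\mathbb{G}_\mathrm{m}} = \lambda_0$. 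Base change for the cartesian square, with $j$ proper, gives
\[
\bar{\lambda}^* \, j_* \, E \simeq h_* \, \lambda_W^* \, E \ .
\]
So the left-hand side of \cref{eq-qsm-window-interval-alt} coincides with the left-hand side of \cref{eq-qsm-window-interval} for $\lambda_W$.

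For the right-hand sides, use the equality of virtual representations $\lambda_0^* \mathbb{T}_{\mathcal{X}} = \bar{\lambda}^* \mathbb{T}_{\mathcal{Y}} - W$ and $\bar{\lambda}^* \mathbb{L}_{\mathcal{V}}|_{\mathcal{Y}} = \bar{\lambda}^* \mathbb{L}_{\mathcal{Y}} \oplus W^\vee$. The multiset for $\lambda_W$ is then
\[
\mathrm{wt}(\bar{\lambda}^* \mathbb{T}_{\mathcal{Y}}) - \mathrm{wt}(W) + 2\,\mathrm{wt}(W^\vee) \ .
\]
The positive part of $-\mathrm{wt}(W) + \mathrm{wt}(W^\vee)$ sums to $0$, because $v \mapsto -v$ swaps the two. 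Quasi-symmetry of $\mathcal{X}$, applied line by line, lets me replace $\bar{\lambda}^* \mathbb{T}_{\mathcal{Y}}$ by its dual $\bar{\lambda}^* \mathbb{L}_{\mathcal{Y}}$ up to terms whose half-sums cancel. The two intervals should then coincide; this is a routine but careful multiset calculation. Hence \cref{item-qsm-window-presentation} holds for the particular test map $\lambda_W$, and it remains to compare an arbitrary $\lambda$ with $\lambda_W$.

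\emph{Step 2: comparing two test maps (the main obstacle).} Let $\lambda \colon V[-1]/\mathbb{G}_\mathrm{m} \to \mathcal{X}$ be quasi-smooth with $\lambda|_{*/\mathbb{G}_\mathrm{m}} = \lambda_0$. Composing with $j$ gives a map to $\mathcal{Y}$. Since $\mathcal{Y}$ is smooth and $V[-1]$ is a nilpotent thickening of a point, this map is homotopic to $\bar{\lambda} \circ h$. The datum of $\lambda$ then amounts to a nullhomotopy of $s$ on $V[-1]/\mathbb{G}_\mathrm{m}$, i.e.\ a $\mathbb{G}_\mathrm{m}$-equivariant linear map $\varphi \colon V \to W$ through which $\lambda$ factors as $\lambda_W \circ \varphi[-1]$. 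Quasi-smoothness of $\lambda$ constrains $\varphi$; I expect the right statement is that the relevant part of $\lambda_0^* \mathbb{T}_{\mathcal{X}} \oplus V^\vee$ behaves well, and pinning this down is the delicate point.

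Passing to Koszul algebras, $h_* \lambda^* E$ is obtained from $h_* \lambda_W^* E$ by base change along $\mathrm{Sym}(W^\vee[1]) \to \mathrm{Sym}(V^\vee[1])$. This factors as a quotient by $\mathrm{coker}(\varphi^\vee)$, an exterior algebra, followed by tensoring with an exterior algebra on $\ker(\varphi^\vee)$. The set of weights therefore changes in a controlled way: the extremal weights move by the sums of positive or negative weights of $\ker$ and $\mathrm{coker}$. This shift is exactly what is compensated by replacing $(W^\vee)^2$ by $(V^\vee)^2$ in \cref{eq-qsm-window-interval}. The hardest part is the extremal-weight estimate. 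I would first handle the case where $\varphi$ is injective or surjective separately, using the explicit exterior Koszul complexes, and then compose the two cases.

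\emph{Step 3: deducing \cref{item-qsm-window-surjection}.} For a general $\mathcal{X}$, first check that both sides of \cref{eq-qsm-window-interval} depend only on the restriction of $\lambda$ to an étale neighbourhood of the image point of $\lambda_0$. By the local structure of quasi-smooth stacks with good moduli spaces (the derived analogue of \cref{lem-sm-local-str}), such a neighbourhood has the form $s^{-1}(0)$ in a smooth affine quotient. Since $V[-1]/\mathbb{G}_\mathrm{m}$ has a single point, both $\lambda$ and $\lambda'$ lift to this chart by étale lifting. Steps 1 and 2 then show that the condition for each of $\lambda$ and $\lambda'$ is equivalent to the condition for the canonical $\lambda_W$. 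This gives \cref{item-qsm-window-surjection}, and together with Step 1 it also gives \cref{item-qsm-window-presentation} for arbitrary $\lambda$.
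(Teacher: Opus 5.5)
The gap is in Step~2. Your factorization $\lambda \simeq \lambda_W \circ \varphi[-1]$ is fine; it is the map $p$ the paper uses for (ii). The trouble is that you leave open what quasi-smoothness says about $\varphi$, and then propose to handle injective $\varphi$ with an explicit exterior Koszul complex. That cannot work as stated, for the following reasons.

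\begin{itemize}
\item Quasi-smoothness of $\lambda$ only says that $\varphi(V)$ and the image of the derivative of $s$ together span $W$. So non-surjective $\varphi$ genuinely occurs.
\item For non-surjective $\varphi$, the map $\varphi[-1]$ is not quasi-smooth: its relative cotangent complex contains $(\mathrm{coker}\,\varphi)^\vee[2]$.
\item Consequently, base change along $\mathrm{Sym}(W^\vee[1]) \to \mathrm{Sym}(\mathrm{im}(\varphi)^\vee[1])$ is a derived tensor product over the exterior algebra on $\ker(\varphi^\vee) = (\mathrm{coker}\,\varphi)^\vee$. (Your $\ker$/$\mathrm{coker}$ labels are swapped.) It is resolved by an infinite divided-power complex, not a finite Koszul complex.
\item For a general module this gives no bound on weights at all. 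Already for one-dimensional $U$ of nonzero weight, $\mathbb{C} \otimes_{\mathrm{Sym}(U[1])} \mathbb{C} \simeq \mathrm{Sym}(U[2])$ has infinitely many weights. For a free module, by contrast, the hull shrinks.
\end{itemize}

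Which of these happens depends on the module structure of $h_* \lambda_W^* E$, not just on its weights. To conclude, you would need to know that $h_* \lambda_W^* E$ is an exterior-algebra tensor of the pullback along a smaller test map. That is exactly the statement being proved.

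The missing idea, which is how the paper proves (i), is intrinsic to $\mathcal{X}$ and needs no presentation. Since $\lambda_0^* \mathbb{T}_{\mathcal{X}}$ has amplitude $[-1, 1]$, deformation theory along the Postnikov tower of $\mathrm{Sym}(V^\vee[1])$ gives the following. Homotopy classes of $\lambda \colon V[-1]/\mathbb{G}_\mathrm{m} \to \mathcal{X}$ restricting to $\lambda_0$ correspond to equivariant maps $V \to \mathrm{H}^1(\lambda_0^* \mathbb{T}_{\mathcal{X}})$. Moreover, $\lambda$ is quasi-smooth if and only if this map is surjective.

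Now choose $V_0 \subset V$ mapping isomorphically onto $\mathrm{H}^1$. Then both $\lambda$ and $\lambda'$ are homotopic to $\lambda|_{V_0[-1]/\mathbb{G}_\mathrm{m}}$ precomposed with surjections onto $V_0$, so only the surjective case is needed. In that case:
\begin{itemize}
\item $p_* \mathcal{O}$ is free, and $h'_* \lambda'^* E \simeq h_* \lambda^* E \otimes \mathrm{Sym}(V''^\vee[1])$ with $V'' = \ker p$.
\item The convex hull of weights grows by the interval from the sum of the negative weights to the sum of the positive weights of $V''^\vee$.
\item This is exactly the change in the right-hand side of \cref{eq-qsm-window-interval} when $(V^\vee)^2$ becomes $(V'^\vee)^2$.
\end{itemize}
In your notation, $\lambda_W$ corresponds to the quotient $W \to \mathrm{H}^1$, and this is what lets you bypass injective $\varphi$ entirely. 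With (i) in hand, (ii) is just your Step~1 base change. Your Step~3 becomes unnecessary; as written, it also needs a good moduli space to produce the chart, which \cref{para-qsm-window} does not assume.

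Finally, in Step~1 both individual claims are wrong.
\begin{itemize}
\item The positive part of $-\mathrm{wt}(W) + \mathrm{wt}(W^\vee)$ does not sum to $0$; for $W$ of weight $1$ it sums to $-1$.
\item Quasi-symmetry of $\mathcal{X}$ says nothing about $\bar{\lambda}^* \mathbb{T}_{\mathcal{Y}}$ alone.
\end{itemize}
The endpoint discrepancies these introduce are half the total weight of $\bar{\lambda}^* \mathbb{T}_{\mathcal{Y}}$ and minus half the total weight of $W$. They cancel only because $\lambda_0^* \mathbb{T}_{\mathcal{X}}$ has total weight $0$. It is cleaner to use $[\bar{\lambda}^* \mathbb{L}_{\mathcal{V}}|_{\mathcal{Y}}] = [\lambda_0^* \mathbb{L}_{\mathcal{X}}] + 2 [W^\vee]$ directly.
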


\begin{proof}
    For \cref{item-qsm-window-surjection},
    we first prove the case when
    $\lambda' = \lambda \, p$
    for a surjection
    $p \colon V' \to V$
    of $\mathbb{G}_\mathrm{m}$-representations.
    Indeed, writing
    $h' \colon V' [-1] / \mathbb{G}_\mathrm{m} \to {*} / \mathbb{G}_\mathrm{m}$
    for the projection, we have
    $h'_* \, \lambda'^* \, E \simeq
    h_* \, p_* \, p^* \, \lambda^* \, E \simeq
    h_* (\lambda^* \, E \otimes \mathrm{Sym} (V''^\vee [1]))$,
    where $V'' = \ker p$,
    we have
    \begin{equation}
        \mathrm{Wt} (h'_* \, \lambda'^* \, E)^\mathrm{c} =
        \mathrm{Wt} (h_* \, \lambda^* \, E)^\mathrm{c} +
        \Biggl[ {}
            \sum_{\substack{n \in \mathrm{wt} (V''^\vee): \\ n < 0}}
            n
            \ , \
            \sum_{\substack{n \in \mathrm{wt} (V''^\vee): \\ n > 0}}
            n
        \Biggr] \ ,
    \end{equation}
    where $(-)^\mathrm{c}$ denotes the convex hull,
    and the result follows.

    In general, since~$\mathcal{X}$ is quasi-smooth,
    homotopy classes of morphisms
    $\lambda \colon V [-1] / \mathbb{G}_\mathrm{m} \to \mathcal{X}$
    with a given underlying map
    $\lambda_0 \colon {*} / \mathbb{G}_\mathrm{m} \to \mathcal{X}$
    are determined by the induced map on tangent complexes,
    and are in bijection with maps of $\mathbb{G}_\mathrm{m}$-representations
    $V \to \mathrm{H}^1 (\lambda_0^* \, \mathbb{T}_{\mathcal{X}})$;
    $\lambda$ is quasi-smooth if and only if
    this map is surjective.
    Therefore, given such~$\lambda$, choosing a subspace
    $V_0 \subset V$
    such that the restriction
    $V_0 \to \mathrm{H}^1 (\lambda_0^* \, \mathbb{T}_{\mathcal{X}})$
    is an isomorphism, all~$\lambda'$ factor through
    $\lambda |_{V_0 [-1] / \mathbb{G}_\mathrm{m}}$
    via a surjection $V' \to V_0$.

    For \cref{item-qsm-window-presentation},
    consider the cartesian diagram
    \begin{equation}
        \begin{tikzcd}
            V' [-1] / \mathbb{G}_\mathrm{m}
            \ar[r, "\lambda'"]
            \ar[d, "h'"']
            \ar[dr, phantom, "\ulcorner" pos=.2]
            & \mathcal{X} \ar[d, "j"]
            \\
            {*} / \mathbb{G}_\mathrm{m}
            \ar[r, "\bar{\lambda}"]
            & \mathcal{Y} \rlap{ ,}
        \end{tikzcd}
    \end{equation}
    where~$V'$ is the fibre of~$\mathcal{V}$
    at the point of~$\mathcal{Y}$ given by~$\bar{\lambda}$,
    and there is a morphism
    $p \colon V \to V'$
    induced by the pair of morphisms $(\lambda, h)$.
    As $\bar{\lambda}^* \, j_* \, E \simeq
    h'_* \, \lambda'^* \, E$,
    the result follows from
    \cref{item-qsm-window-surjection}
    and the fact that
    $[\bar{\lambda}^* \, \mathbb{L}_{\mathcal{V}} |_{\mathcal{Y}}] =
    [\lambda_0^* \, \mathbb{L}_{\mathcal{X}}]
    + 2 [V'^\vee]$
    as virtual $\mathbb{G}_\mathrm{m}$-representations.
\end{proof}

\noindent
The main result of this section is the following:

\begin{theorem}
    \label{thm-qsm-sod}
    Let~$\mathcal{X}$ be a
    quasi-symmetric, quasi-smooth, qca
    derived algebraic stack over~$\mathbb{C}$,
    such that the classical truncation~$\mathcal{X}_{\mathrm{cl}}$
    is a classical algebraic stack with separated diagonal.
    Let~$\mathcal{X} \to X$ be a good moduli space,
    and assume we are given
    \begin{itemize}
        \item
            A quadratic norm~$q$ on $\mathrm{CL}_{\mathbb{Q}} (\mathcal{X})$.
        \item
            A linear function
            $\delta \colon \mathrm{CL}_{\mathbb{Q}} (\mathcal{X}) \to \mathbb{Q}$.
    \end{itemize}
    Then there is a semiorthogonal decomposition
    \begin{equation}
        \label{eq-qsm-sod}
        \mathsf{Coh} (\mathcal{X}) =
        \Bigl<
            \star_\lambda \,
            \mathsf{W}_{\mathcal{X}_\lambda} (\delta_\lambda)
            \Bigm|
            \lambda \in |\mathrm{CL}_\mathbb{Q} (\mathcal{X})|
        \Bigr> \ ,
    \end{equation}
    where ${\star}_\lambda$ is the Hall induction functor in\/ \cref{para-hall-induction},
    and $\delta_\lambda$ is as in \cref{thm-sm-sod}.

    Each functor
    ${\star}_\lambda \colon \mathsf{W}_{\mathcal{X}_\lambda} (\delta_\lambda)
    \to \mathsf{Coh} (\mathcal{X})$
    is fully faithful, and its image
    ${\star}_\lambda \, \mathsf{W}_{\mathcal{X}_\lambda} (\delta_\lambda)$
    is admissible and
    $\mathsf{Perf} (X)$-linear.
    The order of the decomposition can be chosen as in
    \cref{thm-sm-sod}.
\end{theorem}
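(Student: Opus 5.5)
The plan is to reduce \cref{thm-qsm-sod} to the graded matrix factorization decomposition \cref{thm-mf-gr-sod} via Koszul duality, and then globalize by the same local-to-global descent used for \cref{thm-sm-sod}.

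We begin with the local model. Suppose $\mathcal{X} = s^{-1}(0)$ is the derived zero locus of a section $s$ of a vector bundle $\mathcal{V} \to \mathcal{Y}$ over a smooth qca stack $\mathcal{Y}$ with a good moduli space. Set $\mathcal{E} = \mathcal{V}^\vee$, with its total space $\mathrm{Tot}(\mathcal{E})$ carrying the fibrewise scaling $\mathbb{G}_\mathrm{m}$-action. Pairing with $s$ defines a function $f_s$ of weight~$2$, and Koszul duality \cite[Chapter~2]{toda-2024-cat-dt} gives an equivalence
\[
    \Theta \colon \mathsf{Coh}(\mathcal{X}) \simeq \mathsf{MF}^{\mathrm{gr}}(\mathrm{Tot}(\mathcal{E}), f_s) \ .
\]
Here $\mathrm{Tot}(\mathcal{E})$ is smooth, and its tangent weights at a graded point are those of $\mathbb{T}_{\mathcal{Y}} \oplus \mathcal{V}^\vee$. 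These agree with those of $\mathbb{T}_{\mathcal{X}} \oplus (\mathcal{V}^\vee)^{2}$ up to symmetric pairs, so $\mathrm{Tot}(\mathcal{E})$ is quasi-symmetric whenever $\mathcal{X}$ is. Its good moduli space, component lattice, norm~$q$ and $\delta$ are induced from $\mathcal{Y}$ after shrinking to a neighbourhood of $\mathcal{X}$. Applying \cref{thm-mf-gr-sod} to it then yields a decomposition of the right-hand side.

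Two compatibilities are needed to transport this decomposition. First, $\Theta$ should intertwine Hall induction on $\mathcal{X}_\lambda = s_\lambda^{-1}(0) \subset \mathcal{Y}_\lambda$ with Hall induction on $\mathrm{Tot}(\mathcal{E})_\lambda = \mathrm{Tot}(\mathcal{E}_\lambda)$. Here I would verify a twist: the attracting correspondence for $\mathrm{Tot}(\mathcal{E})$ differs from the Koszul dual of that for $\mathcal{X}$ by the determinant of $\mathcal{V}^{\lambda>0}$ together with a shift. This twist should be exactly what converts the $\delta_\lambda$ of \cref{eq-def-delta-lambda} computed on $\mathcal{X}$ into the one computed on $\mathrm{Tot}(\mathcal{E})$, since the $\mathcal{V}$-contributions cancel in pairs. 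Second, $\Theta$ should identify $\mathsf{W}_{\mathcal{X}}(\delta)$ with $\mathsf{M}^{\mathrm{gr}}_{\mathrm{Tot}(\mathcal{E}),f_s}(\delta)$. For this I would use \cref{lemma-qsm-window}~\cref{item-qsm-window-presentation}, which expresses the window condition through $j_* E$ and the weights of $\mathbb{L}_{\mathcal{V}}|_{\mathcal{Y}}$. These are precisely the weight data of pushforward to the zero section of $\mathrm{Tot}(\mathcal{E})$ in the smooth window of \cref{para-sm-window}. I expect this window matching, compatibly with the $\lambda$-twists, to be the main obstacle. I would first check it for $\mathcal{Y} = V/G$ and $\mathcal{V}$ a trivial bundle with fibre a representation, using explicit weights, following \cite{toda-2024-cat-dt}. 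Given both compatibilities, semiorthogonality, generation and full faithfulness transfer directly through $\Theta$.

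For the globalization, we run the induction on $\mathrm{rk}(\mathcal{X}) - \mathrm{crk}(\mathcal{X})$ exactly as in \cref{para-pf-sm-sod}. By the derived local structure theorem, $\mathcal{X}$ is étale locally over $X$ of the form $s^{-1}(0) \subset U_x/G_x$, where $U_x$ is smooth affine with connected fixed loci by \cref{lem-fix-connected}. We use \cref{lem-gms-induction} to identify $\mathcal{U}_\lambda$, and base change to get $f^* \star_\lambda \simeq \bigoplus_{\lambda'} \star_{\lambda'} f_{\lambda'}^*$. The window subcategories pull back to split generators, because the window condition is pointwise and local over the good moduli space. Full faithfulness then follows by \cref{lem-rel-hom}~\cref{item-rel-hom-ff} and étale descent over $X$. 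Admissibility of the non-central terms requires an analogue of \cref{lem-admissible} for quasi-smooth $\mathrm{ev}_\lambda$; for this I would use the right adjoint $\mathrm{pr}_n (\mathrm{gr}_\lambda)_* \mathrm{ev}_\lambda^!$ and Grothendieck duality $\mathbb{D}$ on $\mathsf{Coh}$ in place of $(-)^\vee$. Finally, \cref{lem-sod-descent}~\cref{item-sod-descend} glues the local decompositions, with the central terms treated as a single rightmost block.
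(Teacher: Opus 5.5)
Your plan is the paper's proof. It uses Koszul duality to $\mathsf{MF}^{\mathrm{gr}}(\mathcal{V}^\vee, f)$, then \cref{thm-mf-gr-sod}, then the determinant twist of \cref{lem-koszul}~\cref{item-koszul-hall} to match Hall inductions. It then globalizes by the étale-local descent of \cref{para-pf-qsm-sod} via \cref{lem-qsm-loc-str,lem-fix-connected,lem-qsm-admissible} and \cref{lem-sod-descent}~\cref{item-sod-descend}, with the central terms as one rightmost block.

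The window matching you flagged is, however, off by a shift.

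\textbf{The correct window identification.} The Koszul equivalence sends $\mathsf{W}_{\mathcal{X}}(\delta)$ to $\mathsf{M}^{\mathrm{gr}}_{\mathcal{V}^\vee, f}(\delta + K_{\mathcal{Y}}/2)$, not to $\mathsf{M}^{\mathrm{gr}}_{\mathcal{V}^\vee, f}(\delta)$; see \cref{lem-koszul}~\cref{item-koszul-window}. The interval in \cref{eq-qsm-window-interval-alt} has the same length as the one for $\mathbb{T}_{\mathcal{Y}} \oplus \mathcal{V}^\vee$, but it is centred at $\delta + K_{\mathcal{Y}}/2$. The reason is that $\det(\mathbb{L}_{\mathcal{V}}|_{\mathcal{Y}}) = K_{\mathcal{Y}} \otimes \det \mathcal{V}^\vee$ has weight $2K_{\mathcal{Y}}$, because quasi-symmetry of $\mathcal{X}$ forces $\det \mathbb{T}_{\mathcal{Y}}$ and $\det \mathcal{V}$ to have equal weights. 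This is not a matter of normalizing the equivalence, since $K_{\mathcal{Y}}/2$ need not be integral.

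\textbf{The related slip on quasi-symmetry.} You claim $\mathbb{T}_{\mathcal{Y}} \oplus \mathcal{V}^\vee$ and $\mathbb{T}_{\mathcal{X}} \oplus (\mathcal{V}^\vee)^2$ agree up to symmetric pairs. In fact they differ by $[\mathcal{V}] - [\mathcal{V}^\vee]$, which is antisymmetric. Quasi-symmetry of $\mathcal{V}^\vee$ should instead be read off from $[\mathbb{T}_{\mathcal{Y}}] + [\mathcal{V}^\vee] = [\mathbb{T}_{\mathcal{X}}] + [\mathcal{V}] + [\mathcal{V}^\vee]$.

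\textbf{The resulting bookkeeping.} You must apply \cref{thm-mf-gr-sod} with $\delta + K_{\mathcal{Y}}/2$. The identity to check on each component of $\mathcal{X}_\lambda$ is then
$\delta'_\lambda + K_{\mathcal{Y}}/2 - K_{\mathcal{Y}_\lambda}/2 - \omega_\lambda = \delta_\lambda$,
where $\delta'_\lambda$ is \cref{eq-def-delta-lambda} computed for $\mathcal{V}^\vee$. Below, all terms are read as $\mu$-weights of determinants of the parts with non-zero $\lambda$-weight.
\begin{itemize}
\item The $\mathcal{V}$-terms do not cancel among themselves: $\delta'_\lambda - \omega_\lambda - \delta_\lambda = \tfrac{1}{2} \det \mathcal{V}^{\lambda \neq 0}$.
\item Adding $K_{\mathcal{Y}}/2 - K_{\mathcal{Y}_\lambda}/2 = -\tfrac{1}{2} \det \mathbb{T}_{\mathcal{Y}}^{\lambda \neq 0}$ leaves $-\tfrac{1}{2} \det \mathbb{T}_{\mathcal{X}}^{\lambda \neq 0}$.
\item This last term vanishes by quasi-symmetry of $\mathcal{X}$.
\end{itemize}

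\textbf{Split generation in the globalization.} Pointwise-ness of the window condition only gives $f^* \, \mathsf{W}_{\mathcal{X}}(\delta) \subset \mathsf{W}_{\mathcal{X}'}(f^* \delta)$ and hypothesis \cref{item-sod-descend-c0} of \cref{lem-sod-descent}. It does not show that $f^* \, {\star}_\lambda \, \mathsf{W}_{\mathcal{X}_\lambda}(\delta_\lambda)$ split-generates the local non-central terms. That comes from the induction hypothesis, namely the last clause of \cref{lem-sod-descent}~\cref{item-sod-descend} applied to $\mathcal{X}_\lambda$.
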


\noindent
Note that this recovers
\cref{thm-sm-sod}
when~$\mathcal{X}$ is smooth.
The proof of this theorem will be given in
\cref{subsec-pf-qsm-sod},
and examples will be given in \cref{subsec-qsm-examples}.

\subsection{Proof of the decomposition}
\label{subsec-pf-qsm-sod}

This subsection is dedicated to the proof of \cref{thm-qsm-sod}.

\begin{para}[Koszul duality]
    \label{para-koszul-setup}
    We recall following \textcite[\S 2.3]{toda-2024-cat-dt}
    a version of \emph{Koszul duality} relating the category
    $\mathsf{Coh} (\mathcal{X})$
    for quasi-smooth stacks~$\mathcal{X}$
    to categories of matrix factorizations
    $\mathsf{MF}^{\mathrm{gr}} (\mathcal{Y}, f)$
    for smooth stacks~$\mathcal{Y}$.

    Suppose we are given the following data:
    \begin{itemize}
        \item
            A reductive group~$G$ over~$\mathbb{C}$.

        \item
            A smooth affine scheme~$U$ acted on by~$G$,
            giving a quotient stack
            $\mathcal{Y} = U / G$.

        \item
            A $G$-representation~$V$,
            giving a vector bundle
            $\mathcal{V} = (U \times V) / G \to \mathcal{Y}$.

        \item
            A $G$-equivariant map
            $s \colon U \to V$,
            giving a section $s \colon \mathcal{Y} \to \mathcal{V}$.
    \end{itemize}
    We then define
    \begin{itemize}
        \item
            $\mathcal{X} = s^{-1} (0) / G \simeq
            \mathcal{Y} \times_{0, \, \mathcal{V}, \, s} \mathcal{Y}$,
            the derived zero locus of~$s$.
        \item
            $f \colon \mathcal{V}^\vee \to \mathbb{C}$,
            the \emph{potential function},
            given by
            $f (u, v) = \langle v, s (u) \rangle$
            for $u \in U$
            and $v \in V^\vee$.
    \end{itemize}
    The derived critical locus $\mathrm{Crit} (f)$
    is isomorphic to the $(-1)$-shifted cotangent stack
    $\mathrm{T}^* [-1] \, \mathcal{X}$.

    This setting is more restrictive than that in
    \cite{toda-2024-cat-dt},
    where~$G$ is not assumed reductive and
    $\mathcal{V}$ is not assumed of this particular form.
    We make these restrictions for convenience later on.

    We state the following version of Koszul equivalence
    following \textcite[Chapter~2]{toda-2024-cat-dt},
    and collect some of its properties.
\end{para}

\begin{lemma}
    \label{lem-koszul}
    In the situation of\/
    \cref{para-koszul-setup},
    we have a canonical equivalence
    \begin{align}
        \Phi \colon \mathsf{Coh} (\mathcal{X})
        & \longsimto
        \mathsf{MF}^{\mathrm{gr}} (\mathcal{V}^\vee, f) \ .
        \label{eq-def-koszul-map}
    \end{align}
    where $\mathbb{G}_{\mathrm{m}}$ acts on the fibres of\/~$\mathcal{V}^\vee$
    with weight~$2$,
    so~$f$ has weight~$2$.
    It satisfies the following properties:

    \begin{enumerate}
        \item
            \label{item-koszul-window}
            For a function
            $\delta \colon \mathrm{CL}_{\mathbb{Q}} (\mathcal{Y})
            \simeq \mathrm{CL}_{\mathbb{Q}} (\mathcal{V}^\vee) \to \mathbb{Q}$,
            $\Phi$ restricts to an equivalence
            \begin{equation}
                \Phi \colon \mathsf{W}_{\mathcal{X}} (\delta)
                \longsimto
                \mathsf{M}^{\smash{\mathrm{gr}}}_{\mathcal{V}^\vee, \, f} (\delta + K_{\mathcal{Y}} / 2) \ .
            \end{equation}

        \item
            \label{item-koszul-hall}
            Let $\lambda \in |\mathrm{CL}_\mathbb{Q} (\mathcal{Y})|$.
            Then the following diagram commutes:
            \begin{equation}
                \begin{tikzcd}[column sep={10em, between origins}]
                    \mathsf{Coh} (\mathcal{X}_\lambda)
                    \ar[r, "{\star}_\lambda"]
                    \ar[d, "\Phi"']
                    &
                    \mathsf{Coh} (\mathcal{X})
                    \ar[d, "\Phi"]
                    \\
                    \mathsf{MF}^{\mathrm{gr}} (\mathcal{V}^\vee_\lambda, f)
                    \ar[r, "{\star}_\lambda (- \, \otimes \, \omega_\lambda)"]
                    &
                    \mathsf{MF}^{\mathrm{gr}} (\mathcal{V}^\vee, f)
                    \rlap{\textnormal{ ,}}
                \end{tikzcd}
            \end{equation}
            where~$\omega_\lambda \to \mathcal{V}^\vee_\lambda$
            is the line bundle defined by
            \begin{equation}
                \omega_\lambda =
                \det (\mathcal{V}^\vee_{\smash{\lambda > 0}})
                [-\mathrm{rk} (\mathcal{V}^\vee_{\smash{\lambda > 0}})]
                \ ,
            \end{equation}
            where
            $\mathcal{V}_{\smash{\lambda > 0}} \subset
            \mathcal{V} |_{\smash{\mathcal{Y}_\lambda}}$
            is the part of positive weights
            with respect to the tautological
            $* / \mathbb{G}_{\mathrm{m}}$-action on~$\mathcal{Y}_\lambda$,
            and we pull it back along
            $\mathcal{V}_\lambda^\vee \to \mathcal{Y}_\lambda$.
    \end{enumerate}
\end{lemma}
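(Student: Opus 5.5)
The plan is to take $\Phi$ to be the Koszul duality equivalence of \textcite[Theorem~2.3.3]{toda-2024-cat-dt}, which goes back to Isik, and then check the two extra properties by hand. Write $\pi\colon\mathcal{V}^\vee|_{\mathcal{X}}\to\mathcal{X}$ for the projection, and let $\mathcal{V}^\vee|_{\mathcal{X}}\hookrightarrow\mathcal{V}^\vee$ be the inclusion of the zero locus of $s$, pulled back. Concretely, $\Phi(E)$ is obtained by pulling $E$ back along $\pi$, pushing forward into $(\mathcal{V}^\vee)_0/\mathbb{G}_\mathrm{m}$, and passing to the quotient $\mathsf{Coh}/\mathsf{Perf}$. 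On the level of $\mathbb{G}_\mathrm{m}$-graded modules, this exchanges the Koszul algebra $\mathcal{O}_{\mathcal{Y}}\otimes\mathrm{Sym}(\mathcal{V}^\vee[1])$ with $\mathrm{Sym}(\mathcal{V}[-2])$.

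The weight-$2$ grading on the fibres is exactly what is needed to absorb the cohomological shifts. With that grading, the equivalence is the one proved in \cite[Chapter~2]{toda-2024-cat-dt}. Our setting, with $G$ reductive and $\mathcal{V}$ a trivialised bundle $(U\times V)/G$, is a special case, so nothing new is needed for the equivalence itself.

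For \cref{item-koszul-window}, I would compare weights along $\lambda\colon{*}/\mathbb{G}_\mathrm{m}\to\mathcal{V}^\vee$. Since $\mathcal{V}^\vee\to\mathcal{Y}$ is a vector bundle, I can first reduce to maps landing on the zero section, because both window conditions are governed by fixed points. By \cref{lemma-qsm-window}~\cref{item-qsm-window-presentation}, the condition $E\in\mathsf{W}_{\mathcal{X}}(\delta)$ is a weight bound on $\bar\lambda^*j_*E$, with interval determined by $\mathbb{L}_{\mathcal{V}}|_{\mathcal{Y}}$. On the other side, $\Phi(E)\in\mathsf{M}^{\mathrm{gr}}_{\mathcal{V}^\vee,f}(\delta')$ asks that $i_*p^*\Phi(E)$ satisfy a weight bound with interval determined by $\mathbb{T}_{\mathcal{V}^\vee}|_{\mathcal{Y}}=\mathbb{T}_{\mathcal{Y}}\oplus\mathcal{V}^\vee$.

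Restricted to the zero section, $i_*p^*\Phi(E)$ is Koszul dual to $j_*E$ fibrewise. The two total weight multisets should agree after tensoring by $\det$ of the relevant part of $\mathcal{V}$, and the quasi-symmetry of $\mathcal{X}$ makes both intervals symmetric. The centres then differ by half the weight of $\det\mathbb{L}_{\mathcal{Y}}$, and this shift is what produces $K_{\mathcal{Y}}/2$. To make this rigorous I would check it on generators $\Gamma(w)$ after the étale reduction of \cref{lem-sm-local-str}, where everything is a linear quotient. There the statement becomes the explicit Koszul calculation of \cite[Lemma~2.3.9]{toda-2024-cat-dt}.

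For \cref{item-koszul-hall}, I would write both Hall functors as pull–push along attractor correspondences and compare the diagrams. The attractor stack $(\mathcal{V}^\vee)_\lambda^+$ is $\mathcal{V}^\vee_{\lambda\geq0}$ over $\mathcal{Y}_\lambda^+$. By contrast, $\mathcal{X}_\lambda^+$ is the zero locus of $s$ in $\mathcal{V}_{\lambda\geq0}|_{\mathcal{Y}^+_\lambda}$, which is a different bundle. Applying $\Phi$ to $(\mathrm{ev}_\lambda)_*\mathrm{gr}_\lambda^*$ and using base change therefore produces an extra Koszul factor. This factor comes from the mismatch between $\mathcal{V}^\vee_{\lambda>0}$ (killed in the graded part) and $\mathcal{V}_{\lambda<0}$ (the part of the equations dropped). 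Resolving the zero section of $\mathcal{V}^\vee_{\lambda>0}$ by its Koszul complex and using Grothendieck duality for the closed immersion should contribute exactly $\det(\mathcal{V}^\vee_{\lambda>0})[-\mathrm{rk}]=\omega_\lambda$.

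I expect this last bookkeeping to be the main obstacle: getting the sign of the weights and the cohomological shift right, and checking that the natural transformation is compatible with the quotient by $\mathsf{Perf}$. I would verify it first in the case $U=V$ linear and $G$ a torus, then globalise by naturality of base change.
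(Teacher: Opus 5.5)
Citing \cite[Theorem~2.3.3]{toda-2024-cat-dt} for the equivalence itself is exactly what the paper does. The gaps are in the two properties.

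For \cref{item-koszul-window}, the paper imports \cite[Proposition~6.3.8]{toda-2024-cat-dt} (with $l = 0$), and your direct argument does not replace it.

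Your centre bookkeeping is fine. Comparing the intervals attached to $\mathbb{L}_{\mathcal{Y}} \oplus \mathcal{V}^\vee$ and $\mathbb{T}_{\mathcal{Y}} \oplus \mathcal{V}^\vee$ does produce the shift $K_{\mathcal{Y}} / 2$. Only the second interval is symmetric, though; the first is that of $[\lambda_0^* \, \mathbb{L}_{\mathcal{X}}] + 2 \, [\mathcal{V}^\vee]$.

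What is missing is the comparison of weights itself. An object of $\mathsf{MF}^{\mathrm{gr}} (\mathcal{V}^\vee, f)$ lives in a Verdier quotient by $\mathsf{Perf}$. Membership in the window only asks for \emph{some} representative whose pushforward satisfies the weight bounds, and $\mathrm{Wt} (\lambda^* \, i_* \, p^* (-))$ changes when the representative is modified by a perfect complex. So the claim that $i_* \, p^* \, \Phi (E)$ is fibrewise Koszul dual to $j_* E$ along the zero section does not compare two well-defined weight sets.

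Checking on generators $\Gamma (w)$ does not repair this. $\mathsf{W}_{\mathcal{X}} (\delta) \subset \mathsf{Coh} (\mathcal{X})$ is cut out by weight conditions on coherent, typically non-perfect, objects, and has no a priori description by restricted vector bundles. A computation on generators of the matrix factorization window can give at most the inclusion $\Phi^{-1} \, \mathsf{M}^{\mathrm{gr}}_{\mathcal{V}^\vee, f} (\delta + K_{\mathcal{Y}} / 2) \subset \mathsf{W}_{\mathcal{X}} (\delta)$.

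The substantive direction is that every $E$ satisfying the weight conditions has $\Phi (E)$ admitting a representative in the window. This does not follow from a fibrewise calculation. It needs an identification of windows defined by weight conditions with windows generated by explicit objects, a magic-window type input (compare \cite{halpern-leistner-sam-2020, spenko-van-den-bergh-2021}), and that is where quasi-symmetry really enters. Also, what this paper cites as \cite[Proposition~2.3.9]{toda-2024-cat-dt} is the singular-support compatibility, not a window computation.

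For \cref{item-koszul-hall}, you correctly notice that the attractor $(\mathcal{V}^\vee)^+_\lambda$ differs from $(\mathcal{V}_{\lambda \geq 0})^\vee |_{\mathcal{Y}^+_\lambda}$, the Koszul dual side of $\mathcal{X}^+_\lambda$. You also correctly expect base change and a duality twist to reconcile them. But ``applying $\Phi$ to $(\mathrm{ev}_\lambda)_* \, \mathrm{gr}_\lambda^*$ and using base change'' presupposes the real input: how Koszul duality transforms pullback and pushforward between \emph{different} derived zero loci. The paper supplies this with two lemmas.
\begin{itemize}
\item By \cite[Lemma~2.4.4]{toda-2024-cat-dt}, $\Phi$ turns the quasi-smooth pullback $\mathrm{gr}_\lambda^*$ into $\sigma_\sharp \, r_1^*$. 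Here $r_1$ is the base change of $\mathcal{Y}^+_\lambda \to \mathcal{Y}_\lambda$, $\sigma \colon \mathcal{V}^\vee_\lambda |_{\mathcal{Y}^+_\lambda} \hookrightarrow (\mathcal{V}_{\lambda \geq 0})^\vee |_{\mathcal{Y}^+_\lambda}$ is the closed immersion with normal bundle $(\mathcal{V}_{\lambda > 0})^\vee$, and $\sigma_\sharp$ is left adjoint to $\sigma^*$.
\item By \cite[Lemma~2.4.7]{toda-2024-cat-dt}, $\Phi$ turns $(\mathrm{ev}_\lambda)_*$ into $(r_2)_* \, \tau^*$. Here $\tau \colon \mathcal{V}^\vee |_{\mathcal{Y}^+_\lambda} \to (\mathcal{V}_{\lambda \geq 0})^\vee |_{\mathcal{Y}^+_\lambda}$ is the restriction map and $r_2$ is the base change of $\mathcal{Y}^+_\lambda \to \mathcal{Y}$.
\end{itemize}
Base change in the cartesian square exhibiting $(\mathcal{V}^\vee)^+_\lambda$ as the fibre product of $\sigma$ and $\tau$, together with $\sigma_\sharp = \sigma_* (- \otimes r_1^* \, \omega_\lambda)$, then gives the claim.

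This also shows your account of the twist is off. In your sentence, $\mathcal{V}^\vee_{\lambda > 0}$ is the part killed on passing to graded points of $\mathcal{V}^\vee$, which is $(\mathcal{V}_{\lambda < 0})^\vee$. That part is dual to the dropped equations $\mathcal{V}_{\lambda < 0}$, and the two match with no twist; this is the $\tau^*$ step. The line bundle $\omega_\lambda = \det ((\mathcal{V}_{\lambda > 0})^\vee) [-\mathrm{rk} \, \mathcal{V}_{\lambda > 0}]$ comes instead from the equations $\mathcal{V}_{\lambda > 0}$ imposed along $\mathrm{gr}_\lambda$. That is, it comes from the normal bundle of $\sigma$, a bundle with the opposite weights.
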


\begin{proof}
    \allowdisplaybreaks
    For the equivalence, see
    \textcite[Theorem~2.3.3]{toda-2024-cat-dt}.
    For~\cref{item-koszul-window},
    see \cite[Proposition~6.3.8]{toda-2024-cat-dt},
    where we take $l = 0$.

    For~\cref{item-koszul-hall},
    consider the diagram

    \begin{equation}
        \begin{tikzcd}[column sep={6em, between origins}, row sep={3em, between origins}]
            %%% V section
            &&
            (\mathcal{V}^\vee)_\lambda^+
            \ar[ddll, "\mathrm{gr}'"']
            \ar[ddrr, "\mathrm{ev}'"]
            \ar[d, phantom, "\simeq" rotate=-90]
            &&
            \\
            &&
            \mathcal{V}_{\lambda \leq 0}^\vee |_{\smash{\mathcal{Y}_\lambda^+}}
            \ar[dl, "t'"']
            \ar[dr, "s'"]
            \ar[dd, phantom, "\ulcorner" {rotate=-45, pos=.25, xshift=-.2em}]
            \\
            \mathcal{V}_\lambda^\vee
            \ar[dd, dotted, dash, "\Phi_\lambda"']
            &
            \mathcal{V}_\lambda^\vee |_{\smash{\mathcal{Y}_\lambda^+}}
            \ar[l, "r_1"']
            \ar[dr, "s"]
            &&
            \mathcal{V}^\vee |_{\smash{\mathcal{Y}_\lambda^+}}
            \ar[dl, "t"']
            \ar[r, "r_2"]
            &
            \mathcal{V}^\vee
            \ar[dd, dotted, dash, "\Phi"]
            \\
            &&
            \mathcal{V}_{\lambda \geq 0}^{\vee} |_{\smash{\mathcal{Y}_\lambda^+}}
            \ar[d, dotted, dash, "\Phi_\lambda^+"']
            \\[1em]
            %%% X section
            \mathcal{X}_\lambda
            &&
            \mathcal{X}_\lambda^+
            \ar[ll, "\mathrm{gr}"']
            \ar[rr, "\mathrm{ev}"]
            &&
            \mathcal{X} \rlap{ ,}
        \end{tikzcd}
    \end{equation}
    where
    $\mathcal{V}_{\lambda \geq 0}, \mathcal{V}_{\lambda \leq 0}
    \subset \mathcal{V} |_{\smash{\mathcal{Y}_\lambda}}$
    are the subbundles of
    non-negative and non-positive weights,
    respectively,
    and $\mathcal{V}_\lambda$ can be identified with
    the part of zero weight;
    the top vertical isomorphism makes use of
    the specific form of~$\mathcal{V}$.
    Note that~$\mathcal{X}_\lambda$ and~$\mathcal{X}_\lambda^+$
    are derived zero loci of sections of the vector bundles
    $\mathcal{V}_\lambda \to \mathcal{Y}_\lambda$
    and
    $\mathcal{V}_{\lambda \geq 0} |_{\smash{\mathcal{Y}_\lambda^+}} \to \mathcal{Y}_\lambda^+$, respectively.
    The dotted lines indicate that the stacks are related
    via the Koszul equivalence.
    We have the following natural equivalence of functors
    $\mathsf{Coh} (\mathcal{X}_\lambda)
    \to \mathsf{MF}^{\smash{\mathrm{gr}}} (\mathcal{V}^\vee, \, f)$:
    \begin{alignat*}{2}
        \mathrm{ev}'_* \, \mathrm{gr}'^* \,
        T_{\omega_\lambda} \, \Phi_\lambda
        & \simeq
        (r_2)_* \, s'_* \, t'^* \, r_1^* \,
        T_{\omega_\lambda} \, \Phi_\lambda
        \\*
        & \simeq
        (r_2)_* \, t^* \, s_* \,
        T_{r_1^* \, \omega_\lambda} \, r_1^* \, \Phi_\lambda
        \\
        & =
        (r_2)_* \, t^* \, s_\sharp \, r_1^* \, \Phi_\lambda
        \\
        & \simeq
        (r_2)_* \, t^* \, \Phi_\lambda^+ \, \mathrm{gr}^*
        && \qquad \text{by \cite[Lemma~2.4.4]{toda-2024-cat-dt}}
        \\*
        & \simeq
        \Phi \, \mathrm{ev}_* \, \mathrm{gr}^*
        && \qquad \text{by \cite[Lemma~2.4.7]{toda-2024-cat-dt},}
    \end{alignat*}
    where~$T_{(-)}$ denotes
    tensoring with a given shifted line bundle, and
    $s_\sharp = s_* \, T_{r_1^* \, \omega_\lambda}$
    is left adjoint to~$s^*$.
    This proves the lemma.
\end{proof}

\noindent
The following lemma provides a useful description
of the étale local structure of
quasi-smooth derived algebraic stacks with good moduli spaces.

\begin{lemma}
    \label{lem-qsm-loc-str}
    Let~$\mathcal{X}$ be a quasi-smooth derived algebraic stack over~$\mathbb{C}$,
    such that its classical truncation
    has affine stabilizers and separated diagonal.
    Let~$\mathcal{X} \to X$ be a good moduli space.
    Let~$x \in \mathcal{X}$ be a closed point,
    and let~$G$ be the stabilizer of\/~$x$.

    Then there exists a smooth affine scheme~$U$
    acted on by~$G$, with a fixed point $0 \in U$,
    a $G$-representation~$V$,
    and a $G$-equivariant map
    $s \colon U \to V$ with $s (0) = 0$,
    such that if we set $Z = s^{-1} (0)$
    to be the derived zero locus of\/~$s$,
    then we have a cartesian diagram
    \begin{equation}
        \begin{tikzcd}
            Z / G \ar[r] \ar[d]
            \ar[dr, phantom, "\ulcorner" pos=.2]
            & \mathcal{X} \ar[d]
            \\
            Z \git G \ar[r]
            & X \rlap{ ,}
        \end{tikzcd}
    \end{equation}
    such that the horizontal maps are étale,
    the vertical maps are good moduli space morphisms,
    and the top map sends~$0$ to~$x$, preserving the stabilizer at~$0$.

    Moreover, one can choose~$U$ to be an open set
    in a $G$-representation,
    and $0 \in U$ to be the origin.
\end{lemma}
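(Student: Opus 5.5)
The plan is to reduce to the classical truncation, apply the Luna étale slice theorem there, and then reconstruct the derived structure from the tangent complex at $x$. Since $\mathcal{X}_{\mathrm{cl}}$ has a good moduli space, affine stabilizers and separated diagonal, the argument behind \cref{lem-sm-local-str} applies to the classical truncation. That is, we use \textcite[Theorem~1.2]{alper-hall-rydh-2020-luna}, strengthened by \textcite[Proposition~5.3~(2)]{alper-hall-rydh-etale}. This gives an affine scheme $W$ with a $G$-action fixing a point $0 \mapsto x$, together with a cartesian square $W/G \to \mathcal{X}_{\mathrm{cl}}$ over an étale map $W \git G \to X_{\mathrm{cl}}$, preserving stabilizers at~$0$. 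Since the good moduli space and étale maps of derived stacks are controlled by the classical truncation (\cite[Lemma~2.11]{ahlqvist-hekking-pernice-savvas} and topological invariance of the étale site), this lifts uniquely to a cartesian square $\mathcal{W} \to \mathcal{X}$ over an étale map $W' \to X$, where $\mathcal{W}$ is a derived stack with classical truncation $W/G$. It therefore suffices to show that $\mathcal{W}$, after shrinking around $0$ (equivalently, shrinking $W \git G$), is of the form $Z/G$ with $Z = s^{-1}(0)$ as in the statement.

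To present $\mathcal{W}$ in this form, I would use the quasi-smooth structure at~$0$. Set $V_1 = \mathrm{H}^0(\mathbb{T}_{\mathcal{X}}|_x)$ and $V = \mathrm{H}^1(\mathbb{T}_{\mathcal{X}}|_x)$. These are $G$-representations, since $G$ is reductive and so $\mathbb{T}_{\mathcal{X}}|_x$ splits. The classical truncation $W$ embeds $G$-equivariantly into its Zariski tangent space $V_1$ near $0$, by choosing a $G$-equivariant splitting of $\mathfrak{m}\to\mathfrak{m}/\mathfrak{m}^2$ (again by reductivity). The key step is to produce a $G$-equivariant $s\colon U\to V$ on an open $U\subset V_1$ with $s(0)=0$ and $ds(0)=0$, together with an étale map of derived stacks $s^{-1}(0)/G \to \mathcal{W}$ near $0$. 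Here I would use the standard argument for quasi-smooth affine derived schemes, as in \cite[Theorem~4.1]{ben-bassat-brav-bussi-joyce-2015-darboux}, made $G$-equivariant. Writing $\mathcal{W} = \mathcal{W}'/G$ with $\mathcal{W}'$ an affine derived scheme, the cotangent complex of $\mathcal{W}'$ at $0$ has amplitude $[-1,0]$ by quasi-smoothness, and it differs from that of $\mathcal{W}$ only by $\mathfrak{g}^\vee$ in degree~$0$. Thus $\mathcal{W}'$ admits a $G$-equivariant minimal presentation as a Koszul-type derived zero locus in the smooth affine $U$, with the degree $-1$ generators spanning $V^\vee$. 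Equivariance is possible because all choices of generators can be averaged using the Reynolds operator of $G$. The map $s^{-1}(0)/G\to\mathcal{W}$ is then an isomorphism on classical truncations near $0$, and an isomorphism on tangent complexes at $0$ after accounting for $\mathfrak{g}$. By openness of the étale locus it is étale near~$0$.

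Finally, I would shrink to obtain the cartesian square over good moduli spaces. The étale locus from the previous step is a $G$-saturated open neighbourhood of the closed orbit $0$, so it contains the preimage of an affine open in $Z \git G$. Fundamental lemma-type results (\cite[Proposition~4.13]{alper-hall-rydh-2020-luna}, as used in \cref{lem-sm-local-str}) then show that $Z/G \to \mathcal{X}$ is strongly étale after shrinking. This gives the cartesian square with étale bottom map, and the composite preserves the stabilizer at $0$.

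I expect the equivariant derived presentation in the second step to be the main obstacle: making the Koszul presentation $G$-equivariant, and checking that the comparison map is an equivalence on derived structure and not merely on truncations. The first thing I would try is the minimal-model construction for affine quasi-smooth derived schemes, with generators chosen in $G$-isotypic pieces via the Reynolds operator.
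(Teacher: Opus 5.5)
Your outline has the same shape as the paper's proof. The paper simply combines the derived local structure theorem \cite[Theorem~3.1]{ahlqvist-hekking-pernice-savvas}, which plays the role of your first step, with the equivariant Koszul presentation of \cite[Lemma~4.2.6]{halpern-leistner-derived}, which plays the role of your second. The problem is that your second step, as formulated, asks for something false. You require a \emph{minimal} presentation: $U$ Zariski open in $V_1 = \mathrm{H}^0 (\mathbb{T}_{\mathcal{X}} |_x)$, $V = \mathrm{H}^1 (\mathbb{T}_{\mathcal{X}} |_x)$, and $ds (0) = 0$. Take $\mathcal{X} = E \setminus \{ p \}$ for an elliptic curve~$E$, with $G$ trivial and $X = \mathcal{X}$. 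Then $V = 0$, so $Z = U$ is an open subset of~$\mathbb{A}^1$. You would need an étale, hence non-constant, morphism from an open of $\mathbb{A}^1$ to $E \setminus \{ p \}$. It would extend to a non-constant morphism $\mathbb{P}^1 \to E$, which does not exist.

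The faulty claim is that $W$ embeds into $V_1$ near~$0$. The $G$-equivariant map $W \to V_1$ obtained by splitting $\mathfrak{m} \to \mathfrak{m} / \mathfrak{m}^2$ is only unramified at~$0$ (étale when $\mathcal{X}$ is smooth). An unramified morphism becomes a closed immersion only after étale localization on the target, not after Zariski shrinking. By the example, no étale shrinking of~$W$ rescues a minimal presentation with $U$ open in~$V_1$ either. There is also a direction issue. Nothing lets you construct a map $s^{-1} (0) / G \to \mathcal{W}$ out of a derived zero locus. The natural map is $\mathcal{W}' \to s^{-1} (0)$ over the ambient smooth scheme, given by the universal property of the derived fibre product with $\{ 0 \} \subset V$, i.e.\ by a null-homotopy of $s |_{\mathcal{W}'}$.

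The repair is to drop minimality, which the lemma never asks for.
\begin{enumerate}
  \item Embed the classical slice $W$ $G$-equivariantly as a closed subscheme of some $G$-representation~$R$. This is possible since $\mathcal{O} (W)$ is generated by a finite-dimensional $G$-stable subspace. Translate by the $G$-fixed image of $0$ so that $0$ goes to the origin.
  \item Lift the embedding to $\mathcal{W}' \to R$ by lifting the coordinate functions equivariantly from $\pi_0$.
  \item Since $\mathcal{W}'$ is quasi-smooth and $W \to R$ is a closed immersion, $\mathbb{L}_{\mathcal{W}' / R} \simeq \mathcal{E} [1]$ for a vector bundle~$\mathcal{E}$.
  \item Using reductivity, choose a $G$-equivariant $s \colon R \to V$ and a $G$-equivariant null-homotopy of $s |_{\mathcal{W}'}$ such that the induced map $V^\vee \to \mathcal{E} |_0$ is an isomorphism.
  \item The resulting map $\mathcal{W}' \to s^{-1} (0)$ is a closed immersion on classical truncations and an isomorphism on cotangent complexes at~$0$. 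Hence it is an open immersion near~$0$.
  \item Take $U$ to be the preimage of a suitable affine open of $R \git G$. This gives the lemma, including its last clause, with $ds (0) \neq 0$ in general.
\end{enumerate}
If you insist on $ds (0) = 0$, then $U$ must be a smooth affine $G$-scheme étale over~$V_1$; that proves the main statement but not the last clause.

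Two smaller slips. First, $\mathbb{L}_{\mathcal{W}} |_0$ differs from $\mathbb{L}_{\mathcal{W}'} |_0$ by $\mathfrak{g}^\vee$ in cohomological degree~$1$, not~$0$. Second, the presentation $\mathcal{W} \simeq \mathcal{W}' / G$ with $\mathcal{W}'$ affine needs an argument. One extends $W / G \to {*} / G$ over the derived thickening, with obstructions vanishing because $\mathcal{W}$ is cohomologically affine. This is part of what \cite[Theorem~3.1]{ahlqvist-hekking-pernice-savvas} supplies.
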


\begin{proof}
    This is a combination of
    \textcite[Theorem~3.1]{ahlqvist-hekking-pernice-savvas}
    and \textcite[Lemma~4.2.6]{halpern-leistner-derived}.
\end{proof}

\begin{lemma}
    \label{lem-qsm-admissible}
    Let~$\mathcal{X}$ be a quasi-smooth qca derived algebraic stack over~$\mathbb{C}$
    with quasi-compact graded points and a good moduli space,
    such that its classical truncation has separated diagonal,
    and let $\lambda \in |\mathrm{CL} (\mathcal{X})|$.
    Then for each $n \in \mathbb{Z}$, the functor
    \begin{equation}
        \label{eq-qsm-star-wt-n}
        {\star}_\lambda \colon
        \mathsf{Coh} (\mathcal{X}_\lambda)_n
        \longrightarrow \mathsf{Coh} (\mathcal{X})
    \end{equation}
    admits both left and right adjoints, where
    $\mathsf{Coh} (\mathcal{X}_\lambda)_n \subset
    \mathsf{Coh} (\mathcal{X}_\lambda)$
    is the subcategory of objects of weight~$n$
    under the tautological
    ${*} / \mathbb{G}_\mathrm{m}$-action on~$\mathcal{X}_\lambda$.

    In particular, for any admissible subcategory
    $\mathcal{C} \subset \mathsf{Coh} (\mathcal{X}_\lambda)_n$
    such that ${\star}_\lambda |_{\mathcal{C}}$
    is fully faithful, its image
    ${\star}_\lambda \, \mathcal{C} \subset \mathsf{Coh} (\mathcal{X})$
    is also admissible.
\end{lemma}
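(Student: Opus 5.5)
The plan is to follow the proof of \cref{lem-admissible}, replacing $\mathsf{Perf}$ by $\mathsf{Coh}$ and the smooth local models by the quasi-smooth ones of \cref{lem-qsm-loc-str}. The first step is to show that the composition
\[
    \mathsf{Coh} (\mathcal{X}_\lambda^+)
    \xrightarrow{(\mathrm{gr}_\lambda)_*}
    \mathsf{QCoh} (\mathcal{X}_\lambda)
    \xrightarrow{\mathrm{pr}_n}
    \mathsf{QCoh} (\mathcal{X}_\lambda)_n
\]
lands in $\mathsf{Coh} (\mathcal{X}_\lambda)_n$. This property is étale local on $\mathcal{X}_\lambda$. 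By \cref{lem-gms-induction}, the formation of $\mathcal{X}_\lambda$, $\mathcal{X}_\lambda^+$, $\mathrm{gr}_\lambda$ commutes with étale base change along $U \to X$. So by \cref{lem-qsm-loc-str} we may assume $\mathcal{X} = Z / G$, with $Z = s^{-1} (0)$ the derived zero locus of $s \colon U \to V$, where $U$ is an open subset of a $G$-representation. Then $\mathcal{X}_\lambda$ and $\mathcal{X}_\lambda^+$ are derived zero loci of the induced sections of $V^\lambda$ over $U^\lambda / G^\lambda$ and of $V^{\lambda \geq 0}$ over $U^{\lambda,+} / G^{\lambda,+}$, as used in the proof of \cref{lem-koszul}. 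We factor $\mathrm{gr}_\lambda$ as a composition of $\mathbb{G}_\mathrm{a}$-gerbes $\mathcal{X}_\lambda^+ \to Z^{\lambda,+} / G^\lambda$, followed by $Z^{\lambda,+} / G^\lambda \to Z^\lambda / G^\lambda$. Here $Z^{\lambda,+}$ is affine over $Z^\lambda$, with structure sheaf a graded $\mathcal{O}_{Z^\lambda}$-dg-algebra. It is a Koszul-type algebra $\mathrm{Sym}$ on $(V^{\lambda > 0})^\vee$ (from $U^{\lambda,+}$) and on $(V^{\lambda>0})^\vee[1]$ (from the extra equations). Its $\lambda$-weights are all of one sign, and each weight piece is coherent over $Z^\lambda$. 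A coherent module over it is then coherent in each $\lambda$-weight, and bounded cohomologically in each weight, so $\mathrm{pr}_n$ of the pushforward is coherent. The $\mathbb{G}_\mathrm{a}$-gerbe pushforward preserves boundedness and coherence as in the smooth case.

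The second step is to write the adjoints. The right adjoint of \cref{eq-qsm-star-wt-n} should be ${\star}_\lambda^! = \mathrm{pr}_n \, (\mathrm{gr}_\lambda)_* \, \mathrm{ev}_\lambda^!$. Since $\mathrm{ev}_\lambda$ is proper and quasi-smooth, $\mathrm{ev}_\lambda^! = \mathrm{ev}_\lambda^* (-) \otimes \omega_{\mathrm{ev}_\lambda}$ with $\omega_{\mathrm{ev}_\lambda}$ an invertible shifted line bundle. Quasi-smooth pullback preserves $\mathsf{Coh}$, so this lands in $\mathsf{Coh} (\mathcal{X}_\lambda^+)$, and step one finishes. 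The left adjoint should be ${\star}_\lambda^* = \mathrm{pr}_n \, (\mathrm{gr}_\lambda)_\sharp \, \mathrm{ev}_\lambda^*$, with $(\mathrm{gr}_\lambda)_\sharp = (\mathrm{gr}_\lambda)_* (- \otimes \omega_{\mathrm{gr}_\lambda})$. Here one must be careful: $\mathrm{ev}_\lambda^*$ does not preserve $\mathsf{Coh}$ when $\mathrm{ev}_\lambda$ is not of finite Tor-amplitude. I would therefore instead obtain the left adjoint by Grothendieck–Serre duality. Let $\mathbb{D}$ denote the duality on $\mathsf{Coh}$ relative to the good moduli space, or to dualizing complexes $\omega_{\mathcal{X}}$, $\omega_{\mathcal{X}_\lambda}$, which are shifted line bundles by quasi-smoothness. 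Then set ${\star}_\lambda^* = \mathbb{D} \circ ({\star}_\lambda)^!_{-n'} \circ \mathbb{D}$, using that $\mathbb{D}$ intertwines ${\star}_\lambda$ on weight $n$ with a twisted ${\star}_\lambda$ on some weight $n'$. The reason is that proper pushforward commutes with $\mathbb{D}$, and $\mathrm{gr}_\lambda^*$ commutes with $\mathbb{D}$ up to twisting by $\omega_{\mathrm{gr}_\lambda}$, which has a fixed $\lambda$-weight.

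The main obstacle will be this duality step: checking that $\mathbb{D}$ exchanges the weight-$n$ summand with a single weight summand and commutes correctly with ${\star}_\lambda$ on $\mathsf{Coh}$. If it proves awkward, the fallback is the Koszul route. Étale locally, \cref{lem-koszul}~\cref{item-koszul-hall} identifies ${\star}_\lambda$ with twisted Hall induction on $\mathsf{MF}^{\mathrm{gr}}$ of smooth stacks. There adjoints exist by \cref{lem-admissible} combined with the argument of \cref{thm-mf-sod}, and these local adjoints glue by $\mathsf{Perf} (X)$-linearity via \cref{lem-indcoh-base-change}. The final statement is formal: if $\mathcal{C}$ is admissible in $\mathsf{Coh} (\mathcal{X}_\lambda)_n$ and ${\star}_\lambda|_{\mathcal{C}}$ is fully faithful, then composing the adjoints of ${\star}_\lambda$ with those of $\mathcal{C} \hookrightarrow \mathsf{Coh} (\mathcal{X}_\lambda)_n$ gives adjoints to ${\star}_\lambda \, \mathcal{C} \hookrightarrow \mathsf{Coh} (\mathcal{X})$.
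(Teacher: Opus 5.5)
Your architecture matches the paper's. You use the right adjoint $\mathrm{pr}_n \, (\mathrm{gr}_\lambda)_* \, \mathrm{ev}_\lambda^!$, check coherence \'etale locally on the models $Z/G$ of \cref{lem-qsm-loc-str}, and derive the left adjoint from the right one; the paper does the latter following Toda's Lemma~6.8, in the same spirit as your duality argument. However, the right-adjoint step rests on a false claim.

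You assert that $\mathrm{ev}_\lambda$ is quasi-smooth, so that $\mathrm{ev}_\lambda^!$ lands in $\mathsf{Coh}(\mathcal{X}_\lambda^+)$ and your first step finishes the job. This fails as soon as $\mathcal{X}$ is not smooth. It also contradicts your own later remark that $\mathrm{ev}_\lambda$ need not have finite Tor-amplitude. At a split filtration over a graded point~$x$, the relative tangent complex of $\mathrm{ev}_\lambda$ is $(\mathbb{T}_{\mathcal{X}}|_x)^{\lambda<0}[-1]$. Its $\mathrm{H}^2$ is the negative-weight part of the obstruction space $\mathrm{H}^1(\mathbb{T}_{\mathcal{X}}|_x)$, for example the moment-map directions $\mathfrak{g}^\vee$ for $\mu_V^{-1}(0)/G$ at the origin and non-central~$\lambda$. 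In the local model $Z = s^{-1}(0)$ with $s \colon U \to V$, the map $\mathrm{ev}_\lambda$ factors through the zero section of the derived vector bundle $V^{\lambda<0}[-1]$, which is not of finite Tor-amplitude. A concrete case: take $\mathcal{X} = (0 \times_V 0)/\mathbb{G}_\mathrm{m}$ with $V$ one-dimensional of negative weight and $\lambda$ the identity. Then $\mathrm{ev}_\lambda$ is the inclusion of $\mathcal{X}_{\mathrm{cl}} = {*}/\mathbb{G}_\mathrm{m}$, and $\mathrm{ev}_\lambda^! \, \mathcal{O}_{\mathcal{X}_{\mathrm{cl}}} \simeq \mathbb{C}[u]$ with $u$ of cohomological degree~$2$ and nonzero weight. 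This is bounded below but not coherent, although each weight piece is a single shifted copy of~$\mathbb{C}$.

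So what must be proved is that the composite preserves coherence, not each factor. The object $\mathrm{ev}_\lambda^! F$ has an unbounded tail of the shape $\mathrm{Sym}(V^{\lambda<0}[-2])$. The key point is that its $\lambda$-weights have the same sign as those of the fibre coordinates of $\mathrm{gr}_\lambda$, namely $(U^{\lambda>0})^\vee$ and the Koszul generators $(V^{\lambda>0})^\vee[1]$. Hence each weight space of $(\mathrm{gr}_\lambda)_* \, \mathrm{ev}_\lambda^! F$ receives only finitely many bounded coherent contributions. Your first step cannot replace this bookkeeping, since a bounded-below complex with coherent cohomology may have infinitely many nonzero cohomology sheaves in a single weight.

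This is why the paper defines ${\star}_\lambda^!$ on $\mathsf{IndCoh}$, with $(\mathrm{gr}_\lambda)_*$ right adjoint to $\mathrm{gr}_\lambda^*$ by quasi-smoothness of $\mathrm{gr}_\lambda$. It then checks in the local model that the whole composite sends $\mathsf{Coh}(\mathcal{X})$ into $\mathsf{Coh}(\mathcal{X}_\lambda)_n$, following the proof of Toda's Proposition~6.7. Once this is repaired, your duality construction of the left adjoint goes through, applied to the corrected ${\star}_\lambda^!$ at the dual weight, and the final formal deduction is fine. Your Koszul fallback would likewise need to be phrased as a local check that this globally defined $\mathsf{IndCoh}$ right adjoint preserves $\mathsf{Coh}$.
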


\noindent
Note that this lemma is a generalization of
\cref{lem-admissible}.

\begin{proof}
    For each $n \in \mathbb{Z}$, let
    $\mathrm{pr}_n \colon
    \mathsf{IndCoh} (\mathcal{X}_\lambda) \to \mathsf{IndCoh} (\mathcal{X}_\lambda)_n$
    is the projection to the weight~$n$ part.
    Consider the composition
    \begin{equation}
        {\star}_\lambda^! \colon
        \mathsf{IndCoh} (\mathcal{X})
        \overset{\mathrm{ev}_\lambda^!}{\longrightarrow}
        \mathsf{IndCoh} (\mathcal{X}_\lambda^+)
        \overset{(\mathrm{gr}_\lambda)_*}{\longrightarrow}
        \mathsf{IndCoh} (\mathcal{X}_\lambda)
        \overset{\mathrm{pr}_n}{\longrightarrow}
        \mathsf{IndCoh} (\mathcal{X}_\lambda)_n \ .
    \end{equation}
    Here, $(\mathrm{gr}_\lambda)_*$ is defined as in
    \textcite[\S 3.7]{drinfeld-gaitsgory-2013-finiteness},
    and is right adjoint to~$\mathrm{gr}_\lambda^*$ by
    \cite[\S 3.7.7]{drinfeld-gaitsgory-2013-finiteness},
    using that~$\mathrm{gr}_\lambda$ is quasi-smooth:
    see the proof of
    \textcite[Lemma~3.1.4]{halpern-leistner-derived}.

    We show that ${\star}_\lambda^!$
    sends $\mathsf{Coh} (\mathcal{X})$
    to $\mathsf{Coh} (\mathcal{X}_\lambda)_n$.
    This property can be checked étale locally on~$\mathcal{X}_\lambda$,
    and by \cref{lem-qsm-loc-str},
    we may assume that
    $\mathcal{X} = Z / G$
    for a reductive group~$G$
    and the derived zero locus~$Z$
    of a map $s \colon V \to V'$,
    where $V, V'$ are $G$-representations.
    The property now follows from the proof of
    \cite[Proposition~6.7]{toda-theta},
    and it follows that the restriction of
    ${\star}_\lambda^!$
    gives a right-adjoint of
    \cref{eq-qsm-star-wt-n}.

    The existence of a left adjoint~${\star}_\lambda^*$
    of \cref{eq-qsm-star-wt-n}
    now follows from a similar argument as in
    the proof of \cite[Lemma~6.8]{toda-theta},
    using the existence of~${\star}_\lambda^!$.
\end{proof}

\begin{para}[Proof of \texorpdfstring{\cref{thm-qsm-sod}}{Theorem \ref*{thm-qsm-sod}} for affine quotients]
    \label{para-pf-qsm-affine-sod}
    We first prove the theorem
    in the situation of \cref{para-koszul-setup},
    with the extra assumptions that the induced map
    $|\mathrm{CL}_{\mathbb{Q}} (\mathcal{X})| \to
    |\mathrm{CL}_{\mathbb{Q}} (\mathcal{Y})|$
    is surjective, and that~$\delta$ and~$q$
    descend to~$\mathrm{CL}_{\mathbb{Q}} (\mathcal{Y})$.

    Since~$\mathcal{X}$ is quasi-symmetric,
    and since
    $\mathrm{CL}_{\mathbb{Q}} (\mathcal{V}^\vee) \simeq
    \mathrm{CL}_{\mathbb{Q}} (\mathcal{Y})$,
    it follows that~$\mathcal{V}^\vee$ is also quasi-symmetric.

    Consider the equivalence
    \begin{equation}
        \Phi \colon \mathsf{Coh} (\mathcal{X})
        \longsimto \mathsf{MF}^{\mathrm{gr}} (\mathcal{V}^\vee, f)
    \end{equation}
    from \cref{lem-koszul}.
    Applying \cref{thm-mf-gr-sod}
    to the right-hand side,
    using the function
    $\delta + K_{\mathcal{Y}} / 2 \colon
    \mathrm{CL}_{\mathbb{Q}} (\mathcal{Y}) \to \mathbb{Q}$,
    where $K_{\mathcal{Y}} = \det \mathbb{L}_{\mathcal{Y}}$,
    we obtain a semiorthogonal decomposition
    \begin{equation}
        \label{eq-qsm-sod-intermediate}
        \mathsf{Coh} (\mathcal{X})
        =
        \Bigl<
            \Phi^{-1} \,
            {\star}_\lambda \,
            \mathsf{M}^{\smash{\mathrm{gr}}}_{\mathcal{V}^\vee_\lambda, \, f} (\delta'_\lambda + K_\mathcal{Y} / 2)
            \Bigm|
            \lambda \in |\mathrm{CL}_\mathbb{Q} (\mathcal{Y})|
        \Bigr> \ ,
    \end{equation}
    where we write $\delta'_\lambda$
    for the expression \cref{eq-def-delta-lambda}
    for the stack~$\mathcal{V}^\vee$,
    which is different from~$\delta_\lambda$,
    which is the same expression for~$\mathcal{X}$.
    For each such~$\lambda$,
    the induced morphisms
    $p_\lambda \colon \mathcal{V}_\lambda \to \mathcal{Y}_\lambda$
    and
    $q_\lambda \colon \mathcal{V}^\vee_\lambda \to \mathcal{Y}_\lambda$
    are again vector bundles that are dual to each other.
    We have a section
    $s_\lambda \colon \mathcal{Y}_\lambda \to \mathcal{V}_\lambda$,
    and we have
    \begin{equation}
        \mathcal{X}_\lambda =
        \coprod_{\lambda'} \mathcal{X}_{\lambda'}
        \simeq s_\lambda^{-1} (0) \ ,
    \end{equation}
    where~$\lambda'$ goes through preimages of~$\lambda$
    in $|\mathrm{CL}_\mathbb{Q} (\mathcal{X})|$.

    By \cref{lem-koszul}, we have
    \begin{align}
        \Phi^{-1} \, {\star}_\lambda \,
        \smash{\mathsf{M}^{\smash{\mathrm{gr}}}_{\mathcal{V}^\vee_\lambda, \, f} (\delta'_\lambda + K_\mathcal{Y} / 2)}
        & =
        {\star}_\lambda \, \Phi^{-1}
        (\smash{\mathsf{M}^{\smash{\mathrm{gr}}}_{\mathcal{V}^\vee_\lambda, \, f} (\delta'_\lambda + K_\mathcal{Y} / 2)}
        \otimes \omega_\lambda^{-1})
        \notag \\
        & =
        {\star}_\lambda \,
        \mathsf{W}_{\mathcal{X}_\lambda}
        (\delta'_\lambda + K_\mathcal{Y} / 2 - K_{\mathcal{Y}_\lambda} / 2 - \omega_\lambda)
        \notag \\
        & = \textstyle
        {\star}_\lambda \bigl(
            \smash{\bigoplus_{\lambda'} \mathsf{W}_{\mathcal{X}_{\lambda'}} (\delta_{\lambda'})}
        \bigr)
        \notag \\
        & = \textstyle
        \bigoplus_{\lambda'} {\star}_{\lambda'}
        \mathsf{W}_{\mathcal{X}_{\lambda'}} (\delta_{\lambda'})
        \ ,
    \end{align}
    where~$\lambda'$ runs through preimages of~$\lambda$
    in $|\mathrm{CL}_\mathbb{Q} (\mathcal{X})|$,
    which correspond to connected components
    $\mathcal{X}_{\lambda'} \subset \mathcal{X}_\lambda$,
    and the last step used that~$\star_\lambda$ is fully faithful.
    Substituting this into \cref{eq-qsm-sod-intermediate}
    gives the desired semiorthogonal decomposition in this case.

    Each term in the decomposition is $\mathsf{Perf} (X)$-linear
    by the argument in \cref{para-pf-lq-sod},
    since the morphisms $\mathrm{gr}_\lambda$ and $\mathrm{ev}_\lambda$
    are defined over~$X$.

    Finally, the admissibility of the terms follows from
    \cref{lem-qsm-admissible}
    and an inductive argument as in
    the last step of
    \cref{para-pf-lq-disc-sod},
    where instead of the dual functor on $\mathsf{Perf} (V / G)$
    considered in \cref{para-pf-lq-sod},
    we use the Serre duality functor
    $\mathbb{D} \colon
    \mathsf{Coh} (\mathcal{X}) \simto
    \mathsf{Coh} (\mathcal{X})^{\mathrm{op}}$
    as in \textcite[\S 3.1.1]{halpern-leistner-derived}.
    \qed
\end{para}

\begin{para}[Proof of \texorpdfstring{\cref{thm-qsm-sod}}{Theorem \ref*{thm-qsm-sod}}]
    \label{para-pf-qsm-sod}
    We use a similar argument as in \cref{para-pf-sm-sod}.
    We may assume that~$\mathcal{X}$ is connected,
    and we use induction on
    $\mathrm{rk} (\mathcal{X}) - \mathrm{crk} (\mathcal{X})$
    as in \cref{para-central-rank},
    so we may assume that the theorem holds for~$\mathcal{X}_\lambda$
    for all $\lambda \in |\mathrm{CL}_\mathbb{Q} (\mathcal{X})|$
    that are not central.

    For each $x \in X$,
    let~$x_0 \in \mathcal{X}$ be the closed point lying over~$x$,
    and let~$G_x$ be the stabilizer group at~$x_0$.
    Choose the data
    $U_x, V_x, Z_x$ as in \cref{lem-qsm-loc-str}.
    By \cref{lem-fix-connected},
    we may shrink $Z_x \git G_x$
    and assume that the map
    $\mathrm{CL}_\mathbb{Q} (Z_x / G_x) \to
    \mathrm{CL}_\mathbb{Q} (\mathcal{X})$
    factors through
    $\mathrm{CL}_\mathbb{Q} (* / G_x)$,
    where we use the projection
    $Z_x / G_x \to * / G_x$
    and the inclusion
    $\{ x_0 \} / G_x \hookrightarrow \mathcal{X}$,
    and we shrink $U_x \git G_x$ accordingly.
    In particular, the data~$\delta$ and~$q$
    descend along the map
    $\mathrm{CL}_\mathbb{Q} (Z_x / G_x) \to
    \mathrm{CL}_\mathbb{Q} (U_x / G_x)$,
    since they descend further to
    $\mathrm{CL}_\mathbb{Q} (* / G_x)$.
    We then shrink $U_x \git G_x$ again
    to make
    $\mathrm{CL}_{\mathbb{Q}} (U_x / G_x) \simeq
    \mathrm{CL}_{\mathbb{Q}} (* / G_x)$
    as in \cref{para-pf-sm-sod},
    and to make $U_x \git G_x$ affine.

    We now obtain a cartesian diagram
    \begin{equation}
        \label{eq-qsm-sod-cover}
        \begin{tikzcd}
            & \llap{$\coprod_x Z_x / G_x = {}$}
            \mathcal{X}'
            \ar[r, "f"] \ar[d]
            \ar[dr, phantom, "\ulcorner" pos=.2]
            & \mathcal{X} \ar[d]
            \\
            & \llap{$\coprod_x Z_x \git G_x = {}$}
            X' \ar[r, "f_0"]
            & X \rlap{ ,}
        \end{tikzcd}
    \end{equation}
    where we choose finitely many~$x$ so that~$f_0$ is surjective.
    The horizontal maps are étale,
    and the vertical maps are good moduli space morphisms.
    By \cref{para-pf-qsm-affine-sod},
    \cref{thm-qsm-sod} holds for~$\mathcal{X}'$.

    By an inductive argument similar to that in \cref{para-pf-sm-sod},
    we can assume that
    $f^* \, {\star}_\lambda \, \mathsf{W}_{\mathcal{X}_\lambda} (\delta_\lambda)$
    split-generates the subcategory
    $\bigoplus_{\lambda'} {\star}_{\lambda'} \,
    \mathsf{W}_{\mathcal{U}_{\lambda'}}
    (f_{\lambda'}^* \, \delta_\lambda)
    \subset \mathsf{Coh} (\mathcal{X}')$
    as a $\mathsf{Perf} (X')$-linear thick subcategory,
    where $\lambda'$ runs through preimages of~$\lambda$ in
    $|\mathrm{CL}_\mathbb{Q} (\mathcal{X}')|$.

    One can also show that
    ${\star}_\lambda \colon \mathsf{W}_{\mathcal{X}_\lambda} (\delta_\lambda)
    \to \mathsf{Coh} (\mathcal{X})$
    is fully faithful,
    by the same calculation as in
    \cref{para-pf-sm-sod}.
    It follows from \cref{lem-qsm-admissible}
    that all the subcategories
    ${\star}_\lambda \, \mathsf{W}_{\mathcal{X}_\lambda} (\delta_\lambda)$
    are admissible,
    except possibly when~$\lambda$ is central.

    Now, applying \cref{lem-sod-descent}~\cref{item-sod-descend}
    gives the desired semiorthogonal decomposition,
    and the admissibility of the terms follows from
    an analogous inductive argument
    as in the affine case in \cref{para-pf-qsm-affine-sod}.
    \qed
\end{para}

\subsection{Singular supports}

We now prove a variant of \cref{thm-qsm-sod}
for coherent sheaves on quasi-smooth stacks with singular support
contained in a given closed subset,
in \cref{thm-qsm-ss-sod}.

\begin{para}[Singular supports]
    Let~$\mathcal{X}$ be a quasi-smooth derived algebraic stack,
    and let $\mathrm{T}^* [-1] \, \mathcal{X}$
    be the $(-1)$-shifted cotangent stack of~$\mathcal{X}$.

    Following \textcite[\S 8]{arinkin-gaitsgory-2015},
    for any conical closed subset
    $\mathcal{Z} \subset |\mathrm{T}^* [-1] \, \mathcal{X}|$,
    there are full subcategories
    \begin{equation*}
        \mathsf{Coh}_{\mathcal{Z}} (\mathcal{X}) \subset
        \mathsf{Coh} (\mathcal{X}) \ ,
        \qquad
        \mathsf{IndCoh}_{\mathcal{Z}} (\mathcal{X}) \subset
        \mathsf{IndCoh} (\mathcal{X}) \ ,
    \end{equation*}
    whose objects are said to have \emph{singular support}
    contained in~$\mathcal{Z}$.
    Here, $\mathcal{Z}$ is \emph{conical}
    in the sense that it is invariant under the
    $\mathbb{G}_\mathrm{m}$-action
    scaling the fibres of~$\mathrm{T}^* [-1] \, \mathcal{X} \to \mathcal{X}$.
    We have by definition
    $\mathsf{Coh}_{\mathcal{Z}} (\mathcal{X}) =
    \mathsf{IndCoh}_{\mathcal{Z}} (\mathcal{X})
    \cap \mathsf{Coh} (\mathcal{X})$.
\end{para}

\begin{para}[Saturated subsets]
    \label{para-saturated}
    Let~$\mathcal{X}$ be a quasi-smooth qca derived stack,
    with a good moduli space~$X$.
    Since the morphism
    $\mathrm{T}^* [-1] \, \mathcal{X} \to \mathcal{X}$
    is affine,
    $\mathrm{T}^* [-1] \, \mathcal{X}$
    also has a good moduli space by
    \textcite[Lemma~4.14]{alper-2013-good}
    and \textcite[Theorem~2.12]{ahlqvist-hekking-pernice-savvas},
    which we denote by
    \begin{equation}
        \pi \colon \mathrm{T}^* [-1] \, \mathcal{X}
        \longrightarrow \mathrm{T}^* [-1] \, X \ .
    \end{equation}
    The zero section
    $\mathcal{X} \to \mathrm{T}^* [-1] \, \mathcal{X}$
    induces a closed immersion
    $X \to \mathrm{T}^* [-1] \, X$,
    by the same reasoning.

    We say that a conical closed subset
    $\mathcal{Z} \subset |\mathrm{T}^* [-1] \, \mathcal{X}|$
    is \emph{saturated},
    if there exists a closed subset
    $Z \subset |\mathrm{T}^* [-1] \, X|$
    such that $\mathcal{Z} = \pi^{-1} (Z)$.

    In particular, we define the \emph{nilpotent cone}
    \begin{equation}
        \mathcal{N} = \pi^{-1} (|X|)
        \subset |\mathrm{T}^* [-1] \, \mathcal{X}| \ ,
    \end{equation}
    which is saturated,
    and we say that objects of
    $\mathsf{Coh}_{\mathcal{N}} (\mathcal{X})$
    and $\mathsf{IndCoh}_{\mathcal{N}} (\mathcal{X})$
    have \emph{nilpotent singular support}.
\end{para}

\begin{theorem}
    \label{thm-qsm-ss-sod}
    In the situation of~\cref{thm-qsm-sod},
    let $\mathcal{Z} \subset |\mathrm{T}^* [-1] \, \mathcal{X}|$
    be a saturated conical closed subset,
    corresponding to a closed subset
    $Z \subset |\mathrm{T}^* [-1] \, X|$.
    Then there is a semiorthogonal decomposition
    \begin{equation}
        \label{eq-qsm-ss-sod}
        \mathsf{Coh}_{\mathcal{Z}} (\mathcal{X}) =
        \Bigl<
            \star_\lambda \,
            \mathsf{W}_{\mathcal{X}_\lambda, \, \mathcal{Z}_\lambda} (\delta_\lambda)
            \Bigm|
            \lambda \in |\mathrm{CL}_\mathbb{Q} (\mathcal{X})|
        \Bigr> \ ,
    \end{equation}
    where $\mathcal{Z}_\lambda$
    $ \subset |\mathrm{T}^* [-1] \, \mathcal{X}_\lambda|$
    is the preimage of\/~$Z$
    under the composition
    $\mathrm{T}^* [-1] \, \mathcal{X}_\lambda \to
    \mathrm{T}^* [-1] \, X_\lambda \to
    \mathrm{T}^* [-1] \, X$, and
    \begin{equation}
        \mathsf{W}_{\mathcal{X}_\lambda, \, \mathcal{Z}_\lambda} (\delta_\lambda)
        = \mathsf{W}_{\mathcal{X}_\lambda} (\delta_\lambda)
        \cap \mathsf{Coh}_{\mathcal{Z}_\lambda} (\mathcal{X}_\lambda) \ .
    \end{equation}
    Each functor
    ${\star}_\lambda \colon \mathsf{W}_{\mathcal{X}_\lambda, \, \mathcal{Z}_\lambda} (\delta_\lambda)
    \to \mathsf{Coh} (\mathcal{X})$
    is fully faithful, and its image
    ${\star}_\lambda \, \mathsf{W}_{\mathcal{X}_\lambda, \, \mathcal{Z}_\lambda} (\delta_\lambda)$
    is admissible and
    $\mathsf{Perf} (X)$-linear.
    The order of the decomposition can be chosen as in
    \cref{thm-sm-sod}.
\end{theorem}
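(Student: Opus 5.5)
We deduce \cref{thm-qsm-ss-sod} from \cref{thm-qsm-sod} by showing that the projection functors of the decomposition \cref{eq-qsm-sod} preserve the singular support condition. The key input is that $\mathsf{Coh}_{\mathcal{Z}} (\mathcal{X})$ is cut out by an action of functions on $\mathrm{T}^* [-1] \, X$, and everything in \cref{eq-qsm-sod} is linear over these functions.

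Concretely, following \cite[\S 4]{arinkin-gaitsgory-2015}, $\mathsf{IndCoh} (\mathcal{X})$ is a module over $\mathsf{QCoh}$ of $\mathrm{T}^* [-1] \, \mathcal{X}$, or at least over the graded algebra of Hochschild cochains whose spectrum is this stack. The singular support of $E$ is the support of $E$ for this action. Since $\mathcal{Z} = \pi^{-1} (Z)$ is saturated, the condition $E \in \mathsf{Coh}_{\mathcal{Z}} (\mathcal{X})$ only depends on the induced action of the algebra $\mathcal{O} (\mathrm{T}^* [-1] \, X)$, locally over $X$.

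The first step is to show that $\star_\lambda$ intertwines these actions. The action on $\mathcal{X}_\lambda$ is pulled back along $\mathrm{T}^* [-1] \, \mathcal{X}_\lambda \to \mathrm{T}^* [-1] \, X_\lambda \to \mathrm{T}^* [-1] \, X$, and the action on $\mathcal{X}$ is pulled back along $\pi$. We check this étale locally using \cref{lem-qsm-loc-str}, where $\mathcal{X} = Z/G$ is a derived zero locus. There, by \cref{lem-koszul}, the singular support becomes the support in $\mathcal{V}^\vee$ of the matrix factorization, as in \cite{toda-2024-cat-dt}. By \cref{lem-koszul}~\cref{item-koszul-hall}, Hall induction corresponds to ${\star}_\lambda (- \otimes \omega_\lambda)$ along $\mathcal{V}^\vee_\lambda \leftarrow (\mathcal{V}^\vee)^+_\lambda \to \mathcal{V}^\vee$. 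These morphisms lie over the good moduli spaces $\mathcal{V}^\vee_\lambda \git G^\lambda \to \mathcal{V}^\vee \git G$, and these coincide with $\mathrm{T}^* [-1]$ of the moduli spaces. Hence supports are compatible. It follows that $\star_\lambda$ sends $\mathsf{Coh}_{\mathcal{Z}_\lambda} (\mathcal{X}_\lambda)$ into $\mathsf{Coh}_{\mathcal{Z}} (\mathcal{X})$.

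The second step uses this linearity over functions on $\mathrm{T}^* [-1] \, X$, together with admissibility from \cref{thm-qsm-sod}. The adjoints $\star_\lambda^!$ and $\star_\lambda^*$ of \cref{lem-qsm-admissible}, and the projections onto the window subcategories, are then also linear. So each projection functor of \cref{eq-qsm-sod} preserves the support condition. Concretely, each such functor commutes with the local cohomology (support) functors for $Z$, that is, with colimits of Koszul-type objects built from functions in the ideal of $Z$, on the ind-completion.

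Given $E \in \mathsf{Coh}_{\mathcal{Z}} (\mathcal{X})$, we filter it by the decomposition \cref{eq-qsm-sod}. Only finitely many terms are nonzero by quasi-compactness, since the central weights are bounded. Each graded piece is $\star_\lambda E_\lambda$ with $E_\lambda \in \mathsf{W}_{\mathcal{X}_\lambda} (\delta_\lambda)$. We must show $E_\lambda \in \mathsf{Coh}_{\mathcal{Z}_\lambda} (\mathcal{X}_\lambda)$. This follows because $E_\lambda$ is obtained from $E$ by the linear adjoint functors, and linear functors preserve the property of being set-theoretically supported over $Z$. Conversely, every object built from the terms in \cref{eq-qsm-ss-sod} lies in $\mathsf{Coh}_{\mathcal{Z}} (\mathcal{X})$ by the first step. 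This gives generation; semiorthogonality and full faithfulness are inherited from \cref{thm-qsm-sod}. Admissibility follows because the adjoints restrict, and $\mathsf{Perf} (X)$-linearity is clear.

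The main obstacle is the first step: making the compatibility of the $\mathcal{O} (\mathrm{T}^* [-1] \, X)$-actions with $\mathrm{gr}_\lambda^*$ and $(\mathrm{ev}_\lambda)_*$ precise globally, rather than only up to supports. I would sidestep a global statement by working étale locally on $X$. By \cref{lem-gms-induction}, the covers of $X_\lambda$ are pulled back from covers of $X$, and singular support is étale local, so the statement reduces to the Koszul-dual local model described above. There the functoriality of supports under proper pushforward and pullback of matrix factorizations is standard.
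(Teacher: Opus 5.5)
Your argument is correct in outline and follows the paper's skeleton. Both reduce étale locally over $X$ to the Koszul model, and both use Toda's identification of $\mathsf{Coh}_{\mathcal{Z}}(\mathcal{X})$ with graded matrix factorizations supported on $\mathcal{Z}$. The mechanism inside the local model differs, however.

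\textbf{The paper's route.} It characterizes $\mathsf{MF}^{\mathrm{gr}}(\mathcal{V}^\vee, f)_{\mathcal{Z}}$ as the kernel of the restriction $r$ to the saturated open complement, and checks $r\,{\star}_\lambda \simeq {\star}_\lambda\, r_\lambda$ via \cref{lem-gms-induction}. It then uses the decomposition of \cref{thm-mf-gr-sod} on that open complement itself. Since $rE \simeq 0$ is filtered by the objects $r\,{\star}_\lambda E_\lambda$, which lie in semiorthogonal components there, they must vanish. Full faithfulness on the open part then gives $r_\lambda E_\lambda \simeq 0$.

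\textbf{Your route.} You let homogeneous invariant functions $a$ on the good moduli space act as natural transformations $\mathrm{id} \to \mathrm{id}\langle 2k\rangle$, using $\mathsf{Perf}((\mathcal{V}^\vee \git G)/\mathbb{G}_\mathrm{m})$-linearity. Adjoints and projection functors then commute with these by a formal mate argument; indeed any such natural transformation restricts to full subcategories and is compatible with any semiorthogonal projection. You detect support by nilpotence of $a$, and full faithfulness of ${\star}_\lambda$ on the window reflects $a^N_{E_\lambda} = 0$.

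\textbf{What each buys.} Your route does not need the decomposition on the open complement. It also shows directly that the left and right adjoints of each component preserve $\mathsf{Coh}_{\mathcal{Z}}$, and that ${\star}_\lambda \, \mathsf{Coh}_{\mathcal{Z}_\lambda}(\mathcal{X}_\lambda) \subset \mathsf{Coh}_{\mathcal{Z}}(\mathcal{X})$. That gives admissibility inside $\mathsf{Coh}_{\mathcal{Z}}(\mathcal{X})$, which the paper leaves implicit. The price is twofold:
\begin{enumerate}
\item You need the characterization that $E|_{D(a)} \simeq 0$ if and only if $a^N \cdot \mathrm{id}_E = 0$ for some $N$. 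This amounts to Hom spaces localizing along basic opens of the good moduli space, which needs a line of justification in the graded setting.
\item Your global framing via an $\mathcal{O}(\mathrm{T}^*[-1]X)$-module structure on $\mathsf{IndCoh}(\mathcal{X})$ is not available as stated for stacks. The argument genuinely has to be run in the local model, where the only structure used is linearity over $(\mathcal{V}^\vee \git G)/\mathbb{G}_\mathrm{m}$.
\end{enumerate}

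\textbf{Two smaller points.}
\begin{enumerate}
\item The reduction to the local model needs étale pullback to carry the components of $E$ to those of $f^*E$. This follows from $f^* {\star}_\lambda \simeq \bigoplus_{\lambda'} {\star}_{\lambda'} f_{\lambda'}^*$ and uniqueness of the filtration.
\item Finiteness of the nonzero components of $E$ comes from generation, not from boundedness of central weights. A fixed central weight still corresponds to infinitely many $\lambda$.
\end{enumerate}
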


\begin{proof}
    By \cref{thm-qsm-sod},
    the terms are semiorthogonal,
    and the restriction
    ${\star}_\lambda |_{\smash{\mathsf{W}_{\mathcal{X}_\lambda, \, \mathcal{Z}_\lambda} (\delta_\lambda)}}$
    is fully faithful.
    It remains to show that the terms generate
    $\mathsf{Coh}_{\mathcal{Z}} (\mathcal{X})$.
    Equivalently, we need to show that for any
    $E \in \mathsf{Coh}_{\mathcal{Z}} (\mathcal{X})$,
    its projection to each term in
    \cref{eq-qsm-sod}
    lands in the subcategory
    $\mathsf{W}_{\mathcal{X}_\lambda, \, \mathcal{Z}_\lambda} (\delta_\lambda)$.

    The question is now étale local over the good moduli space~$X$,
    and choosing an étale cover as in \cref{para-pf-qsm-sod},
    we may assume that $\mathcal{X} = s^{-1} (0) / G$
    is of the standard form as in \cref{para-koszul-setup}.

    By \textcite[Proposition~2.3.9]{toda-2024-cat-dt},
    the Koszul equivalence~$\Phi$ in \cref{lem-koszul}
    restricts to an equivalence
    \begin{equation}
        \Phi \colon
        \mathsf{Coh}_{\mathcal{Z}} (\mathcal{X})
        \longsimto
        \mathsf{MF}^{\mathrm{gr}} (\mathcal{V}^\vee, f)_{\mathcal{Z}} \ ,
    \end{equation}
    where the right-hand side is the category
    of matrix factorizations supported on~$\mathcal{Z}$,
    defined as the kernel of the restriction functor
    $r \colon \mathsf{MF}^{\mathrm{gr}} (\mathcal{V}^\vee, f)
    \to \mathsf{MF}^{\mathrm{gr}} (\mathcal{V}^\vee \setminus \mathcal{Z}, f)$.
    Therefore, it is enough to show that
    the semiorthogonal decomposition from
    \cref{thm-mf-gr-sod}
    restricts to a semiorthogonal decomposition on
    $\mathsf{MF}^{\mathrm{gr}} (\mathcal{V}^\vee, f)_{\mathcal{Z}}$,
    with the terms given by
    ${\star}_\lambda \, \mathsf{M}^{\smash{\mathrm{gr}}}_{\mathcal{V}_\lambda^\vee, \, f} (\delta_\lambda)_{\mathcal{Z}_\lambda}$,
    where the subscript~$(-)_{\mathcal{Z}_\lambda}$
    denotes intersecting with
    $\mathsf{MF}^{\mathrm{gr}} (\mathcal{V}_\lambda^\vee, f)_{\mathcal{Z}_\lambda}$.

    Let $\mathrm{Crit} (f) \subset V^\vee$
    be the classical critical locus of~$f$, so that
    $(\mathrm{T}^* [-1] \, \mathcal{X})_{\mathrm{cl}} \simeq
    \mathrm{Crit} (f) / G$,
    and we can regard~$\mathcal{Z}$
    as a closed subset of~$|\mathcal{Y}|$.
    Applying \cref{lem-gms-induction}
    to the stack $\mathcal{Y}$ and the open immersion
    $Y \setminus Z \hookrightarrow Y$,
    one can deduce that there is a commutative diagram
    \begin{equation}
        \begin{tikzcd}
            \mathsf{MF}^{\mathrm{gr}} (\mathcal{V}_\lambda^\vee, f)
            \ar[d, "{\star}_\lambda"']
            \ar[r, "r_\lambda"]
            & \mathsf{MF}^{\mathrm{gr}} (\mathcal{V}_\lambda^\vee |_{\mathcal{Y}_\lambda \setminus \mathcal{Z}_\lambda}, f)
            \ar[d, "{\star}_\lambda"]
            \\
            \mathsf{MF}^{\mathrm{gr}} (\mathcal{V}^\vee, f)
            \ar[r, "r"]
            & \mathsf{MF}^{\mathrm{gr}} (\mathcal{V}^\vee |_{\mathcal{Y} \setminus \mathcal{Z}}, f)
            \rlap{ ,}
        \end{tikzcd}
    \end{equation}
    where the horizontal maps are restrictions.
    For an object
    $E \in \mathsf{Coh} (\mathcal{X}) \simeq
    \mathsf{MF}^{\mathrm{gr}} (\mathcal{V}^\vee, f)$
    such that $r E \simeq 0$, let
    $E_\lambda \in \mathsf{M}^{\smash{\mathrm{gr}}}_{\mathcal{V}_\lambda^\vee, \, f} (\delta_\lambda)$
    be the object such that
    ${\star}_\lambda \, E_\lambda$
    is equal to the projection of~$E$
    to the $\lambda$-component.
    We have $r \, {\star}_\lambda \, E_\lambda \simeq 0$,
    since~$r E \simeq 0$
    is filtered by the objects
    $r \, {\star}_\lambda \, E_\lambda$
    which lie in the semiorthogonal components,
    and have to be zero.
    Thus ${\star}_\lambda \, r_\lambda \, E_\lambda \simeq 0$,
    and by the full faithfulness of~$\star_\lambda$
    on the window subcategory,
    $r_\lambda \, E_\lambda \simeq 0$,
    and hence
    $E_\lambda \in \mathsf{MF}^{\mathrm{gr}} (\mathcal{V}_\lambda^\vee, f)_{\mathcal{Z}_\lambda}$.
\end{proof}

\subsection{Examples}
\label{subsec-qsm-examples}

\begin{example}[Quasi-symmetric GIT quotients]
    \label{eg-qsm-git}
    Let~$G$ be a reductive group over~$\mathbb{C}$,
    acting on a quasi-smooth derived algebraic space~$U$,
    and suppose that the quotient stack $U / G$ is quasi-symmetric
    and has a good moduli space $U \git G$.

    Similarly to \cref{eg-sm-git},
    for any rational character
    $\delta \in \Lambda^{\mathrm{Z} (G)^\circ} \otimes \mathbb{Q}$
    of~$\mathrm{Z} (G)^\circ$,
    and any $W$-invariant positive-definite quadratic form~$q$
    on $\Lambda_T \otimes \mathbb{Q}$,
    by \cref{thm-qsm-sod},
    we have the semiorthogonal decomposition
    \begin{equation}
        \mathsf{Coh} (U / G) =
        \Bigl<
            {\star}_\lambda \,
            \mathsf{W}_{\smash{U^{\lambda'} / G^\lambda}} (\delta_{\lambda'})
            \Bigm|
            \lambda \in (\Lambda_T \otimes \mathbb{Q}) / W , \
            U^{\lambda'} \subset U^\lambda
        \Bigr> \ ,
    \end{equation}
    where $U^{\lambda'}$ runs through all connected components of~$U^\lambda$.

    Several interesting examples are as follows:
    \begin{enumerate}
        \item
            \label{item-ham-red}
            Let~$V$ be a symplectic $G$-representation,
            and consider the (derived) Hamiltonian reduction
            $\mu_V^{-1} (0) / G$,
            where $\mu_V \colon V \to \mathfrak{g}^\vee$
            is the moment map.
            The stack $\mu_V^{-1} (0) / G$ is $0$-shifted symplectic,
            and in particular,
            symmetric and quasi-smooth.
            We have a semiorthogonal decomposition
            \begin{equation}
                \mathsf{Coh} (\mu^{-1} (0) / G) =
                \Bigl<
                    {\star}_\lambda \,
                    \mathsf{W}_{\smash{\mu_{V^\lambda}^{-1} (0) / G^\lambda}} (\delta_\lambda)
                    \Bigm|
                    \lambda \in (\Lambda_T \otimes \mathbb{Q}) / W
                \Bigr> \ ,
            \end{equation}
            where $\mu_{V^\lambda} \colon V^\lambda \to (\mathfrak{g}^\lambda)^\vee$
            is the moment map of~$V^\lambda$.

        \item
            Let~$V$ be a $G$-representation,
            and consider the \emph{loop stack}
            \begin{equation*}
                \mathcal{L} (V / G) =
                \calMap (S^1_{\mathrm{B}}, V / G) \simeq
                \bigl(
                    (G \times V) \underset{V \times V}{\times} V
                \bigr) \big/ G \ ,
            \end{equation*}
            where $S^1_{\mathrm{B}}$ is the Betti stack of~$S^1$,
            and we take the derived fibre product using the maps
            $(\mathrm{act}, \mathrm{pr}_2) \colon
            G \times V \to V \times V$
            and the diagonal $\Delta \colon V \to V \times V$,
            and~$G$ acts on~$G$ by conjugation.
            The stack $\mathcal{L} (V / G)$
            is symmetric and quasi-smooth,
            and we have a semiorthogonal decomposition
            \begin{equation}
                \mathsf{Coh} (\mathcal{L} (V / G)) =
                \Bigl<
                    {\star}_\lambda \,
                    \mathsf{W}_{\smash{\mathcal{L} (V^\lambda / G^\lambda)}} (\delta_\lambda)
                    \Bigm|
                    \lambda \in (\Lambda_T \otimes \mathbb{Q}) / W
                \Bigr> \ .
            \end{equation}

    \end{enumerate}
\end{example}

\begin{example}[Preprojective algebras of quivers]
    \label{eg-preproj}
    Let~$Q$ be a quiver as in \cref{eg-quiver},
    with a stability condition~$\zeta$,
    and consider the derived moduli stack
    $\mathcal{X}_{\smash{\Pi_Q, \, d}}^{\zeta \mathhyphen \mathrm{ss}}
    \subset \mathcal{X}_{\smash{\Pi_Q, \, d}}
    = \mathrm{T}^* \mathcal{X}_{d}$
    of $\zeta$-semistable representations of the preprojective algebra~$\Pi_Q$ of~$Q$
    with dimension vector~$d$.
    See, for example, \textcite{davison-2023-preprojective}
    for more background.

    Applying \cref{thm-qsm-sod},
    we obtain a semiorthogonal decomposition
    \begin{equation}
        \label{eq-preproj-sod}
        \mathsf{Coh} (\mathcal{X}_{\smash{\Pi_Q, \, d}}^{\smash{\zeta \mathhyphen \mathrm{ss}}}) =
        \biggl< {}
            \bigotimes_{j = 1}^k
            \mathsf{W}_{\mathcal{X}_{\smash{\Pi_Q, \, d_j}}^{\smash{\zeta \mathhyphen \mathrm{ss}}}}
            (\delta_{w_j})
            \biggm|
            \frac{w_1}{|d_1|} < \cdots < \frac{w_k}{|d_k|}
        \, \biggr> \ ,
    \end{equation}
    where notations are similar to those in \cref{eg-quiver},
    but with the Ext-complex $\mathrm{Ext}^\bullet (d_j, d_{j'})$
    in~\cref{eq-quiver-delta-w} replaced by
    \begin{equation*}
        \mathrm{Ext}^\bullet (d_j, d_{j'})
        \oplus \mathrm{Ext}^\bullet (d_{j'}, d_j)^\vee [-2] \ ,
    \end{equation*}
    where $\mathrm{Ext}^\bullet (-, -)$
    denotes the Ext-complex of~$Q$ as in \cref{eg-quiver}.
\end{example}

\begin{example}[\textG-Higgs bundles]
    \label{eg-higgs}
    Let~$C$ be a connected smooth projective curve over~$\mathbb{C}$
    of genus~$g$,
    and let $G$ be a connected reductive group.
    For $d \in \uppi_1 (G)$,
    consider the derived moduli stack
    $\calHiggs_G^{\mathrm{ss}} (d)$
    of semistable $G$-Higgs bundles on~$C$ of degree~$d$.

    By \cref{thm-qsm-sod,thm-qsm-ss-sod},
    we have semiorthogonal decompositions
    \begin{multline}
        \mathsf{Coh}_{(\mathcal{N})} (\calHiggs_G^{\mathrm{ss}} (d))
        \\ =
        \Bigl<
            {\star}_P \,
            \mathsf{W}_{\smash{\calHiggs_L^{\mathrm{ss}} (d)}, \, (\mathcal{N})}
            \bigl( -(2g - 2) (\rho_G - \rho_L) - \mu (w) \bigr)
            \Bigm|
            L \subset G, \
            w \in \Lambda^{\smash{\mathrm{Z} (L)^\circ}}_+
        \Bigr> \ ,
    \end{multline}
    where notations and the choice of the quadratic norm are as in \cref{eg-bun-g}.
    The notation $(\mathcal{N})$
    means that there is a version with nilpotent singular supports
    and a version without.

    In particular, this extends
    \textcite[Theorem~1.1]{padurariu-toda-higgs}
    from type~A groups to general groups.
\end{example}

\begin{example}[\textG-bundles with \textlambda-connections]
    \label{eg-lambda-conn}
    \allowdisplaybreaks
    Recall from \cref{eg-twisted-lambda-conn}
    the moduli stacks
    $\lambda \calConn_G^{\mathrm{ss}} (d)$
    and
    $\calConn_G (d)$
    of $G$-bundles of degree~$d$
    with semistable $\lambda$-connections and connections,
    respectively, where we set $D = 0$ in \cref{eg-twisted-lambda-conn},
    and all $\lambda$-connections are automatically semistable
    when $\lambda \neq 0$.

    By \cref{thm-qsm-sod,thm-qsm-ss-sod},
    we have semiorthogonal decompositions
    \begin{align}
        & \mathsf{Coh}_{(\mathcal{N})} (\lambda \calConn_G^{\mathrm{ss}} (d))
        \notag \\*
        & \hspace{3em} =
        \Bigl<
            {\star}_P \,
            \mathsf{W}_{\smash{\lambda \calConn_L^{\mathrm{ss}} (d)}, \, (\mathcal{N})}
            \bigl( -(2g - 2) (\rho_G - \rho_L) - \mu (w) \bigr)
            \Bigm|
            L \subset G, \
            w \in \Lambda^{\smash{\mathrm{Z} (L)^\circ}}_+
        \Bigr> \ ,
        \\[.5em]
        & \mathsf{Coh}_{(\mathcal{N})} (\calConn_G (d))
        \notag \\*
        &\label{sod:conn} \hspace{3em} =
        \Bigl<
            {\star}_P \,
            \mathsf{W}_{\smash{\calConn_L (d)}, \, (\mathcal{N})}
            \bigl( -(2g - 2) (\rho_G - \rho_L) - \mu (w) \bigr)
            \Bigm|
            L \subset G, \
            w \in \Lambda^{\smash{\mathrm{Z} (L)^\circ}}_+
        \Bigr> \ ,
    \end{align}
    with notations as in the previous example.
\end{example}

\begin{example}[\textG-local systems]
    \label{eg-loc}
    Let~$C$ now be a connected finite CW complex of dimension~$\leq 2$,
    and let $G$ be a connected reductive group.
    Let
    \begin{equation*}
        \calLoc_G =
        \calMap (C_{\mathrm{B}}, * / G)
    \end{equation*}
    be the moduli stack of $G$-local systems on~$C$,
    where $C_{\mathrm{B}}$ is the Betti stack of~$C$.

    By \cite[Lemma~4.3.9]{bu-davison-ibanez-nunez-kinjo-padurariu},
    the stack~$\calLoc_G$ is almost symmetric,
    and at a closed point
    $E \in \calLoc_G$,
    we have
    $[\mathbb{T}_{\smash{\calLoc_G}} |_E]
    = -\chi (C) \, [\mathfrak{g}]$
    as virtual $\mathrm{Aut} (E)^\circ$-representations,
    where~$\chi (C)$ is the Euler characteristic of~$C$.

    By \cref{thm-qsm-sod,thm-qsm-ss-sod},
    we have semiorthogonal decompositions
    \begin{equation}
        \mathsf{Coh}_{(\mathcal{N})} (\calLoc_G) =
        \Bigl<
            {\star}_P \,
            \mathsf{W}_{\smash{\calLoc_L}, \, (\mathcal{N})}
            \bigl(
                \chi (C) (\rho_G - \rho_L) - \mu (w)
            \bigr)
            \Bigm|
            L \subset G, \
            w \in \Lambda^{\smash{\mathrm{Z} (L)^\circ}}_+
        \Bigr> \ ,
    \end{equation}
    where notations are as in the previous examples.
\end{example}

\begin{remark}
    \cref{eg-higgs,eg-lambda-conn,eg-loc}
    also work for families of smooth projective curves over a base
    $C \to B$,
    where $B$ is a smooth quasi-compact algebraic space.
    In this case,
    \cref{thm-qsm-sod,thm-qsm-ss-sod}
    can be applied to the corresponding relative moduli stacks over~$B$,
    where the weight parameters for window subcategories
    are the same as in the absolute versions,
    and we obtain semiorthogonal decompositions
    that are linear over $\mathsf{Perf} (B)$.
\end{remark}

\begin{example}[Sheaves on surfaces]
    \label{eg-surface}
    Let~$S$ be a connected smooth projective surface over~$\mathbb{C}$,
    and let $K (S) \subset \mathrm{H}^{2 \bullet} (S; \mathbb{Q})$
    be the subgroup consisting of Chern characters of perfect complexes on~$S$.
    Let~$\tau$ be either
    \begin{enumerate}
        \item
            \label{item-surface-gieseker}
            a Gieseker stability condition on~$S$
            with respect to a Kähler class~$\omega$, or
        \item
            \label{item-surface-bridgeland}
            a Bridgeland stability condition on
            $\mathsf{Perf} (S)$
            whose central charge factors through~$K (S)$,
            such that it lies in a connected component
            in the stability manifold described in
            \textcite[\S 4]{piyaratne-toda-2019}.
    \end{enumerate}
    Assume that~$\tau$ is \emph{generic} in the sense that
    for any $\tau$-semistable objects $E, F$
    of the same slope, we have
    \begin{equation*}
        \chi (E, F) = \chi (F, E) \ ,
    \end{equation*}
    where $\chi (-, -)$ is the Euler pairing.
    For example, this is always the case when $K_S$ is trivial.

    For a class $0 \neq \gamma \in K (S)$, consider the derived moduli stack
    \begin{equation*}
        \mathcal{M}^{\tau \mathhyphen \mathrm{ss}} (\gamma)
        \subset \calPerf (S)
        = \calMap (S, \calPerf)
    \end{equation*}
    of $\tau$-semistable objects in $\mathsf{Coh} (S)$,
    with a fixed phase in the case of Bridgeland stability,
    which is an open substack in the derived moduli stack
    $\calPerf (S)$ of perfect complexes on~$S$.
    It is quasi-smooth since $\calPerf (S)$ is.
    By \textcite[Examples~7.28--7.29]{alper-halpern-leistner-heinloth-2023},
    the stack~$\mathcal{M}^{\tau \mathhyphen \mathrm{ss}} (\gamma)$
    is quasi-compact and admits a proper good moduli space;
    its classical truncation has affine diagonal.
    It is also symmetric since~$\tau$ is assumed to be generic.

    We define a quadratic norm~$q$ on
    $\mathrm{CL}_\mathbb{Q} (\mathcal{M}^{\tau \mathhyphen \mathrm{ss}} (\gamma))$
    using the construction in \cref{eg-lms-norm}.
    In fact, for the definition~\cref{eq-lms-norm} to makes sense for
    $\mathcal{M}^{\tau \mathhyphen \mathrm{ss}} (\gamma)$,
    we only need the rank function to be defined on the subset
    $K (S)_{\leq \gamma} \subset K (S)$
    of classes of direct summands of semistable objects of class~$\gamma$.
    That is, we only need a map
    $r \colon K (S)_{\leq \gamma} \to \mathbb{Q}_{\geq 0}$
    preserving addition when defined on the left-hand side,
    such that $r (\gamma') > 0$ when $\gamma' \neq 0$.
    Then, the norm~$q$ is given for a graded point
    $\lambda \colon {*} / \mathbb{G}_\mathrm{m} \to
    \mathcal{M}^{\tau \mathhyphen \mathrm{ss}} (\gamma)$
    corresponding to a graded object
    $E = \bigoplus_{n \in \mathbb{Z}} E_n$
    by
    \begin{equation}
        q (\lambda)
        = \sum_{n \in \mathbb{Z}} n^2 \, r (\gamma_n) \ ,
    \end{equation}
    where~$\gamma_n$ is the class of~$E_n$, and $E$ is of class
    $\gamma = \sum_{n \in \mathbb{Z}} \gamma_n$.

    For example, for Gieseker stability and a class of positive rank,
    we can take~$r$ to be the rank of the sheaf;
    for a class of rank~$0$ and $c_1 \neq 0$,
    we may take~$r (\gamma') = c_1 (\gamma') \cdot \omega$;
    for a zero-dimensional class, we may take~$r$ to be the length.
    For Bridgeland stability, we may perturb the central charge~$Z$
    so it lands in $\mathbb{Q} + \mathrm{i} \mathbb{Q}$
    without changing $\mathcal{M}^{\tau \mathhyphen \mathrm{ss}} (\gamma)$,
    as in \cite[Example~7.29]{alper-halpern-leistner-heinloth-2023},
    then take $r (\gamma') = |Z (\gamma')| / |Z (\gamma)|$.

    Given such a rank function,
    by \cref{thm-qsm-sod},
    we have a semiorthogonal decomposition
    \begin{equation}
        \label{eq-sod-surface}
        \mathsf{Coh} (\mathcal{M}^{\tau \mathhyphen \mathrm{ss}} (\gamma)) =
        \biggl< {}
            \bigotimes_{i = 1}^n
            \mathsf{W}_{\smash{\mathcal{M}^{\tau \mathhyphen \mathrm{ss}} (\gamma_i)}}
            ( \delta_i + w_i \, \ell_i )
            \biggm|
            \begin{array}{l}
                \gamma = \gamma_1 + \cdots + \gamma_n \ ,
                \\
                w_1 < \cdots < w_n
            \end{array}
        \biggr> \ ,
    \end{equation}
    where we run though all decompositions
    $\gamma = \gamma_1 + \cdots + \gamma_n$
    into non-zero classes~$\gamma_i$
    of direct summands of semistable objects of class~$\gamma$,
    and $w_i \in \mathbb{Q}$.
    The functions
    $\delta_i, \ell_i \colon \mathrm{CL} (\mathcal{M}^{\tau \mathhyphen \mathrm{ss}} (\gamma_i)) \to \mathbb{Q}$
    are given by
    \begin{align}
        \delta_i (\mu)
        & =
        \frac{1}{2} \sum_{m \in \mathbb{Z}}
        m \, \chi \biggl(
            \gamma_{i, m} \ , \
            \sum_{j > i} \gamma_j - \sum_{j < i} \gamma_j
        \biggr) \ ,
        \\
        \ell_i (\mu)
        & =
        \sum_{m \in \mathbb{Z}}
        m \, r (\gamma_{i, m}) \ ,
    \end{align}
    for a graded point
    $\mu \colon {*} / \mathbb{G}_\mathrm{m} \to \mathcal{M}^{\tau \mathhyphen \mathrm{ss}} (\gamma_i)$
    corresponding to a graded object
    $E_i = \bigoplus_{m \in \mathbb{Z}} E_{i, m}$,
    and $\gamma_{i, m}$ is the class of~$E_{i, m}$.

    The decomposition~\cref{eq-sod-surface} generalizes
    \textcite[Theorem~1.1]{padurariu-2022-surfaces},
    which is the special case of zero-dimensional sheaves,
    and
    \textcite[Theorem~5.1]{padurariu-toda-k3},
    which is the special case of K3 surfaces.
\end{example}

\begin{para}[Semiorthogonal decompositions under geometric Langlands]
    \label{para-de-rham-langlands}
Let $C$ be a connected smooth projective curve,
    and let $G$ be a connected reductive group.
    Recall that the de Rham geometric Langlands correspondence is an equivalence~\cite{geometric-langlands-i, geometric-langlands-ii, geometric-langlands-iii, geometric-langlands-iv, geometric-langlands-v}
    \begin{align}\label{Derham}
        \mathsf{IndCoh}_{\mathcal{N}}(\calConn_G)
        \simeq
        \mathsf{D}\text{-}\mathsf{mod}(\calBun_{G^{\vee}}) \ ,
    \end{align}
    where $G^{\vee}$ is the Langlands dual group of $G$.
    Both sides are compactly generated~\cite{drinfeld-gaitsgory-2013-finiteness, drinfeld-gaitsgory-2015-compactgen}, and \cref{Derham} is equivalent to the equivalence between the subcategories of compact objects
    \begin{align}\label{Derham2}
        \mathsf{Coh}_{\mathcal{N}}(\calConn_G)
        \simeq
        \mathsf{D}\text{-}\mathsf{mod}(\calBun_{G^{\vee}})^{\mathrm{cp}} \ .
    \end{align}
    In the above, $(-)^{\mathrm{cp}}$ denotes the full subcategory of compact objects.

    Below we discuss the semiorthogonal decomposition of $\mathsf{D}\text{-}\mathsf{mod}(\calBun_{G^{\vee}})^{\mathrm{cp}}$
    corresponding to \cref{sod:conn} under the equivalence \cref{Derham2}.
    For a parabolic subgroup $P \subset G$ with Levi quotient $P \twoheadrightarrow L$,
    let $P^{\vee} \subset G^{\vee}$ be the corresponding parabolic subgroup
    such that its Levi quotient $P^{\vee} \twoheadrightarrow L^{\vee}$ is the Langlands dual of $L$.
    We have the Harder--Narasimhan stratification of $\calBun_{G^{\vee}}$
    \begin{align}\label{HNbun}
        \calBun_{G^{\vee}}
        =
        \bigsqcup_{\chi \in \uppi_1(L^{\vee})^{\mathrm{free}}_+}
        \calBun_{P^{\vee}}(\chi)^{\mathrm{ss}} \ .
    \end{align}

    Here $\uppi_1(L^{\vee})^{\mathrm{free}}$ is the torsion-free quotient of $\uppi_1(L^{\vee})$,
    which is isomorphic to $\Lambda^{\mathrm{Z}(L)^{\circ}}$,
    and $\uppi_1(L^{\vee})^{\mathrm{free}}_{+} \subset \uppi_1(L^{\vee})^{\mathrm{free}}$
    is the subset corresponding to $\Lambda^{\mathrm{Z}(L)^{\circ}}_{+} \subset \Lambda^{\mathrm{Z}(L)^{\circ}}$
    (see \cref{eg-g-quiver} for the notation).
    The substack $\calBun_{P^{\vee}}^{\mathrm{ss}}(\chi)$ is the preimage of
    $\calBun_{L^{\vee}}(\chi)^{\mathrm{ss}}$ under the natural map
    $\calBun_{P^{\vee}} \to \calBun_{L^{\vee}}$,
    where $\calBun_{L^{\vee}}(\chi)^{\mathrm{ss}}$ is the union of the components
    $\calBun_{L^{\vee}}(\chi')^{\mathrm{ss}}$ for $\chi' \in \uppi_1(L^{\vee})$
    whose image in $\uppi_1(L^{\vee})^{\mathrm{free}}$ is equal to $\chi$.

    Suppose that $g(C) \leq 1$.
    Then by~\cite[Theorem~9.1.2]{drinfeld-gaitsgory-2015-compactgen},
    the stratification \cref{HNbun} defines a sequence of open substacks of $\calBun_{G^{\vee}}$
    that are \textit{co-truncative} in the sense of~\cite{drinfeld-gaitsgory-2015-compactgen}.
    Consequently, by~\cite[Proposition~3.1.2]{drinfeld-gaitsgory-2015-compactgen} and Kashiwara's 
    lemma~\cite[Proposition~2.5.6]{gaitsgory-rozenblyum-2014},
    we obtain the semiorthogonal decomposition
    \begin{align}\label{sod:dmod}
        \mathsf{D}\text{-}\mathsf{mod}(\calBun_{G^{\vee}})
        =
        \Bigl<
            \mathsf{D}\text{-}\mathsf{mod}(\calBun_{P^{\vee}}(\chi)^{\mathrm{ss}})
            \Bigm| \chi \in \uppi_1(L^{\vee})^{\mathrm{free}}_+
        \Bigr> \ .
    \end{align}
    We have an equivalence by~\cite[Lemma~10.3.6]{drinfeld-gaitsgory-2013-finiteness}
    \begin{align*}
        \mathsf{D}\text{-}\mathsf{mod}(\calBun_{L^{\vee}}(\chi)^{\mathrm{ss}})
        \longsimto
        \mathsf{D}\text{-}\mathsf{mod}(\calBun_{P^{\vee}}(\chi)^{\mathrm{ss}})
    \end{align*}
    since $\calBun_{P^{\vee}}(\chi)^{\mathrm{ss}}\to \calBun_{L^{\vee}}(\chi)^{\mathrm{ss}}$ is a gerbe of some semidirect product of $\mathbb{G}_\mathrm{a}$'s by the condition $g(C) \leq 1$.
    Moreover, the semiorthogonal decomposition \cref{sod:dmod} preserves compact objects, therefore we have the semiorthogonal decomposition
    \begin{align}\label{sod:compact}
        \mathsf{D}\text{-}\mathsf{mod}(\calBun_{G^{\vee}})^{\mathrm{cp}}
        =
        \Bigl<
            \mathsf{D}\text{-}\mathsf{mod}(\calBun_{L^{\vee}}(\chi)^{\mathrm{ss}})^{\mathrm{cp}}
            \Bigm| \chi \in \uppi_1(L^{\vee})^{\mathrm{free}}_+
        \Bigr> \ .
    \end{align}

    Using the compatibility of the equivalence \cref{Derham} with Eisenstein functors~\cite{geometric-langlands-ii},
    one can show that the semiorthogonal decomposition \cref{sod:conn} corresponds to \cref{sod:compact}
    under the equivalence \cref{Derham2}.
    In particular, we obtain an equivalence
    \begin{align*}
        \mathsf{W}_{\calConn_G, \, \mathcal{N}}(-w)
        \simeq
        \mathsf{D}\text{-}\mathsf{mod}(\calBun_{G^{\vee}}(w)^{\mathrm{ss}})^{\mathrm{cp}} \ .
    \end{align*}

    For an arbitrary $g(C)$, we expect the following.
    For a closed substack
    $\mathcal{Z} \subset \calHiggs_{G^{\vee}} = \Omega_{\calBun_{G^{\vee}}}$
    with complement $\mathcal{U}$, let
    \[
        \mathsf{D}\text{-}\mathsf{mod}(\calBun_{G^{\vee}})_{\mathcal{Z}}
        \subset
        \mathsf{D}\text{-}\mathsf{mod}(\calBun_{G^{\vee}})
    \]
    be the subcategory of D-modules whose microlocal supports are contained in $\mathcal{Z}$.
    We then consider the following \textit{microlocal category}:
    \begin{align*}
        \mathsf{D}^{\mathrm{micro}}(\mathcal{U})
        =
        \mathsf{D}\text{-}\mathsf{mod}(\calBun_{G^{\vee}})
        \big/
        \mathsf{D}\text{-}\mathsf{mod}(\calBun_{G^{\vee}})_{\mathcal{Z}} \ .
    \end{align*}
    We expect that (see \cite{mcgerty-nevins-morse} in the case of global quotient stacks)
    \begin{align*}
        \mathsf{D}\text{-}\mathsf{mod}(\calBun_{G^{\vee}})
        =
        \Bigl<
            \mathsf{D}^{\mathrm{micro}}(\calHiggs_{L^{\vee}}(\chi)^{\mathrm{ss}})
            \Bigm| \chi \in \uppi_1(L^{\vee})^{\mathrm{free}}_+
        \Bigr> \ ,
    \end{align*}
    and that this decomposition also preserves compact objects:
    \begin{align}\label{sod:compact2}
        \mathsf{D}\text{-}\mathsf{mod}(\calBun_{G^{\vee}})^{\mathrm{cp}}
        =
        \Bigl<
            \mathsf{D}^{\mathrm{micro}}(\calHiggs_{L^{\vee}}(\chi)^{\mathrm{ss}})^{\mathrm{cp}}
            \Bigm| \chi \in \uppi_1(L^{\vee})^{\mathrm{free}}_+
        \Bigr> \ .
    \end{align}

    The semiorthogonal decomposition \cref{sod:compact2} should coincide with \cref{sod:compact} when $g(C)\leq 1$,
    since the semistable locus in $\calHiggs_{G^{\vee}}$ is the pullback of the semistable locus in $\calBun_{G^{\vee}}$.
    The semiorthogonal decomposition \cref{sod:compact2} should be a deformation quantization of the semiorthogonal decomposition of the limit category for $\calHiggs_{G^{\vee}}$ constructed in~\cite{padurariu-toda-dolbeault}.

    We expect that the semiorthogonal decomposition \cref{sod:conn} corresponds to \cref{sod:compact2}
    under the equivalence \cref{Derham2}. In particular, we expect that the following conjecture holds: 
\end{para}

\begin{conjecture}
    \label{conj-conn}
    There is an equivalence 
    \begin{align*}
        \mathsf{W}_{\calConn_G, \, \mathcal{N}}(-w)
        \simeq
        \mathsf{D}^{\mathrm{micro}}(\calHiggs_{G^{\vee}}(w)^{\mathrm{ss}})^{\mathrm{cp}} \ .
    \end{align*} 
\end{conjecture}

\begin{remark}
    A semiorthogonal decomposition similar to \cref{sod:conn}
    also appears in the context of categorical $p$-adic Langlands programme,
    see~\cite[Section~7.5]{emerton-gee-hellmann-categorical-padic}.
\end{remark}

\preparebibliography
\printbibliography

\authorinforule

\authorinfo{Chenjing Bu}
    {bucj@mailbox.org}
    {Mathematical Institute, University of Oxford, Oxford OX2 6GG, United Kingdom.}

\authorinfo{Tudor Pădurariu}
    {tpad@math.uni-bonn.de}
    {Mathematical Institute, University of Bonn, Endenicher Allee 60, 53115 Bonn, Germany.}

\authorinfo{Yukinobu Toda}
    {yukinobu.toda@ipmu.jp}
    {Kavli Institute for the Physics and Mathematics of the Universe (WPI), University of Tokyo, 5-1-5 Kashiwanoha, Kashiwa, 277-8583, Japan. \\
    Inamori Research Institute for Science, 620 Suiginya-cho, Shimogyo-ku, Kyoto 600-8411, Japan.
    }

\end{document}